\documentclass[11pt,reqno]{amsart}
\usepackage{amsmath,amscd,amssymb,amsfonts,amsthm,courier,relsize,bm}
\usepackage{hyperref,enumerate,mathrsfs,mathtools,slashed}

\usepackage{tikz-cd}
\tikzcdset{arrow style=math font}
\tikzcdset{diagrams={nodes={inner sep=3pt}}}

\usepackage{xcolor}  
\hypersetup{
    colorlinks,
    linkcolor={red!50!black},
    citecolor={blue!70!black},
    urlcolor={blue!80!black}
}

\textwidth15.8 cm
\oddsidemargin.4cm
\evensidemargin.4cm

\newtheorem{theorem}{Theorem}[section]
\newtheorem{corollary}[theorem]{Corollary}
\newtheorem{proposition}[theorem]{Proposition}

\newtheorem{lemma}[theorem]{Lemma}
\theoremstyle{definition}    
\newtheorem{definition}[theorem]{Definition}
\theoremstyle{remark}

\newtheorem{remark}[theorem]{Remark}

\newtheorem{example}[theorem]{Example}

\newcommand{\pair}[2]{\langle #1, #2 \rangle}
\newcommand{\ignore}[1]{}

\newcommand{\ol}[1]{\overline{#1}}
\newcommand{\ul}[1]{\underline{#1}}
\newcommand{\ti}[1]{\widetilde{#1}}

\newcommand{\wh}[1]{\widehat{#1}}
\newcommand{\scr}[1]{\mathscr{#1}}
\newcommand{\mf}[1]{\mathfrak{#1}}

\newcommand{\tn}[1]{\textnormal{#1}}
\renewcommand{\i}{{\mathrm{i}}}



\def\d{\ensuremath{\mathrm{d}}}

\def\Ad{\ensuremath{\textnormal{Ad}}}
\def\ad{\ensuremath{\textnormal{ad}}}
\def\g{\ensuremath{\mathfrak{g}}}
\def\k{\ensuremath{\mathfrak{k}}}
\def\t{\ensuremath{\mathfrak{t}}}
\def\n{\ensuremath{\mathfrak{n}}}

\def\h{\ensuremath{\mathfrak{h}}}

\def\r{\ensuremath{\mf{r}}}


\def\B{\ensuremath{\mathcal{B}}}

\def\F{\ensuremath{\mathcal{F}}}
\def\G{\ensuremath{\mathcal{G}}}
\def\H{\ensuremath{\mathcal{H}}}
\def\I{\ensuremath{\mathcal{I}}}
\def\J{\ensuremath{\mathcal{J}}}

\def\L{\ensuremath{\mathcal{L}}}

\def\O{\ensuremath{\mathcal{O}}}
\def\P{\ensuremath{\mathcal{P}}}

\def\T{\ensuremath{\mathcal{T}}}

\def\V{\ensuremath{\mathcal{V}}}

\def\bC{\ensuremath{\mathbb{C}}}
\def\bR{\ensuremath{\mathbb{R}}}
\def\bZ{\ensuremath{\mathbb{Z}}}

\def\bL{\ensuremath{\mathbb{L}}}

\def\End{\ensuremath{\textnormal{End}}}
\def\Hom{\ensuremath{\textnormal{Hom}}}

\def\ker{\ensuremath{\textnormal{ker}}}
\def\supp{\ensuremath{\textnormal{supp}}}
\def\id{\ensuremath{\textnormal{id}}}


\def\pr{\ensuremath{\textnormal{pr}}}
\def\dim{\ensuremath{\textnormal{dim}}}
\def\Sym{\ensuremath{\textnormal{Sym}}}

\def\Cl{\ensuremath{\textnormal{Cl}}}

\def\index{\ensuremath{\textnormal{index}}}


\def\f{\ensuremath{\mathscr{F}}}

\def\x{\ensuremath{\mf{X}}}

\title{Riemannian foliations and geometric quantization}
\author{Yi Lin}\address{Georgia Southern University}\email{yilin@georgiasouthern.edu}
\author{Yiannis Loizides}\address{George Mason University}\email{yloizide@gmu.edu}
\author{Reyer Sjamaar}\address{Cornell University}
\email{sjamaar@math.cornell.edu}
\author{Yanli Song}\address{Washington University in St. Louis}\email{yanlisong@wustl.edu}
\begin{document}
\sloppy
\maketitle

\vspace{-0.5cm}

\begin{abstract}
We introduce geometric quantization for constant rank presymplectic structures with Riemannian null foliation and compact leaf closure space. We prove a quantization-commutes-with-reduction theorem in this context. Examples related to symplectic toric quasi-folds, suspensions of isometric actions of discrete groups, and K-contact manifolds are discussed.
\end{abstract}

\tableofcontents

\section{Introduction}
Let $G$ be a compact connected Lie group with Lie algebra $\g$. Let $(M,\omega,\mu)$ be a compact Hamiltonian $G$-space equipped with a $G$-equivariant prequantum line bundle $(L,\nabla^L)$. A choice of $G$-invariant $\omega$-compatible almost complex structure $J$ on $M$ determines a $G$-equivariant Spin$_c$ Dirac operator
\[ D\colon \Gamma(M,\wedge^{\tn{even}} T^*M^{0,1}\otimes L)\rightarrow \Gamma(M,\wedge^{\tn{odd}}T^*M^{0,1}\otimes L).\]
Since $D$ is elliptic and $M$ is compact, the kernel and cokernel of $D$ are finite dimensional representations of $G$. Thus the equivariant index of $D$, denoted $RR_G(M,L)=\index_G(D)\in R(G)$, is well-defined (here the notation `$RR_G$' stands for `$G$-equivariant Riemann-Roch'). The equivariant index has a well-known interpretation as a crude sort of geometric quantization of $(M,\omega,\mu)$. For example, if $M$ is K{\"a}hler, $L$ is holomorphic and sufficiently positive, then $\index_G(D)$ agrees with the representation of $G$ on the space of holomorphic sections of $L$, the K{\"a}hler quantization of $(M,\omega,\mu)$.

The $G$-equivariant index has a remarkable property with respect to symplectic reduction. For simplicity assume $G$ acts freely on $\mu^{-1}(0)$ and let $M_0=\mu^{-1}(0)/G$ be the symplectic quotient, with its induced prequantum line bundle $L_0=L|_{\mu^{-1}(0)}/G$ and Spin$_c$ Dirac operator $D_0$. The \emph{quantization commutes with reduction} ($[Q,R]=0$) theorem \cite{GuilleminSternbergConjecture, MeinrenkenSymplecticSurgery} states that
\[ RR_G(M,L)^G=RR(M_0,L_0) \in \bZ,\]
i.e. the multiplicity of the trivial representation in $RR_G(M,L)$ equals  the Riemann-Roch number of the symplectic quotient at $0$. Various proofs of the $[Q,R]=0$ theorem have been given in the literature, cf. \cite{TianZhang, ParadanRiemannRoch}. The theorem has been generalized to the case of singular symplectic quotients \cite{MeinrenkenSjamaar} and to various non-compact settings (cf. \cite{MaZhangVergneConj, ParadanFormalI, HochsMathaiQuantizingProperActions} and references therein for a sample of results in this direction). There are refinements of this result in algebro-geometric settings (cf. \cite{TelemanConjecture}). Analogues of the $[Q,R]=0$ theorem have been proven in non-symplectic settings, such as quasi-Hamiltonian manifolds (cf. \cite{MeinrenkenKHomology}) and log symplectic manifolds (cf. \cite{guillemin2018geometric, BLSbsympl, LLSS2}).

In this article we study an analogous geometric quantization procedure in a setting where the closed 2-form $\omega$ is permitted to have a non-zero constant rank kernel. We will call such a 2-form a \emph{presymplectic form}. Since $\omega$ is closed and of constant rank, $\ker(\omega)=T\f$ is the tangent distribution to a regular foliation $\f$ of $M$, and $\omega$ restricts to a symplectic structure on the fibers of the normal bundle $N\f=TM/T\f$ to the leaves. The data $(M,\f,\omega)$ can be thought of as describing a symplectic structure on the leaf space of the foliation. More formally, $\omega$ determines a $0$-shifted symplectic structure on the leaf stack $M/\f$, cf. \cite{SjamaarHoffmanStacky} for this point of view. Analogues of many familiar theorems from symplectic geometry have been studied by various authors in this context, including \cite{SjamaarLinConvexity, toben2014localization, goertsches2017localization,SjamaarLinLocalization, goertsches2018equivariant, casselmann2017basic, liu2021chern}.

In the symplectic setting, the Riemannian metric $g=\omega \circ J$ determined by $(\omega,J)$ plays a key role in the definition of the Dirac operator $D$. Therefore to obtain a setting in which the above index-theoretic geometric quantization construction has a straight-forward analogue, we shall make the rather restrictive assumption that $(M,\f)$ is a \emph{Riemannian foliation}, i.e. that there is a fiber metric $g$ on the normal bundle $N\f$ such that the holonomy action is isometric. One thinks of $g$ as a Riemannian metric on the leaf space $M/\f$, and from this point of view the study of $(M,\f,g)$ fits into the more general context of Riemannian metrics on differentiable stacks \cite{del2019riemannian}. The simplest non-trivial example of a Riemannian foliation is a line of irrational slope on the 2-torus.

There is a rich and detailed theory of Riemannian foliations $(M,\f,g)$ due to Molino \cite{MolinoBook} and further developed by several authors (see also, for example, the references \cite{tondeurbook, Kacimi, MoerdijkMrcun}); we survey parts of this theory in Section \ref{s:background}. Given a Spin$_c$ structure for $(N\f,g)$, one can construct a \emph{transverse Spin$_c$ Dirac operator} $D$ that differentiates in directions orthogonal to the leaves of $\f$; see Section \ref{s:transverseindex}. If $M$ is compact then it has been known for some time that the restriction of $D$ to the space of smooth $\f$-invariant sections is a Fredholm operator \cite{Kacimi}. Transverse Dirac operators for Riemannian foliations have been studied extensively by many authors, including \cite{Kacimi, glazebrook1991transversal, douglas1995index, park1996basic, BruningRiemFoliations, GorokhovskyLott}.

Let $(M,\f,\omega)$ be a presymplectic manifold. For simplicity we shall assume $M$ is compact in this introduction, although this will be relaxed somewhat in the main body of the article. Let $\g$ be a compact Lie algebra acting on $(M,\f)$ by transverse vector fields (see Section \ref{s:tact}), and suppose that the action is Hamiltonian with moment map $\mu \in C^\infty(M,\g^*)^\f$. Assume $0 \in \g^*$ is a regular value of the moment map. The \emph{symplectic quotient} of $(M,\f,\omega)$ in the category of presymplectic manifolds is $(M_0,\f_0,\omega_0)$ where $M_0=\mu^{-1}(0)$, $\omega_0=\omega|_{M_0}$ and $\f_0$ is the enlarged foliation of $M_0$ generated by $\f$, $\g$. The theory of Hamiltonian actions and reduction in this context is discussed in greater detail in Appendix \ref{s:reductiontheory}.

Assume $N\f$ admits an $\omega$-compatible complex structure $J$ which is both $\f$ and $\g$ invariant, and let $g=\omega\circ J$ be the corresponding fiber metric turning $(M,\f,g)$ into a Riemannian foliation. Let $(L,\nabla^L)$ be a foliated basic $\g$-equivariant prequantum line bundle for $(M,\f,\omega)$. One constructs a transverse Spin$_c$ Dirac operator
\[ D \colon \Gamma(M,\wedge^{\tn{even}} N^*\f^{0,1}\otimes L)^\f \rightarrow \Gamma(M,\wedge^{\tn{odd}}N^*\f^{0,1}\otimes L)^\f \]
on the spaces of smooth $\f$-invariant sections. The operator $D$ is Fredholm and $\g$-equivariant, hence has a well-defined $\g$-equivariant index, denoted $RR_\g(M,\f,L)=\index^\f_\g(D)$, which belongs to the representation ring $R(\g)$ generated by finite dimensional irreducible representations of $\g$. Likewise one defines $RR(M_0,\f_0,L_0)=\index^{\f_0}(D_0)$ where $L_0=L|_{M_0}$ and $D_0$ is the corresponding transverse Dirac operator on $M_0$. The main result of the article is the following.
\begin{theorem}
\label{t:qr0introduction}
$RR_\g(M,\f,L)^\g=RR(M_0,\f_0,L_0)$.
\end{theorem}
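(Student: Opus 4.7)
The plan is to adapt Tian--Zhang's analytic deformation method to the Riemannian foliation setting. Write $\mu^\sharp$ for the Kirwan vector field on $M$, built from the moment map $\mu$ and the transverse metric $g$: it is $\f$-basic, $\g$-invariant, and its zero set consists of the critical points of $|\mu|^2$, with $\mu^{-1}(0)$ as the component relevant to the trivial isotype. Form the family of deformed transverse Spin$_c$ Dirac operators
\[ D_t = D + \sqrt{-1}\,t\,c(\mu^\sharp), \qquad t \geq 0, \]
acting on $\Gamma(M,\wedge^\bullet N^*\f^{0,1}\otimes L)^\f$. Since $\mu^\sharp$ is bounded and basic, each $D_t$ remains Fredholm and $\g$-equivariant inside the framework of Section \ref{s:transverseindex}, and $\index^\f_\g(D_t) = RR_\g(M,\f,L)$ is independent of $t$.

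I then proceed in two steps. Step 1 (localization of the trivial isotype): restrict to the $\g$-invariant subspace and show that as $t\to\infty$ the kernel and cokernel of $D_t^\g$ concentrate in an arbitrarily small tubular neighborhood $U$ of $\mu^{-1}(0)$. The key is a Bochner--Lichnerowicz-type estimate
\[ D_t^2 \;\geq\; t^2\,|\mu^\sharp|^2 \;-\; C\,t \]
on $\f$- and $\g$-invariant sections. Since $0$ is a regular value the $\g$-action is infinitesimally free along $\mu^{-1}(0)$, so $|\mu^\sharp|$ is bounded below outside $U$, and compactness of the leaf closure space $M/\overline{\f}$ allows one to pick $\f$- and $\g$-invariant cut-offs and to make the concentration global.

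Step 2 (local model): using the presymplectic normal form from Appendix \ref{s:reductiontheory}, identify $U$ equivariantly with a neighborhood of the zero section of a vector bundle over $(M_0,\f_0)$ with fiber $\g^*$, so that $\f|_U$ sits inside the pull-back of $\f_0$ and $L|_U$ descends to $L_0$ twisted by the tautological character in the $\g^*$ direction. In this model the localized $\g$-invariant index of $D_t$ factors as $\index^{\f_0}(D_0)$ times a Bargmann-type model index on $\g\oplus\g^*$ that picks out the trivial $\g$-character and contributes a factor of $1$; this identifies $RR_\g(M,\f,L)^\g$ with $RR(M_0,\f_0,L_0)$.

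The principal obstacle is Step 1. In the classical case the Bochner estimate forces exponential decay of approximate-kernel sections away from $\{\mu^\sharp=0\}$, but here the estimates must be transported into Kacimi's Fredholm theory for $\f$-basic sections; one needs cut-offs and parametrices that preserve basicness and uniform control across the leaves, for which compactness of $M/\overline{\f}$ and Molino-type structure are essential. A second subtlety is that the enlarged foliation $\f_0$ on $M_0$ is strictly larger than $\f|_{M_0}$, so one must verify that the $\f$-basic, $\g$-invariant sections surviving the localization descend to genuine $\f_0$-basic sections and that the model index computation in Step 2 respects this enlargement. Once these analytic points are settled, Step 2 reduces to a standard symbol-level computation in the spirit of Witten and Paradan--Tian--Zhang, with only bookkeeping adjustments for $\f_0$.
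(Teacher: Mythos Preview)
Your strategy is a genuinely different route from the paper's. The paper follows Paradan's symbol-theoretic approach rather than Tian--Zhang's operator deformation: instead of working with $D_t=D+\i t\,c(a(\mu))$ and Bochner estimates on $\f$-basic sections, the paper deforms the \emph{principal symbol} to $\sigma_D(\xi-a(\mu))$, uses the transfer functor $\T$ to send this to a $\ol{\rho(G)}\times O_q$-transversely elliptic symbol on the compact Molino manifold $W$, and then invokes Atiyah's theory of transversely elliptic operators there. Localization becomes excision for transversely elliptic symbols (Theorem~\ref{t:excision}), and the passage from the enlarged foliation $\f_0$ to $\f|_{M_0}$ is handled by the ``infinitesimally free actions'' result (Theorem~\ref{t:infinitesimalactionfoliation}), which again reduces to standard manipulations on Molino manifolds. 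The final local model is a Bott symbol on $\g^*$ rather than a Bargmann--Fock computation.

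What this buys the paper is that it never needs to establish Bochner--Lichnerowicz estimates, exponential decay, or parametrix constructions for $\f$-basic sections: all the hard analysis is outsourced to the classical Atiyah theory on the compact manifold $W$. Your approach is plausible in outline, and you have correctly identified the main obstacle (Step~1), but carrying it through would require building the foliated analogue of the Tian--Zhang analytic package more or less from scratch---uniform estimates on basic sections, $\f$-invariant cut-offs compatible with the Fredholm theory of Kacimi, and so on. This is not impossible, but it is substantially more work than the paper's route, and the paper's use of Molino theory to convert the transverse $\g$-action into an honest compact group action on $W$ is precisely the device that lets one avoid it.
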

This is a special case of Theorem \ref{t:qr0} in Section \ref{s:qr0}. One can interpret this result as a quantization-commutes-with-reduction theorem for the $0$-shifted symplectic structure on the leaf stack $M/\f$. In this spirit we prove (see Section \ref{s:weakequivquant}) that $RR_\g(M,\f,L)$ is invariant under a suitable notion of weak (or Morita) equivalence of foliations.

Our approach to Theorem \ref{t:qr0introduction} resembles that of Paradan \cite{ParadanRiemannRoch} in the symplectic setting, which in turn is based on Atiyah's theory of $G$-transversely elliptic operators and `non-abelian localization' to the critical locus of the norm-square of the moment map. Aspects of the theories of Molino \cite{MolinoBook} and el Kacimi \cite{Kacimi} that we use extensively, are surveyed in Sections \ref{s:MolinoOverview}, \ref{sec:descent}, \ref{s:transversediff}. In Section \ref{s:transverseindex} we develop the index theory of $\g\ltimes \f$-transversely elliptic symbols; here $\g\ltimes \f=\g_M\times_{N\f}TM$ is a Lie algebroid encoding the foliation $\f$ and transverse $\g$-action. In Section \ref{s:localization} this theory is applied to prove abelian and non-abelian localization formulas for the index of a $\g$-equivariant transverse Spin$_c$ Dirac operator. In Section \ref{s:qr0} the non-abelian localization formula is applied to prove Theorem \ref{t:qr0introduction}. Several applications of the main theorem are described in Section \ref{s:ex}, including to toric quasifolds, K-contact  (in particular, Sasakian) manifolds, and Haefliger suspensions.

We learned recently of very interesting forthcoming work of Hekmati and Orseli on the equivariant index of transverse Dolbeault operators in the toric K-contact setting. We expect that our projects are mutually complementary. One notable difference in setting is that Hekmati and Orseli work with the index of transverse operators acting on the full space of smooth sections, rather than restricting attention to the $\f$-basic sections as we have in the present article.

\section{Background on Riemannian foliations}\label{s:background}
In this section we recall various definitions and results in the theory of Riemannian foliations that we shall need in later sections. We give a brief overview of Molino's theory \cite{MolinoBook} and discuss transfer of basic vector bundles to the Molino manifold.

\subsection{Riemannian foliations}\label{s:riemfol}
Let $(M,\f)$ be a manifold equipped with a foliation of codimension $q$. The tangent bundle to the leaves is a vector bundle denoted $T\f$. Let $N\f$ denote the normal bundle to the leaves, a real vector bundle of rank $q$. If $T \subset M$ is a transversal to the leaves, then $N\f|_T$ is canonically isomorphic to the tangent bundle of $T$. A subset of $M$ is $\f$-\emph{saturated} if it is a union of leaves of $\f$. An $\f$-saturated submanifold $M'\subset M$ has a restricted foliation $\f|_{M'}$. If an orientation is chosen on $N\f$, then $(M,\f)$ is said to be \emph{transversely oriented}.

Let $\x(\f)\subset \x(M)$ be the Lie subalgebra of smooth vector fields tangent to the leaves of $\f$. There is an action of $\x(\f)$ on smooth sections of $N\f$:
\begin{equation} 
\label{e:bottconn}
\L_X \ol{Y}=\ol{[X,Y]}, \qquad X \in \x(\f), Y \in \x(M) 
\end{equation}
where $\ol{Z} \in \Gamma(N\f)$ denotes the image of $Z \in \x(M)$ under the quotient map. The normalizer of $\x(\f)$ in $\x(M)$ is the Lie subalgebra $\x(M,\f)$ of \emph{foliate vector fields}. The quotient $\x(M/\f)=\x(M,\f)/\x(\f)\simeq \Gamma(N\f)^\f$ is the Lie algebra of \emph{transverse vector fields}; here $\Gamma(N\f)^\f$ is the space of sections fixed under the $\x(\f)$ action. A differential form $\alpha$ on $M$ is said to $\f$-\emph{basic} if $\iota(X)\alpha=\L_X \alpha=0$ for all $X \in \mf{X}(\f)$.

In this paper we will sometimes use of the language of Lie algebroids and Lie groupoids. The tangent distribution $T\f$ to the foliation $\f$ is a simple example of a Lie algebroid, with the anchor map being the inclusion $T\f\hookrightarrow TM$, and the bracket being the ordinary bracket of elements of $\x(\f)$. Via equation \eqref{e:bottconn}, the vector bundle $N\f$ is an example of a representation of this Lie algebroid. The Lie algebroid $T\f$ has two canonical Lie groupoid integrations: the \emph{holonomy groupoid} $\tn{Hol}(M,\f)$ (the minimal integration), and the \emph{monodromy groupoid} $\tn{Mon}(M,\f)$ (the source-simply connected integration). The representation of $T\f$ on $N\f$ integrates to a representation of $\tn{Hol}(M,\f)$ (or $\tn{Mon}(M,\f)$).

A \emph{transverse metric} is a fibre metric $g$ on $N\f$ such that $\L_X g=0$ for all $X \in \mf{X}(\f)$. A triple $(M,\f,g)$ where $g$ is a transverse metric is a \emph{Riemannian foliation}. The action of $\tn{Hol}(M,\f)$ is by isometries between the fibres of $N\f$; in particular, the isotropy group at a point $m \in M$ is a (possibly non-closed) subgroup of the isometry group $O(N_m\f)$. The holonomy groupoid of a Riemannian foliation is automatically Hausdorff (cf. \cite[Examples 5.8]{MoerdijkMrcun}). 

Let $\scr{K}(M/\f,g)\subset \x(M/\f)$ denote the Lie subalgebra of \emph{transverse Killing fields}, that is, transverse vector fields that induce a Killing vector field on any local transversal. The normal bundle $N\f$ carries a canonical connection called the \emph{transverse Levi-Civita connection}; its restriction to any foliation chart $T\times F$ is the pullback of the Levi-Civita connection for the transversal $(T,g|_T)$.

A Riemannian metric $g_M$ on $M$ (not $\f$-invariant) is called a \emph{bundle-like metric for} $\f$ if the restriction of $g_M$ to $T\f^\perp\simeq N\f$ is a transverse metric. A Riemannian foliation $(M,\f,g)$ is \emph{complete} if there exists a complete bundle-like metric $g_M$ for $\f$ such that $g_M|_{T\f^\perp}=g$. For later use, we record the fact that the completeness property is well-behaved under restriction to submanifolds. More precisely, suppose $(M,\f,g)$ is a Riemannian foliation with complete bundle-like metric $g_M$, and let $M'\subset M$ be an $\f$-saturated submanifold of $M$, then:
\begin{enumerate}
\item if $M'$ is closed (as a set), then by the Hopf-Rinow theorem $g_M|_{M'}$ is a complete bundle-like metric for $(M',\f|_{M'})$, hence $(M',\f|_{M'})$ with the restricted transverse metric is a complete Riemannian foliation;
\item if $M'$ is open, then $g_M|_{M'}$ is usually not complete, but by Proposition \ref{p:openMolino} there exists a complete bundle-like metric for $(M',\f|_{M'})$, hence $(M',\f|_{M'})$ with the corresponding transverse metric is a complete Riemannian foliation.
\end{enumerate}
We end this section by describing two sources of simple examples of Riemannian foliations.

\begin{example}[locally free isometric actions]
\label{ex:freeisometric}
Let $(M,g_M)$ be a Riemannian manifold with a locally free action of a Lie algebra $\h$ by Killing vector fields. Let $\f$ be the induced foliation of $M$ by $\h$-orbits, and let $g=g_M|_{T\f^\perp}$ be the restriction of $g_M$ to $T\f^\perp\simeq N\f$. Then $(M,\f,g)$ is a Riemannian foliation. The simplest non-trivial example (with $\h=\bR$) is a line of irrational slope on the 2-torus.
\end{example}
\begin{remark}
By results of Haefliger and Salem \cite[Corollary 3.5]{haefliger1990riemannian}, every Killing foliation (cf. Section \ref{s:MolinoOverview}) on a compact connected oriented manifold is weakly equivalent (cf. Section \ref{s:weakequiv}) to a Riemannian foliation of the kind in Example \ref{ex:freeisometric}. 
\end{remark}
\begin{example}[Haefliger suspensions]
\label{ex:Haefliger}
Let $\ti{B},N$ be manifolds with $\ti{B}$ connected and $N$ a $q$-dimensional Riemannian manifold with metric $g_N$. Let $\ti{M}=\ti{B}\times N$ equipped with the Riemannian foliation $(\ti{M},\ti{\f},\ti{g})$ having leaves given by the fibers of the projection map $\ti{M}\rightarrow N$. Let $\Gamma$ be a discrete group acting on $\ti{B}$ and $N$ such that the following two conditions hold:
\begin{enumerate}
\item the action of $\Gamma$ on $\ti{B}$ is free and proper,
\item the action of $\Gamma$ on $N$ is isometric.
\end{enumerate}
Then $\ti{B}$ is a principal $\Gamma$-bundle over $B=\ti{B}/\Gamma$, and we may form the associated bundle with fiber $N$,
\[ M=\ti{M}/\Gamma=\ti{B}\times_\Gamma N \rightarrow B=\ti{B}/\Gamma.\]
The quotient of $\ti{\f}$ by $\Gamma$ is a Riemannian foliation $\f$ of $M$ with transverse metric $g$. Any fiber of $M \rightarrow B$ is a complete transversal for $\f$. Each leaf of $\f$ is of the form $\ti{B}\times_\Gamma (\Gamma \cdot  y)$ for some $y \in N$, and hence in particular $M/\f\simeq N/\Gamma$. We refer to $(M,\f)$ as the \emph{Haefliger suspension} of $(\ti{B},N,g_N)\circlearrowleft \Gamma$. 
\end{example}

\subsection{Overview of Molino theory}\label{s:MolinoOverview}
For details of Molino's theory, see Molino's book \cite{MolinoBook}. Other textbook references include \cite{tondeurbook, MoerdijkMrcun}. For a brief overview with more details than are provided here, see for example \cite{SjamaarLinLocalization, goertsches2018equivariant, alexandrino2022leaf}. 
In the transversely oriented case, it is convenient in some applications to use the \emph{oriented} transverse orthonormal frame bundle; the discussion below applies to this setting with only cosmetic changes. The material on the Molino centralizer sheaf and structural Lie algebra, included mainly for expository completeness, is used only sparingly in later sections. 

Let $(M,\f,g)$ be a complete Riemannian foliation on a connected manifold $M$. Let $\pi_M\colon P \rightarrow M$ be the orthonormal frame bundle of $(N\f,g)$, with structure group $O_q$ (Lie algebra $\mf{o}_q$). The foliation $\f$ lifts canonically to an $O_q$-invariant codimension $q+\frac{1}{2}q(q-1)$ foliation $\f_P$ of $P$. The transverse Levi-Civita connection determines a \emph{transverse parallelism} $(P,\f_P)$, i.e. a trivialization $N\f_P\simeq P\times (\bR^q\times \mf{o}_q)$ as a foliated vector bundle. Using the horizontal lift of a bundle-like metric $g_M$ and any fixed inner product for $\mf{o}_q$, we obtain a complete bundle-like metric $g_P$ for $(P,\f_P)$. The transverse vector fields in the transverse parallelism are bounded with respect to this metric, and therefore $N\f_P$ is generated by the images of global, complete, foliate vector fields. There follow a number of desirable consequences (the `Molino package'), including (i) existence of a Molino manifold $W=P/\ol{\f}_P$ and generalized Lie groupoid morphism $\tn{Hol}(M,\f) \dashrightarrow O_q\ltimes W$, (ii) a locally constant Molino centralizer sheaf $\scr{C}_M$, (iii) leaf closure (singular) foliation $\ol{\f}$ with Hausdorff quotient $M/\ol{\f}$, (iv) transfer functor for basic vector bundles. We briefly discuss (i), (ii), (iii) and related results below which are due to Molino cf. \cite[Section 4.5]{MolinoBook}. Item (iv) is due to El Kacimi \cite{Kacimi} and will be discussed in Sections \ref{s:basicvb}, \ref{sec:descent}.


From now on we assume $(M,\f,g)$ is a complete Riemannian foliation. The closures of the leaves of $\f_P$ then form a \emph{strictly simple} foliation $\ol{\f}_P$, i.e. a fibration with connected fibers over a smooth manifold $W=P/\ol{\f}_P$ dubbed the \emph{Molino manifold}. The $O_q$ action on $P$ induces an $O_q$ action on $W$ such that the quotient map $\pi_W \colon P \rightarrow W$ is $O_q$-equivariant. 
The transverse metric on $(P,\f_P)$ determines an induced complete Riemannian metric on $W$ such that $N\ol{\f}_P \xrightarrow{\sim} \pi_W^*TW$ is metric-preserving. 

Let $\mf{s}_P=\ker(N\f_P\rightarrow N\ol{\f}_P)$. The Lie bracket on transverse vector fields ($\f_P$-invariant sections of $N\f_P$) restricts to a Lie bracket on $\f_P$-invariant sections of $\mf{s}_P$, which in turn induces a Lie bracket on each of the fibers of $\mf{s}_P$, making $\mf{s}_P$ into a bundle of Lie algebras. The flows of global complete foliate vector fields may be used to produce isomorphisms between the fibers of $\mf{s}_P$ at different points, and so all fibers are isomorphic to a Lie algebra $\mf{s}$ called the \emph{structural Lie algebra} of the Riemannian foliation.

Let $\scr{T}_P$ be the sheaf of transverse vector fields of $(P,\f_P)$. The subsheaf $\scr{C}_P \subset \scr{T}_P$ of transverse vector fields commuting with all global transverse vector fields $\Gamma(P,\scr{T}_P)=\mf{X}(P/\f_P)$ is called the \emph{Molino centralizer sheaf}. It follows, from the fact that $\mf{X}(P,\f_P)$ acts locally transitively on $P$, that $\scr{C}_P$ is locally constant and finitely generated, hence is given by flat sections of a flat bundle of Lie algebras $\mf{c}_P$.

Let $\L \ni p$ be a leaf of $\f_P$, and let $\ol{\L}=\pi_W^{-1}(\pi_W(p))$ be its closure. The fiber $\mf{s}\simeq\mf{s}_{P,p}$ can be identified with the space $\mf{X}(\ol{\L}/\f_P|_{\ol{\L}})$ of transverse vector fields, and thus acts locally freely and transitively on a suitable small transversal $T \ni p$ to $\L$ inside $\ol{\L}$. This identifies $T$ with a neighborhood of $1 \in S$, where $S$ is a Lie group integrating $\mf{s}$, such that $\mf{s}$ acts as left invariant vector fields. The commuting sheaf $\scr{C}_P|_T$ is then identified with the right invariant vector fields. In particular $\mf{c}_{P,p}, \mf{s}_{P,p}$ are isomorphic as Lie algebras, and the fiber product $\mf{c}_P\times_{N\f_P}TP$ is a Lie algebroid whose orbits are the leaves of $\ol{\f}_P$.

As elements of $\scr{C}_P$ commute with the transverse parallelism of $(P,\f_P)$, they are automatically lifts of transverse Killing fields on $M$. It follows that the local Killing fields on $M$ that lift to elements of $\scr{C}_P$ form a locally constant sheaf $\scr{C}_M$ on $M$, given by flat sections of a flat bundle of Lie algebras $\mf{c}_M=\mf{c}_P/O_q$. In the special case that $\scr{C}_M$ is globally constant, $(M,\f,g)$ is called a \emph{Killing foliation}. The quotient $\pi_M\colon P\rightarrow M$ maps leaf closures onto leaf closures, and it follows that leaf closures in $M$ are the $\mf{c}_M$-orbits of the leaves of $\f$. In particular the leaf closures in $M$ are the orbits of a Lie algebroid $\mf{c}\ltimes \f:=\mf{c}_M\times_{N\f} TM$, and so form a singular foliation $\ol{\f}$. The quotient $M/\ol{\f}$ is a Hausdorff space homeomorphic to $W/O_q$. $M$ is naturally stratified according to the dimension of the leaf closures, and on passing to the quotient one obtains a stratification of $M/\ol{\f}$ by orbifolds. Within a stratum on $M$, the holonomy of the $\ol{\f}$ action may vary, but is finite.

The relation between $M$ and $W$ is summarized in the \emph{Molino diagram}:
\[
\begin{tikzcd}
&P \ar[dl,"\pi_M"']\ar[dr,"\pi_W"] &\\
M \ar[d] & & W \ar[d]\\
M/\ol{\f} \ar[rr,leftrightarrow,"\sim"]& & W/O_q
\end{tikzcd}
\]
In the language of Lie groupoids, Molino's construction yields a generalized morphism from the holonomy groupoid $\tn{Hol}(M,\f)$ of the Riemannian foliation to the action Lie groupoid $O_q\ltimes W$. The fiber product $P\times_W P$ associated to the submersion $\pi_W \colon P \rightarrow W$ is a Lie groupoid over $P$. Its quotient
\begin{equation} 
\label{e:Molinogroupoid}
\G=(P\times_W P)/O_q
\end{equation}
is a Lie groupoid over $M$ that we shall refer to as the \emph{Molino groupoid}. Since $\pi_W\colon P \rightarrow W$ is the quotient map by $\ol{\f}_P$, $\G$ integrates $\ol{\f}_P/O_q=(\mf{c}\ltimes \f_P)/O_q=\mf{c}\ltimes \f$. The construction is such that $\G$ and $O_q\ltimes W$ are Morita equivalent Lie groupoids, with $P$ serving as the Morita equivalence bibundle. 
\begin{example}[locally free isometric actions]
\label{ex:freeisometric2}
Let $(M,\f,g)$ be the Riemannian foliation from Example \ref{ex:freeisometric}. Let $H$ be the simply connected Lie group integrating $\h$. The leaf space $M/\f=M/H$. Let $\ol{H}$ be the closure of the image of $H$ in the isometry group of $M$. The $\ol{H}$ action on $M$ lifts to an action on the transverse orthonormal frame bundle $P$, and $W=P/\ol{H}$. The quotient $M/\ol{\f}\simeq W/O_q\simeq M/\ol{H}$. 
\end{example}
\begin{example}[Haefliger suspensions]
\label{ex:Haefliger2}
Let $(M,\f,g)$ be the Riemannian foliation from Example \ref{ex:Haefliger}. If $N$ is complete then $(M,\f,g)$ is a complete Riemannian foliation. The leaf closures are the associated bundles of the $\ol{\Gamma}$ orbits in $N$, where $\ol{\Gamma}$ denotes the closure of the image of $\Gamma$ in the isometry group of $N$. The Molino manifold $W$ identifies with the quotient $P_N/\ol{\Gamma}$, where $P_N$ is the orthonormal frame bundle of $N$. The quotient $M/\ol{\f}\simeq W/O_q\simeq N/\ol{\Gamma}$. 
\end{example}

\subsection{Basic vector bundles}\label{s:basicvb}
\begin{definition}
A \emph{foliated principal bundle} $(Q,\f_Q)$ on $(M,\f)$ is a principal $H$-bundle $Q$ ($H$ a Lie group) equipped with an $H$-invariant lifted foliation $\f_Q$. A \emph{foliated vector bundle} is a vector bundle associated to a foliated principal bundle.
\end{definition} 
A foliated vector bundle $E$ acquires a foliation $\f_E$ of its total space (covering the foliation $\f$ on the base $M$) such that holonomy acts by linear isomorphisms between the fibers. We shall say that `$\f$ acts on $E$' for short; other ways of saying the same thing are (i) $E$ is a representation of the Lie algebroid $T\f$, (ii) $E$ is a representation of the source simply connected integration $\tn{Mon}(M,\f)$ of $T\f$. The following is a considerably more restrictive definition.
\begin{definition}
A \emph{basic principal bundle} is a foliated principal bundle $(Q,\f_Q)$ admitting a connection $1$-form $\theta$ that is $\f_Q$-basic. A \emph{basic vector bundle} is a vector bundle associated to a basic principal bundle.
\end{definition}
By the Cartan homotopy formula, $\theta$ is $\f_Q$-basic if and only if both $\theta$ and the curvature $F_\theta=\d \theta+\frac{1}{2}[\theta,\theta]$ have trivial contraction with $\mf{X}(\f_Q)$. A choice of basic connection is not regarded as part of the data of a basic principal bundle. Any two choices of basic connection are homotopic through basic connections.

\begin{example}
Let $(M,\f,g)$ be a Riemannian foliation. Then $N\f$ is a basic vector bundle. There is a canonical connection on $N\f$, the \emph{transverse Levi-Civita connection}, given locally on foliation charts $U\simeq T\times F$ by the pullback of the Levi-Civita connection on $(T,g_T)$, where $g_T$ is the induced Riemannian metric on the transversal $T$. That the local definitions patch together to a global connection follows from the fundamental theorem of Riemannian geometry.
\end{example}

Let $\V\B(M,\f\tn{-bas})$ be the category of basic vector bundles on $M$, with morphisms being $\f$-equivariant smooth vector bundle maps $E_1 \rightarrow E_2$. The morphisms are the same as in the category of foliated vector bundles. There is a similar category $\H\V\B(M,\f\tn{-bas})$ of Hermitian (Euclidean in the real case) basic vector bundles, for which the unitary frame bundle $\tn{Fr}_U(E)$ is basic. The following is a key property of basic principal bundles that is not difficult to prove.
\begin{proposition}
\label{prop:foliatelift}
Let $(P,\f_P)$ be a basic principal bundle over $(M,\f)$ equipped with a basic connection $\theta$. Horizontal lifts of foliate vector fields are foliate.
\end{proposition}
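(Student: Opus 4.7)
The plan is to exploit the fact that basicness of $\theta$ forces $T\f_P$ to lie in the horizontal distribution $H=\ker\theta$ of the projection $\pi\colon P\to M$, which in turn identifies $T\f_P$ with $\pi^*T\f$ and reduces the statement to a short curvature computation.

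First I would observe that if $\theta$ is $\f_P$-basic then $\iota_Y\theta=0$ for every $Y\in\x(\f_P)$, so $T\f_P\subset H$. Combined with $H\cap V=0$ (where $V=\ker\d\pi$) and the fact that $T\f_P$ and $T\f$ have the same rank---since $\f_P$ is by assumption a lift of $\f$---this makes $\d\pi|_{T\f_P}\colon T\f_P\to\pi^*T\f$ a fibrewise isomorphism. Two consequences follow: every $Y\in\x(\f_P)$ is horizontal, and for any $Z\in\x(\f)$ its horizontal lift $Z^\uparrow$ lies in $\x(\f_P)$. Since horizontal lifts of a local frame of $T\f$ form a local frame of $T\f_P$, every $Y\in\x(\f_P)$ can be written locally as $Y=\sum_i f_i Z_i^\uparrow$ with $f_i\in C^\infty(P)$ and $Z_i\in\x(\f)$.

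Now let $X\in\x(M,\f)$ be foliate with horizontal lift $\tilde X$. Expanding $[\tilde X,\sum_i f_i Z_i^\uparrow]=\sum_i(\tilde X f_i)Z_i^\uparrow+\sum_i f_i[\tilde X,Z_i^\uparrow]$ shows that the first sum is automatically in $\x(\f_P)$, so the task reduces to showing $[\tilde X,Z^\uparrow]\in\x(\f_P)$ for $Z\in\x(\f)$; by the identification above, it suffices to check that $[\tilde X,Z^\uparrow]$ is horizontal and projects into $T\f$.

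The projection is the easy half: both $\tilde X$ and $Z^\uparrow$ are $\pi$-projectable, so $\d\pi[\tilde X,Z^\uparrow]=[X,Z]$, which lies in $\x(\f)$ because $X$ is foliate and $Z\in\x(\f)$. The horizontality is where basicness of the curvature does the work: using that $\tilde X,Z^\uparrow$ are horizontal and the structure equation $F_\theta=\d\theta+\tfrac{1}{2}[\theta,\theta]$, one obtains $\theta[\tilde X,Z^\uparrow]=-\d\theta(\tilde X,Z^\uparrow)=-F_\theta(\tilde X,Z^\uparrow)$; and since $F_\theta$ is $\f_P$-basic (a consequence of $\theta$ being basic), $\iota_{Z^\uparrow}F_\theta=0$ because $Z^\uparrow\in\x(\f_P)$. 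The main subtlety I anticipate is ensuring that $T\f_P$ really has the same rank as $T\f$; this is forced by the existence of a basic connection, since any vertical vector in $T\f_P$ would pair nontrivially with $\theta$.
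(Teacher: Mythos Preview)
Your proposal is correct and follows essentially the same argument as the paper's proof: reduce via a local frame argument to checking $[\ti{X},Z^\uparrow]\in\x(\f_P)$ for $Z\in\x(\f)$, then verify this bracket is horizontal (using $\iota_{Z^\uparrow}F_\theta=0$ from basicness of $\theta$) and projects into $\x(\f)$ (by $\pi$-relatedness). You are slightly more explicit than the paper in spelling out why $T\f_P\subset H$ and why horizontal lifts of a local frame of $T\f$ form a local frame of $T\f_P$, but the core computation is identical.
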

\ignore{
\begin{remark}[Foliate vector fields]\label{r:foliatevectorfield}
To check whether a vector field $X \in \x(M)$ is foliate, it suffices to verify $[X,Y] \in \x(\f)$ for vector fields $Y$ with support contained in the sets of an open cover of $M$ (use a partition of unity). On each open $U \subset M$, it suffices to check $[X,Y_i] \in \x(\f)$ for a local frame $Y_1,...,Y_{\dim(T\f)}$ of $T\f$. Indeed note that if $Y=\sum_i f_iY_i$ then
\[ [X,Y]=\sum_i f_i[X,Y_i]+(Xf_i)Y_i, \]
and $[X,Y_i],(Xf_i)Y_i \in \x(\f)$ for all $i$.
\end{remark}
\begin{proof}
Let $X \in \x(M,\f)$ be a foliate vector field and let $\ti{X}$ be its horizontal lift.  By Remark \ref{r:foliatevectorfield}, the result will follow once we show $[\ti{X},\ti{Y}]\in \x(\f_P)$ for all $Y \in \x(\f)$, since horizontal lifts are sufficient to construct local frames of $T\f_P$. The vector fields $\ti{X}$, $\ti{Y}$ are $\pi_M$-related to $X,Y$ respectively, hence $[\ti{X},\ti{Y}]$ is $\pi_M$-related to $[X,Y] \in \x(\f)$. Therefore $[\ti{X},\ti{Y}]$ is projectable and projects to a vector field tangent to the leaves.  On the other hand
\[ \theta([\ti{X},\ti{Y}])=\ti{X}\theta(\ti{Y})-\ti{Y}\theta(\ti{X})-d\theta(\ti{X},\ti{Y})=0-0-F_\theta(\ti{X},\ti{Y})+\tfrac{1}{2}[\theta(\ti{X}),\theta(\ti{Y})]=0 \]
using $\theta(\ti{X})=\theta(\ti{Y})=0$ (as these vectors are horizontal) and $\iota(\ti{Y})F_\theta=0$ as $\theta$ is basic. Thus $[\ti{X},\ti{Y}]$ is horizontal and projects onto a vector field that is tangent to $\f$. It follows that $[\ti{X},\ti{Y}]$ is tangent to $\f_P$.
\end{proof}
}

\subsection{Transfer of basic vector bundles}\label{sec:descent}
Let $(M,\f,g)$ be a complete Riemannian foliation. Let $E$ be a basic vector bundle over $M$. The space of $\f_P$-invariant sections of $\pi_M^*E \rightarrow P$ is a $C^\infty(P)^{\f_P}=C^\infty(W)$-module.

\begin{theorem}[\cite{Kacimi}, Section 2]\label{t:transfer}
Let $E$ be a basic vector bundle over a complete Riemannian foliation $(M,\f,g)$. The $C^\infty(W)$-module $C^\infty(P,\pi_M^*E)^{\f_P}$ is finitely generated and projective, hence is the space of smooth sections of a smooth $O_q$-equivariant vector bundle $\T(E)\rightarrow W$. There is a canonical isomorphism $C^\infty(M,E)^{\f}\simeq C^\infty(W,\T(E))^{O_q}$. The fiber $\T(E)_w$ of $\T(E)$ at a point $w\in W$ may be identified as
\[ \T(E)_w:=C^\infty(P,\pi_M^*E)^{\f_P}/\mf{m}_w C^\infty(P,\pi_M^*E)^{\f_P}\simeq C^\infty(\pi_W^{-1}(w),\pi_M^*E)^{\f_P},\]
where $\mf{m}_w$ is the vanishing ideal of $w$. The pullback $\pi_W^*\T(E)$ is a subbundle of $\pi_M^*E$.
\end{theorem}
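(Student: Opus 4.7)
The plan is to produce $\T(E)$ as a vector bundle on $W$ by an explicit local trivialization, then appeal to Serre--Swan. Denote the rank of $E$ by $r$, and let $Q$ be the principal bundle associated to $E$, equipped with a basic connection $\theta$. Pulling $\theta$ back along $\pi_M\colon P \to M$ produces a basic connection on the pullback principal bundle over $(P,\f_P)$; by basicness this connection restricts to a flat partial connection along $T\f_P$ on $\pi_M^*E$. Relative to this partial connection, the $\f_P$-invariant sections of $\pi_M^*E$ are precisely the sections flat along the leaves of $\f_P$. The task therefore reduces to showing that these flat sections form a locally free $C^\infty(W)$-module of rank $r$.

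The essential input from Molino's theory on $P$ is that $(P,\f_P)$ is transversely parallelizable, that the leaf closures coincide with the fibres of $\pi_W \colon P \to W$, and that on each such fibre the structural Lie algebra $\mf{s}$ acts locally transitively via global complete foliate vector fields. The first key step is to show that parallel transport of an $\f_P$-invariant flat section along a leaf of $\f_P$ extends uniquely and continuously to the entire leaf closure: every limit point of the leaf is reached by applying a flow of a foliate vector field produced by the transverse parallelism, and parallel transport along such flows is well-defined by flatness and converges because the foliate vector fields in the parallelism are bounded with respect to the complete bundle-like metric $g_P$.

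Next I would trivialize locally on $W$. For $w\in W$, choose a small open $U \ni w$ and a local section $\sigma\colon U \to P$ of $\pi_W$ whose image is transverse to $\f_P$. Restriction to $\sigma(U)$ defines a map
\[ C^\infty(\pi_W^{-1}(U),\pi_M^*E)^{\f_P}\longrightarrow C^\infty(U,\sigma^*\pi_M^*E),\]
whose inverse is given by parallel transport along $\f_P$-leaves followed by continuous extension to leaf closures as in the previous step. Thus $U \mapsto C^\infty(\pi_W^{-1}(U),\pi_M^*E)^{\f_P}$ is the sheaf of sections of a rank-$r$ vector bundle $\T(E)\to W$. Naturality under the $O_q$-action on $P$ yields the $O_q$-equivariant structure and the identification $C^\infty(M,E)^{\f}\simeq C^\infty(W,\T(E))^{O_q}$. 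The fibre description is read off directly from the local model, and the inclusion $\pi_W^*\T(E)\subset \pi_M^*E$ is realized by the evaluation $(p,s)\mapsto s(p)$ on leaf closures, which is fibrewise injective since a flat section vanishing at a point of a leaf vanishes on the entire leaf and hence on its closure by continuity.

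The main obstacle is the extension-of-parallel-transport step: flatness of the partial connection gives only ordinary leaf holonomy, whereas a leaf closure is generically an orbit of $\mf{s}$ strictly larger than the leaf. One must combine basicness of the connection with the global complete foliate vector fields of the transverse parallelism to produce the continuous extension of parallel transport to the closure, which is precisely where the passage from $M$ to $P$ becomes essential. This is the heart of el Kacimi's argument and is what ultimately forces the construction to live on the Molino manifold rather than directly on $M$.
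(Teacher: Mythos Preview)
There is a genuine gap in your argument, and it leads to a false conclusion: you assert that $\T(E)$ has rank $r=\tn{rk}(E)$, but this can fail. The paper states this explicitly in Remark~\ref{rem:ranks}, citing \cite[Example 2.7.3]{Kacimi} for a basic vector bundle $E$ with $\tn{rk}(\T(E))<\tn{rk}(E)$. Your local trivialization step would, if correct, force $\tn{rk}(\T(E))=r$, so something must be wrong there.

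The error is in the claim that the restriction map
\[
C^\infty(\pi_W^{-1}(U),\pi_M^*E)^{\f_P}\longrightarrow C^\infty(U,\sigma^*\pi_M^*E)
\]
has an inverse given by ``parallel transport along $\f_P$-leaves followed by continuous extension to leaf closures''. The restriction map is indeed injective (leaves are dense in leaf closures), but it is not surjective in general. Concretely: fix $p\in P$, let $\ol{\L}=\pi_W^{-1}(\pi_W(p))$, and take any $e\in(\pi_M^*E)_p$. Parallel transport of $e$ along the leaf $\L$ through $p$ gives a section on $\L$, but there is no reason for this section to extend continuously to $\ol{\L}$. Your proposed mechanism for the extension---flows of foliate vector fields lifted via the basic connection---moves $e$ to some value at a nearby point $q\in\ol{\L}$, but different choices of vector field (or of path reaching $q$) can give different values, because the basic connection is not flat in the $\mf{s}_P$-directions: its curvature is $\f_P$-basic but need not vanish on $T\ol{\f}_P/T\f_P$. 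Flatness of the partial connection along $T\f_P$ says nothing about these transverse-to-$\f_P$-but-tangent-to-$\ol{\f}_P$ directions.

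The argument sketched in the paper (following El Kacimi) avoids this trap. One defines $\T(E)_w$ as the quotient $C^\infty(P,\pi_M^*E)^{\f_P}/\mf{m}_wC^\infty(P,\pi_M^*E)^{\f_P}$ and proves directly that $\dim\T(E)_w$ is independent of $w$: lift the global complete foliate vector fields of the transverse parallelism to the total space of $\pi_M^*E$ using Proposition~\ref{prop:foliatelift}, and observe that their flows act transitively on $W$ and by linear isomorphisms between the spaces $\T(E)_w$. This uses the lifted flows to compare fibres over \emph{different} leaf closures, which is unproblematic; it does not attempt to build sections within a single leaf closure from an arbitrary initial value. The upshot is that $\T(E)$ is a vector bundle of some rank $\le r$, and $\pi_W^*\T(E)\hookrightarrow\pi_M^*E$ is a possibly proper subbundle.
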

The proof has elements in common with Molino's proof that $C^\infty(P)^{\f_P}$ is the ring of smooth functions on a manifold $W$, with Proposition \ref{prop:foliatelift} playing an important role. For example to show that the dimension of the fibers $\T(E)_w$ is locally constant, recall that complete foliate vector fields act locally transitively on $P$, and lift these to the total space of $\pi_M^*E$ using Proposition \ref{prop:foliatelift}. Local flows of such vector fields act by linear isomorphisms between the fibers, implying that all fibers have the same rank over any connected component of $P$. 

\begin{remark}
\label{rem:ranks}
For an example in which the rank of $\T(E)$ is strictly less than the rank of $E$, see \cite[Example 2.7.3]{Kacimi}. A simple sufficient condition for the ranks to be equal (in which case $\pi_W^*\T(E)\simeq \pi_M^*E$) is that $\pi_M^*E$ be the quotient of a trivial foliated vector bundle by a foliated vector subbundle.
\end{remark}

\ignore{
The results in this section are mostly due to Kacimi-Alaoui \cite[Section 2]{Kacimi}.\footnote{At the moment I have included proofs. Keep?} Let $E\rightarrow M$ be a basic vector bundle. For $w \in W$, let $\mf{m}_w\subset C^\infty(W)$ be the vanishing ideal at $w$, and let
\[ \T(E)=\bigsqcup_{w \in W} \T(E)_w, \qquad \T(E)_w=C^\infty(P,\pi_M^*E)^{\f_P}/\mf{m}_w C^\infty(P,\pi_M^*E)^{\f_P}. \]
\begin{proposition}[\cite{Kacimi}] 
\label{p:fibredim}
The dimension of the vector space $\T(E)_w$ does not depend on $w$, and is no greater than $\tn{rk}(E)$.
\end{proposition}
\begin{proof}
If two sections $e_1,e_2 \in C^\infty(P,\pi_M^*E)^{\f_P}$ have the same value at a point $p \in \pi_W^{-1}(w)$, then by $\f_P$-invariance, $e_1-e_2$ vanishes on the fibre $\pi_W^{-1}(w)$, hence lies in $\mf{m}_w\cdot C^\infty(P,\pi_M^*E)^\f$.  Thus $\dim(\T(E)_w)\le \tn{rk}(E)$. 

Let $p_1,p_2 \in P$ project to $w_1,w_2$.  By transverse parallelizability of $\f_P$, there exists a foliate vector field $X \in \x(P,\f_P)$ such that the time-$1$ flow of $X$ maps $p_1$ to $p_2$.  Using a basic connection on $\pi_M^*E$, lift $X$ to a vector field $\ti{X}$ on the total space of $E$, which is foliate by Proposition \ref{prop:foliatelift}.  The time-$1$ flow of $\ti{X}$ preserves $\f_P$, and maps $\I_{w_1}$ to $\I_{w_2}$. The result follows.
\end{proof}

\begin{corollary}[\cite{Kacimi}]
The space of $\f_P$-invariant sections $C^\infty(P,\pi_M^*E)^{\f_P}$ is a finitely generated, projective module over $C^\infty(P)^{\f_P}=C^\infty(W)$, hence determines a smooth $SO_q$-equivariant vector bundle over $W$, whose fibre at $w \in W$ is $\T(E)_w$. Thus $\T(E)$ has the structure of a smooth $SO_q$-equivariant vector bundle over $W$.
\end{corollary}
\begin{proof}
Let $w \in W$, and choose a basis $[e_1],...,[e_r]$ of the finite dimensional vector space $\T(E)_w=C^\infty(P,\pi_M^*E)^{\f_P}/\mf{m}_w C^\infty(P,\pi_M^*E)^{\f_P}$.  The $\f_P$-invariant sections $e_1,...,e_r$ of $\pi_M^*E$ are linearly independent on $\pi_W^{-1}(U)$, where $U$ is a sufficiently small open neighborhood of $w$ in $W$. By Proposition \ref{p:fibredim}, for all $w' \in U$, the images of $e_1,...,e_r$ in the quotient $\T(E)_{w'}=C^\infty(P,\pi_M^*E)^{\f_P}/\I_{w'} C^\infty(P,\pi_M^*E)^{\f_P}$ form a basis. This determines a local vector bundle chart for $\T(E)|_U$.  The Molino manifold $W$ can be covered with such open sets $U$.  Given two such $U_1,U_2$, we obtain two sets $e_{1,i}$, $e_{2,i}$, $i=1,...,r$ of generating sections over $\pi_W^{-1}(U_1\cap U_2)$; since both sets are smooth, basic, and linearly independent over $C^\infty(U_1\cap U_2)$, they must be related by an invertible matrix of basic functions defined on $\pi_W^{-1}(U_1\cap U_2)$, and this determines the transition functions for the vector bundle. 
\end{proof}
}

Morphisms in $\V\B(M,\f\tn{-bas})$ induce $O_q$-equivariant morphisms of vector bundles over $W$, yielding a functor
\[ \T \colon \V\B(M,\f\tn{-bas})\rightarrow \V\B(W,O_q), \]
where $\V\B(W,O_q)$ denotes the category of $O_q$-equivariant vector bundles over $W$. There is likewise a functor
\[ \T\colon \H\V\B(M,\f\tn{-bas})\rightarrow \H\V\B(W,O_q).\]
The functor $\T$ is monoidal and satisfies a compatibility condition with internal Hom's as well: one has $\T(1)=1$ and there are natural transformations
\[ \T (E_1)\otimes \T (E_2)\rightarrow \T(E_1\otimes E_2), \qquad \T(\Hom(E_1,E_2))\rightarrow \Hom(\T (E_1),\T (E_2)).\]
The first map exists because the tensor product of two $\f_P$-invariant sections is $\f_P$-invariant. The second map exists because if an $\f_P$-equivariant morphism is applied to an $\f_P$-invariant section, the result is an $\f_P$-invariant section.
\ignore{
\begin{proposition}
\label{prop:fibres2}
One has $\T(E)_w\simeq C^\infty(P_w,\pi_M^*E|_{P_w})^{\f_P}$, where $P_w=\pi_W^{-1}(w)$.
\end{proposition}
\begin{proof}
Let $E_w^\prime=C^\infty(P_w,\pi_M^*E|_{P_w})^{\f_P}$.  There is an evaluation map $\T(E)_w\rightarrow E_w^\prime$, which is injective since if $e_1,e_2$ have the same restriction to $P_w$, then $e_1-e_2$ vanishes on $P_w$ hence lies in $\mf{m}_w C^\infty(P,\pi_M^*E)^{\f_P}$.  Moreover $E_w^\prime$ is finite dimensional, because the leaves of $\f_P$ are dense in $P_w$, so any smooth $\f_P$-invariant section on $P_w$ is determined by its value at a point. It remains to show that an $\f_P$-invariant section on $P_w$ can be extended to an $\f_P$-invariant section on a small neighborhood of $P_w$ (this suffices, since a local $\f_P$-invariant section can then be extended to all of $P$ using the pullback of a bump function on $W$ supported near $w$). 

We briefly recall the construction of coordinate charts on $W$ (\cite{MolinoBook,MoerdijkMrcun}). Let $p \in P_w$. Choose linearly independent vectors $\ol{v}_1,...,\ol{v}_d$ of $N_p\f_P$ that project to a basis of $N_p\ol{\f}_P$.  Recall $\f_P$ is transversely parallelizable, meaning $N\f_P$ is trivial (as a foliated vector bundle), hence we may extend $\ol{v}_1,...,\ol{v}_d$ to constant $\f_P$-invariant sections of $N\f_P$ that we continue to denote $\ol{v}_1,...,\ol{v}_d$.  Since $C^\infty(P,N\f_P)^{\f_P}=\x(P,\f_P)/\x(\f_P)$ we may choose global foliate vector fields $v_1,...,v_d$ representing $\ol{v}_1,...,\ol{v}_d$.  Note that $v_1,...,v_d$ are projectable to $W$ and their projections restrict to a basis of $T_wW$, and hence also restrict to a basis of $T_{w^\prime}W$ for $w^\prime$ sufficiently near $w$.  The flows of the vector fields $v_i$ may be used to define coordinates on $W$ near $w$:
\[ \Phi(x_1,...,x_d)=\pi_W(e^{x_1v_1}e^{x_2v_2}\cdots e^{x_dv_d}\cdot p).\]
(Note that a different ordering of $v_1,...,v_d$ can lead to a different chart, as the vector fields $v_1,...,v_d$ need not commute.) 

Returning to the proof, suppose $e$ is an $\f_P$-invariant section of $\pi_M^*E|_{P_w}$.  By Proposition \ref{prop:foliatelift}, we may use a basic connection on $\pi_M^*E$ to lift the vector fields $v_1,...,v_d$ to foliate vector fields $\ti{v}_1,...,\ti{v}_d$ on the total space of $\pi_M^*E$. Use the flows of $\ti{v}_1,...,\ti{v}_d$ to extend $e$ to a small neighborhood of $P_w$, by the same formula as above for the chart $\Phi$, except with $p$ replaced by $e$ and $v_i$ by $\ti{v}_i$. The flows map basic sections to basic sections, so the resulting local section of $\pi_M^*E$ is basic.  
\end{proof}
}
\ignore{
\begin{remark}
\label{rem:ranks}
The ranks of $E,\T(E)$ are the same if and only if for every $e \in \pi_M^*E_p$ there is an $\f_P$-invariant section of $\pi_M^*E$ taking the value $e$ at $p$, and in this case $\pi_M^*E\simeq \pi_W^*\T(E)$. One simple case in which this occurs is if $\pi_M^*E$ is a quotient of a trivial foliated vector bundle by a foliated vector subbundle. For an example in which the rank drops see \cite[Example 2.7.3]{Kacimi}. 
\ignore{
a quotient of a trivial foliated vector bundle.

a trivial vector bundle, with the trivial lift of the foliation on $P$, since then constant sections are $\f_P$-invariant. Slightly more general is the case in which $E_P=\pi_M^*E$ is the quotient of a trivial foliated vector bundle $\ti{E}_P$, since then the requisite section is obtained by lifting $e$ to $\ti{e} \in \ti{E}_{P,p}$, and applying the quotient map to the constant section through $\ti{e}$. In this situation there is a splitting $\ti{E}_P\simeq E_P\oplus \ker(\ti{E}_P\rightarrow E_P)$ as foliated vector bundles, obtained by choosing an invariant bundle metric on $\ti{E}_P$ for example. }
\end{remark}
}

\begin{definition}
\label{d:Tsub}
Let $M' \subset M$ be a closed $\f$-invariant submanifold, with restricted foliation $\f'$. Let $W'=\pi_W(\pi_M^{-1}(M'))$ be the corresponding $O_q$-invariant submanifold of $W$. Note that $W'$ is not the Molino manifold of $M'$. Nevertheless, there is a functor
\[ \T'\colon \V\B(M',\f'\tn{-bas})\rightarrow \V\B(W',O_q) \]
with analogous properties, defined as follows. Let $E'$ be a basic vector bundle over $M'$. Using the tubular neighborhood theorem of \cite{SjamaarLinLocalization}, there is an $\f$-invariant tubular neighborhood $\ti{M}$ of $M'$. The pullback $\ti{E}$ of $E'$ to $\ti{M}$ is a basic vector bundle. Applying the functor $\ti{\T}$ of the complete Riemannian foliation $(\ti{M},\ti{\f}=\f|_{\ti{M}})$ to $\ti{E}$ yields an $O_q$-equivariant vector bundle $\ti{\T}(\ti{E})$ over the Molino manifold $\ti{W}$. The latter is identified with an open neighborhood of $W'$ in $W$, and we define $\T'(E')=\ti{\T}(\ti{E})|_{W'}$.
\end{definition}

\subsection{Examples of transfer of vector bundles}\label{s:transferexamples}
Examples of basic vector bundles include: $N\f$, $N\f_P/O_q$, $\Ad(P)$, $N\ol{\f}_P/O_q$. For the case of $N\f$, the transverse Levi-Civita connection provides a basic connection. Applying the functor $\T$ yields the vector bundles $\T(N\f)$, $\T(N\f_P/O_q)$, $W \times \mf{o}_q$, $TW$ respectively. Note that $N\f_P$, $\pi_M^*N\f$, $\pi_M^*\Ad(P)=P\times \mf{o}_q$ are trivial, while $N\ol{\f}_P$ is a quotient of $N\f_P$, hence in particular Remark \ref{rem:ranks} shows that for these 4 examples the rank does not drop.  The foliated vector bundle $\pi_M^*N\f$ has a canonical $O_q$-equivariant trivialization, hence $\T(N\f)\simeq W\times \bR^q$, with the trivial $O_q$-action on the fibres.

Note that there are obvious vector bundle maps
\[ \Ad(P)\rightarrow N\f_P/O_q\rightarrow N\f, \qquad b\colon N\f_P/O_q\rightarrow N\ol{\f}_P/O_q.\]
The first pair constitute the transverse Atiyah sequence for $P$. The transverse Levi-Civita connection determines splitting maps
\begin{equation} 
\label{e:transverseLC}
\theta \colon N\f_P/O_q\rightarrow \Ad(P), \qquad l\colon N\f \rightarrow N\f_P/O_q. 
\end{equation}
We also define
\begin{equation}
\label{e:anchorsplitting}
h=b\circ l \colon N\f\rightarrow N\ol{\f}_P/O_q.
\end{equation}
All of these vector bundle maps are $\f$-equivariant, hence induce morphisms $\T(b),\T(l),\T(\theta),\T(h)$ of $O_q$-equivariant vector bundles over $W$.
\ignore{
\begin{remark}
The morphism $\T(b)\colon \T(N\f_P/O_q)\rightarrow TW$ is the anchor map of a Lie algebroid (cf. \cite[p.13]{SjamaarLinLocalization}). The morphism $\T(h) \colon \T(N\f)\rightarrow TW$ will appear frequently below.
\end{remark}
}

Let $M'\subset M$ be a closed $\f$-saturated submanifold with restricted foliation $\f'$, and let $W'=\pi_W(\pi_M^{-1}(M'))$. Using the transverse metric, there is an orthogonal splitting
\[ N\f|_{M'}=N\f'\oplus NM' \]
where $NM'=TM|_{M'}/TM'$ is the normal bundle to $M'$ in $M$. Applying the functor $\T'$ from Definition \ref{d:Tsub} yields $O_q$-equivariant vector bundles $\T'(N\f|_{M'})\simeq \T'(N\f')\oplus \T'(NM')$ over $W$. Remark \ref{rem:ranks} implies that the ranks of these vector bundles does not decrease. Moreover $\T'(NM')=NW'$. 

\subsection{Transverse actions}\label{s:tact}
Let $(M,\f)$ be a foliation and let $\g$ be a finite dimensional Lie algebra. A \emph{transverse action} of $\g$ on $(M,\f)$ is a Lie algebra homomorphism
\[ a \colon \g \rightarrow \mf{X}(M/\f)=\Gamma(N\f)^\f.\]
Let $(M,\f,g)$ be a complete Riemannian foliation. An action $a\colon \g \rightarrow \mf{X}(M/\f)$ is \emph{isometric} if $a$ takes values in transverse Killing fields $\scr{K}(M/\f,g)$. 
\begin{example}
Let $(M,\f,g)$ be a complete Riemannian foliation with trivial Molino centralizer sheaf $\scr{C}_M$. Then $\mf{c}_M \simeq M\times \mf{c}$ is a trivial bundle of Lie algebras and $(M,\f,g)$ acquires a transverse isometric action of $\mf{c}$. We remark that it is known (cf. \cite{MolinoBook}) that if $\scr{C}_M$ is trivial then $\mf{c}$ must be abelian. 
\end{example}
Note that $\g$ does not act on $M$ in general; instead the Lie algebroid $T\f$ and the transverse action may be amalgamated into a Lie algebroid that acts on $M$.
\begin{definition}[\cite{SjamaarLinLocalization}]
\label{d:tact}
Let $a \colon \g \rightarrow \mf{X}(M/\f)$ be a transverse action. The fiber product
\[ \g\ltimes \f=\g_M\times_{N\f} TM, \qquad \g_M=M\times \g \] 
is a Lie algebroid. The anchor map is projection to the second factor, and the bracket is induced by the Lie brackets on $\g$ and $\mf{X}(M)$.
\end{definition}
This Lie algebroid is integrable \cite{SjamaarLinLocalization}. There is a similar Lie algebroid more generally in case $\g_M$ is replaced with a flat bundle of Lie algebras (such as $\mf{c}_M$) equipped with a bundle map to $N\f$ such that the induced map on local sections restricts to a Lie algebra homomorphism from flat sections to transverse vector fields. 

A transverse isometric action lifts to an action $a^\dagger \colon \g \rightarrow \scr{K}(P/\f_P,g_P)^{O_q}$, which determines a bundle map $a^\dagger\colon \g_M\rightarrow N\f_P/O_q$. The transverse isometric $\g$ action on $P$ projects to an isometric $\g$ action on $W$. Since $W$ is complete, the isometric $\g$ action integrates to an isometric action of the connected simply connected integration $G$ of $\g$. Thus there is homomorphism of Lie groups 
\[ \rho \colon G \rightarrow \tn{Isom}(W).\] 
This homomorphism need not be an embedding. Let $\ol{\rho(G)}$ denote the the closure of the image $\rho(G)$ in $\tn{Isom}(W)$; $\ol{\rho(G)}$ is thus a connected embedded Lie subgroup of $\tn{Isom}(W)$.


There is a category $\V\B_\g(M,\f\tn{-bas})$ of $\g\ltimes \f$-equivariant vector bundles such that $\tn{Fr}(E)$ admits a $\g\ltimes \f$-invariant $\f$-basic connection. The functor $\T$ generalizes to a functor
\[ \T\colon \V\B_\g(M,\f\tn{-bas})\rightarrow \V\B_\g(W,O_q).\]
Let $\rho_E$ denote the induced action of $G$ on $\T(E)$, and let $\ol{\rho_E(G)}$ denote the closure of the image of $G$ in the Lie group consisting of automorphisms of $\T(E)$ covering an isometry of $W$.

Likewise there is a category $\H\V\B_\g(M,\f\tn{-bas})$ of $\g\ltimes \f$-equivariant Hermitian (Euclidean in the real case) vector bundles such that the unitary frame bundle $\tn{Fr}_U(E)$ admits a $\g\ltimes \f$-invariant $\f$-basic connection. In this case the induced map $\ol{\rho_E(G)}\rightarrow \ol{\rho(G)}$ is guaranteed to be surjective, since the map from unitary automorphisms of $\T(E)$ to isometries of $W$ is proper.
\begin{example}
Given a transverse isometric $\g$ action on $M$, $N\f$ acquires a $\g$-action, and the induced $G$-action on the trivial vector bundle $\T(N\f)$ is the trivial lift of the $G$-action on $W$.
\end{example}

\subsection{Weak equivalence of Riemannian foliations}\label{s:weakequiv}
If $\pi \colon B \rightarrow M$ is a smooth map with connected fibers and transverse to a foliation $\f$ of $M$, then $\pi^*\f$ denotes the pullback foliation, with leaves $\pi^{-1}(\L)$ for $\L$ a leaf of $\f$. A \emph{weak equivalence} of foliations $(M_1,\f_1)$, $(M_2,\f_2)$ is a triple $(B,\pi_1,\pi_2)$ where for $i=1,2$, $\pi_i \colon B\rightarrow M_i$ is a surjective submersion with connected fibers, such that $\pi_1^*\f_1=\pi_2^*\f_2$. Weakly equivalent foliations are also called transversely equivalent (cf. \cite[Definition 2.1]{MolinoBook}).

A weak equivalence induces an isomorphism
\begin{equation} 
\label{e:normaliso}
\pi_1^*N\f_1\simeq \pi_2^*N\f_2.
\end{equation}
Hence $(M_1,\f_1)$ admits a transverse metric if and only if $(M_2,\f_2)$ admits a transverse metric. An \emph{isometric weak equivalence of Riemannian foliations} is a weak equivalence such that \eqref{e:normaliso} intertwines the transverse metrics. If $(M_1,\f_1,g_1)$ and $(M_2,\f_2,g_2)$ are complete Riemannian foliations, then we say that an isometric weak equivalence $(B,\pi_1,\pi_2)$ is \emph{complete} if $(B,\f,g)$ is a complete Riemannian foliation, where $\f=\pi_1^*\f_1=\pi_2^*\f_2$ and $g=\pi_1^*g_1=\pi_2^*g_2$.

\ignore{
By a \emph{foliation groupoid} we shall mean a source-connected Lie groupoid $\G\rightrightarrows M$ integrating the Lie algebroid $T\f$. Examples include the \emph{holonomy groupoid} $\tn{Hol}(\f)$ (the minimal integration) and the \emph{monodromy groupoid} $\tn{Mon}(\f)$ (the source-simply connected integration). A foliation groupoid integrating a Riemannian foliation will be referred to as a \emph{Riemannian foliation groupoid}. The holonomy groupoid of a Riemannian foliation is always Hausdorff.

Let $\G_i \rightrightarrows M_i$, $i=1,2$ be Lie groupoids. A \emph{Morita equivalence} $\G_1 \sim \G_2$ is a manifold $B$ equipped with commuting free and proper actions of $\G_1$ (on the left), $\G_2$ (on the right), such that the anchor maps $\pi_i \colon B \rightarrow M_i$, $i=1,2$ for the actions induce diffeomorphisms $\G_1\backslash B\simeq M_2$, $B/\G_2\simeq M_1$. The manifold $B$ is known as a \emph{Morita bibundle} or \emph{Hilsum-Skandalis bibundle}. Lie groupoids $\G_1,\G_2$ are Morita equivalent if and only if they are weakly equivalent, cf. \cite{del2013lie}.

A Morita bibundle $(B,\pi_1,\pi_2)$ for foliation groupoids $(M_i,\f_i,\G_i)$, $i=1,2$ induces an isomorphism $\pi_1^*N\f_1\simeq \pi_2^*N\f_2$. Thus in the case of Riemannian foliations, it makes sense to require that this isomorphism intertwine the pullbacks of the transverse metrics $g_1,g_2$, and this is what we shall mean by a \emph{Morita equivalence of Riemannian foliations}.}

\begin{proposition}
\label{p:moritamolino}
A complete isometric weak equivalence of complete Riemannian foliations induces an $O_q$-equivariant isometry of Molino manifolds.
\end{proposition}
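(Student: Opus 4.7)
The plan is to construct the isometry via an intermediate object, namely the Molino manifold $W_B$ of the complete Riemannian foliation $(B,\f,g)$ furnished by the assumption, where $\f=\pi_1^*\f_1=\pi_2^*\f_2$ and $g=\pi_1^*g_1=\pi_2^*g_2$. I would exhibit $O_q$-equivariant isometries $\phi_i\colon W_B \to W_i$ for $i=1,2$, so that $\phi_2\circ \phi_1^{-1}\colon W_1\to W_2$ is the required $O_q$-equivariant isometry.

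First I would promote the isometric identification $\pi_i^*N\f_i\simeq N\f$ to an $O_q$-equivariant identification of transverse orthonormal frame bundles $P_B\simeq \pi_i^*P_i$. This yields $O_q$-equivariant surjective submersions $\wh{\pi}_i\colon P_B\to P_i$ whose fibres are diffeomorphic to those of $\pi_i$, hence connected. I would then show that the lifted foliations satisfy $\f_{P_B}=\wh{\pi}_i^*\f_{P_i}$. Since $\pi_i$ is a submersion with fibres lying inside leaves of $\f=\pi_i^*\f_i$ and the $q$-dimensional foliation codimensions agree, $\pi_i$ restricts on any local transversal of $\f$ in $B$ to a local diffeomorphism onto a local transversal of $\f_i$ in $M_i$, which is moreover a local isometry because the transverse metrics match. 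The Levi-Civita connections on transversals therefore pull back, so the transverse Levi-Civita connections on $N\f$ and $N\f_i$ are compatible, and consequently the horizontal distributions defining $\f_{P_B}$ and $\f_{P_i}$ agree under $\wh{\pi}_i$.

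Next I would pass to leaf closures. Because $\wh{\pi}_i$ is an open continuous surjection, $\wh{\pi}_i^{-1}(\ol{\L_i})=\ol{\wh{\pi}_i^{-1}(\L_i)}$ for every leaf $\L_i$ of $\f_{P_i}$: the inclusion $\supseteq$ is clear, and for $\subseteq$ one lifts a sequence in $\L_i$ converging to $\wh{\pi}_i(x)$ through the submersion $\wh{\pi}_i$. Since each $\wh{\pi}_i^{-1}(\L_i)$ is a single leaf of $\f_{P_B}$ (by connectedness of the fibres of $\wh{\pi}_i$ and of $\L_i$), this identifies leaf closures in $P_B$ with $\wh{\pi}_i$-preimages of leaf closures in $P_i$. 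Hence $\wh{\pi}_i$ descends to a map $\phi_i\colon W_B\to W_i$ with single-point fibres. A standard diagram chase using the submersions $\wh{\pi}_i$, $\pi_{W_B}$, $\pi_{W_i}$ shows $\phi_i$ is itself a submersion, so being bijective it is a diffeomorphism. Since $\wh{\pi}_i$ acts as a local isometry on local transversals to $\f_{P_B}$ (the normal-bundle direction is pulled back isometrically and the $\mf{o}_q$-direction is intrinsic to the frame bundle), $\phi_i$ is a local isometry, and therefore an isometry; $O_q$-equivariance is inherited from that of $\wh{\pi}_i$.

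The main obstacle is verifying naturality of the transverse Levi-Civita connection under the weak equivalence — everything downstream is a formal consequence, but the transverse connection is only characterised locally via foliation charts, so one must carefully check that the local definitions on $B$ are the $\pi_i$-pullbacks of the local definitions on $M_i$ in a way compatible with the gluing. Once this is established, the remaining steps (leaf-closure comparison and descent to an isometry of Molino manifolds) are routine applications of the submersion property of $\wh{\pi}_i$ and the fact that the Molino fibrations $\pi_{W_i}$, $\pi_{W_B}$ are strictly simple.
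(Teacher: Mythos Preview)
Your approach is correct and parallels the paper's, with the paper making the same reduction more economically: it observes that by symmetry it suffices to treat the case where one of the $\pi_i$ is the identity, i.e.\ a single foliated isometric submersion $\pi\colon(M_1,\f_1,g_1)\to(M_2,\f_2,g_2)$, and then shows $W_1\simeq W_2$. This is exactly your $\phi_i\colon W_B\to W_i$ step done once.

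Two points of comparison. First, where you analyze leaf closures directly and argue $\phi_i$ is a bijective submersion, the paper instead invokes the algebraic characterization $C^\infty(W_i)=C^\infty(P_i)^{\f_{P_i}}$: since $\f_{P_1}\simeq\pi^*\f_{P_2}$ under $P_1\simeq\pi^*P_2$, basic functions on $P_1$ and $P_2$ correspond bijectively, giving $C^\infty(W_1)\simeq C^\infty(W_2)$ and hence an equivariant diffeomorphism. This sidesteps the leaf-closure bookkeeping entirely. Second, a small correction: the lifted foliation $\f_{P_B}$ is \emph{not} defined via the Levi--Civita horizontal distribution; it is the canonical lift of $\f$ to the frame bundle via the Bott (holonomy) action on frames, and its naturality under pullback is immediate. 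The transverse Levi--Civita connection enters only in defining the transverse metric on $(P,\f_P)$ (via the splitting $N\f_P\simeq\bR^q\oplus\mf{o}_q$), which is what you need for the isometry claim. So your ``main obstacle'' is real but narrower than you state: it is needed only to check $\phi_i$ is an isometry, not to identify the foliations.
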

\begin{proof}
By symmetry it suffices to prove the result in the special case where one of the maps $\pi_i$ is the identity map. 
Thus we are reduced to proving that if $\pi\colon (M_1,\f_1,g_1)\rightarrow (M_2,\f_2,g_2)$ is surjective submersion with connected fibers such that $\pi^*\f_2=\f_1$, $\pi^*g_2=g_1$, then $\pi$ induces an equivariant isometry of Molino manifolds $W_1\simeq W_2$. In this case the differential of $\pi$ induces an isometry of foliated vector bundles $N\f_1\simeq \pi^*N\f_2$, and hence an isomorphism of foliated principal bundles $(P_1,\f_{P_1})\simeq (\pi^*P_2,\pi^*\f_{P_2})$. By the definition of pullback foliation, a $\pi^*\f_{P_2}$-invariant smooth function on $\pi^*P_2$ descends to an $\f_{P_2}$-invariant smooth function on $P_2$, and conversely. Thus there are canonical isomorphisms
\[ C^\infty(W_1)\simeq C^\infty(P_1)^{\f_{P_1}}\simeq C^\infty(\pi^*P_2)^{\pi^*\f_{P_2}}\simeq C^\infty(P_2)^{\f_{P_2}}\simeq C^\infty(W_2),\]
inducing an equivariant diffeomorphism of Molino manifolds. Since the transverse metric of $(M_1,\f_1)$ is the pullback of that of $(M_2,\f_2)$, the map of Molino manifolds is an isometry.
\end{proof}
\begin{corollary}
A complete isometric weak equivalence induces a homeomorphism of spaces of leaf closures.
\end{corollary}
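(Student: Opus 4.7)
The plan is to obtain the corollary as an almost immediate consequence of Proposition \ref{p:moritamolino}. First I would invoke that proposition to produce an $O_q$-equivariant isometry $\varphi \colon W_1 \to W_2$ between the Molino manifolds attached to the two Riemannian foliations. Passing to $O_q$-orbit spaces then yields a homeomorphism $W_1/O_q \to W_2/O_q$. Composing with the canonical identifications $M_i/\ol{\f}_i \simeq W_i/O_q$ recorded in the Molino diagram of Section \ref{s:MolinoOverview} delivers a homeomorphism $M_1/\ol{\f}_1 \to M_2/\ol{\f}_2$.

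The only point that deserves verification is that this homeomorphism is the one naturally induced by the weak equivalence, namely the map matching a leaf closure in $M_1$ with the leaf closure in $M_2$ whose preimages under $\pi_1$ and $\pi_2$ coincide in $B$. By the symmetry of the setup, as in the proof of Proposition \ref{p:moritamolino}, one can reduce to the case where $\pi_1 = \id$ and $\pi_2 = \pi \colon M_1 \to M_2$ is a surjective submersion with connected fibers satisfying $\pi^*\f_2 = \f_1$. Each fiber of $\pi$ is then contained in a single leaf of $\f_1$, so $\pi$ descends to a continuous map on leaf spaces. Since $\pi$ is open, one has $\ol{\pi^{-1}(\L)} = \pi^{-1}(\ol{\L})$ for every leaf $\L$ of $\f_2$, so leaf closures correspond bijectively. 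Openness of the canonical quotient maps $M_i \to M_i/\ol{\f}_i$ then upgrades this bijection to a homeomorphism, and by naturality of Molino's construction it agrees with the map obtained from $\varphi$.

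I do not expect any real obstacle: once Proposition \ref{p:moritamolino} is in hand, the corollary is essentially a formal consequence of quotienting by $O_q$ and translating through the Molino diagram. The only mild point is the compatibility check in the second paragraph, but this is a routine application of openness of submersions and of Molino's construction being functorial under complete isometric weak equivalences.
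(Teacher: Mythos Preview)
Your proposal is correct and follows exactly the paper's approach: invoke Proposition~\ref{p:moritamolino} to get an $O_q$-equivariant isometry of Molino manifolds, then pass to $O_q$-orbit spaces and use the identification $M/\ol{\f}\simeq W/O_q$ from the Molino diagram. The paper's proof is a single sentence and does not include your second-paragraph compatibility check, which is correct but not something the authors felt needed spelling out.
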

\begin{proof}
This follows immediately from Proposition \ref{p:moritamolino} since $M/\ol{\f}\simeq W/O_q$.
\end{proof}

\begin{proposition}
\label{p:equivMorita}
A weak equivalence (resp. isometric weak equivalence) of foliations (resp. Riemannian foliations) determines a one-one correspondence between transverse actions (resp. transverse isometric actions).
\end{proposition}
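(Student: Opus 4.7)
The strategy is to factor both legs of the weak equivalence through $B$ and establish a single base case: if $\pi\colon B \to M$ is a surjective submersion with connected fibers and $\f_B := \pi^*\f$, then pullback induces a Lie algebra isomorphism
\[
\pi^\sharp\colon \mf{X}(M/\f) \xrightarrow{\sim} \mf{X}(B/\f_B).
\]
Once this is known, applying it to $\pi_1,\pi_2$ yields an isomorphism $\mf{X}(M_1/\f_1) \simeq \mf{X}(B/\f_B) \simeq \mf{X}(M_2/\f_2)$, and by the definition of a transverse action (a Lie algebra homomorphism from $\g$ into this Lie algebra), this produces the required one-one correspondence.

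Under the canonical identification $N\f_B \simeq \pi^*N\f$ (which follows from $T\f_B = (\d\pi)^{-1}(T\f)$), a section of $N\f_B$ is the same thing as a bundle map $B \to N\f$ covering $\pi$. My first key step is to show that such a section is $\f_B$-invariant if and only if it descends to an $\f$-invariant section of $N\f$ on $M$. In one direction, the $\pi$-vertical vector fields lie in $\mf{X}(\f_B)$, so $\f_B$-invariance forces the section to be constant along fibers of $\pi$; by connectedness of fibers, it descends to a section $\ol{X}$ of $N\f$. To verify $\ol{X}$ is $\f$-invariant, I work in a foliation chart adapted to $\pi$ (writing $B$ locally as a fiber bundle whose base is a foliation chart of $M$), locally lift any $V \in \mf{X}(\f)$ to a $\pi$-related $\ti{V} \in \mf{X}(\f_B)$, and read off from the Bott formula \eqref{e:bottconn} that $\L_V \ol{X}=0$ follows from $\L_{\ti{V}}\pi^\sharp\ol{X}=0$. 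In the other direction, if $\ol{X} \in \mf{X}(M/\f)$ then $\pi^\sharp \ol{X}$ is automatically $\f_B$-invariant: vertical vector fields annihilate it since it is constant along fibers, and any other element of $\mf{X}(\f_B)$ can be checked locally against $\pi$-related lifts of elements of $\mf{X}(\f)$.

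Compatibility with brackets is then routine: $\mf{X}(M,\f)/\mf{X}(\f)$ computes $\mf{X}(M/\f)$, local $\pi$-related lifts of foliate vector fields are foliate, and $\pi$-relatedness is preserved by the Lie bracket, so the brackets on both sides agree after descent. For the Riemannian statement, note that an isometric weak equivalence by definition satisfies $\pi_1^*g_1 = \pi_2^*g_2 =: g_B$, so the isomorphisms $\pi_i^\sharp$ intertwine the fiber metrics on $N\f_i$ with $g_B$; hence a transverse vector field induces a transverse Killing field on one side iff it does on the other, and the bijection restricts to transverse isometric actions.

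The main obstacle is the descent argument in paragraph two: an $\f_B$-invariant section $s$ of $N\f_B$ is invariant under all of $\mf{X}(\f_B)$, but $\mf{X}(\f_B)$ is generally larger than the span of $\pi$-vertical vector fields and $\pi$-related lifts of vector fields in $\mf{X}(\f)$. I expect this to be handled by showing that it suffices to test $\f_B$-invariance on a generating set of $T\f_B$, which is spanned pointwise by vertical vectors together with $\pi$-lifts of leafwise vectors on $M$; combined with the $C^\infty(B)$-linearity of the Bott operator modulo $\mf{X}(\f_B)$, this reduces the verification to the two kinds of vector fields handled above.
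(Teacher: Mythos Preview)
Your proposal is correct and follows essentially the same route as the paper: reduce by symmetry to a single surjective submersion $\pi\colon B\to M$ with connected fibers and $\f_B=\pi^*\f$, then show that pullback gives a Lie algebra isomorphism $\mf{X}(M/\f)\simeq\mf{X}(B/\f_B)$, with the Riemannian statement following since the identification $N\f_B\simeq\pi^*N\f$ intertwines the transverse metrics.

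The one place where the paper's argument is a bit cleaner than yours is the bracket compatibility and, relatedly, the ``obstacle'' you flag. Rather than lifting foliate vector fields through $\pi$ and arguing via the Bott connection on a local generating set of $T\f_B$, the paper simply observes that on a small foliation chart, transverse vector fields are identified with ordinary vector fields on a transversal, and that $\pi$ restricts to a diffeomorphism between sufficiently small transversals in $B$ and $M$; since diffeomorphisms preserve Lie brackets, compatibility is immediate. This local-transversal picture also makes the descent direction and the Killing condition transparent without ever needing to discuss whether $\mf{X}(\f_B)$ is generated by verticals and $\pi$-lifts. Your resolution of that point (using $C^\infty$-linearity of the Bott operator in its first argument together with a local frame of $T\f_B$ consisting of vertical fields and lifts of leafwise fields) is correct, so there is no genuine gap---the paper just avoids the issue altogether by working on transversals.
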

\begin{proof}
As in the proof of Proposition \ref{p:moritamolino}, it suffices to consider the case of a foliated submersion $\pi\colon (M_1,\f_1) \rightarrow (M_2,\f_2)$ with connected fibers. In this case $N\f_1\simeq \pi^*N\f_2$. A transverse action $\g \rightarrow \mf{X}(M_2/\f_2)=\Gamma(N\f_2)^{\f_2}$ pulls back uniquely to a transverse action $\g \rightarrow \Gamma(\pi^*N\f_2)^{\pi^*\f_2}=\Gamma(N\f_1)^{\f_1}=\mf{X}(M_1/\f_1)$. Conversely since the leaves of $\f_1$ contain the fibers of $\pi$, a transverse action $\g \rightarrow \mf{X}(M_1/\f_1)=\Gamma(N\f_1)^{\f_1}$ is projectable, producing a transverse action $\g \rightarrow \Gamma(N\f_2)^{\f_2}=\mf{X}(M_2/\f_2)$. In either direction the correspondence is compatible with brackets. Indeed it suffices to check this property locally. On a local foliation chart transverse vector fields are in one-one correspondence with vector fields on the transversal. But $\pi$ induces a diffeomorphism between sufficiently small transversals in $M_1,M_2$, and diffeomorphisms are compatible with Lie brackets.
\end{proof}
A weak equivalence between foliations that intertwines transverse actions as in the proof of Proposition \ref{p:equivMorita} will be called \emph{equivariant}.

Let $(B,\pi_1,\pi_2)$ be a weak equivalence of foliations $(M_i,\f_i)$, $i=1,2$. Let $E_1,E_2$ be foliated vector bundles (resp. Hermitian vector bundles) over $M_1,M_2$ respectively. We will say that $(B,\pi_1,\pi_2)$ \emph{intertwines} $E_1,E_2$ if $\pi_1^*E_1\simeq \pi_2^*E_2$ are isomorphic as $\pi_1^*\f_1=\pi_2^*\f_2$-foliated vector bundles (resp. foliated Hermitian vector bundles). A choice of isomorphism $\pi_1^*E_1\simeq \pi_2^*E_2$ determines an isomorphism between the corresponding spaces of invariant sections. If one of $E_1,E_2$ is a basic vector bundle, then the other is also basic. Supposing in addition that $(B,\pi_1,\pi_2)$ is a complete isometric weak equivalence of complete Riemannian foliations, a choice of isomorphism $\pi_1^*E_1\simeq \pi_2^*E_2$ determines an equivariant isomorphism $\T_1(E_1)\simeq \T_2(E_2)$ covering the isometry $W_1\simeq W_2$. In the $\g$-equivariant case, we require that the isomorphism $\pi_1^*E_1\simeq \pi_2^*E_2$ intertwine transverse $\g$-actions.

\begin{remark}
Riemannian foliations $(M_1,\f_1,g_1)$, $(M_2,\f_2,g_2)$ are weakly equivalent if and only if the holonomy groupoids $\tn{Hol}(M_1,\f_1)$, $\tn{Hol}(M_2,\f_2)$ are Morita equivalent in the sense of Lie groupoids. A Morita equivalence bibundle $M_1\leftarrow B \rightarrow M_2$ is a weak equivalence in the above sense, and for the other direction see for example \cite[Theorem 4.6.3]{del2013lie}. Note also that all manifolds that arise in this discussion are automatically Hausdorff, since the holonomy groupoid of any Riemannian foliation is automatically Hausdorff (cf. \cite[Examples 5.8]{MoerdijkMrcun}).
\end{remark}

\begin{remark}
As a warning, a weak equivalence $(B,\pi_1,\pi_2)$ does not determine an equivalence of categories of foliated vector bundles on $(M_1,\f_1)$, $(M_2,\f_2)$. The reason is that foliated vector bundles are not required to be representations of the holonomy groupoid. If we fix source connected groupoids $\G_1,\G_2$ integrating $T\f_1,T\f_2$ respectively, and if the weak equivalence $B$ is a Morita equivalence bibundle for $\G_1,\G_2$, then $B$ induces an equivalence between the categories of $\G_1$, $\G_2$ equivariant vector bundles over $M_1$, $M_2$ respectively.
\end{remark}

\begin{remark}
Weak equivalences $(B,\pi_1,\pi_2)$, $(B',\pi_1',\pi_2')$ are themselves said to be equivalent if there is a weak equivalence $(B'',\pi_1'',\pi_2'')$ between $B, B'$ equipped with the pullback foliations, and such that the obvious diagram involving $\pi_i,\pi_i',\pi_i''$ commutes. There are analogous definitions for weak equivalences that are isometric, complete, equivariant, and so on. Equivalent complete isometric weak equivalences of complete Riemannian foliations induce the same equivariant isometry of Molino manifolds.
\end{remark}

\section{Transverse index theory}\label{s:transverseindex}
In this section we discuss the index theory of transversely elliptic operators on complete Riemannian foliations. We combine the theory of operators elliptic in directions transverse to a foliation (cf. \cite{Kacimi}) with elements of the theory of operators in directions transverse to the orbits of a compact group action (cf. \cite{AtiyahTransEll}) to study $\g\ltimes \f$-transversely elliptic operators, for $\g$ acting transversely and isometrically. Throughout this section $(M,\f,g)$ shall be a complete Riemannian foliation of codimension $q$ with Molino manifold $W$.

\subsection{Transverse differential operators}\label{s:transversediff}
We begin with a brief overview of an approach to transverse differential operators due to El Kacimi-Alaoui \cite{Kacimi}, to which we refer the reader for greater detail. Let $E$ be a basic complex vector bundle. Let $C^\infty_{E,\f}$ denote the sheaf $U\mapsto C^\infty(U,E)^{\f_U}$, where $\f_U$ is the induced foliation (with connected leaves) of $U$. For $r \in \bZ_{\ge 0}$ and $m \in M$ define
\[ \J^r(E/\f)_m=C^\infty_{E,\f,m}/\mf{m}_m^{r+1}C^\infty_{E,\f,m} \]
where $C^\infty_{E,\f,m}$ is the stalk at $m$ and $\mf{m}_m \subset C^\infty_{\f,m}$ is the vanishing ideal of $m$. $\J^r(E/\f)_m$ may be identified with the space of $r$-jets at $m$ of sections of $E|_T$ where $T$ is a transversal through $m$. The fibres $\J^r(E/\f)_m$, $m \in M$ fit together into a smooth foliated fibre bundle $\J^r(E/\f)\rightarrow M$, and there is a map of sheaves $\J^r \colon C^\infty_{E,\f}\rightarrow \J^r(E/\f)$.

\begin{definition}
Let $E,F$ be basic vector bundles and $r \in \bZ_{\ge 0}$. An $r$-th order transverse differential operator is a map of sheaves of vector spaces $D\colon C^\infty_{E,\f}\rightarrow C^\infty_{F,\f}$ that factors through a smooth foliated bundle morphism $\J^r(E/\f)\rightarrow F$.
\end{definition}

A transverse differential operator $D$ induces an ordinary $O_q$-equivariant differential operator $\T(D)\colon C^\infty(W,\T(E))\rightarrow C^\infty(W,\T(F))$ as follows. Let $T_m$ be a small transversal through $m \in M$. Given $p \in \pi_M^{-1}(m)$, lift $T_m$ to a diffeomorphic submanifold $T_p \subset P$ through $p$ using parallel translation along radial geodesics in $T_m$ (making use of the transverse Levi-Civita connection). There is an induced map $\J^r(\pi_M^*E/\f_P)_p\rightarrow \J^r(E/\f)_m$ obtained by restricting jets to $T_p \simeq T_m$. This map determines a lifted $O_q$-equivariant transverse differential operator
\[ l(D)\colon C^\infty_{\pi_M^*E,\f_P}\rightarrow C^\infty_{\pi_M^*F,\f_P}. \]
Taking global sections we obtain the differential operator
\[ \T(D)\colon C^\infty(W,\T(E))\rightarrow C^\infty(W,\T(F)).\]
Notice also that by construction the restriction of $\T(D)$ to $O_q$-invariant sections \emph{coincides} with $D$ under the identifications $C^\infty(M,E)^{\f}\simeq C^\infty(W,\T(E))^{O_q}$, $C^\infty(M,F)^{\f}\simeq C^\infty(W,\T(F))^{O_q}$.

In case $D$ is of order $0$ (a foliated bundle morphism), this coincides with the previous definition of $\T(D)$. Transverse vector fields $X \in \x(M/\f)$ are order $1$ examples (note however that the lift $l(X)$ used here is different from the lift used in Section \ref{s:tact} to define the $\g$-action on $W$, in particular here there is no requirement that $X$ be transverse Killing). A more interesting order $1$ example is a \emph{transverse Dirac operator}. For the next definition assume $q$ is even and that $(M,\f)$ is transversely oriented with $P$ denoting the transverse oriented orthonormal frame bundle.

\begin{definition}
\label{d:basicspinc}
A \emph{transverse basic Spin$^c$ structure} on $(M,\f,g)$ is a basic principal $\tn{Spin}^c_q$-bundle $(Q,\f_Q)$ together with a foliated $\tn{Spin}^c_q$-equivariant bundle map $(Q,\f_Q)\rightarrow (P,\f_P)$, and that furthermore admits a \emph{basic Clifford connection}, that is, a basic connection $\theta$ such that the composition of $\theta$ with the projection $\mf{spin}^c_q\rightarrow \mf{so}_q$ equals the pullback of the transverse Levi-Civita connection.
\end{definition}
Given a transverse basic Spin$^c$ structure $(Q,\f_Q)$, the corresponding spinor bundle is the basic $\bZ_2$-graded Hermitian vector bundle
\[ S=Q\times_{\tn{Spin}^c_q}\Delta \]
where $\Delta$ is the unique irreducible representation of $\bC l(\bR^q)$. The Clifford action is denoted $c\colon \bC l(N^*\f)\xrightarrow{\sim}\End(S)$. Choose a basic Clifford connection on $Q$ and let $\nabla$ be the corresponding basic connection on $S$. For any $\f$-invariant section $s \in C^\infty(M,S)^\f$, $\nabla_X s=0$ for $X \in \mf{X}(\f)$ and hence $\nabla s$ may be regarded as a section of $N^*\f\otimes S$.
\begin{definition}
\label{d:transverseDirac}
The \emph{transverse Dirac operator} $D$ associated to $(S,\nabla)$ is the composition
\[ C^\infty(M,S)^\f\xrightarrow{\nabla}C^\infty(M,N^*\f\otimes S)^\f \xrightarrow{c} C^\infty(M,S)^\f.\]
\end{definition}
\begin{remark}
One can easily define Dirac operators on more general Clifford modules, see for example \cite{BruningRiemFoliations} and references therein. The Clifford connection condition is unimportant for our purposes.
\end{remark}

\subsection{Transverse symbols and transverse ellipticity}
\begin{definition}
A transverse differential operator $D\colon C^\infty_{E,\f}\rightarrow C^\infty_{F,\f}$ of order $r$ has a \emph{transverse principal symbol} $\sigma_D \in C^\infty(M,\Sym^r(N\f)\otimes \Hom(E,F))^\f$ (note that $\Sym^r(N\f)_m$ may be identified with homogeneous polynomials of degree $r$ on the fibre $N^*\f$). 
\end{definition}

\begin{definition}
Let $E,F$ be basic vector bundles. A (polynomial) \emph{transverse symbol} is a section $\sigma \in C^\infty(M,\Sym^{\le r}(N\f)\otimes \Hom(E,F))^\f$ for some $r\ge 0$. The \emph{support} $\supp(\sigma)$ of $\sigma$ is the subset of $N^*\f$ where $\sigma$ fails to be invertible. $\sigma$ is $\f$-\emph{transversely elliptic} if $\supp(\sigma)/\ol{\f}_{N^*\f}$ is a compact subset of $N^*\f/\ol{\f}_{N^*\f}$. A transverse differential operator $D\colon C^\infty_{E,\f}\rightarrow C^\infty_{F,\f}$ of order $r$ is $\f$-\emph{transversely elliptic} if the principal symbol $\sigma_D$ is an $\f$-transversely elliptic symbol.
\end{definition}
Assuming $M/\ol{\f}$ is compact, then an example of an $\f$-transversely elliptic operator is a transverse Dirac operator, for which one has $\sigma_D(\xi)=c(\xi) \in \End(S)$, Clifford multiplication by $\xi \in N^*\f$. Examples of $\f$-transversely elliptic operators for Riemannian foliations can be found in \cite{Kacimi,glazebrook1991transversal,
habib2009brief,park1996basic,BruningRiemFoliations,GorokhovskyLott}.

Let $a \colon \g \rightarrow \scr{K}(M/\f,g)$ be a transverse isometric action. Recall that there is a Lie algebroid $\g\ltimes \f$ describing the infinitesimal actions of $\f$, $\g$. Let $N_\g^*\f=T^*_{\g\ltimes \f}M \subset N^*\f$ denote the conormal space to the $\g\ltimes \f$-orbits (not a vector bundle in general). Note that $N_\g^*\f$ is saturated for the singular foliation $\ol{\f}_{N^*\f}$; we write $\ol{\f}_{N^*_\g\f}$ for the induced equivalence relation on the topological space $N^*_\g\f$.
\begin{definition}
Let $E,F$ be $\g\ltimes \f$-equivariant basic vector bundles, and let $\sigma\in C^\infty(M,\Sym^{\le r}(N\f)\otimes \Hom(E,F))^{\g\ltimes\f}$ be a ($\g$-invariant) transverse symbol. $\sigma$ is $\g\ltimes \f$-\emph{transversely elliptic} if $(\supp(\sigma)\cap N_\g^*\f)/\ol{\f}_{N_\g^*\f}$ is a compact subset of $N_\g^*\f/\ol{\f}_{N_\g^*\f}$. 
\end{definition}
Given a transverse symbol $\sigma\in C^\infty(M,\Sym^{\le r}(N\f)\otimes \Hom(E,F))^\f$, applying the functor $\T$ results in a section $\T(\sigma)$ of $\Sym^{\le r}(\T(N\f))\otimes \Hom(\T(E),\T(F)))$. Analogous to $\sigma$, $\T(\sigma)$ may be regarded as a $\Hom(\T(E),\T(F))$-valued function on $\T(N\f)^*=\T(N^*\f)$, fiberwise polynomial of degree at most $r$, and in particular $\supp(\T(\sigma)) \subset \T(N^*\f)$ is defined. 

Let $\pi_{M*} \colon \pi_M^*N^*\f\rightarrow N^*\f$, $\pi_{W*} \colon \pi_W^*\T(N^*\f) \rightarrow \T(N^*\f)$ be the canonical maps. Recall $\pi_W^*\T(N^*\f)=\pi_M^*N^*\f$ (see Remark \ref{rem:ranks}). It is convenient to extend our notation $\T(-)$ to invariant closed subsets of $N^*\f$ as follows. 
\begin{definition}
\label{d:transfersubset}
Let $V\subset N^*\f$ be an $\f_{N^*\f}$-invariant closed subset. Define the $O_q$-invariant closed subset
\[ \T(V)=\pi_{W*}\big(\pi_{M*}^{-1}V\big)\subset \T(N^*\f).\]
\end{definition}
\begin{proposition}
\label{p:transfersubsets}
If $\sigma$ is a symbol, then 
\[ \T(\supp(\sigma))=\supp(\T(\sigma)).\]
Let $V,V' \subset N^*\f$ be closed and $\f_{N^*\f}$-invariant, then
\[ \T(V\cap V')=\T(V)\cap \T(V'), \quad \text{and} \quad V/\ol{\f}_{N^*\f}\simeq \T(V)/O_q.\]
\end{proposition}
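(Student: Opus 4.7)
The plan is to deduce all three assertions from a single commutative picture. By Remark~\ref{rem:ranks} applied to the basic bundle $N\f$ (whose pullback to $P$ is canonically trivial), the total spaces $\pi_M^*N^*\f$ and $\pi_W^*\T(N^*\f)$ are canonically identified as vector bundles over $P$. This common space carries two commuting actions: the principal action of the compact group $O_q$ (via the $P$-factor), with orbit map $\pi_{M*}$ onto $N^*\f$, and the lifted action of $\ol{\f}_P$, with quotient map $\pi_{W*}$ onto $\T(N^*\f)$. Both $\pi_{M*}$ and $\pi_{W*}$ are surjective, and the first is a principal $O_q$-bundle (in particular proper), while the second is the quotient by a leaf-closure equivalence. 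I will use this picture for all three parts.

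For the first equality, I would observe that $\T(\sigma)$ is constructed so that $\pi_W^*\T(\sigma)$ corresponds to the $\f_P$-invariant lift $l(\sigma)=\pi_M^*\sigma$ under the above identification. Since invertibility is pointwise, this yields
\[ \pi_{M*}^{-1}(\supp\sigma)=\supp(l(\sigma))=\pi_{W*}^{-1}(\supp\T(\sigma)).\]
Both sides are $\ol{\f}_P$-saturated and $O_q$-saturated; applying the surjection $\pi_{W*}$ gives $\T(\supp\sigma)=\supp\T(\sigma)$.

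For the intersection formula, preimages always commute with intersections, so $\pi_{M*}^{-1}(V\cap V')=\pi_{M*}^{-1}V\cap\pi_{M*}^{-1}V'$. Since $V$ and $V'$ are closed and $\f_{N^*\f}$-invariant, they are $\ol{\f}_{N^*\f}$-invariant and their $\pi_{M*}$-preimages are $\ol{\f}_P$-saturated. For any surjection, the image of an intersection of saturated sets equals the intersection of the images, yielding $\T(V\cap V')=\T(V)\cap\T(V')$.

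For the homeomorphism $V/\ol{\f}_{N^*\f}\simeq\T(V)/O_q$, I would interpret both sides as the double quotient of $\pi_{M*}^{-1}(V)$ by the two commuting actions. The natural map $\Psi\colon V\to\T(V)/O_q$ sending $\xi\mapsto[\pi_{W*}(\tilde\xi)]_{O_q}$ is well-defined (different lifts $\tilde\xi$ differ by $O_q$ and $\pi_{W*}$ is $O_q$-equivariant) and surjective by the definition of $\T(V)$. The induced map $\ol{\Psi}\colon V/\ol{\f}_{N^*\f}\to\T(V)/O_q$ is then a continuous open map between quotients of a common space, and so will be a homeomorphism once bijectivity on classes is established. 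The main obstacle is this bijectivity statement, which amounts to: $\ol{\f}_P$-orbit closures in $\pi_M^*N^*\f$ descend under the compact principal projection $\pi_{M*}$ to $\ol{\f}_{N^*\f}$-orbit closures in $N^*\f$. This is a mild extension of Molino's identification $M/\ol{\f}\simeq W/O_q$ to the foliated vector bundle $N^*\f$, and relies on properness of $\pi_{M*}$ (so it commutes with closures) together with the fact that the lifted $\f_P$-leaves in $\pi_M^*N^*\f$ project onto the $\f_{N^*\f}$-leaves in $N^*\f$.
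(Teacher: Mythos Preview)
Your proof is correct. For the first two assertions it coincides with the paper's argument: the paper dismisses the first as ``immediate from the definitions'' and proves the second by exactly your observation that $\pi_{M*}^{-1}(V)$ and $\pi_{M*}^{-1}(V')$ are each unions of $\pi_{W*}$-fibres.

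For the third assertion the paper takes a more explicit route. Rather than invoking an abstract double quotient, it uses the canonical $O_q$-equivariant trivialization $\pi_M^*N^*\f\simeq P\times\bR^q$ (under which $\pi_{W*}=\pi_W\times\id_{\bR^q}$) to compute directly: for a leaf $\L_{N^*\f}$ through $v\in N^*_m\f$ and any $p\in\pi_M^{-1}(m)$ one has $\pi_{M*}^{-1}(\L_{N^*\f})=O_q\cdot(\L_{P,p}\times\{p(v)\})$, and taking closures and applying $\pi_{W*}$ yields the single $O_q$-orbit $O_q\cdot(\pi_W(p),p(v))$, establishing the bijection leaf-closure-by-leaf-closure. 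Your double-quotient argument is equally valid and more conceptual: the ingredients you name (properness of $\pi_{M*}$, hence $\pi_{M*}(\overline{A})=\overline{\pi_{M*}(A)}$, together with the fact that lifted $\f_P$-leaves surject onto $\f_{N^*\f}$-leaves) do establish that $\pi_{M*}$ carries lifted leaf closures onto $\f_{N^*\f}$-leaf closures, from which both well-definedness and injectivity of $\ol\Psi$ follow. Once bijectivity is known, the homeomorphism is automatic since both sides are then the quotient of $\pi_{M*}^{-1}(V)$ by the \emph{same} combined equivalence relation, so your separate openness claim is not actually needed. The paper's explicit calculation has the advantage of making the structure of the lifted leaf closures visible (e.g.\ that the $\bR^q$-coordinate is constant along them), while your argument isolates the general mechanism.
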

\begin{proof}
The first claim is immediate from the definitions. The second claim follows from the definition and the fact that $\pi_{M*}^{-1}(V),\pi_{M*}^{-1}(V')$ are separately unions of fibers of $\pi_{W*}$. 

For the last claim, we begin by recalling that $\pi_W^*\T(N^*\f)=\pi_M^*N^*\f\simeq P\times \bR^q$ as foliated vector bundles, thus $\pi_{W*}=\pi_W\times \id_{\bR^q}$. Let $\L_{N^*\f}\subset N^*\f$ be a leaf of $\f_{N^*\f}$ covering a leaf $\L\subset M$ of $\f$, and let $v \in \L_{N^*\f}\cap N^*_m\f$. Choose any $p \in \pi_M^{-1}(m)$, let $\L_{P,p}$ be the lift of $\L$ passing through $p$, and let $p(v)\in \bR^q$ denote the components of $v$ in the frame $p$. It follows from the definitions that
\[ \pi_{M*}^{-1}(\L_{N^*\f})=O_q\cdot (\L_{P,p}\times \{p(v)\}),\]
where $O_q$ acts diagonally on $P\times \bR^q$. Taking the closure of both sides,
\[ \pi_{M*}^{-1}(\ol{\L}_{N^*\f})=O_q\cdot (\ol{\L}_{P,p}\times \{p(v)\}).\]
Applying $\pi_{W*}=\pi_W\times \id_{\bR^q}$ yields
\[ \T(\ol{\L}_{N^*\f})=(\pi_W\times \id_{\bR^q})\big(O_q\cdot (\ol{\L}_{P,p}\times \{p(v)\})\big)=O_q\cdot (\pi_W(p)\times \{p(v)\}).\] 
This sets up a one-one correspondence between $\f_{N^*\f}$ leaf closures and $O_q$-orbits in $\T(N^*\f)=W\times \bR^q$, extending the one-one correspondence between $\f$-leaf closures and $O_q$ orbits in $W=W\times \{0\}$. The last claim follows.
\end{proof}
We now return to discussing a transverse symbol $\sigma\in C^\infty(M,\Sym^{\le r}(N\f)\otimes \Hom(E,F))^\f$. In order to obtain a symbol on $W$ from $\T(\sigma)$, we use the morphism $h \colon N\f \rightarrow N\ol{\f}_P/O_q$, which we recall (see equation \eqref{e:anchorsplitting}) is the composition of horizontal lift $N\f\rightarrow N\f_P/O_q$ with the quotient map $N\f_P/O_q\rightarrow N\ol{\f}_P/O_q$. The map $\T(h)$ is a bundle morphism from $\T(N\f)$ to $\T(N\ol{\f}_P/O_q)=TW$. Therefore composing $\T(\sigma)$ with $\Sym^{\le r}(\T(h))\colon \Sym^{\le r}(\T(N\f))\rightarrow \Sym^{\le r}(TW)$ yields a symbol on $W$:
\[ \T_h(\sigma):=\Sym^r(\T(h))\T(\sigma) \in C^\infty(W,\Sym^{\le r}(TW)\otimes \Hom(\T(E),\T(F))).\]
In case $\sigma=\sigma_D$ is the principal symbol of a transverse differential operator, $\T_h(\sigma_D)=\sigma_{\T(D)}$ is the principal symbol of the operator $\T(D)$ on the Molino manifold.

\begin{theorem}
\label{t:Ttransell}
Let $E,F$ be $\g\ltimes \f$-equivariant basic vector bundles and let $\sigma \in C^\infty(M,\Sym^{\le r}(N\f)\otimes \Hom(E,F))^{\g\ltimes \f}$ be a $\g\ltimes \f$-transversely elliptic symbol. Then $\supp(\T_h(\sigma))\cap T_{\g\times \mf{o}_q}^*W$ is a closed subset of the compact set $\T(\supp(\sigma)\cap N_\g^*\f)$.
\end{theorem}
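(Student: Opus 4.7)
The plan is to establish two claims, from which the theorem follows: (1) $B := \T(\supp(\sigma) \cap N_\g^*\f)$ is compact; (2) the dual bundle map $\T(h)^*\colon T^*W \to \T(N^*\f)$ sends $A := \supp(\T_h(\sigma)) \cap T^*_{\g \times \mf{o}_q} W$ into $B$, and $A$ is closed in $T^*W$.

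For (1), I invoke Proposition \ref{p:transfersubsets}. Since $N_\g^*\f$ is $\ol{\f}_{N^*\f}$-saturated, the quotient $(\supp(\sigma) \cap N_\g^*\f)/\ol{\f}_{N^*\f}$ coincides with $(\supp(\sigma) \cap N_\g^*\f)/\ol{\f}_{N_\g^*\f}$, which is compact by the $\g\ltimes\f$-transverse ellipticity of $\sigma$. The final identification in Proposition \ref{p:transfersubsets} identifies this quotient with $B/O_q$. Since $O_q$ is compact, the quotient map $B\to B/O_q$ is proper, so $B$ is compact.

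For (2), the factorization $\T_h(\sigma)(\xi) = \T(\sigma)(\T(h)^*\xi)$ together with $\supp(\T(\sigma)) = \T(\supp(\sigma))$ (Proposition \ref{p:transfersubsets}) gives $\supp(\T_h(\sigma)) = (\T(h)^*)^{-1}(\T(\supp(\sigma)))$; in particular $\supp(\T_h(\sigma))$ is closed and $\T(h)^*$ maps it into $\T(\supp(\sigma))$. The remaining geometric content is the inclusion $\T(h)^*(T^*_{\g \times \mf{o}_q}W) \subset \T(N_\g^*\f)$, which at the bundle level is equivalent to showing that $\T(h)\circ\T(a)$ carries $\T(\g_M)$ into the $(\g \oplus \mf{o}_q)$-orbit directions in $TW$. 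For this one compares two lifts of the transverse $\g$-action to $P$: the lift $a^\dagger\colon \g_M \to N\f_P/O_q$, which induces the $\g$-action on $W$ after applying $b\colon N\f_P/O_q \to TW/O_q$, and the horizontal lift $l\circ a\colon \g_M \to N\f_P/O_q$ furnished by the transverse Levi-Civita connection \eqref{e:transverseLC}, which appears in $h\circ a = b\circ l\circ a$. The discrepancy $a^\dagger - l\circ a$ takes values in $\Ad(P)\subset N\f_P/O_q$, and $b(\Ad(P))$ consists precisely of the $\mf{o}_q$-orbit directions in $TW$. Hence the image of $h\circ a$ lies in the sum of the $\g$- and $\mf{o}_q$-orbit directions on $W$, giving the required inclusion after applying $\T$. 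Combining with the intersection identity $\T(\supp(\sigma)) \cap \T(N_\g^*\f) = B$ from Proposition \ref{p:transfersubsets} yields $\T(h)^*(A) \subset B$; since $A$ is closed in $T^*W$ (an intersection of two closed sets) and $\T(h)^*$ is continuous, this exhibits $A$ as a closed subset of the compact set $B$.

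The hard part will be the geometric inclusion $\T(h)^*(T^*_{\g\times\mf{o}_q}W)\subset \T(N_\g^*\f)$, whose justification hinges on the careful comparison of the lifts $a^\dagger$ and $l\circ a$ modulo the infinitesimal $\mf{o}_q$-action via the Molino transfer. The remaining steps are routine applications of Proposition \ref{p:transfersubsets} and standard properness facts for compact group actions.
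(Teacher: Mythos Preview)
Your argument has the right geometric content but contains a genuine gap in the final step. You establish that $\T(h)^*(A)\subset B$ and that $A$ is closed in $T^*W$, and then assert that ``since $A$ is closed in $T^*W$ and $\T(h)^*$ is continuous, this exhibits $A$ as a closed subset of the compact set $B$.'' That inference is invalid: continuity together with $\T(h)^*(A)\subset B$ neither identifies $A$ with a subset of $B$ nor shows $A$ is compact. (Think of a fiberwise linear map with nontrivial kernel: the preimage of a compact set need not be compact.) What is missing is the \emph{injectivity} of $\T(h)^*$ on $T^*_{\g\times\mf{o}_q}W$, which turns $\T(h)^*$ into a closed embedding on that locus and makes the identification of $A$ with a closed subset of $B$ legitimate.

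The paper supplies precisely this missing step: it observes that $\T(h)=\T(b)\circ\T(l)$ is transverse to the $\mf{o}_q$-orbit directions because the horizontal subbundle $l(N\f)\subset N\f_P/O_q$ is complementary to the vertical bundle $\Ad(P)$; hence $\T(h)^*$ is already injective on $T^*_{\mf{o}_q}W$, and a fortiori on $T^*_{\g\times\mf{o}_q}W$. Your comparison of the two lifts $a^\dagger$ and $l\circ a$ (with discrepancy in $\Ad(P)$) is exactly the paper's argument for the inclusion $\T(h)^*(T^*_{\g\times\mf{o}_q}W)\subset\T(N_\g^*\f)$, so that part is fine. You just need to add the transversality/injectivity observation before drawing the final conclusion.
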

\begin{proof}
Let $\xi \in T^*W$. By definition $\T(\sigma)$ is a section of $\Sym^{\le r}(\T(N\f))\otimes \Hom(\T(E),\T(F))$ and $\T_h(\sigma)$ is given by
\[ \T_h(\sigma)(\xi)=\T(\sigma)(\T(h)^*\xi),\]
where $\T(h)^*\xi=\xi\circ \T(h)\in \T(N\f)^*$, and $\T(\sigma)$ is to be regarded as a function on the total space of $\T(N\f)^*$ (polynomial along the fibres) with values in $\Hom(\T(E),\T(F))$. By assumption $(\supp(\sigma)\cap N_\g^*\f)/\ol{\f}_{N^*\f}$ is compact. By Proposition \ref{p:transfersubsets}, 
\[(\supp(\T(\sigma))\cap \T(N_\g^*\f))/O_q=\T(\supp(\sigma)\cap N_\g^*\f)/O_q \simeq (\supp(\sigma)\cap N_\g^*\f)/\ol{\f}_{N^*\f} \] 
is compact, and hence $\T(\supp(\sigma)\cap N_\g^*\f)=\supp(\T(\sigma))\cap \T(N_\g^*\f)$ is itself compact.

Notice that $\T(h)=\T(b)\circ \T(l)\colon \T(N\f)\rightarrow TW$ is transverse to the $\mf{o}_q$ orbits, because $l(N\f)$ is complementary to the vertical bundle $\Ad(P)=(P\times \mf{o}_q)/O_q \subset N\f_P/O_q$. It follows that the restriction of $\T(h)^*$ to $T^*_{\mf{o}_q}W$ is injective. Restricting further to $T^*_{\g\times \mf{o}_q}W$, we conclude that the map
\begin{equation} 
\label{e:Th}
\T(h)^* \colon T^*_{\g\times \mf{o}_q}W\rightarrow \T(N\f)^* 
\end{equation}
is injective.

Let $\xi \in T^*_{\g\times \mf{o}_q}W$ and $X \in \g$. We have
\[ (\T(h)^*\xi)(\T(a(X)))=\xi(\T(b(l(a(X)))))=\xi(\T(b(a^\dagger(X)-\theta(a(X))))) \]
and both terms vanish separately since $\T(b(a^\dagger(X)))$ lies in the $\g$-orbit directions while $\T(b(\theta(a(X))))$ lies in the $\mf{o}_q$-orbit directions. This proves that the image of the map \eqref{e:Th} is contained in $\T(N_\g^*\f)$, and therefore \eqref{e:Th} is an injective map with image contained in $\T(N_\g^*\f)$. This map identifies $\supp(\T_h(\sigma))\cap T_{\g \times \mf{o}_q}^*W$ with a closed subset of the compact set $\supp(\T(\sigma))\cap \T(N_\g^*\f)=\T(\supp(\sigma)\cap N_\g^*\f)$.
\end{proof}

\begin{corollary}
Let $E,F$ be $\g\ltimes \f$-equivariant basic Hermitian vector bundles and let $\sigma \in C^\infty(M,\Sym^{\le r}(N\f)\otimes \Hom(E,F))^\f$ be a $\g\ltimes \f$-transversely elliptic symbol. Assume that the closure $\ol{\rho(G)}$ of the image of $G$ in the isometry group of $W$ is compact. Then $\T_h(\sigma) \in C^\infty(W,\Sym^{\le r}(TW)\otimes \Hom(\T(E),\T(F)))$ is a $\ol{\rho_{E\oplus F}(G)}\times O_q$-transversely elliptic symbol. If $\sigma$ is $\f$-transversely elliptic, then $\T_h(\sigma)$ is $O_q$-transversely elliptic (and $\ol{\rho_{E\oplus F}(G)}$-equivariant).
\end{corollary}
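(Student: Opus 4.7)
The plan is to derive this corollary directly from Theorem \ref{t:Ttransell} together with the elementary observation that enlarging a Lie algebra acting on $W$ can only \emph{shrink} the conormal bundle to its orbits. The compactness of $\ol{\rho(G)}$ ensures, via the properness of the map $\ol{\rho_{E\oplus F}(G)}\to \ol{\rho(G)}$ noted in Section \ref{s:tact}, that $\ol{\rho_{E\oplus F}(G)}$ is itself compact, so that $K:=\ol{\rho_{E\oplus F}(G)}\times O_q$ is a compact Lie group acting on $\T(E)\oplus \T(F)\to W$ covering an isometric action of $\ol{\rho(G)}\times O_q$ on $W$. Thus the notion of $K$-transverse ellipticity for $\T_h(\sigma)$ makes sense, and the $K$-equivariance of $\T_h(\sigma)$ is immediate from the $\g\ltimes \f$-equivariance of $\sigma$ combined with the continuity of the action on the space of symbols.

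For the first assertion, note that since the Lie algebra $\mf{k}$ of $\ol{\rho(G)}$ contains the image $\rho(\g)$, the $\ol{\rho(G)}\times O_q$-orbit directions on $W$ contain the $\g\times \mf{o}_q$-orbit directions, and hence
\[ T^*_K W = T^*_{\ol{\rho(G)}\times O_q}W \subseteq T^*_{\g\times \mf{o}_q}W. \]
By Theorem \ref{t:Ttransell}, $\supp(\T_h(\sigma))\cap T^*_{\g\times \mf{o}_q}W$ is closed in the compact set $\T(\supp(\sigma)\cap N^*_\g\f)$, hence compact. Its closed subset $\supp(\T_h(\sigma))\cap T^*_K W$ is therefore also compact, which is exactly $K$-transverse ellipticity of $\T_h(\sigma)$.

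For the second assertion, take $\g=0$ throughout the proof of Theorem \ref{t:Ttransell}. Then $N_\g^*\f=N^*\f$ and the injective, $O_q$-equivariant map $\T(h)^*\colon T^*_{\mf{o}_q}W\to \T(N^*\f)$ identifies $\supp(\T_h(\sigma))\cap T^*_{\mf{o}_q}W$ with a closed $O_q$-invariant subset of $\supp(\T(\sigma))$. Passing to $O_q$-quotients and using the last claim of Proposition \ref{p:transfersubsets},
\[ \big(\supp(\T_h(\sigma))\cap T^*_{\mf{o}_q}W\big)/O_q \hookrightarrow \T(\supp(\sigma))/O_q \simeq \supp(\sigma)/\ol{\f}_{N^*\f}, \]
which is compact by $\f$-transverse ellipticity of $\sigma$. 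Being a closed subspace of a compact space, it is itself compact, so $\T_h(\sigma)$ is $O_q$-transversely elliptic; equivariance under $\ol{\rho_{E\oplus F}(G)}$ is again automatic. The proof is essentially a bookkeeping exercise; the only potential subtlety is ensuring the compactness of $\ol{\rho_{E\oplus F}(G)}$ from that of $\ol{\rho(G)}$, which is handled by the properness remark in Section \ref{s:tact}.
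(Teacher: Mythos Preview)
Your proof is correct and aligns with the paper's approach: the paper gives no explicit proof for this corollary, treating it as immediate from Theorem \ref{t:Ttransell}, and your argument spells out precisely the intended derivation---compactness of $\ol{\rho_{E\oplus F}(G)}$ via the properness remark in Section \ref{s:tact}, the inclusion $T^*_{\ol{\rho(G)}\times O_q}W\subseteq T^*_{\g\times\mf{o}_q}W$, and then Theorem \ref{t:Ttransell}. Your handling of the second assertion is slightly more elaborate than necessary: applying Theorem \ref{t:Ttransell} with $\g=0$ already asserts that $\supp(\T_h(\sigma))\cap T^*_{\mf{o}_q}W$ is a closed subset of the \emph{compact} set $\T(\supp(\sigma))$, so the passage to $O_q$-quotients via Proposition \ref{p:transfersubsets} is redundant (that step is already absorbed into the proof of the theorem).
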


\subsection{The index of a $\g\ltimes \f$-transversely elliptic symbol}
Let $K$ be a compact Lie group. A $K$-transversely elliptic symbol $\sigma_W$ on $W$ has an analytic index defined by Atiyah \cite{AtiyahTransEll} that we denote by $\index_K(\sigma_W) \in R^{-\infty}(K)$, where $R^{-\infty}(K)$ denotes the set of possibly infinite formal linear combinations of irreducible complex representations of $K$ with finite multiplicities. The index only depends on the homotopy class of the symbol and is defined regardless of whether the underlying manifold $W$ is compact, so long as $\supp(\sigma_W)\cap T^*_K W$ is compact. An additional more recent reference for the theory of transversely elliptic symbols is \cite{WittenNonAbelian}.

Let $R(\g)=R(G)$ denote the ring generated by finite dimensional irreducible complex representations of $\g$, or equivalently of $G$, the connected simply connected integration of $\g$. Let $R^{-\infty}(\g)=R^{-\infty}(G)$ denote the $R(\g)$-module of possibly infinite formal linear combinations of irreducible finite dimensional representations of $\g$ with finite multiplicities. Let $K$ be a compact Lie group and suppose $\rho \colon G \rightarrow K$ is a smooth homomorphism with dense image. By continuity and density, any irreducible representation of $K$ remains irreducible after restriction along $\rho$. Therefore we get well-defined, injective restriction maps $R(K)\rightarrow R(G)=R(\g)$ and $R^{-\infty}(K)\rightarrow R^{-\infty}(G)=R^{-\infty}(\g)$.
\ignore{
\begin{lemma}
Let $G$, $K$ be Lie groups with $K$ compact. Let $\rho\colon G \rightarrow K$ be a smooth homomorphism with dense image. Restriction of representations along $\rho$ induces well-defined, injective maps $R(\ol{G})\rightarrow R(G)$ and $R^{-\infty}(\ol{G})\rightarrow R^{-\infty}(G)$.
\end{lemma}
\begin{proof}
Let $(V,\pi)$ be a finite dimensional complex representation of $\ol{G}$. Since $\ol{G}$ is compact, we may assume the representation is unitary. Then the induced representation $\pi\circ \rho$ of $G$ is unitary, hence any $G$-invariant subspace $W\subset V$ has a $G$-invariant complement $W^\perp$. This shows that restriction along $\rho$ induces a well-defined map from $R(\ol{G})$ to $R(G)$. 

If two finite dimensional representations of $\ol{G}$ are isomorphic as representations of $G$, then the isomorphism must intertwine the $\ol{G}$ actions as well by continuity. This shows that the map $R(\ol{G})\rightarrow R(G)$ is injective, and also that the map $R^{-\infty}(\ol{G}) \rightarrow R^{-\infty}(G)$ is well-defined (multiplicities remain finite).
\end{proof}
}
\begin{definition}
\label{d:symbolindex}
Let $(M,\f,g)$ be a complete Riemannian foliation equipped with a transverse isometric $\g$ action, and with Molino manifold $W$. Let $E,F$ be $\g\ltimes \f$-equivariant basic Hermitian vector bundles and let $\sigma \in C^\infty(M,\Sym^{\le r}(N\f)\otimes \Hom(E,F))^\f$ be a $\g\ltimes \f$-transversely elliptic symbol. Assume that the closure $\ol{\rho(G)}$ of the image of $G$ in the isometry group of $W$ is compact. We define $\index^\f_\g(\sigma) \in R^{-\infty}(\g)$ to be the element obtained by restriction of $\index_{\ol{\rho_{E\oplus F}(G)}\times O_q}(\T_h(\sigma))^{O_q}$.
\end{definition}

\begin{remark}
If $G$ is the simply connected integration of $\g$ then $R^{-\infty}(\g)\simeq R^{-\infty}(G)$, and accordingly we use the notation $\index^\f_G$ and $\index^\f_\g$ interchangeably. Occasionally there may be additional information guaranteeing that $\index^\f_\g(\sigma)$ lies in the subspace $R^{-\infty}(G')\subset R^{-\infty}(G)=R^{-\infty}(\g)$ for some particular integration $G'$ of $\g$. To remember this, we sometimes use the notation $\index^\f_{G'}(\sigma)$ for the index.
\end{remark}

\begin{remark}
\label{r:indexmetricindependence}
Although a choice of complete $\g\ltimes \f$-invariant transverse metric appears in Definition \ref{d:symbolindex}, the index does not depend on this choice. This follows from Proposition \ref{p:metricdependence} and Remark \ref{r:canonicalisoequivariance}. It is not difficult to see that the index is also independent of the choices of Hermitian structures on $E,F$.
\end{remark}

\begin{theorem}
\label{t:indexell}
Let $(M,\f,g)$ be a complete Riemannian foliation with a transverse isometric $\g$-action. Assume $M/\ol{\f}$ is compact. Let $E,F$ be $\g\ltimes \f$-equivariant basic Hermitian vector bundles. Let $D$ be an $\f$-transversely elliptic operator. Passing to global sections results in an operator that we denote by the same symbol $D \colon C^\infty(M,E)^\f \rightarrow C^\infty(M,F)^\f$. Then
\[ \ker(D)\simeq \ker(\T(D))^{O_q}, \quad \tn{coker}(D)=C^\infty(M,F)^\f/D(C^\infty(M,E)^\f)\simeq \tn{coker}(\T(D))^{O_q} \] 
are finite dimensional representations of $\ol{\rho_{E\oplus F}(G)}$. Define
\[ \index^\f_\g(D) \in R(\g) \]
to be the formal difference of the kernel and cokernel of $D$, restricted to $\g$. Then $\index^\f_\g(D)=\index^\f_\g(\sigma_D)$.
\end{theorem}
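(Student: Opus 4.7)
The plan is to transfer the problem to the Molino manifold $W$ via the functor $\T$, and then invoke Atiyah's theory of transversely elliptic operators for compact group actions.

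First I would apply the construction of Section \ref{s:transversediff} to lift $D$ to an $O_q$-equivariant transverse differential operator $\T(D)\colon C^\infty(W,\T(E))\rightarrow C^\infty(W,\T(F))$, with principal symbol $\sigma_{\T(D)}=\T_h(\sigma_D)$. By construction $\T(D)$ restricts to $D$ on invariant sections under the canonical isomorphisms $C^\infty(M,E)^\f\simeq C^\infty(W,\T(E))^{O_q}$ and $C^\infty(M,F)^\f\simeq C^\infty(W,\T(F))^{O_q}$ of Theorem \ref{t:transfer}. This identification is equivariant for the action of $\ol{\rho_{E\oplus F}(G)}$ since that action is induced by one that commutes with $O_q$. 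Consequently $\ker(D)$ and $\tn{coker}(D)$ are identified with $\ker(\T(D))^{O_q}$ and $\tn{coker}(\T(D))^{O_q}$, each carrying an induced $\ol{\rho_{E\oplus F}(G)}$-representation.

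Next I would verify the hypotheses of Atiyah's transverse index theory for $\T(D)$. By the corollary to Theorem \ref{t:Ttransell}, the $\f$-transverse ellipticity of $\sigma_D$ implies that $\T_h(\sigma_D)$ is $O_q$-transversely elliptic and $\ol{\rho_{E\oplus F}(G)}$-equivariant, hence $\ol{\rho_{E\oplus F}(G)}\times O_q$-transversely elliptic. The support condition $\supp(\T_h(\sigma_D))\cap T^*_{\ol{\rho_{E\oplus F}(G)}\times O_q}W$ compact is then satisfied, and Atiyah's theory produces an analytic index
\[ \index_{\ol{\rho_{E\oplus F}(G)}\times O_q}(\T(D))\in R^{-\infty}(\ol{\rho_{E\oplus F}(G)}\times O_q), \]
depending only on the homotopy class of the principal symbol. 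In particular
\[ \index_{\ol{\rho_{E\oplus F}(G)}\times O_q}(\T(D))=\index_{\ol{\rho_{E\oplus F}(G)}\times O_q}(\T_h(\sigma_D)).\]

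The main technical point — and the one I expect to be the chief obstacle — is verifying that the $O_q$-invariant parts of $\ker(\T(D))$ and $\tn{coker}(\T(D))$ are \emph{separately} finite-dimensional (as opposed to merely having a well-defined virtual difference). Here the extra hypothesis that $M/\ol{\f}\simeq W/O_q$ is compact is decisive: combined with $O_q$-transverse ellipticity of $\T_h(\sigma_D)$, it forces the induced operator on $O_q$-invariant sections to be genuinely elliptic modulo a compact remainder, hence Fredholm. This is essentially El Kacimi's Fredholmness theorem \cite{Kacimi} for transverse elliptic operators on Riemannian foliations with compact leaf-closure space, reinterpreted on $W$; one may also argue by standard Peter--Weyl averaging together with a parametrix for $\T_h(\sigma_D)$ on a compact $O_q$-invariant neighborhood of the preimage of $W/O_q$. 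Given this Fredholmness, the $\ol{\rho_{E\oplus F}(G)}$-representations on $\ker(D)$ and $\tn{coker}(D)$ are finite-dimensional.

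Finally, taking $O_q$-invariants of the equality $\index_{\ol{\rho_{E\oplus F}(G)}\times O_q}(\T(D))=\index_{\ol{\rho_{E\oplus F}(G)}\times O_q}(\T_h(\sigma_D))$ and restricting along the dense embedding $G\rightarrow \ol{\rho_{E\oplus F}(G)}$ yields
\[ \index^\f_\g(D)=\ker(D)-\tn{coker}(D)=\index_{\ol{\rho_{E\oplus F}(G)}\times O_q}(\T(D))^{O_q}\big|_\g=\index_{\ol{\rho_{E\oplus F}(G)}\times O_q}(\T_h(\sigma_D))^{O_q}\big|_\g=\index^\f_\g(\sigma_D), \]
as required.
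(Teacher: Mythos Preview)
Your approach is essentially the same as the paper's: transfer to $W$ via $\T$, identify $D$ with $\T(D)^{O_q}$, and invoke Atiyah. The one place you make things harder than necessary is the ``chief obstacle'': since $M/\ol{\f}\simeq W/O_q$ is compact and $O_q$ is compact, $W$ itself is compact, so Atiyah's \cite[Corollary~2.5]{AtiyahTransEll} directly gives finite-dimensionality of each $O_q$-isotype (in particular the invariants) of $\ker\T(D)$ and $\tn{coker}\,\T(D)$; no separate parametrix-on-a-neighborhood or appeal to El Kacimi is needed. The paper also spells out the density argument showing $\ol{\rho_{E\oplus F}(G)}$ commutes with $\T(D)$ and $O_q$, which you assert a bit quickly but correctly.
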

\begin{proof}
Note that $M/\ol{\f}\simeq W/O_q$ is compact if and only if $W$ is compact. The isomorphisms $C^\infty(M,E)^\f\simeq C^\infty(W,\T(E))^{O_q}$, $C^\infty(M,F)^\f \simeq C^\infty(W,\T(F))^{O_q}$ identify $D$ with the restriction $\T(D)^{O_q}$ of the $O_q$-transversely elliptic operator $\T(D)$ to $O_q$-invariant sections. The kernel and cokernel are thus finite dimensional by a fundamental result on transversely elliptic operators \cite[Corollary 2.5]{AtiyahTransEll}. Since $\T(D)^{O_q}$ is $G$-equivariant, the kernel and cokernel are finite dimensional representations of $G$. The Lie group $\ol{\rho_{E\oplus F}(G)}$ acts on the domain and codomain of $\T(D)$. As the image $G \rightarrow \ol{\rho_{E\oplus F}(G)}$ is dense and commutes with $O_q$ and $\T(D)$, the $\ol{\rho_{E\oplus F}(G)}$ action also commutes with $O_q$ and $\T(D)$. Thus the kernel and cokernel of $\T(D)^{O_q}$ are representations of the compact Lie group $\ol{\rho_{E\oplus F}(G)}$. The last claim is immediate from $\sigma_{\T(D)}=\T_h(\sigma)$.
\end{proof} 

\begin{example}[locally free isometric actions]
\label{ex:freeisometric3}
Let $(M,\f,g)$ be the Riemannian foliation considered in Examples \ref{ex:freeisometric}, \ref{ex:freeisometric2}. Assume $(M,g_M)$ is complete and that $\ol{H}\subset \tn{Isom}(M,g_M)$ acts cocompactly on $M$. Then the Molino manifold $W=P/\ol{H}$ is compact. Choose a differential operator $D_M$ on $M$ whose restriction to $\f$-invariant sections equals $D$; for example, if $D$ is a transverse Dirac operator, then an extension $D_M$ is determined by the choice of a bundle-like metric $g_M$ compatible with $g$. Then
\[ \index^\f(D)=\index^H(D_M) \]
where $\index^H(D_M)$ denotes the index of $D_M$ restricted to the space of smooth $H$-invariant (equivalently, $\ol{H}$-invariant) sections. We can describe the index in `compact terms' using Abel's slice theorem \cite{abelslice}, which says that $M$ admits a global slice $M\simeq \ol{H}\times_K N$, where $K\subset \ol{H}$ is a maximal compact subgroup and $N$ is a compact $K$-manifold. By continuity, $D_M$ is $\ol{H}$-invariant and hence $\ol{H}$-transversely elliptic. Using a principal $K$-connection to lift $D_M$ to $\ol{H}\times N$ and then restricting to $\ol{H}$-invariant sections yields a $K$-transversely elliptic operator $D_N$ on $N$, with the property that the restriction of $D_N$ to $K$-invariant sections over $N$ agrees with the restriction of $D_M$ to $\ol{H}$-invariant sections over $M$, under the canonical identification. Then 
\[ \index^\f(D)=\index_K(D_N)^K.\]
\end{example}
\begin{example}[Haefliger suspensions]
\label{ex:Haefliger3}
Let $(M,\f,g)$ be the Riemannian foliation considered in Examples \ref{ex:Haefliger}, \ref{ex:Haefliger2}. Assume $(N,g_N)$ is complete and that $\ol{\Gamma}\subset \tn{Isom}(N,g_N)$ acts cocompactly on $N$. Then the Molino manifold $W\simeq P_N/\ol{\Gamma}$ is compact. The index of an $\f$-transversely elliptic operator $D$ can be described in terms of $N$. Indeed $D$ lifts uniquely to an $\ti{\f}$-transversely elliptic operator $\ti{D}$ on the $\Gamma$-covering space $\ti{M}=\ti{B}\times N$. Restricting to $\ti{\f}$-invariant sections yields a $\Gamma$-invariant differential operator $D_N$ on $N$, with the property that the restriction of $D$ to $\f$-invariant sections agrees with the restriction of $D_N$ to $\Gamma$-invariant sections, under the canonical identification. Therefore 
\[ \index^\f(D)=\index^\Gamma(D_N) \]
where $\index^\Gamma(D_N)$ denotes the index of $D_N$ restricted to the space of smooth $\Gamma$-invariant (equivalently $\ol{\Gamma}$-invariant) sections.
\end{example}

\subsection{Weak equivalence and the index}\label{s:weakindex}
Let $(B,\pi_1,\pi_2)$ be an equivariant isometric weak equivalence of complete Riemannian foliations $(M_i,\f_i,g_i)$, $i=1,2$. Suppose $E_i,F_i$ are $\g\ltimes \f$-equivariant basic Hermitian vector bundles on $M_i$ for $i=1,2$, which are intertwined by the weak equivalence, and fix isomorphisms $\pi_1^*E_1\simeq \pi_2^*E_2$, $\pi_1^*F_1\simeq \pi_2^*F_2$. Since $\pi_1^*N\f_1\simeq \pi_2^*N\f_2$, $B$ induces a one-one correspondence between foliation-invariant symbols: $\sigma_1,\sigma_2$ are in correspondence if $\pi_1^*\sigma_1=\pi_2^*\sigma_2$ under the isomorphisms $\pi_1^*N\f_1\simeq \pi_2^*N\f_2$, $\pi_1^*E_1\simeq \pi_2^*E_2$, $\pi_1^*F_1\simeq \pi_2^*F_2$. Since $M_1/\ol{\f}_1\simeq M_2/\ol{\f}_2$, $B$ induces a one-one correspondence between $\g\ltimes \f_i$-transversely elliptic symbols. 
\begin{proposition}
\label{p:Moritaindex}
An equivariant isometric weak equivalence preserves the index of transversely elliptic symbols.
\end{proposition}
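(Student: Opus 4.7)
The plan is to push the question to the Molino manifolds and then appeal to the fact that by Definition \ref{d:symbolindex} the index $\index^{\f_i}_\g(\sigma_i)$ is obtained from Atiyah's equivariant transverse index on the Molino manifold $W_i$. An equivariant isometric weak equivalence will provide an equivariant identification of all the relevant data on $W_1$ and $W_2$, from which the proposition follows by naturality of the Atiyah index.

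Exactly as in the proof of Proposition \ref{p:moritamolino}, by symmetry it suffices to treat the case in which $\pi_2=\id$, so that $\pi:=\pi_1\colon(M_1,\f_1,g_1)\to(M_2,\f_2,g_2)$ is a surjective submersion with connected fibers satisfying $\pi^*\f_2=\f_1$ and $\pi^*g_2=g_1$. Proposition \ref{p:moritamolino} then yields an $O_q$-equivariant isometry $\phi\colon W_1\xrightarrow{\sim} W_2$, and Proposition \ref{p:equivMorita} says that the transverse $\g$-actions correspond under $\pi$, hence induce $G$-actions on $W_1, W_2$ that are intertwined by $\phi$; in particular the closures $\ol{\rho_1(G)}\subset\tn{Isom}(W_1)$ and $\ol{\rho_2(G)}\subset\tn{Isom}(W_2)$ are identified via conjugation by $\phi$.

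Next I would verify that the transferred bundles and symbols correspond. The fixed isomorphisms $\pi^*E_2\simeq E_1$, $\pi^*F_2\simeq F_1$ of $\g\ltimes\f$-equivariant basic Hermitian bundles, combined with Theorem \ref{t:transfer} and the induced identification $P_1\simeq\pi^*P_2$ of foliated orthonormal frame bundles, yield equivariant isomorphisms $\T_1(E_1)\simeq\phi^*\T_2(E_2)$ and $\T_1(F_1)\simeq\phi^*\T_2(F_2)$. The splitting $h_i\colon N\f_i\to N\ol{\f}_{P_i}/O_q$ from equations \eqref{e:transverseLC} and \eqref{e:anchorsplitting} is constructed from the transverse Levi-Civita connection, which is natural under isometric foliated submersions; thus $\pi^*h_2=h_1$, and consequently $\T_1(h_1)$ corresponds under $\phi$ to $\T_2(h_2)$. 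The relation $\pi^*\sigma_2=\sigma_1$ on foliation-invariant symbols therefore descends to $\T_{1,h}(\sigma_1)\simeq\phi^*\T_{2,h}(\sigma_2)$ as $\ol{\rho_{E\oplus F}(G)}\times O_q$-transversely elliptic symbols on $W_1$.

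Finally, Atiyah's equivariant index of a transversely elliptic symbol is invariant under equivariant isomorphisms of the underlying data, so
\[ \index_{\ol{\rho_{E_1\oplus F_1}(G)}\times O_q}(\T_{1,h}(\sigma_1))=\index_{\ol{\rho_{E_2\oplus F_2}(G)}\times O_q}(\T_{2,h}(\sigma_2)) \]
in the common representation ring, and taking $O_q$-invariants followed by restriction to $R^{-\infty}(\g)$ gives the claim. The main obstacle is the bookkeeping in the third paragraph: one must carefully verify that the auxiliary choices (transverse Levi-Civita connection, splittings $l_i,\theta_i,h_i$, frame bundles $P_i$, and $\g$-equivariant Hermitian structures) transport coherently under $\phi$. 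This is where the isometric hypothesis on $(B,\pi_1,\pi_2)$ is used in an essential way, since the transverse Levi-Civita connection is canonically determined by the transverse metric, and the compatibility of the $h_i$ with $\phi$ rests precisely on this naturality.
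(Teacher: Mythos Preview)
Your proposal is correct and follows essentially the same route as the paper: push everything to the Molino manifolds via Proposition~\ref{p:moritamolino}, observe that the transferred symbols $\T_{h_i}(\sigma_i)$ are intertwined by the resulting equivariant isometry $W_1\simeq W_2$, and conclude by naturality of Atiyah's transverse index. The paper's proof is considerably terser (three sentences), simply asserting that the transferred symbols are intertwined by the isometry; you have spelled out the intermediate bookkeeping (reduction to a single submersion, compatibility of the transverse Levi-Civita connections and hence of the splittings $h_i$, equivariance of $\phi$) that the paper leaves implicit.
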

\begin{proof}
As noted above, a weak equivalence induces a one-one correspondence between transversely elliptic symbols. If $\sigma_1,\sigma_2$ are transversely elliptic symbols related via this one-one correspondence, then the transversely elliptic symbols $\T_{h_1}(\sigma_1)$, $\T_{h_2}(\sigma_2)$, on $W_1,W_2$ respectively, are intertwined by the isometry $W_1\simeq W_2$ described in Proposition \ref{p:moritamolino}. Thus they have the same index.
\end{proof}

By Remark \ref{r:indexmetricindependence}, the index of a transversely elliptic symbol is also insensitive to the choice of complete $\g\ltimes \f$-invariant transverse metric and to the choices of Hermitian metrics on $E,F$. This flexibility may be combined with Proposition \ref{p:Moritaindex} to see that the index of a transversely elliptic symbol is nearly an invariant of the weak equivalence class of the foliation $(M,\f)$ and homotopy class of the transverse symbol $(\sigma,E,F)$. For example, let us call a foliation $(M,\f)$ \emph{completable} if it admits a complete bundle-like metric. Then the index is invariant under equivariant completable weak equivalences of completable foliations intertwining the transverse symbols in the sense discussed above.

\ignore{
Let $(B,\pi_1,\pi_2)$ be an equivariant Morita bibundle between Riemannian foliation groupoids $(M_i,\f_i,g_i,\G_i,a_i)$ equipped with transverse isometric $\g$-actions, $i=1,2$. Suppose $E_i,F_i$ are both $\G_i$ and $\g\ltimes \f_i$-equivariant basic vector bundles on $M_i$ for $i=1,2$, which are intertwined by the Morita bibundle in the sense of Proposition \ref{p:moritavb}. Since $\pi_1^*N\f_1\simeq \pi_2^*N\f_2$, $B$ induces a one-one correspondence between foliation-invariant symbols: $\sigma_1,\sigma_2$ are in correspondence if $\pi_1^*\sigma_1=\pi_2^*\sigma_2$ under the isomorphisms $\pi_1^*N\f_1\simeq \pi_2^*N\f_2$, $\pi_1^*E_1\simeq \pi_2^*E_2$, $\pi_1^*F_1\simeq \pi_2^*F_2$. Since $M_1/\ol{\f}_1\simeq M_2/\ol{\f}_2$, $B$ induces a one-one correspondence between $\f_i$-transversely elliptic (resp. $\g\ltimes \f_i$-transversely elliptic) symbols. Since Morita equivalent Riemannian foliations have equivariantly isometric Molino manifolds (Proposition \ref{p:moritamolino}), it is not surprising that the index is a Morita invariant.
\begin{proposition}
\label{p:Moritaindex}
An equivariant Morita equivalence between Riemannian foliations preserves the index of transversely elliptic symbols.
\end{proposition}
\begin{proof}
As already noted, a Morita equivalence induces a one-one correspondence between transversely elliptic symbols. If $\sigma_1,\sigma_2$ are transversely elliptic symbols related via this one-one correspondence, then the transversely elliptic symbols $\T_{h_1}(\sigma_1)$, $\T_{h_2}(\sigma_2)$, on $W_1,W_2$ respectively, are intertwined by the isometry $W_1\simeq W_2$ described in Proposition \ref{p:moritamolino}. Thus they have the same index.
\end{proof}
}

\subsection{Infinitesimally free actions}\label{s:free}
Let $(M,\f,g)$ be a complete Riemannian foliation with a transverse isometric action $a \colon \g \times \h \rightarrow \scr{K}(M/\f,g)$, where $\g,\h$ are Lie algebras with simply connected integrations $G,H$ respectively. If the anchor map for the Lie algebroid $\h\ltimes \f$ is injective then it defines a new complete Riemannian foliation $\ti{\f}$ whose leaves are the orbits of the $\h\ltimes \f$-action. 

Suppose $\theta$ is an $\h\ltimes \f$-equivariant $\f$-basic connection on a foliated principal bundle $(Q,\f_Q)$. To obtain an $\ti{\f}$-basic connection, replace $\theta$ with $\theta-\theta\circ p$, where $p\colon N\f_Q\rightarrow N\f_Q$ is projection onto the subbundle $a^\dagger(\h)\subset N\f_Q$ along the complementary subbundle $\ker(\theta)\oplus l(a(\h)^\perp)$. 
\ignore{
To see this consider a local foliation chart $U$ with local quotient $U/\f|_U=T$. The basic condition means that $(Q,\theta)|_U$ is pulled back from a pair $(Q_T,\theta_T)$ on $T$. One replaces the connection $\theta_T$ with $\theta_T-\theta_T\circ p_T$, where $p_T \colon TQ_T\rightarrow TQ_T$ is projection onto the $\h$-orbit directions $a_{Q_T}(\h)$ along $\ker(\theta_T)\oplus l(a_T(\h)^\perp)$.
Since the construction is canonical, it glues on overlaps of foliation charts. (Simpler: one can write an explicit formula for the modified connection globally on $M$ just involving smooth basic objects. Alternatively one could just do this at a point in $M$ and then observe  that the formula just involves globally defined objects.)} It follows that a Hermitian $\h_M\ltimes \f$-equivariant $\f$-basic vector bundle determines a Hermitian $\ti{\f}$-basic vector bundle, and this correspondence intertwines invariant sections. One has $N^*\ti{\f}=N^*_\h \f$ and there is a quotient map $N\f\rightarrow N\ti{\f}$ inducing a map $\Sym^{\le r}(N\f)\rightarrow \Sym^{\le r}(N\ti{\f})$.
\begin{theorem}
\label{t:infinitesimalactionfoliation}
Let $(M,\f,g)$ be a complete Riemannian foliation with a transverse isometric action $a \colon \g \times \h \rightarrow \scr{K}(M/\f,g)$. Assume that the closure $\ol{\rho(G\times H)}$ of the image of $G \times H$ in the isometry group of the Molino manifold $W$ is compact. Suppose the anchor map for the Lie algebroid $\h_M\ltimes \f$ is injective, and let $\ti{\f}$ be the corresponding foliation. Let $E,F$ be Hermitian $(\g\times \h)\ltimes \f$-equivariant $\f$-basic vector bundles and let $\sigma \in C^\infty(M,\Sym^{\le r}(N\f)\otimes \Hom(E,F))^\f$ be a $(\g\times \h)\ltimes \f$-transversely elliptic symbol. Let $\ti{\sigma} \in C^\infty(M,\Sym^{\le r}(N\ti{\f})\otimes \Hom(E,F))^{\ti{\f}}$ be the corresponding $\g\ltimes \ti{\f}$-transversely elliptic symbol. Then
\[ \index^\f_{\g\times \h}(\sigma)^\h=\index_\g^{\ti{\f}}(\ti{\sigma}).\]
\end{theorem}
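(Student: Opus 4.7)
The strategy is to reduce both sides of the identity to indices of equivariant transversely elliptic symbols on the Molino manifold and then invoke the free-quotient property of Atiyah's transverse equivariant index \cite{AtiyahTransEll}. Write $K_H \subset \tn{Isom}(W)$ for the closure of the image of $H$; since $H \to K_H$ has dense image, taking $\h$-invariants on a finite-dimensional summand of an element of $R^{-\infty}(\g \times \h)$ coincides with taking $K_H$-invariants. By Definition \ref{d:symbolindex}, the left-hand side of the claimed identity equals the restriction to $\g$ of
\[ \index_{\ol{\rho_{E\oplus F}(G\times H)} \times O_q}(\T_h(\sigma))^{O_q \times K_H}, \]
while the right-hand side equals the restriction to $\g$ of
\[ \index_{\ol{\rho_{E\oplus F}(G)} \times O_{\ti q}}(\T_{\ti h}(\ti\sigma))^{O_{\ti q}}, \]
where $\ti q = q - \dim \h$ and tilded quantities refer to the Molino package for $(M,\ti\f,\ti g)$.

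Next I would establish a geometric comparison between the Molino data of $\f$ and $\ti\f$. The injectivity of the anchor of $\h_M \ltimes \f$ ensures that the lifted $\h$-action on $P$ is locally free, so $K_H$ acts on $W$ with only finite stabilizers. The orthogonal decomposition $N\f = a^\dagger(\h) \oplus N\ti\f$ reduces $P$ to a principal $(O_{\dim\h}\times O_{\ti q})$-subbundle $P' \subset P$, and projection on the second factor identifies $P'/O_{\dim\h}$ with the transverse frame bundle $\ti P$ of $N\ti\f$. Applying the Molino construction to both $P$ and $\ti P$ and exploiting the weak-equivalence invariance from Section \ref{s:weakindex}, one obtains an $O_{\ti q}$-equivariant identification of $\ti W$ with the locally free orbifold quotient of $W' = P'/\ol{\f}_{P'}$ by $K_H$. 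Naturality of the splittings in \eqref{e:anchorsplitting} then ensures that $\T_{\ti h}(\ti\sigma)$ corresponds, under this identification, to the descent of $\T_h(\sigma)$ along the $K_H$-quotient.

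With these identifications in hand, the theorem reduces to the following standard property of Atiyah's equivariant transverse index: if a compact Lie group $K_1 \times K_2$ acts on a manifold $X$ with $K_2$ acting locally freely on $\supp(\tau)\cap T^*_{K_1 \times K_2}X$ for a $K_1 \times K_2$-transversely elliptic symbol $\tau$, then $\tau$ descends to a $K_1$-transversely elliptic symbol $\bar\tau$ on $X/K_2$ and one has $\index_{K_1 \times K_2}(\tau)^{K_2} = \index_{K_1}(\bar\tau)$. Applying this with $K_2 = K_H$, taking $O_{\ti q}$-invariants, and restricting to $\g$ yields the claimed equality.

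The main obstacle is the middle step — producing the clean geometric identification of $\ti W$ with a locally free orbifold quotient of (a reduction of) $W$ in a manner compatible with both the transferred symbols and the structure-group reduction $O_q \rightsquigarrow O_{\dim\h} \times O_{\ti q}$. The tubular neighbourhood theorem of \cite{SjamaarLinLocalization} together with the flexibility afforded by weak-equivalence invariance should allow one to work in a convenient Morita-equivalent model where the comparison becomes transparent, in the spirit of Examples \ref{ex:freeisometric2} and \ref{ex:Haefliger2}.
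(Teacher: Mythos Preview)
Your overall architecture—transfer both indices to Molino manifolds, reduce the structure group from $O_q$ to $O_h\times O_{\ti q}$, and then compare via an Atiyah-type quotient theorem—is the paper's. But the step you flag as ``the main obstacle'' contains a genuine error, not just a gap to be filled in.

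The assertion that injectivity of the anchor of $\h_M\ltimes\f$ forces $K_H=\ol{\rho(H)}$ to act with only finite stabilizers is false. Injectivity of the anchor says that for each $m\in M$ the map $\h\to N_m\f$ is injective; it says nothing about the induced action on a leaf-closure quotient. A nonzero $\xi\in\h$ fixes a point of $W$ whenever its natural lift $a^\dagger(\xi)$ lands in the structural direction $\mf{s}_P\subset N\f_P$ along the corresponding leaf closure—for instance, whenever $a(\xi)$ coincides there with a local section of the Molino centralizer sheaf. Consequently the relevant $K$-action on the intermediate space (in the paper's notation, on $\ti W''=\ti W'/O_h$) can have a positive-dimensional common stabilizer $K''$, and neither Atiyah's free-quotient theorem nor an orbifold variant applies directly. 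The paper repairs this with two ingredients you are missing: (i) the transverse parallelism of $(\ti P,\f_{\ti P})$, together with the fact that the tautological frame commutes with the lifted $\h$-fields, forces the $K$-action on $\ti W''$ to have a \emph{single orbit type} per component; (ii) after passing to the slice $\tn{Stab}(K'',\ti W'')$ via \cite[Theorem 4.1]{AtiyahTransEll}, one must take $K''_{E\oplus F}$-invariants \emph{in the fibres}—possibly dropping rank—before the now genuinely free quotient by $N(K'')/K''$. Step~(ii) is precisely what accounts for the potential rank drop of $\ti\T(E)$ relative to $\T(E)$ (Remark~\ref{rem:ranks}) and cannot be subsumed into a locally-free-quotient argument.
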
 
\begin{proof}
To simplify notation we will assume $\g=0$, but the general case involves no additional difficulty. Let $\ti{q}=q-h$, $h=\dim(\h)$ be the codimension of $\ti{\f}$, and let $\ti{P}$ be the principal $O_{\ti{q}}$-bundle of orthonormal frames of $N\ti{\f}$. Using the transverse metric $g$ there is an orthogonal foliated splitting
\begin{equation} 
\label{e:split}
N\f=N\ti{\f}\oplus \h_M
\end{equation}
where $\h_M\simeq a(M\times \h) \subset N\f$ is identified with the subbundle of transverse $\h$-orbit directions. Let $\ti{P}'\subset P$ be the closed subbundle whose fibre $\ti{P}'_m$ at $m \in M$ is the subset of oriented orthonormal frames of $N_m\f$ such that the first $\ti{q}=q-h$ vectors in the frame form a frame of $N_m\ti{\f}$. Since \eqref{e:split} is $\f$-equivariant, $\ti{P}'$ is $\f_P$-saturated. Moreover $\ti{P}'$ is a principal $O_{\ti{q}}\times O_h$-bundle and
\begin{equation} 
\label{e:isos1}
P\simeq O_q\times_{O_{\ti{q}}\times O_h} \ti{P}', \qquad \ti{P}\simeq \ti{P}'/O_h.
\end{equation}
Let $\ti{W}$ be the Molino manifold of $\ti{\f}$ and let $\ti{W}'=\ti{P}'/\ol{\f}_P \subset W=P/\ol{\f}_P$. The first isomorphism in \eqref{e:isos1} descends to an isomorphism
\begin{equation}
\label{e:iso2}
W\simeq O_q\times_{O_{\ti{q}}\times O_h} \ti{W}'.
\end{equation}
In addition the induced action of $O_h$ on $\ti{W}'$ is free. To see this, suppose $p_i\rightarrow p$ is a convergent sequence of points in a leaf of $\f_P|_{\ti{P}'}$. Let $m_i=\pi_M(p_i)$ and $m=\pi_M(p)$. Each $p_i$ represents a frame consisting of a frame of $N_{m_i}\ti{\f}$ and the image of a frame $f_i$ of $\h$ under the action map $a_{m_i}\colon \h\rightarrow N_{m_i}\f$. But by $\f$-invariance of the action and since all $p_i$ lie on a common leaf of $\f_P$, the frame $f_i=f$ of $\h$ must be independent of $i$. Thus the last $h$ elements of the frame $p$ must be the frame $a_m(f)$ of $\h_M|_m$. Since the $O_h$ action is induced by change of frame in $\h$, this shows that if $1\ne g \in O_h$ and $p \in \ti{P}'$ then $p,g\cdot p$ are contained in distinct $\f_P$ leaf closures.

There is an isomorphism
\begin{equation}
\label{e:tildeWpp}
\ti{P}/\ol{\f}_{\ti{P}}\simeq \ti{W}'/O_h=:\ti{W}'' 
\end{equation}
where $\f_{\ti{P}}=\f_P|_{\ti{P}'}/O_h$ is the lift of the foliation $\f$ to $\ti{P}$. As in \eqref{e:split}, $N\f_{\ti{P}}\simeq N\ti{\f}_{\ti{P}}\oplus \h_{\ti{P}}$, where $\h_{\ti{P}}$ denotes the subbundle of orbit directions for the lifted $\h$ action. Thus the foliation $(\ti{P},\f_{\ti{P}})$ has a transverse parallelism, obtained by combining the frame $V_1,\ldots,V_d$, $d=\ti{q}(\ti{q}+1)/2$ of $N\ti{\f}_{\ti{P}}$ used in the Molino construction for $(M,\ti{\f})$ with the frame $V_1',\ldots,V_h'$ of $\h_{\ti{P}}$ obtained from any fixed basis of $\h$. The transverse vector fields $V_1,\ldots,V_d$ commute with $V_1',\ldots,V_h'$, because the $V_j'$ are natural lifts of transverse Killing vector fields on $M$ (the natural lift of any transverse Killing vector field commutes with the transverse vector fields in the tautological frame). The transverse vector fields $V_i$, $V_j'$ project to $\ti{W}''$ and span $T\ti{W}''$. It follows that for any two sufficiently close points $w_1,w_2\in \ti{W}''$ there exist $c_1,\ldots,c_d \in \bR$ such that the image of $w_1$ under the time-1 flow of $c_1T\pi_{\ti{W}''}(V_1)+\cdots+c_dT\pi_{\ti{W}''}(V_d)$ is in the orbit $H\cdot w_2$. As the $T\pi_{\ti{W}''}(V_i)$ commute with the $H$ action on $\ti{W}''$ we conclude that the $H$-action on $\ti{W}''$ has a single orbit-type in each connected component of $\ti{W}''$. By taking limits the same holds for the $K:=\ol{\rho(H)}$ action. To simplify notation we assume there is a single orbit-type. Thus all stabilizers are conjugates of a subgroup $K''\subset K$, and $\ti{W}''$ is a bundle of homogeneous spaces with typical fibre $K/K''$ over the quotient $\ti{W}''/K$. If we let $\iota \colon \tn{Stab}(K'',\ti{W}'')\hookrightarrow \ti{W}''$ denote the subbundle whose stabilizer equals $K''$, then
\begin{equation} 
\label{e:sliceW}
\ti{W}''\simeq K\times_{N(K'')}\tn{Stab}(K'',\ti{W}''),
\end{equation}
where $N(K'')$ is the normalizer of $K''$ inside $K$.

Let $\pi_{\ti{W}''}\colon \ti{P}\rightarrow \ti{W}''$ be the quotient map. Let $w \in \ti{W}''$ and $p \in \pi_{\ti{W}''}^{-1}(w)$. The closure of the leaf of $\ti{\f}_{\ti{P}}$ containing $p$ is $\pi_{\ti{W}''}^{-1}(\ol{H\cdot w})$. Moreover $\ol{H\cdot w}=\ol{\rho(H)}\cdot w=K\cdot w$; indeed suppose $w_2$ belongs to the closure of $H\cdot w_1$. If $h_i \in H$ is a sequence such that $h_iw_1\rightarrow w_2$ then there exists a subsequence $h_i'$ that converges to an isometry $h \in \ol{\rho(H)}=K$ mapping $w_1$ to $w_2$ and thus $w_2 \in K\cdot w_1$. Conversely any $h \in \ol{\rho(H)}=K$ preserves the closure of each $H$-orbit, being the limit of isometries that preserve $H$-orbits. It follows that there is an isomorphism
\begin{equation} 
\label{e:tildeW}
\ti{W}''/K\simeq \ti{W}
\end{equation}
as smooth $O_{\ti{q}}$-manifolds.

With these preparations we turn to comparing the two indices in the statement of the theorem. Let $K_{E\oplus F}=\ol{\rho_{E\oplus F}(G)}$. Let $\sigma_W=\T_h(\sigma)$ be the induced $O_q \times K_{E\oplus F}$-transversely elliptic symbol on $W$. By definition $\index^\f_\h(\sigma)=\index_{K_{E\oplus F}\times O_q}(\sigma_W)^{O_q}$. By \eqref{e:iso2}, the $O_q$-invariant part of the index of $\sigma_W$ equals the $O_{\ti{q}}\times O_h$-invariant part of the index of the $O_{\ti{q}}\times O_h\times K_{E\oplus F}$-transversely elliptic symbol $\sigma_{\ti{W}'}$ obtained by restriction of $\sigma_W$ to $\ti{W}'$. Since the $O_h$-action is free (see \eqref{e:tildeWpp}) the latter equals the $O_{\ti{q}}$-invariant part of the index of the induced $O_{\ti{q}}\times K_{E\oplus F}$-transversely symbol $\sigma_{\ti{W}''}$ on the quotient $\ti{W}''$ (\cite[Theorem 3.1]{AtiyahTransEll}). Let $\ti{E}'',\ti{F}''$ denote the corresponding vector bundles on $\ti{W}''$.

Let $K_{E\oplus F}''$ denote the inverse image of $K''$ under the quotient map $K_{E\oplus F}\rightarrow K$. By \eqref{e:sliceW},
\[ \ti{W}''=K_{E\oplus F}\times_{N(K_{E\oplus F}'')}\tn{Stab}(K''_{E\oplus F},\ti{W}'').\]
By \cite[Theorem 4.1]{AtiyahTransEll}, the $O_{\ti{q}}\times K_{E\oplus F}$-invariant part of the index of $\sigma_{\ti{W}''}$ is the $O_{\ti{q}}\times N(K_{E\oplus F}'')$-invariant part of the index of the pullback $\iota^*\sigma_{\ti{W}''}$ to $\tn{Stab}(K_{E\oplus F}'',\ti{W}'')$. 

Since $K_{E\oplus F}''$ acts trivially on the base $\tn{Stab}(K''_{E\oplus F},\ti{W}'')$, taking the $N(K_{E\oplus F}'')$-invariant part of the index of $\iota^*\sigma_{\ti{W}''}$ is equivalent to the following sequence of two steps: (i) take $K_{E\oplus F}''$-invariants in the vector bundle fibers, resulting in vector bundles $\ti{E}'$, $\ti{F}'$ (of possibly smaller rank), and a restricted symbol $\iota^*\sigma'_{\ti{W}''}$, (ii) taking the $N(K_{E\oplus F}'')/K_{E\oplus F}''$-invariant part of the index of $\iota^*\sigma'_{\ti{W}''}$. In step (ii), since $\tn{Stab}(K_{E\oplus F}'',\ti{W}'')$ is a principal $N(K_{E\oplus F}'')/K_{E\oplus F}''$-bundle over $\ti{W}''/K=\ti{W}$, \cite[Theorem 3.1]{AtiyahTransEll} implies that the result is the index of an induced $O_{\ti{q}}$-transversely elliptic symbol $\sigma_{\ti{W}}$ on $\ti{W}$. The vector bundle $\ti{E}\oplus \ti{F}=(\ti{E}'\oplus \ti{F}')/(N(K_{E\oplus F}'')/K_{E\oplus F}'')$ identifies naturally with $\ti{\T}(E\oplus F)$, where $\ti{\T}$ is the transfer functor for $\ti{\f}$, since $K_{E\oplus F}$-invariant sections of $\T(E\oplus F)|_{\ti{W}'}/O_h$ may be identified with sections of $\ti{\T}(E\oplus F)$. Finally $\sigma_{\ti{W}}=\ti{\T}_h(\ti{\sigma})$, and the $O_{\ti{q}}$-invariant part of the index of the latter is the definition of $\index^{\ti{\f}}(\ti{\sigma})$. Following the chain of identifications yields the equality of indices in the statement of the theorem.
\end{proof}

The following generalizes a well-known property of $K$-transversely elliptic symbols (cf. \cite[Theorem 3.1]{AtiyahTransEll}).
\begin{corollary}
Let $(Q,\f_Q)$ be a foliated principal $K$-bundle over $(M,\f)$, where $K$ is a compact Lie group. Let $\ti{\f}_Q$ be the foliation of $Q$ obtained by pulling back the foliation $\f$. An $\f$-transversely elliptic symbol $\sigma$ on $M$ pulls back to a $\k\ltimes \f_Q$-transversely elliptic symbol $\sigma_Q$ on $Q$, and
\begin{equation} 
\label{e:freeMorita}
\index^{\f_Q}_\k(\sigma_Q)^\k=\index^{\ti{\f}_Q}(\sigma_Q)=\index^\f(\sigma). 
\end{equation}
\end{corollary}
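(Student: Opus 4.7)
The plan is to deduce the first equality from Theorem~\ref{t:infinitesimalactionfoliation} applied to the foliated principal $K$-bundle, and the second from a direct identification of basic sections along $\pi\colon Q\to M$ (equivalently, from Proposition~\ref{p:Moritaindex}).

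First I would verify that the vertical $K$-action equips $(Q,\f_Q)$ with a transverse isometric action of $\k$. A $K$-invariant bundle-like metric on $Q$ inducing a $\k$-invariant transverse metric $g_Q$ on $N\f_Q$ exists by averaging (since $K$ is compact), and because $(Q,\f_Q)$ is a foliated principal bundle the projection $\pi$ restricts to a local diffeomorphism on each leaf, so $T\f_Q\cap \ker(d\pi)=0$. This shows that the anchor $\k_Q\to N\f_Q$ of $\k_Q\ltimes \f_Q$ is fibrewise injective and that $T\ti{\f}_Q=(d\pi)^{-1}(T\f)=T\f_Q+\ker(d\pi)$ coincides with the enlarged foliation generated by $\f_Q$ and the $\k$-orbit directions. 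Using a principal connection on $Q$ to split $N\f_Q\simeq \pi^*N\f\oplus \k_Q$, one identifies $N^*_\k\f_Q\simeq \pi^*N^*\f$, and under this identification the pulled-back symbol $\sigma_Q=\pi^*\sigma$ has support equal to $\pi^*\supp(\sigma)\subset N^*_\k\f_Q$; the $\f$-transverse ellipticity of $\sigma$ thus translates directly into $\k\ltimes \f_Q$-transverse ellipticity of $\sigma_Q$. Since $K$ is compact the closure $\ol{\rho(K)}=\rho(K)$ in the isometry group of the Molino manifold is automatically compact, so the hypotheses of Theorem~\ref{t:infinitesimalactionfoliation} (with $\g=0$, $\h=\k$) are all met and it yields the first equality $\index^{\f_Q}_\k(\sigma_Q)^\k=\index^{\ti{\f}_Q}(\sigma_Q)$.

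For the second equality, when $K$ is connected the projection $\pi\colon Q\to M$ is a complete equivariant isometric weak equivalence between the complete Riemannian foliations $(Q,\ti{\f}_Q)$ and $(M,\f)$ in the sense of Section~\ref{s:weakequiv} (take $B=Q$, $\pi_1=\id_Q$, $\pi_2=\pi$, noting that $N\ti{\f}_Q\simeq \pi^*N\f$ and hence that the Molino manifolds of $(Q,\ti{\f}_Q)$ and $(M,\f)$ are canonically $O_q$-equivariantly isomorphic to the same $W$). Proposition~\ref{p:Moritaindex} then gives $\index^{\ti{\f}_Q}(\sigma_Q)=\index^\f(\sigma)$. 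More directly, and with no connectedness hypothesis on $K$, pullback along $\pi$ induces a canonical isomorphism $C^\infty(Q,\pi^*E)^{\ti{\f}_Q}\simeq C^\infty(M,E)^\f$ of spaces of basic sections: by the very definition of the pullback foliation a $\ti{\f}_Q$-invariant section of $\pi^*E$ is constant along the $\pi$-fibres and restricts on any transversal to an $\f$-invariant section of $E$, and conversely. Under this identification the pulled-back transverse operator corresponds to the original, so the two Fredholm indices agree.

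I expect no serious obstacle: the argument is essentially a compatibility check combining Theorem~\ref{t:infinitesimalactionfoliation} with the functoriality of the transverse index under weak equivalence. The main work is bookkeeping—matching transverse metrics, basic connections, and principal symbols along the pullback $\pi$. The only mild wrinkle is that if $K$ is disconnected then $\pi$ lies strictly outside the weak-equivalence framework of Section~\ref{s:weakequiv} (which requires connected fibres); this is harmlessly handled either by reducing to $K^0$ (the statement depending only on $\k$) or by invoking the direct basic-sections identification above, which needs no such connectedness assumption.
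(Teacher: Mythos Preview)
Your proposal is correct and follows essentially the same two-step route as the paper: the first equality is exactly Theorem~\ref{t:infinitesimalactionfoliation} (with $\g=0$, $\h=\k$), and the second is the weak-equivalence invariance of Proposition~\ref{p:Moritaindex} applied to $\pi\colon Q\to M$. Your observation that the weak-equivalence framework of Section~\ref{s:weakequiv} formally requires connected fibres, and hence connected $K$, is a point the paper glosses over; your workaround via the direct identification of basic sections (or reduction to $K^0$) is a reasonable way to handle this.
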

\begin{proof}
The first equality is Theorem \ref{t:infinitesimalactionfoliation}. The second equality follows from invariance of the index under weak equivalences, Proposition \ref{p:Moritaindex}.
\end{proof}
\ignore{
For an example of an application of Theorem \ref{t:infinitesimalactionfoliation}, consider the case of a foliated principal $K$-bundle $(Q,\f_Q)\rightarrow (M,\f)$, where $K$ is a compact Lie group. An $\f$-transversely elliptic symbol $\sigma$ on $M$ pulls back to a $\k\ltimes \f$-transversely elliptic symbol $\sigma_Q$ on $Q$, and Theorem \ref{t:infinitesimalactionfoliation},
\begin{equation} 
\label{e:freeMorita}
\index_K(\sigma_Q)^K=\index(\ti{\sigma}_Q)=\index(\sigma) 
\end{equation}
where $\ti{\sigma}_Q$ is $\sigma_Q$ regarded as a $\ti{\f}$-transversely elliptic symbol and $\ti{\f}$ is the foliation generated by taking the $K$ flow out of the leaves of $\f$. The second equality is just Morita invariance of the index. 

\label{t:infinitesimalactionfoliation}
Let $(M,\f,g)$ be a Riemannian foliation with a transverse isometric action $a \colon \g \times \h \rightarrow \scr{K}(M/\f,g)$. Assume that the closure $\ol{G\times H}$ of the image of $G \times H$ in the isometry group of the Molino manifold $W$ is compact. Suppose the anchor map for the Lie algebroid $\h_M\ltimes \f$ is injective, and let $\ti{\f}$ be the corresponding foliation. Let $E,F$ be Hermitian $(\g\times \h)\ltimes \f$-equivariant $\f$-basic vector bundles and let $\sigma \in C^\infty(M,\Sym^{\le r}(N\f)\otimes \Hom(E,F))^\f$ be a $(\g\times \h)\ltimes \f$-transversely elliptic symbol. Let $\ti{\sigma} \in C^\infty(M,\Sym^{\le r}(N\ti{\f})\otimes \Hom(E,F))^{\ti{\f}}$ be the corresponding $\g\ltimes \ti{\f}$-transversely elliptic symbol. Then
\[ \index_{G\times H}(\sigma)^H=\index_G(\ti{\sigma}).\]
}

\subsection{Twisted products}\label{s:twistedprod}
Let $E\rightarrow M$ be a $\g\ltimes \f$-equivariant $\bZ_2$-graded basic Hermitian vector bundle. Let $\sigma \in C^\infty(M,\Sym^{\le r}(N\f)\otimes \End(E))^\f$ be an odd $\g\ltimes \f$-transversely elliptic symbol. Let $K$, $R$ be connected compact Lie groups and let $\pi_Q\colon Q \rightarrow M$ be a $\g\ltimes \f$-equivariant basic principal $K$-bundle. Let $V$ be a $K\times R$-manifold, let $F \rightarrow V$ be a $K\times R$-equivariant Hermitian vector bundle, and let $\sigma_V \in C^\infty(V,\Sym^{\le r}(TV)\otimes\End(F))$ be an odd $K\times R$-invariant $R$-transversely elliptic symbol. Choosing a basic connection on $Q$, the symbol $\sigma$ lifts to a $(\g\times \k)\ltimes \f_Q$-transversely elliptic symbol $\sigma_Q \in C^\infty(Q,\Sym^{\le r}(N\f_Q)\otimes\End(\pi_Q^*E))^{\f_Q}$. The product symbol
\[ \sigma_Q \wh{\boxtimes} 1+1\wh{\boxtimes}\sigma_V \in C^\infty(Q\times V,\Sym^{\le r}(N\f_Q \oplus TV)\otimes \End(\pi_Q^*E\wh{\boxtimes}F))^{\f_Q} \] 
is $(\g\times \k\times \r)\ltimes \f_Q$-transversely elliptic. It descends to a $(\g\times \r) \ltimes \f_\V$-transversely elliptic symbol $\sigma_\V$ on the associated bundle $\V=Q\times_K V$.

\begin{theorem}
\label{t:indextwistedproduct}
With notation and hypotheses as above, one has
\[ \index^{\f_\V}_{\g\times \mf{r}}(\sigma_\V)=\sum_{\chi \in \hat{R}}\sum_{\psi \in \hat{K}}n_{\chi\psi}\index^\f_\g(\sigma \otimes Q(\psi))\cdot \chi, \]
where $Q(\psi)=Q\times_K \psi$ is the associated $\g\ltimes \f$-equivariant vector bundle, and $n_{\chi\psi}$ is the multiplicity of $\chi\otimes \psi$ in $\index_{\k\times \mf{r}}(\sigma_V)$.
\end{theorem}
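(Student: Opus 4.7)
The plan is to reduce the formula to Atiyah's classical multiplicativity of the external product of $G$-transversely elliptic symbols, using the Molino transfer and the infinitesimal-freeness identity (Theorem \ref{t:infinitesimalactionfoliation}, equation \eqref{e:freeMorita}) that converts foliated invariants on the principal bundle $Q$ into invariants on the base $M$. The starting point is the observation that the free diagonal $K$-action on $Q\times V$ realizes $(\V,\f_\V)$ as the quotient of $(Q\times V,\ti{\f})$, where $\ti{\f}$ enlarges the pullback foliation $\f_{Q\times V}$ by the orbits of this diagonal action. By construction this quotient map is a $\g\times\r$-equivariant isometric complete weak equivalence of complete Riemannian foliations, and it intertwines $\sigma_\V$ with the enlargement of $\sigma_Q\wh{\boxtimes}1+1\wh{\boxtimes}\sigma_V$ to a $\ti{\f}$-transverse symbol.

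The first step is then to combine Proposition \ref{p:Moritaindex} with Theorem \ref{t:infinitesimalactionfoliation}, applied to the locally free transverse $\k$-action on $(Q\times V,\f_{Q\times V})$ (locally free because $K$ acts freely on $Q$ with orbits transverse to the horizontal lift $\f_Q$), to obtain
\[\index^{\f_\V}_{\g\times\r}(\sigma_\V)=\index^{\f_{Q\times V}}_{\g\times\k\times\r}\bigl(\sigma_Q\wh{\boxtimes}1+1\wh{\boxtimes}\sigma_V\bigr)^{\k}.\]
The second step is a multiplicativity identity for the $\f_{Q\times V}$-transverse index. Since $\f_{Q\times V}$ is trivial along the $V$-factor, the Molino transfer $\T$ acts only on the $Q$-factor, carrying $\sigma_Q\wh{\boxtimes}1+1\wh{\boxtimes}\sigma_V$ to the external product of $\T_h(\sigma_Q)$ with $\sigma_V$, and the classical multiplicative axiom for transversely elliptic symbols \cite{AtiyahTransEll} then supplies
\[\index^{\f_{Q\times V}}_{\g\times\k\times\r}\bigl(\sigma_Q\wh{\boxtimes}1+1\wh{\boxtimes}\sigma_V\bigr)=\index^{\f_Q}_{\g\times\k}(\sigma_Q)\cdot\index_{\k\times\r}(\sigma_V).\]
For the third step I expand $\index_{\k\times\r}(\sigma_V)=\sum_{\chi,\psi}n_{\chi\psi}\chi\otimes\psi$, take $\k$-invariants of the product, and recognize via equation \eqref{e:freeMorita} applied to the twisted symbol $\sigma\otimes Q(\psi)$ (which lifts to $\sigma_Q\otimes\psi$ on $Q$, with $\psi$ regarded as a trivial bundle) that $\bigl(\index^{\f_Q}_{\g\times\k}(\sigma_Q)\otimes\psi\bigr)^{\k}=\index^\f_\g(\sigma\otimes Q(\psi))$; substitution yields the stated formula.

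The principal difficulty I expect is making the multiplicativity step rigorous, as the Molino manifold of $(Q\times V,\f_{Q\times V})$ is not literally a Cartesian product of Molino manifolds and one must carefully track how the external product transfers under $\T$. A safe route is to first handle the case where $V$ and $M/\ol{\f}$ are compact, so that Theorem \ref{t:indexell} identifies both transverse indices with Fredholm indices of honest operators on the respective Molino manifolds and the K\"unneth-type multiplicativity of Fredholm indices applies directly, and then to extend to general transversely elliptic symbols by the standard homotopy arguments of Atiyah's theory.
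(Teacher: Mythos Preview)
Your proposal is correct and follows essentially the same route as the paper: weak equivalence to pass from $\V$ to $Q\times V$, Theorem~\ref{t:infinitesimalactionfoliation} to trade the enlarged foliation for a $\k$-invariant part, Atiyah's multiplicativity on the Molino manifold, and equation~\eqref{e:freeMorita} to descend the twisted indices back to $M$.

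Your anticipated difficulty, however, is not a difficulty at all. The foliation $\f_{Q\times V}$ is the product of $\f_Q$ with the one-leaf foliation of $V$, so the Molino manifold of $(Q\times V,\f_{Q\times V})$ \emph{is} literally the Cartesian product $W_Q\times V$ (the transverse frame bundle factors, and leaf closures in the product are products of leaf closures with points of $V$). Hence $\T_h$ carries the external product symbol to an honest external product on $W_Q\times V$, and Atiyah's multiplicative axiom \cite[Theorem~3.5]{AtiyahTransEll} applies directly without any K\"unneth detour or compactness reduction.
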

\begin{proof}
Let $\ti{\f}_\V$ be the pullback of the foliation $\f_\V$ under the map $Q\times V \rightarrow Q\times_K V=\V$. Since $\f_\V$-invariants on $\V$ are the same as $\ti{\f}_\V$-invariants on $Q\times V$, invariance of the index under weak equivalence (Proposition \ref{p:Moritaindex}) implies 
\[ \index_{\g\times \mf{r}}^{\f_\V}(\sigma_\V)=\index_{\g\times \mf{r}}^{\ti{\f}_\V}(\sigma_Q \wh{\boxtimes} 1+1\wh{\boxtimes}\sigma_V) \]
where on the right hand side we regard the product symbol as $(\g\times \r)\ltimes \ti{\f}_\V$-transversely elliptic. By Theorem \ref{t:infinitesimalactionfoliation},
\[ \index_{\g\times \mf{r}}^{\f_\V}(\sigma_\V)=\index_{\g\times \mf{r}\times \k}^{\f_Q}(\sigma_Q \wh{\boxtimes} 1+1\wh{\boxtimes}\sigma_V)^\k.\]
The foliation of the product manifold $Q\times V$ is the product of $\f_Q$ with the one-leaf foliation of $V$. In particular if $W_Q$ is the Molino manifold of $Q$, then the Molino manifold of $Q\times V$ is $W_Q\times V$. The multiplicative property of the index of transversely elliptic symbols (\cite[Theorem 3.5]{AtiyahTransEll}) on $W_Q\times V$ implies
\[ \index_{\g\times \k\times \mf{r}}^{\f_Q}(\sigma_Q \wh{\boxtimes} 1+1\wh{\boxtimes}\sigma_V)=\index_{\g\times \k}^{\f_Q}(\sigma_Q)\index_{\k\times \mf{r}}(\sigma_V).\]
Therefore
\begin{align*} 
\index_{\g\times \k\times \mf{r}}^{\f_Q}(\sigma_Q \wh{\boxtimes} 1+1\wh{\boxtimes}\sigma_V)^\k
&=\Big(\index^{\f_Q}_{\g\times \k}(\sigma_Q)\index_{\k\times \mf{r}}(\sigma_V)\Big)^\k \\
&=\sum_{\chi,\psi}n_{\chi\psi}\Big(\index^{\f_Q}_{\g\times \k}(\sigma_Q)\psi\Big)^\k\chi\\
&=\sum_{\chi,\psi}n_{\chi\psi}\index^\f_\g(\sigma\otimes Q(\psi))\chi,
\end{align*}
where in the last line we used equation \eqref{e:freeMorita}.
\end{proof}

The following instance of the above situation will be needed below. Let $V=\bR^{2r}$. Let $J$ denote a choice of complex structure on $V$, let $K=U(V,J)$ be the corresponding unitary group, and let $R\simeq U_1$ be the center of $K$. Let 
\[ \sigma_V(\xi)=c_{\wedge \ol{V}}(\xi-Jv) \]
where $c_{\wedge \ol{V}}$ is Clifford action on $\wedge \ol{V}$. Then $\sigma_V$ is $K\times R$-invariant and $R$-transversely elliptic. Atiyah \cite{AtiyahTransEll} showed that the $K\times R$-equivariant index of $\sigma_V$ is $\Sym(V)$, where the $R$-action is the grading of the symmetric algebra. Applying Theorem \ref{t:indextwistedproduct} yields the following.
\begin{corollary}
\label{c:symalg}
With notation and hypotheses as above, one has
\[ \index_{\g\times \mf{r}}^{\f_\V}(\sigma_\V)=\index_\g^{\f}(\sigma\otimes \Sym(\V)).\]
\end{corollary}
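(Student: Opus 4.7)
The plan is to apply Theorem~\ref{t:indextwistedproduct} directly to the given data $(V,K,R,\sigma_V)$ and then to repackage the resulting double sum using the natural degree grading on $\Sym(V)$.

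First, I would substitute Atiyah's computation $\index_{\k\times\mf{r}}(\sigma_V)=\Sym(V)$ into the formula provided by Theorem~\ref{t:indextwistedproduct}. Since $R=U_1$ is the center of $K=U(V,J)$, it acts on $V$ by scalar multiplication and therefore on $\Sym^d(V)$ by the character $\chi^d$. Consequently the multiplicities $n_{\chi,\psi}$ vanish unless $\chi=\chi^d$ for some $d\ge 0$, and in that case $n_{\chi^d,\psi}$ equals the multiplicity of the irreducible $K$-representation $\psi$ in $\Sym^d(V)$.

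Second, for each fixed $d$ I would use the $K$-decomposition $\Sym^d(V)=\bigoplus_{\psi\in\hat{K}}n_{\chi^d,\psi}\,\psi$ and apply the associated bundle construction over $Q$ to obtain an isomorphism
\[
\bigoplus_{\psi\in\hat{K}}n_{\chi^d,\psi}\,Q(\psi)\;\simeq\;Q\times_K\Sym^d(V)\;=\;\Sym^d(\V)
\]
of $\g\ltimes\f$-equivariant basic Hermitian vector bundles on $M$. Linearity of $\index^\f_\g(\sigma\otimes-)$ in the twisting bundle (which holds because the twisted symbols are homotopic to direct sums with a compatible $\f$-basic connection) then gives
\[
\sum_{\psi\in\hat{K}}n_{\chi^d,\psi}\,\index^\f_\g(\sigma\otimes Q(\psi))\;=\;\index^\f_\g(\sigma\otimes\Sym^d(\V)).
\]

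Finally, assembling these contributions yields
\[
\index^{\f_\V}_{\g\times\mf{r}}(\sigma_\V)\;=\;\sum_{d\ge 0}\index^\f_\g(\sigma\otimes\Sym^d(\V))\cdot\chi^d,
\]
which is precisely $\index^\f_\g(\sigma\otimes\Sym(\V))$ viewed in $R^{-\infty}(\g\times\mf{r})$, once $\Sym(\V)=\bigoplus_d\Sym^d(\V)$ is equipped with its canonical $R$-grading (with $R$ acting on $\Sym^d(\V)$ by $\chi^d$). The argument is essentially bookkeeping; the only point requiring care is the identification of the fibrewise $R$-action on $\Sym^d(\V)$ with the degree grading on $\Sym(V)$, but this is immediate because $R$ lies in the center of $K$ and hence acts on each fibre $V$ of $\V$ by the same scalars as on the model $V$.
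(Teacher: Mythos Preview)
Your proof is correct and follows exactly the paper's approach: the paper simply states that the corollary follows by ``applying Theorem~\ref{t:indextwistedproduct}'' to the Atiyah computation $\index_{\k\times\mf{r}}(\sigma_V)=\Sym(V)$, and your argument spells out precisely the bookkeeping this entails.
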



\ignore{
In this section we mention another perspective on transverse index theory of Riemannian foliations. The discussion makes use of the K-theory of $C^*$ algebras of Lie groupoids and will not be used in the remainder of the article. 
}

\ignore{
Unlike the groupoid integrating $\f$, the Lie groupoid $\G$ is proper. If $M/\ol{\f}$ (or equivalently $W$) is compact, then the action of $\G$ on $M$ is also cocompact. In the noncommutative geometry literature, it is well-known that in this case (proper and cocompact action) there is a canonical element $[p]\in K_0(C^*(\G))$ associated to a cutoff function for the $\G$ action. These elements are functorial for Morita equivalences. 

Let $D$ be an odd first order transversely elliptic differential operator acting on sections of a $\bZ_2$-graded basic Hermitian vector bundle $E$.  Assume for simplicity that $E$, $\T(E)$ have the same rank (but see Remark \ref{r:MolinoVariant}). Then $\pi_M^*E\simeq \pi_W^*\T(E)$. Essentially by construction, $P\times_W P$ acts on $\pi_W^*\T(E)$, and therefore $\G=(P\times_W P)/O_q$ acts on $E=\pi_M^*E/O_q$. In \cite{douglas1995index}, it is explained that $D$ determines a class in the K-homology of the $C^*$ algebra of a foliation groupoid integrating $T\f$. The main observation is the that symbol of $D$ is $\f$-invariant.
By continuity, the symbol of $D$ is $\G$-invariant, hence for similar reasons $D$ determines a class $[D] \in K^0(C^*(\G))$.
\begin{proposition}
Let $(M,\f,g)$ be a complete Riemannian foliation such that $M/\ol{\f}$ is compact. Let $D$ be an odd first order transversely elliptic differential operator acting on sections of a $\bZ_2$-graded basic Hermitian vector bundle $E$. Assume $E$, $\T(E)$ have the same rank. Then
\[ \index^\f(D)=\pair{[p]}{[D]}. \]
\end{proposition} 
\begin{proof}[Proof sketch.]
A Morita equivalence of Lie groupoids induces a strong Morita equivalence of $C^*$ algebras (a fortiori a KK-equivalence). Under the Morita equivalence $\G\sim_P O_q\ltimes W$, $[D]$ is sent to the class $[\T(D)]\in K^0(C^*(O_q\ltimes W))$ defined by the $O_q$-transversely elliptic operator $\T(D)$ (cf. \cite{JulgTransEll} for the K-homology class associated to an $O_q$-transversely elliptic operator). Likewise $[p]$ is sent to $[1] \in K_0(C^*(O_q\ltimes W))=K^0_{O_q}(W)$. By functoriality of the index pairing,
\[ \pair{[p]}{[D]}=\pair{[1]}{[\T(D)]}=\index_{O_q}(\T(D))^{O_q}=\index^\f(D).\]
\end{proof}
\begin{remark}
\label{r:MolinoVariant}
Suppose $E$ has rank $k$. Instead of $P$ one can work with the principal $O_q\times U_k$-bundle $P_E$ consisting of orthonormal frames of both $N\f$ and $E$. This leads to an `$E$-adapted' variant of the Molino manifold $W_E$ and transfer functor $\T_E$ (cf. \cite{BruningRiemFoliations}). One has
\[ \index^{\f}(D)=\index_{\mf{o}_q\times \mf{u}_k}(\T_E(D))^{O_q\times U_k} \]  
by the same argument as Theorem \ref{t:indexell}. The pullback of $E$ to $P_E$ is canonically trivial as a foliated vector bundle, and hence $E$, $\T_E(E)$ always have the same rank. Then $D$ determines a class in the K-homology of $C^*(\G_E)$, where $\G_E=(P_E\times_{W_E} P_E)/(O_q\times U_k)$ is the corresponding $E$-adapted Molino groupoid. If we let $[p_E] \in K_0(C^*(\G_E))$ be the class of the proper cocompact action, then $\index^\f(D)=\pair{[p_E]}{[D]}$, with the same proof.
\end{remark}

}
\ignore{
\texttt{I think that not only do we need to discuss the excision result, but we also need to extend the definition of the index to suitable open subsets of manifolds with complete Riemannian foliations. Simpler would be if we can prove that these open subsets have complete metrics. This seems clear in the case of a normal bundle to a saturated submanifold. What about more generally an open $\f$-invariant subset? But if we do the the Molino manifold of the open subset won't be canonically identified with an open subset of $W$?}
Let $U\subset M$ be a $\g\ltimes \f$-invariant open subset of $M$. The complement $M\backslash U$ is closed and $\f$-invariant, hence is a union of leaf closures. It follows that $U$ is also a union of leaf closures. Although $(U,\f|_U,g|_U)$ is not a complete Riemannian foliation, the Molino construction works Let $W_U=\pi_W(\pi_M^{-1}(U))\subset W$ be the corresponding open $O_q$-invariant subset of $W$.

Let $\sigma$ be a $\g\ltimes \f$-transversely elliptic symbol such that
}

\subsection{Excision}
Atiyah \cite{AtiyahTransEll} proved an excision property for the index of a transversely elliptic symbol that leads to a similar excision property in our context. By Proposition \ref{p:openMolino} (see also Remark \ref{r:openMolinoequivariant}), an $\f$-saturated open subset $M'\subset M$ of a complete Riemannian foliation $(M,\f,g)$ admits a complete bundle-like metric, and hence may itself be considered a complete Riemannian foliation.

\begin{theorem}
\label{t:excision}
Let $(M,\f,g)$ be a complete Riemannian foliation equipped with a transverse isometric $\g$ action, and with Molino manifold $W$. Let $E,F$ be $\g\ltimes \f$-equivariant basic Hermitian vector bundles and let $\sigma \in C^\infty(M,\Sym^{\le r}(N\f)\otimes \Hom(E,F))^\f$ be a $\g\ltimes \f$-transversely elliptic symbol. Assume that the closure $\ol{\rho(G)}$ of the image of $G$ in the isometry group of $W$ is compact. Let $M'\subset M$ be a $\g\ltimes \f$-saturated open subset with restricted foliation $\f'$. Suppose $\sigma$ is invertible on $N^*_\g \f \backslash N^*_\g \f'$. Then $\sigma$ restricts to a $\g\ltimes \f'$ transversely elliptic symbol $\sigma'$ over $M'$ and
\[ \index_\g^\f(\sigma)=\index_\g^{\f'}(\sigma').\]
\end{theorem}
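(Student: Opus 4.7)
The plan is to transfer everything to the Molino manifold $W$ and apply Atiyah's excision theorem \cite{AtiyahTransEll} for transversely elliptic symbols. Write $K=\ol{\rho_{E\oplus F}(G)}$, which is compact by assumption, and $\sigma_W=\T_h(\sigma)$, the $O_q\times K$-transversely elliptic symbol on $W$ produced by Theorem~\ref{t:Ttransell}. By Proposition~\ref{p:openMolino}, $(M',\f',g|_{M'})$ admits a complete bundle-like metric, so $\index^{\f'}_\g(\sigma')$ is defined. The Molino manifold of $M'$ identifies $O_q$-equivariantly with the open invariant subset $W':=\pi_W(\pi_M^{-1}(M'))\subset W$, under which the transfer $\T'_{h'}(\sigma')$ coincides with $\sigma_W|_{W'}$ and $\ol{\rho'_{E\oplus F}(G)}$ becomes a closed (hence compact) subgroup of $K$.

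Next I translate the invertibility hypothesis. Because $M'$ is $\g\ltimes\f$-saturated and $\f'=\f|_{M'}$, one has $N^*_\g\f'=N^*_\g\f|_{M'}$, so the hypothesis says $\supp(\sigma)\cap N^*_\g\f\subset N^*_\g\f|_{M'}$. Using Proposition~\ref{p:transfersubsets} and the injection $\T(h)^*\colon T^*_{\mf{o}_q\times\g}W\hookrightarrow \T(N^*_\g\f)$ supplied by the proof of Theorem~\ref{t:Ttransell}, I conclude that $\supp(\sigma_W)\cap T^*_{\mf{o}_q\times\g}W$ lies over $W'$. Since $K$-orbits in $W$ contain $G$-orbits, we have the inclusion of conormals $T^*_{\mf{o}_q\times\k}W\subset T^*_{\mf{o}_q\times\g}W$, so the compact set $\supp(\sigma_W)\cap T^*_{\mf{o}_q\times\k}W$ is contained in $W'$ as well.

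Atiyah's excision theorem for $O_q\times K$-transversely elliptic symbols, applied to the open invariant subset $W'\subset W$, then yields
\[
\index_{O_q\times K}(\sigma_W)=\index_{O_q\times K}(\sigma_W|_{W'})=\index_{O_q\times K}(\T'_{h'}(\sigma')).
\]
Taking $O_q$-invariants and restricting to $\g$ along the dense homomorphism $G\to K$ gives $\index^\f_\g(\sigma)=\index^{\f'}_\g(\sigma')$, as required.

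The main obstacle is the identification, used at the outset, of the Molino manifold of $M'$ with $W'\subset W$ along with the naturality of $\T$, $h$ and the transverse $\g$-action under this restriction. Since $M'$ is only assumed $\g\ltimes\f$-saturated, not $\ol{\f}$-saturated, individual $\f$-leaf closures in $M$ may cross the boundary between $M'$ and its complement, so the identification is not entirely obvious. Nonetheless, the Molino construction on $M'$ uses only the algebra of $\f_P$-invariant smooth functions on the open set $P|_{M'}\subset P$, which is the localization of the corresponding algebra on $P$; this localization argument produces an $O_q$-equivariant diffeomorphism of the Molino manifold of $M'$ with $W'$, and parallel naturality arguments handle the transfer functor, the horizontal splitting~\eqref{e:anchorsplitting}, and the restricted transverse action. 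Once this preparatory step is carried out carefully, the proof reduces to the invocation of Atiyah's excision above.
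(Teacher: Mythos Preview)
Your proof is correct and follows essentially the same route as the paper: transfer to the Molino manifold, show the support condition lies over $W'$ via Theorem~\ref{t:Ttransell} and Proposition~\ref{p:transfersubsets}, and invoke Atiyah's excision. One small clarification: the obstacle you flag in the last paragraph is not actually present---since $M'$ is open and $\f$-saturated, its closed $\f$-saturated complement is automatically a union of leaf closures (any limit point of a leaf in the complement that fell into the open set $M'$ would force points of that leaf into $M'$), so $M'$ is already $\ol{\f}$-saturated; this is exactly the observation the paper records in the proof of Proposition~\ref{p:openMolino}, which then furnishes the identification $W'\simeq\pi_W(\pi_M^{-1}(M'))$ directly.
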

\begin{proof}
Since $\sigma$ is invertible on $N^*_\g \f \backslash N^*_\g \f'$, the restriction $\sigma'$ is transversely elliptic, and therefore $\index_\g^{\f'}(\sigma')$ is well-defined. The Molino manifold $W'$ of $M'$ is identified canonically (Proposition \ref{p:metricdependence}) with the open subset $\pi_W(\pi_M^{-1}(M'))\subset W$. By functoriality, the $\ol{\rho_{E\oplus F}(G)}\times O_q$ transversely elliptic symbol $\T_h(\sigma)$ on $W$ restricts to $\T_h(\sigma')$ on $W'$. By Theorem \ref{t:Ttransell}, there is a canonical inclusion
\begin{equation}
\label{e:exinclu}
\supp(\T_h(\sigma))\cap T_{\g\times \mf{o}_q}^*W \hookrightarrow \T(\supp(\sigma)\cap N^*_\g\f).
\end{equation} 
By assumption $\supp(\sigma)\cap N^*_\g\f\subset N^*_\g \f'$. Applying $\T$ we obtain $\T(\supp(\sigma)\cap N^*_\g\f)\subset \T(N^*_\g \f')$. It follows from this and \eqref{e:exinclu} that the support of $\T_h(\sigma)$ in $T^*_{\g\times \mf{o}_q}W$ lies entirely over $W'$:
\[ \supp(\T_h(\sigma))\cap T_{\g\times \mf{o}_q}^*W \subset T^*_{\g\times \mf{o}_q}W'.\] 
Therefore by Atiyah's excision theorem \cite[Theorem 3.7]{AtiyahTransEll}, 
\[ \index_{\ol{\rho_{E\oplus F}(G)}\times O_q}(\T_h(\sigma))=\index_{\ol{\rho_{E\oplus F}(G)}\times O_q}(\T_h(\sigma')),\]
from which the result follows.
\end{proof}

\section{Localization}\label{s:localization}
As an application of Theorem \ref{t:Ttransell} and its corollaries, we prove (Corollary \ref{c:generalloc}) a localization formula for the index of a transverse Dirac operator. Refinements of the formula will be explained in Sections \ref{sec:abel}, \ref{s:nonabelloc}. Throughout this section, $(M,\f,g)$ denotes a transversely oriented complete Riemannian foliation of even codimension $q$, with a transverse isometric action $a \colon \g \rightarrow \scr{K}(M/\f,g)$, and such that $M/\ol{\f}\simeq W/O_q$ is compact. The transverse metric $g$ will sometimes be used to identify $N\f\simeq N^*\f$.

\subsection{Taming maps and localization}\label{s:taming}
\begin{definition}
A \emph{taming map} (cf. \cite{Braverman2002,KarshonHarada} for similar terminology) is a $\g\ltimes \f$-equivariant map
\[ \mu\colon M \rightarrow \g \]
i.e. $\mu$ is $\f$-invariant and satisfies $a(X)\mu=\ad_X\circ \mu$ for all $X \in \g$.  Via the transverse action, $\mu$ generates a transverse vector field $a(\mu) \in \mf{X}(M/\f)$. (It is not transverse Killing, in general.)
\end{definition}
The \emph{localizing set} $Z_\mu$ of $\mu$ is the vanishing locus of the transverse vector field $a(\mu)$:
\[ Z_\mu=a(\mu)^{-1}(0)=\bigcup_{\beta \in \g} M_\beta \cap \mu^{-1}(\beta),\]
where $M_\beta$ is the fixed leaf set of $\beta$.
\begin{example}
\label{ex:beta}
Let $\beta \in \mf{z}\subset \g$ be central. Then the constant map $\mu_m=\beta$ for all $m \in M$ is a taming map. In this case $a(\beta)$ is transverse Killing.
\end{example}
\begin{example}
\label{ex:kirwan}
An important setting is that where one has a $\g\ltimes \f$-equivariant \emph{abstract moment map} (cf. \cite{GinzburgGuilleminKarshon}) 
\[ \mu \colon M \rightarrow \g^* \]
for the $\g$-action, meaning that $\pair{\mu}{\xi}$ is locally constant on $M_\xi$ for all $\xi \in \g$. Suppose $\g$ has an invariant inner product (this occurs if and only if $\g$ is the Lie algebra of a compact Lie group). Using the inner product to identify $\g^*\simeq \g$, $\mu$ becomes a taming map.
\end{example}

Let $D\colon C^\infty_{S,\f}\rightarrow C^\infty_{S,\f}$ be a $\g\ltimes \f$-equivariant transverse Dirac operator with symbol $\sigma_D \in C^\infty(M,\Sym^1(N\f)\otimes \End(S))^\f$. Let $\mu$ be a taming map. Let $U$ be a $\g\ltimes \f$-invariant open neighborhood of $Z_\mu=\{m \in M|a(\mu)_m=0\}$. The symbol $\sigma_{D,U,\mu} \in C^\infty(U,\Sym^{\le 1}(N\f|_U)\otimes \End(S|_U))^{\f_U}$ defined by
\[ \sigma_{D,U,\mu}(\xi)=\sigma_D(\xi-a(\mu)) \]
is $\g\ltimes \f|_U$-transversely elliptic; indeed as $a(\mu)$ is tangent to the transverse $\g$-orbit directions, the intersection $\supp(\sigma_{D,U,\mu})\cap N^*_\g \f|_U=Z_\mu$ (viewed as a subset of the $0$-section), and $Z_\mu/\ol{\f}|_U\simeq \pi_W(\pi_M^{-1}(Z_\mu))/SO_q$ is compact, being a closed subset of the compact space $W/SO_q$. 
\begin{corollary}
\label{c:generalloc}
$\index_\g^\f(D)=\index_\g^\f(\sigma_{D,U,\mu})$.
\end{corollary}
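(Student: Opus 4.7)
The plan is to build a path from $\sigma_D$ to $\sigma_{D,U,\mu}$ in two steps: a homotopy of $\g\ltimes \f$-transversely elliptic symbols on all of $M$, followed by the excision theorem (Theorem \ref{t:excision}) to restrict to $U$. Let $\sigma_{D,\mu}(\xi) := c(\xi - a(\mu)) \in C^\infty(M,\Sym^{\le 1}(N\f)\otimes \End(S))^\f$ denote the globally-defined version of the deformed symbol, and consider the family $\sigma_t(\xi) := c(\xi - t\cdot a(\mu))$ for $t \in [0,1]$, interpolating between $\sigma_0 = \sigma_D$ and $\sigma_1 = \sigma_{D,\mu}$. Each $\sigma_t$ is $\g\ltimes \f$-equivariant because $\mu$ is.

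The key step is verifying that this one-parameter family, viewed as a single symbol on $[0,1]\times M$ with the pulled-back foliation, is uniformly $\g\ltimes \f$-transversely elliptic. The support of $\sigma_t$ is $\{\xi = t\cdot a(\mu)_m\} \subset N^*\f$. Using the transverse metric to identify $N\f \simeq N^*\f$, the subspace $N^*_\g\f|_m \subset N^*\f|_m$ is the orthogonal complement of the $\g$-orbit subspace $a_m(\g) \subset N\f|_m$, since by definition $N^*_\g\f$ annihilates the transverse $\g$-orbit directions. Hence if $\xi = t\cdot a(\mu)_m$ also lies in $N^*_\g\f$, pairing with $a(\mu)_m \in a_m(\g)$ gives $t\cdot\|a(\mu)_m\|^2 = 0$, forcing $t = 0$ or $m \in Z_\mu$. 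Consequently the support of the family inside $[0,1]\times N^*_\g\f$ lies entirely in the zero section and projects to $(\{0\}\times M) \cup ([0,1]\times Z_\mu)$; modulo $\ol{\f}$ this is a closed subset of the compact space $[0,1]\times M/\ol{\f}$, hence compact.

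Homotopy invariance of $\index_\g^\f$ for this family now follows by transferring via $\T_h$ (Theorem \ref{t:Ttransell}) to a family of $\ol{\rho_S(G)}\times O_q$-transversely elliptic symbols on $[0,1]\times W$ and invoking Atiyah's homotopy invariance on the compact Molino manifold $W$; this yields $\index_\g^\f(\sigma_D) = \index_\g^\f(\sigma_{D,\mu})$, and by Theorem \ref{t:indexell} the left side equals $\index_\g^\f(D)$. Specializing the calculation above to $t=1$ shows that $\supp(\sigma_{D,\mu}) \cap N^*_\g\f$ is the zero section over $Z_\mu \subset U$, so $\sigma_{D,\mu}$ is invertible on $N^*_\g\f \setminus N^*_\g \f|_U$; excision (Theorem \ref{t:excision}) then gives $\index_\g^\f(\sigma_{D,\mu}) = \index_\g^{\f|_U}(\sigma_{D,U,\mu})$, closing the chain.

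The main obstacle is precisely the uniform transverse ellipticity of the homotopy: the support of $\sigma_t$ jumps from the full zero section at $t=0$ down to the graph of $t\cdot a(\mu)$ for $t > 0$, and one must confirm that this non-uniform behaviour is still compatible with the compactness hypothesis underlying homotopy invariance. The metric orthogonality argument above handles this, using crucially both the $\g\ltimes\f$-equivariance of $\mu$ (so that $a(\mu)$ really does lie in the orbit directions killed by $N^*_\g\f$) and the standing assumption that $M/\ol{\f}$ is compact (to control the $t=0$ slice).
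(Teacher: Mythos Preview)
Your proof is correct and follows essentially the same two-step approach as the paper: a homotopy $\sigma_D(\xi - ta(\mu))$ through $\g\ltimes\f$-transversely elliptic symbols, followed by excision to $U$. You have supplied the details the paper leaves implicit, in particular the orthogonality argument showing $\supp(\sigma_t)\cap N^*_\g\f$ lies in the zero section over $Z_\mu$ (for $t>0$) and the uniform compactness of the family's support.
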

\begin{proof}
The index of the $\g\ltimes \f$-transversely elliptic symbol $\sigma_D(\xi-ta(\mu))$, $t \in [0,1]$ is independent of $t$. The index of the symbol $\sigma_D(\xi-a(\mu))$ equals the index of the $\g\ltimes \f|_U$-transversely elliptic symbol $\sigma_{D,U,\mu}$ by the excision property, Theorem \ref{t:excision}.
\end{proof}
\noindent In the case of a trivial foliation and $\mu$ as in Example \ref{ex:kirwan}, Corollary \ref{c:generalloc} is due to Paradan \cite{ParadanRiemannRoch}.

\begin{remark}
Interpreted on the Molino manifold, Corollary \ref{c:generalloc} localizes the index to a neighborhood of the vanishing locus of the section $\T(a(\mu))$ of $\T(N\f)$. As $\mu$ is $\f$-invariant, it determines a $\g$-valued function $\T(\mu)\colon W \rightarrow \g$, and using the $\g$-action on $W$, $\T(\mu)$ generates a vector field on $W$. However the vanishing locus of this vector field is usually different from the vanishing locus of $\T(a(\mu))$. 
\end{remark}

\subsection{Abelian localization}\label{sec:abel}
We consider the taming map $\mu=\beta$ in Example \ref{ex:beta} in more detail and derive a formula for the fixed-point contributions in Corollary \ref{c:generalloc}. The analogous result in the case of a trivial foliation goes back to work of Atiyah, Paradan, and Vergne (cf. \cite[Proposition 7.2]{WittenNonAbelian} and discussion therein). An analogous result for equivariant cohomology of Riemannian foliations was proved by Lin-Sjamaar \cite{SjamaarLinLocalization}, extending the Atiyah-Bott-Berline-Vergne localization formula to the foliated setting. Let $\beta \in \mf{z}$ be a fixed element in the centre of $\g$. Let $D$ be a transverse Dirac operator acting on $\f$-invariant sections of a transverse spinor bundle $S$.

The fixed point set $M_\beta=a(\beta)^{-1}(0)$ is a closed $\g\ltimes \f$-invariant submanifold of $M$. Let $\f_\beta=\f|_{M_\beta}$ denote the induced foliation of $M_\beta$. The metric $g$ on $N\f$ restricts to a metric $g_\beta$ on $N\f_\beta$. Let $NM_\beta=TM|_{M_\beta}/TM_\beta=N\f|_{M_\beta}/N\f_\beta$ denote the normal bundle to $M_\beta$ in $M$. Then one has an orthogonal decomposition
\[ N\f|_{M_\beta}=N\f_\beta\oplus NM_\beta.\]
Projecting the transverse Levi-Civita connection onto $NM_\beta$ yields a basic connection on $NM_\beta$.

By the linearization theorem of \cite{SjamaarLinLocalization}, $\beta$ induces an invertible linear bundle endomorphism $a_\beta \in C^\infty(M_\beta,\mf{so}(NM_\beta))$. Let
\[ J_\beta=\frac{a_\beta}{|a_\beta|} \in SO(NM_\beta) \]
be the corresponding complex structure on the vector bundle $NM_\beta$.  Note that $a_\beta$ (and hence also $J_\beta$) is $\f_\beta$-invariant, because $\g$ acts by transverse vector fields.  Also $a_\beta$ commutes with $J_\beta$, hence becomes a section of $\mf{u}(NM_\beta)=\mf{u}(NM_\beta,J_\beta)$.

Let $N^*M_\beta^{1,0}$, $N^*M_\beta^{0,1}$ denote the $+\i$, $-\i$ $J_\beta$-eigenbundles (subbundles of $N^*M_\beta\otimes \bC$).  The transverse metric $g$ and the complex structure $J_\beta$ determine Hermitian inner products. The complex exterior algebra bundle $\wedge N^*M_\beta^{0,1}$ is a spinor module for $\bC l(N^*M_\beta)$, and we obtain an induced Clifford module
\[ S_\beta=\Hom_{\Cl}(\wedge N^*M_\beta^{0,1},S|_{M_\beta}) \]
for $\bC l(N^*\f_\beta)$. By construction there is a canonical isomorphism
\[ S_\beta \wh{\otimes} \wedge\!N^*M_\beta^{0,1} \xrightarrow{\sim} S|_{M_\beta} \]
given by composition. Moreover $S_\beta$ has a canonical $\g$-equivariant basic connection induced from the connections on $\wedge N^*M_\beta^{0,1}$, $S|_{M_\beta}$. Let $D_\beta$ be the corresponding transverse Dirac operator for $M_\beta$ acting on invariant sections of $S_\beta$.

Let $\Sym(N^*M_\beta^{1,0})$ denote the complex symmetric algebra bundle.  The graded components $\Sym^k(N^*M_\beta^{1,0})$ are finite dimensional $\g$-equivariant basic complex vector bundles over $M_\beta$.  Let $\index_\g^{\f_\beta}(D_\beta \otimes \Sym(N^*M_\beta^{1,0})) \in R^{-\infty}(\g)$ denote the sum over all $k \ge 0$ of the index of $D_\beta$ twisted by $\Sym^k(N^*M_\beta^{1,0})$ (see the definition in Theorem \ref{t:indexell}, which is now applied to the Riemannian foliation $(M_\beta,\f_\beta,g_\beta)$).
\begin{theorem}[Abelian localization for $\index_\g^\f(D)$]
\label{thm:abel}
With hypotheses and notation as above, the index of $D$ is given by
\[ \index_\g^\f(D)=\index_\g^{\f_\beta}(D_\beta \otimes \Sym(N^*M_\beta^{1,0})).\]
\end{theorem}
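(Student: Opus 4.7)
The plan is to specialize Corollary \ref{c:generalloc} to the constant taming map $\mu\equiv\beta$ from Example \ref{ex:beta}, which localizes the index to a tubular neighborhood of $M_\beta$, and then to identify the localized symbol as a twisted product of $\sigma_{D_\beta}$ with Atiyah's Bott symbol along the fibers of $NM_\beta$, so that Corollary \ref{c:symalg} applies. This is a direct foliated analogue of Paradan's abelian localization argument \cite{ParadanRiemannRoch}.

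The localizing set of $\mu\equiv\beta$ is $Z_\mu=M_\beta$. By the $\f$-invariant tubular neighborhood theorem of \cite{SjamaarLinLocalization}, one may choose the $\g\ltimes\f$-invariant open neighborhood $U$ of $M_\beta$ to be $\g\ltimes\f$-equivariantly diffeomorphic to a neighborhood of the zero section of $NM_\beta\to M_\beta$, with $\f|_U$ the pullback foliation of $\f_\beta$. The basic connection on $NM_\beta$ induced from the transverse Levi-Civita connection, together with the orthogonal splitting $N^*\f|_{M_\beta}=N^*\f_\beta\oplus N^*M_\beta$ and the isomorphism $S|_{M_\beta}\simeq S_\beta\wh{\otimes}\wedge N^*M_\beta^{0,1}$, yields a product decomposition of the transverse spinor module over $U$. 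Writing $\xi=\xi_h+\xi_v$ for a cotangent vector split into horizontal and vertical components, and identifying $a_\beta v\in NM_\beta$ with an element of $N^*M_\beta$ via the metric, I expect to obtain
\[
\sigma_{D,U,\beta}(\xi,v)=\sigma_{D_\beta}(\xi_h)\wh{\boxtimes}1+1\wh{\boxtimes}c_{\wedge N^*M_\beta^{0,1}}(\xi_v-a_\beta v),
\]
modulo zeroth order terms from the connection, which do not affect the symbol class.

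Since $a_\beta=|a_\beta|J_\beta$ with $|a_\beta|>0$ everywhere on $M_\beta$, the linear homotopy $\lambda_t:=(1-t)|a_\beta|+t$ stays positive on $[0,1]$, and the deformed symbols $\sigma_{D_\beta}\wh{\boxtimes}1+1\wh{\boxtimes}c_{\wedge N^*M_\beta^{0,1}}(\xi_v-\lambda_t J_\beta v)$ remain $\g\ltimes\f|_U$-transversely elliptic throughout, as the intersection of their support with $N_\g^*\f|_U$ is the zero section of $NM_\beta$ for every $t$ (by the same argument used in Corollary \ref{c:generalloc}). At $t=1$ the vertical factor is exactly the Atiyah--Bott symbol $c_{\wedge N^*M_\beta^{0,1}}(\xi_v-J_\beta v)$ of Corollary \ref{c:symalg}, with $Q$ the unitary frame bundle of $(NM_\beta,J_\beta)$, $K=U_r$, and $V=\bC^r$. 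Combining Corollary \ref{c:generalloc}, homotopy invariance of the transverse index, and Corollary \ref{c:symalg} applied with $(M,\f)=(M_\beta,\f_\beta)$, $\sigma=\sigma_{D_\beta}$, $\V=NM_\beta$ then yields the theorem. The main technical point is verifying the product decomposition of the symbol and its compatibility with the spinor module splitting and the Clifford relations; this is standard in the unfoliated setting \cite{ParadanRiemannRoch} and carries over here using Proposition \ref{prop:foliatelift}, which ensures that horizontal lifts of foliate vector fields remain foliate.
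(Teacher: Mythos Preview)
Your proposal is correct and follows essentially the same approach as the paper's proof: localize via Corollary~\ref{c:generalloc}, pass to the normal bundle via the tubular neighborhood and linearization theorems of \cite{SjamaarLinLocalization}, homotope $a_\beta$ to its phase $J_\beta$, and invoke Corollary~\ref{c:symalg}. The one point the paper makes more explicit is that the transverse metric induced on $NM_\beta$ by the tubular neighborhood embedding need not agree with the fiberwise-plus-base metric, so an additional straight-line homotopy of transverse metrics (and hence of Clifford actions) is used before the symbol takes the exact twisted-product form; your phrase ``modulo zeroth order terms'' slightly understates this, but the fix is exactly the homotopy you already anticipate.
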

\begin{proof}
By Corollary \ref{c:generalloc}, $\index_\g^\f(D)=\index_\g^\f(\sigma_{D,U,\mu})$. By the linearization and tubular neighborhood theorems of \cite{SjamaarLinLocalization}, we may as well assume $U$ is the total space of $NM_\beta$, and that the vector field $a(\beta)$ is the vector field on $U$ generated by the linear endomorphism $a_\beta$ of $NM_\beta$. The transverse metric induced on the total space of $NM_\beta$ by the tubular neighborhood embedding is straight-line homotopic to the metric induced from the metric on the fibres of the vector bundle with connection $NM_\beta \rightarrow M_\beta$ and the transverse metric on $M_\beta$. The homotopy of transverse metrics determines a homotopy of the transverse Clifford actions. A further homotopy replaces $a_\beta$ by its phase $J_\beta$ in the polar decomposition. The upshot of these modifications is that the proof is reduced to computing the index of the type of model symbol on the total space of $\V=NM_\beta$ considered in Corollary \ref{c:symalg}, from which the result follows.
\end{proof}

\subsection{Non-abelian localization}\label{s:nonabelloc}
We consider the taming map $\mu$ in Example \ref{ex:kirwan} in more detail. We will derive a formula for the contributions in an `abelianized' version of Corollary \ref{c:generalloc}. The analogous result in the case of a trivial foliation is due to Paradan \cite{ParadanRiemannRoch}.

Let $\g$ be a real Lie algebra admitting a positive definite invariant inner product (this occurs if and only if $\g$ is the Lie algebra of a compact Lie group). Then $\g=[\g,\g]\oplus \mf{z}$ where $\mf{z}$ is the center. Let $\t \subset \g$ be a maximal abelian subalgebra and fix a positive chamber $\t_+$. The set of finite dimensional irreducible complex representations of $\g$ is parametrized by $\wh{\g}=P_+\times \mf{z}^*$ where $P_+\subset \t_+^*\cap [\g,\g]^*$ is the set of dominant weights of $[\g,\g]$. The ring $R(\g)$ is the free abelian group on $\wh{\g}$ with product given by tensor product of representations; it is isomorphic to the character ring $R(G)$ of finite dimensional semisimple representations of the connected, simply connected integration $G$ of $\g$. 

Any element $\chi \in R(\g)\simeq R(G)$ is uniquely determined by its finitely supported multiplicity function 
\[ m_\chi \colon \wh{\g}=P_+\times \mf{z}^*\rightarrow \bZ.\]
Let $P$ be the weight lattice of $[\g,\g]$. Note that $m_\chi$ has a unique extension 
\[ \hat{m}_\chi\colon P\times \mf{z}^*\rightarrow \bZ \] 
which is alternating under the $\rho$-shifted action of the Weyl group. The Weyl character formula shows that this extension is the multiplicity function for $\chi|_T \cdot \wedge \n_- \in R(T)$, where $T=\exp(\t)\subset G$, $\n_-$ is the direct sum of the negative root spaces, and $\wedge \n_-$ is regarded as a $\bZ_2$-graded representation of $T$. Note that, unlike $G$, the group $T$ is usually not simply connected. Thus  remembering that $T$ acts on a vector space (rather than $\t$ only) is additional information about what infinitesimal weights can occur, and $R(T)$ identifies with a subspace of $R(\t)$.

Let 
\[ \mu_\g \colon M \rightarrow \g^*\simeq \g \] 
be an abstract moment map. Let 
\[ \mu=\pr_\t\circ \mu_\g \colon M\rightarrow \t, \qquad \mu_\perp\colon M \rightarrow \t^\perp \]
be the projections of $\mu_\g$ to $\t$, $\t^\perp$ respectively. Note that for any $\xi\in \t$, $\mu_\g^{-1}(\xi)=\mu^{-1}(\xi)\cap \mu_\perp^{-1}(0)$.

The localizing set for the taming map $\mu$ is
\[ Z_\mu=\bigcup_{\beta \in \B} M_\beta \cap \mu^{-1}(\beta), \qquad \B=\{\beta\in \t|M_\beta \cap \mu^{-1}(\beta)\ne \emptyset.\] 
Since $\mu$ induces a map $\mu_W \colon W \rightarrow \t$ and $W$ is compact, the image of $\mu$ in $\t$ is compact.

Suppose $\beta \ne 0$. The abstract moment map condition shows that the image of any component of $M_\beta$ that intersects $\mu^{-1}(\beta)$ non-trivially is contained in the affine hyperplane $\beta+\tn{ann}(\beta)$, where $\tn{ann}(\beta)\subset \t^*$ is the annihilator of $\beta$. Since the Molino manifold is compact, $M$ has finitely many $\t$ orbit type strata. It follows that finitely many distinct affine subspaces can appear in this way, and hence the set $\B$ is discrete.

Let $D$ be a $\g\ltimes \f$-equivariant transverse Dirac operator acting on $\f$-invariant sections of a transverse spinor bundle $S$. It has an equivariant index, $\index_\g^\f(D) \in R(\g)=R(G)$. Via the Weyl character formula, $\index_\g^\f(D) \in R(G)$ is determined by $\index_T^\f(D)\cdot \wedge \n_- \in R(T)$, and it will be convenient to study the latter instead. (Here the notation $\index_T^\f(D)$ simply means $\index_\t^\f(D)$, except we are remembering the additional information that the result lies in $R(T)\subset R(\t)$.) By Corollary \ref{c:generalloc},
\[ \index_T^\f(D)=\index_T^\f(\sigma_{D,U,\mu}) \]
where $U$ is a $\t_M\ltimes \f$-invariant open neighborhood of $Z_\mu$. We may choose $U=\sqcup_\beta U(\beta)$, where $U(\beta)$ is an open neighborhood of $M_\beta \cap \mu^{-1}(\beta)$. Let $\sigma_{D,U(\beta),\mu}$ be the restriction of the $a(\mu)$-deformed symbol to $U(\beta)$. Taking $U(\beta)$ smaller if necessary, it may be identified, via a $\t_M\ltimes \f$-equivariant tubular neighborhood embedding, with the normal bundle $\pi_\beta \colon NU(\beta)_\beta\rightarrow U(\beta)_\beta$ to the $\beta$-fixed point set $U(\beta)_\beta\subset M_\beta$. By a straight-line homotopy, the taming map $\mu|_{U(\beta)}$ can be replaced with the taming map $\beta+\pi_\beta^*\mu_\beta$, where $\mu_\beta=\mu|_{U(\beta)_\beta}-\beta$, in the deformation of the symbol $\sigma_D|_{U(\beta)}$ without changing the index. A further homotopy as in the previous section leads to a symbol which is a twisted product. Applying Corollary \ref{c:symalg} as in the proof of Theorem \ref{thm:abel}, results in the following.
\begin{theorem}
\label{t:nonabelloc1}
The $T$-equivariant index of $D$ is given by
\[ \index_T^\f(D)=\sum_{\beta \in \B}\index_T^{\f_\beta}(\sigma_{D_\beta,U(\beta)_\beta,\mu_\beta} \otimes \Sym(N^*U(\beta)^{1,0}_\beta)) \]
where $U(\beta)_\beta\subset M_\beta$ is an open neighborhood of $M_\beta\cap \mu^{-1}(\beta)$ in $M_\beta$, and $\sigma_{D_\beta,U(\beta)_\beta,\mu_\beta}$ is the induced $\t_M\ltimes \f$-transversely elliptic symbol on $U(\beta)_\beta$ obtained by deforming the symbol of $D_\beta$ with the taming map $\mu_\beta$.
\end{theorem}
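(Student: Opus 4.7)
The plan is to deform the $T$-equivariant index of $D$ using the taming map $\mu$ so that it localizes to $Z_\mu = \bigcup_{\beta \in \B} M_\beta \cap \mu^{-1}(\beta)$, then split the contributions according to $\beta$, and identify each piece with a twisted Dirac index on the $\beta$-fixed submanifold. The starting point is Corollary \ref{c:generalloc} applied to $\mu \colon M \to \t$, which yields $\index_T^\f(D) = \index_T^\f(\sigma_{D,U,\mu})$ for any $\t_M \ltimes \f$-invariant open neighborhood $U$ of $Z_\mu$.

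Next I would exploit the discreteness of $\B$. Since $\mu$ descends to $\mu_W \colon W \to \t$ and $W$ is compact, its image is compact, so $\B$ is in fact finite. Thus I can shrink $U$ to a disjoint union $U = \bigsqcup_{\beta \in \B} U(\beta)$, where $U(\beta)$ is a small $\t_M \ltimes \f$-invariant open neighborhood of $M_\beta \cap \mu^{-1}(\beta)$. Additivity of the index over disjoint invariant opens (a direct consequence of Theorem \ref{t:excision}) gives
\[ \index_T^\f(D) = \sum_{\beta \in \B} \index_T^\f(\sigma_{D,U(\beta),\mu}). \]

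For each individual $\beta$ I would reduce the local contribution to a twisted-product symbol in three steps. First, apply the foliated tubular neighborhood theorem of \cite{SjamaarLinLocalization} to identify $U(\beta)$ equivariantly with the total space of the normal bundle $\pi_\beta \colon NU(\beta)_\beta \to U(\beta)_\beta$, where $U(\beta)_\beta = U(\beta) \cap M_\beta$. Second, homotope the taming map linearly from $\mu|_{U(\beta)}$ to $\beta + \pi_\beta^*\mu_\beta$ with $\mu_\beta = \mu|_{U(\beta)_\beta}-\beta$; the index is unchanged because throughout the homotopy the vanishing locus of $a(\mu_t)$ stays inside the compact-mod-leaf-closure set $M_\beta \cap \mu^{-1}(\beta)$. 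Third, homotope the infinitesimal rotation $a_\beta$ on the normal fibres to its unitary phase $J_\beta$, exactly as in the proof of Theorem \ref{thm:abel}, putting the symbol into the product form to which Corollary \ref{c:symalg} applies. That corollary then identifies
\[ \index_T^\f(\sigma_{D,U(\beta),\mu}) = \index_T^{\f_\beta}\!\left(\sigma_{D_\beta, U(\beta)_\beta, \mu_\beta} \otimes \Sym(N^*U(\beta)_\beta^{1,0})\right), \]
and summing over the finite set $\B$ yields the stated formula.

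The main obstacle is the careful verification that every homotopy of taming maps and of symbols preserves $\t_M \ltimes \f$-transverse ellipticity, i.e. that the deformed symbol has support whose intersection with $N^*_\t \f$ remains compact modulo leaf closures. The key point is that the zero set of $a(\mu_t)$ stays close to $M_\beta \cap \mu^{-1}(\beta)$ uniformly in $t \in [0,1]$, for $U(\beta)$ chosen sufficiently small, which is where the discreteness of $\B$ and the abstract moment map property of $\mu_\g$ are essential (the latter guarantees that distinct components of $Z_\mu$ lie on distinct affine hyperplanes, so they can be separated by disjoint invariant opens). A secondary technical point is the construction of the $\t_M \ltimes \f$-equivariant tubular neighborhood, but this is already provided by the linearization theorem of \cite{SjamaarLinLocalization} used in the abelian case.
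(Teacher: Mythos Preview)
Your proposal is correct and follows essentially the same route as the paper: localize via Corollary~\ref{c:generalloc}, split $U$ into disjoint pieces $U(\beta)$ indexed by the discrete (hence finite) set $\B$, then on each piece use the tubular neighborhood theorem of \cite{SjamaarLinLocalization}, the straight-line homotopy of the taming map to $\beta+\pi_\beta^*\mu_\beta$, and the polar-decomposition homotopy $a_\beta\rightsquigarrow J_\beta$ from the proof of Theorem~\ref{thm:abel}, finishing with Corollary~\ref{c:symalg}. The only cosmetic difference is that you make the finiteness of $\B$ and the additivity over disjoint opens explicit, whereas the paper leaves these implicit.
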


After multiplying by $\wedge \n_-$ we obtain the formula
\[ \index_T^\f(D)\cdot \wedge \n_-=\sum_{\beta \in \B}\index_T^{\f_\beta}((\sigma_{D_\beta,U(\beta)_\beta,\mu_\beta}\hat{\boxtimes} 1) \otimes \Sym(N^{1,0}U(\beta)_\beta)),\]
where $\sigma_{D_\beta,U(\beta)_\beta,\mu_\beta}\hat{\boxtimes}1$ acts on $S_\beta \hat{\boxtimes}\wedge \n_-$. Note that $\n_-$ is the $-\i$-eigenspace of a complex structure on $\t^\perp$. Let $\sigma_{\n_-} \colon \t^\perp \rightarrow \End(\wedge \n_-)$ be the standard Clifford action. Deform the symbol further to
\[ \sigma_{D_\beta,U(\beta)_\beta,\mu_\beta}\hat{\boxtimes} 1+\mu_\perp^*\sigma_{\n_-}.\]
Since the first term acts trivially on $\wedge \n_-$ and the second term acts trivially on $S_\beta$, the support of the new symbol is $M_\beta\cap \mu^{-1}(\beta)\cap \mu_\perp^{-1}(0)=M_\beta\cap \mu_\g^{-1}(\beta)$. This yields the following refinement of Theorem \ref{t:nonabelloc1} for the product $\index_T^\f(D)\cdot \wedge \n_-$.
\begin{theorem}
\label{t:nonabelloc2}
Let $\B_\g$ be the subset of $\B$ consisting of $\beta$ such that $M_\beta\cap \mu_\g^{-1}(\beta)\ne \emptyset$. Then
\[ \index_T^\f(D) \cdot \wedge \n_-=\sum_{\beta \in \B_\g}\index_T^{\f_\beta}(\sigma_{D_\beta,U(\beta)_\beta,\mu_\beta} \otimes \Sym(N^*U(\beta)^{1,0}_\beta))\cdot \wedge \n_-. \]
\end{theorem}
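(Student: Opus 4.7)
The plan is to start from the identity in Theorem \ref{t:nonabelloc1}, multiplied on both sides by $\wedge \n_-$, and to introduce a further symbol deformation in each summand that kills the contributions from $\beta \in \B \setminus \B_\g$. Write $\tau_\beta = \sigma_{D_\beta,U(\beta)_\beta,\mu_\beta} \otimes \Sym(N^*U(\beta)^{1,0}_\beta)$ for the full transversely elliptic symbol appearing on the right-hand side. Treating $\wedge \n_-$ as a $\bZ_2$-graded trivial basic vector bundle carrying the standard $T$-representation, I would first rewrite
$$\index_T^{\f_\beta}(\tau_\beta)\cdot \wedge \n_- = \index_T^{\f_\beta}\bigl(\tau_\beta \wh{\boxtimes} \id_{\wedge \n_-}\bigr),$$
using that tensoring with a trivial basic bundle multiplies the index by its $T$-character.

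Next I would introduce the family
$$\sigma_t = \tau_\beta \wh{\boxtimes} \id_{\wedge \n_-} + t\cdot \id \wh{\boxtimes} \mu_\perp^*\sigma_{\n_-}, \qquad t \in [0,1].$$
Since the two summands act by odd Clifford-type endomorphisms on distinct $\bZ_2$-graded factors, the super-tensor product structure forces them to anticommute; consequently
$$\sigma_t^2 = \tau_\beta^2 \wh{\boxtimes} \id + t^2 \cdot \id \wh{\boxtimes} |\mu_\perp|^2,$$
so $\sigma_t$ fails to be invertible at $(m,\xi)$ precisely when $\tau_\beta(m,\xi)$ is non-invertible \emph{and} $\mu_\perp(m) = 0$. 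To apply homotopy invariance, I must verify $T\ltimes \f_\beta$-transverse ellipticity throughout the family: the support of $\sigma_t$ in $N^*_T \f_\beta$ is contained in that of $\sigma_0$, which by the proof of Theorem \ref{t:nonabelloc1} lies in the zero section above $M_\beta \cap \mu^{-1}(\beta)$ and has compact image modulo $\ol{\f_\beta}$ (being a closed subset of the compact set $W_\beta/SO_{q_\beta}$). Thus $\{\sigma_t\}$ is a homotopy of transversely elliptic symbols, and its index is independent of $t$.

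At $t = 1$ the support of $\sigma_1$ in $N^*_T \f_\beta$ lies over $M_\beta \cap \mu^{-1}(\beta) \cap \mu_\perp^{-1}(0) = M_\beta \cap \mu_\g^{-1}(\beta)$. For $\beta \in \B \setminus \B_\g$ this set is empty, so by the excision theorem \ref{t:excision} applied with $M' = \emptyset$ the index of $\sigma_1$ vanishes. Combining with the previous paragraph gives $\index_T^{\f_\beta}(\tau_\beta)\cdot \wedge \n_- = 0$ for $\beta \notin \B_\g$, and substituting into Theorem \ref{t:nonabelloc1} (multiplied by $\wedge \n_-$) yields the asserted formula. The main technical point to verify carefully will be that transverse ellipticity is preserved along the entire homotopy; this reduces via Theorem \ref{t:Ttransell} to the observation that the support of $\sigma_t$ only shrinks with $t$, so the compactness of its image in $N^*_T\f_\beta/\ol{\f_\beta}$ is inherited from the base case $t=0$ treated in Theorem \ref{t:nonabelloc1}.
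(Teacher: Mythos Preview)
Your argument is correct and follows essentially the same route as the paper: starting from Theorem \ref{t:nonabelloc1} multiplied by $\wedge\n_-$, one absorbs $\wedge\n_-$ as a trivial graded bundle factor and then deforms the symbol by adding the order-zero term $\mu_\perp^*\sigma_{\n_-}$ acting on that factor; the anticommutation in the graded tensor product forces the new support inside $M_\beta\cap\mu^{-1}(\beta)\cap\mu_\perp^{-1}(0)=M_\beta\cap\mu_\g^{-1}(\beta)$, and the terms with $\beta\notin\B_\g$ drop out. Your write-up is slightly more explicit than the paper's about the homotopy (verifying that the support only shrinks, hence transverse ellipticity persists) and about invoking excision for the empty-support case, but the mechanism is identical.
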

This is a version of the non-abelian localization formula, as the contributions are labelled by the set $\B_\g$ associated to $\mu_\g$. Often $\B_\g$ is much smaller than $\B$. For example, in the case $\f$ is trivial and $M=G\cdot \xi$ is the coadjoint orbit of a regular element $\xi \in \t$, $\B_\g$ is the Weyl orbit of $\xi$, whereas $\B$ is much larger. 

\section{Quantization commutes with reduction}\label{s:qr0}
In this section we apply non-abelian localization (Theorem \ref{t:nonabelloc2}) to prove a $[Q,R]=0$ theorem for transversely symplectic Riemannian foliations. For further discussion and details regarding transverse Hamiltonian actions and reduction, see Appendix \ref{s:reductiontheory}.

\subsection{Quantization}\label{s:quantization}
Let $(M,\f)$ be a foliation of even codimension $q$. A \emph{transverse symplectic structure} is a closed $\f$-basic 2-form $\omega$ such that $\omega^{q/2}\ne 0$. Then $\omega$ induces a non-degenerate skew-symmetric bilinear form on the fibers of the normal bundle $N\f$, making the latter into a symplectic vector bundle. Suppose further that $N\f$ admits an $\omega$-compatible $\f$-invariant complex structure $J$. Then $g(-,-)=\omega(-,J-)$ is a Euclidean structure on the fibers of $N\f$, hence $(M,\f,g)$ becomes a Riemannian foliation. Any two of $(\omega,J,g)$ determines the third, and as in the case of a trivial foliation, this implies that any two choices of $J$ are connected by a homotopy. We will refer to the data $(M,\f,g,\omega,J)$ as a \emph{transversely symplectic Riemannian foliation}. If $(M,\f,g)$ is a complete Riemannian foliation, then $(M,\f,g,\omega,J)$ will be called a \emph{transversely symplectic complete Riemannian foliation}.

The transverse almost complex structure $J$ determines a $\Cl(N^*\f,g)$-module 
\[ (\wedge N^*\f^{0,1},c\colon \bC l(N^*\f,g)\rightarrow \End(\wedge N^*\f^{0,1})).\] 
The latter has a canonical basic Clifford connection, induced by the transverse Levi-Civita connection. Let $(L,\nabla^L)$ be a basic Hermitian line bundle. Twisting by $(L,\nabla^L)$ we obtain a transverse spinor module with basic Clifford connection
\[ S=\wedge N^*\f^{0,1}\otimes L.\]
This data determines a transverse Spin$_c$ Dirac operator
\[ D\colon C^\infty(M,S)^\f \rightarrow C^\infty(M,S)^\f.\]
In case $(M,\f)$ is equipped with a transverse isometric action of a Lie algebra $\g$,
\[ a \colon \g \rightarrow \scr{K}(M,\f,g), \]
and $(L,\nabla^L)$ is a $\g\ltimes \f$-equivariant basic Hermitian line bundle, then $S$ and $D$ become $\g\ltimes \f$-equivariant.
\begin{definition}
Let $(M,\f,g,\omega,J,a)$ be a transversely symplectic complete Riemannian foliation equipped with a transverse isometric $\g$-action. Let $(L,\nabla^L)$ be a $\g\ltimes \f$-equivariant basic Hermitian line bundle. Suppose $M/\ol{\f}$ is compact. We define $RR_\g(M,\f,L)$ (`Riemann-Roch') to be the $\g$-equivariant index:
\[ RR_\g(M,\f,L)=\index^\f_\g(D)\in R(\g).\]
As for the index, if $G'$ is a particular integration of $\g$ and $RR_\g(M,\f,L) \in R(G')\subset R(G)=R(\g)$, we use the notation $RR_{G'}(M,\f,L)$ when we wish to remember this additional information.
\end{definition}
An important special case is when $(L,\nabla^L)$ is a basic \emph{prequantum line bundle}, meaning that the first Chern form of $(L,\nabla^L)$ is $\omega$: $(\i/2\pi)(\nabla^L)^2=\omega$. A transverse $\g$ action on $M$ is called \emph{Hamiltonian} if there exists a $\g_M\ltimes\f$-equivariant moment map:
\[ \mu_\g \colon M \rightarrow \g^*, \qquad \iota(a(\xi))\omega=-\d\pair{\mu_\g}{\xi}.\]
The moment map equation implies that the transverse $\g$ action preserves $\omega$. If $(L,\nabla^L)$ is a basic Hermitian prequantum line bundle, then $\mu_\g$, $\nabla^L$ determine a lift of the transverse $\g$ action to $L$ via Kostant's formula:
\[ \xi \mapsto \nabla^L_{a(\xi)}+2\pi \i\pair{\mu_\g}{\xi}.\]
With this choice of lift, $RR_\g(M,\f,L)$ is our definition of the (equivariant) \emph{quantization} of $(M,\f,\omega,L)$.

\subsection{Weak equivalence and quantization}\label{s:weakequivquant}
Recall that a weak equivalence $(B,\pi_1,\pi_2)$ of foliations $(M_i,\f_i)$, $i=1,2$ induces an isomorphism $\pi_1^*N\f_1\simeq \pi_2^*N\f_2$. Thus in the case of transversely symplectic foliations, it makes sense to require that this isomorphism intertwine the pullbacks of the transverse symplectic forms $\omega_1,\omega_2$, and this is what we shall mean by a \emph{symplectic weak equivalence}. Similarly if $L_i$ is a prequantum line bundle for $(M_i,\f_i,\omega_i)$, $i=1,2$, it makes sense to require in addition that $B$ intertwines $L_1,L_2$. Then Proposition \ref{p:Moritaindex} immediately implies the following.
\begin{corollary}
\label{c:quantweakequiv}
The quantization of transversely symplectic complete Riemannian foliations defined in Section \ref{s:quantization} is invariant under equivariant complete weak equivalences that intertwine prequantum line bundles.
\end{corollary}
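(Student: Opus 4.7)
The plan is to observe that the quantization $RR_\g(M_i,\f_i,L_i)=\index_\g^{\f_i}(D_i)$ is defined as the equivariant index of a transverse Spin$_c$ Dirac operator, and by Theorem \ref{t:indexell} this coincides with the index $\index_\g^{\f_i}(\sigma_{D_i})$ of its principal symbol. Proposition \ref{p:Moritaindex} already shows that the index of a transversely elliptic symbol is preserved under equivariant complete isometric weak equivalences intertwining the underlying $\g\ltimes\f$-equivariant basic Hermitian vector bundles. The task therefore reduces to arranging compatible geometric data on $M_1$ and $M_2$ so that the Dirac symbols $\sigma_{D_1}$, $\sigma_{D_2}$ are intertwined by $(B,\pi_1,\pi_2)$, at which point the corollary follows directly.

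The first step is to produce compatible almost complex structures. Choose any $\omega_1$-compatible $\f_1$-invariant transverse complex structure $J_1$ on $N\f_1$, and consider its pullback $\pi_1^*J_1$, a section of $\End(\pi_1^*N\f_1)=\End(\pi_2^*N\f_2)$ that is compatible with $\pi_1^*\omega_1=\pi_2^*\omega_2$. Because $\pi_1^*J_1$ is $\pi_1^*\f_1=\pi_2^*\f_2$-invariant, and the fibers of $\pi_2$ are contained in the leaves of $\pi_2^*\f_2$, this section is constant along the fibers of the submersion $\pi_2$ and hence descends to an $\omega_2$-compatible $\f_2$-invariant complex structure $J_2$ on $N\f_2$. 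Setting $g_i=\omega_i(-,J_i-)$, the identity $\pi_1^*g_1=\pi_2^*g_2$ holds by construction, so $(B,\pi_1,\pi_2)$ becomes an equivariant complete isometric weak equivalence in the sense of Section \ref{s:weakequiv}.

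With this choice, the transverse Levi-Civita connections on $N\f_1$ and $N\f_2$ are intertwined by the weak equivalence (they are uniquely determined by the transverse metrics), and so are the induced basic Clifford connections on $\wedge N^*\f_i^{0,1}$. Combined with the given intertwining of the $\g\ltimes\f_i$-equivariant basic prequantum line bundles $(L_i,\nabla^{L_i})$, the twisted spinor bundles $S_i=\wedge N^*\f_i^{0,1}\otimes L_i$ with their basic Clifford connections match under the weak equivalence, whence the principal symbols $\sigma_{D_i}$ of the transverse Spin$_c$ Dirac operators are intertwined. Proposition \ref{p:Moritaindex} then yields $\index_\g^{\f_1}(\sigma_{D_1})=\index_\g^{\f_2}(\sigma_{D_2})$, and Theorem \ref{t:indexell} converts this to the equality of the quantizations. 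Finally, by Remark \ref{r:indexmetricindependence} the quantization does not depend on the choice of compatible $J_i$, so the equality holds for arbitrary choices on each side.

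The main subtlety in this argument is the descent of $\pi_1^*J_1$ to a complex structure $J_2$ on $N\f_2$; once this is carried out, every remaining step is a naturality check, and the conclusion reduces to a direct application of Proposition \ref{p:Moritaindex}.
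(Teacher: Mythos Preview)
Your proposal is correct and follows essentially the same route as the paper, which proves the corollary by the single remark that Proposition~\ref{p:Moritaindex} ``immediately implies'' it. You have simply made explicit the step the paper leaves implicit: choosing $J_2$ on $M_2$ so that $\pi_1^*J_1=\pi_2^*J_2$ under $\pi_1^*N\f_1\simeq\pi_2^*N\f_2$, which upgrades the symplectic weak equivalence to an isometric one and puts you in the setting of Proposition~\ref{p:Moritaindex}. One small point: since the statement is in the $\g$-equivariant context, you should take $J_1$ to be $\g\ltimes\f_1$-invariant (not merely $\f_1$-invariant); the descent argument then goes through unchanged because the weak equivalence is equivariant.
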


\subsection{Localization and the $[Q,R]=0$ theorem}
If $\g$ admits an invariant inner product, then we are in the situation of Section \ref{s:nonabelloc}. In particular $RR_\g(M,\f,L) \in R(\g)$ is fully determined by $RR_T(M,\f,L)\cdot \wedge \n_-\in R(T)$ (where recall $T \subset G$ is a maximal torus in the simply connected integration of $\g$), and Theorem \ref{t:nonabelloc2} gives the formula
\begin{equation}
\label{e:nonabelQ}
RR_T(M,\f,L) \cdot \wedge \n_-=\sum_{\beta \in \B_\g}\index^{\f_\beta}_T(\sigma_{D_\beta,U(\beta)_\beta,\mu_\beta} \otimes \Sym(N^*U(\beta)^{1,0}_\beta))\cdot \wedge \n_-. 
\end{equation}
In the $[Q,R]=0$ theorem the contribution of $\beta=0$ in \eqref{e:nonabelQ} is important. For this case $M_\beta=M$, $U(0)$ is a small neighborhood of $\mu^{-1}(0)$ in $M$, and the corresponding $\t_M\ltimes \f$-transversely elliptic symbol on $U(0)$ in \eqref{e:nonabelQ} is simply $\sigma_{D,U(0),\mu}(\xi)=\sigma_D(\xi-a(\mu))|_{U(0)}$.
\begin{lemma}
\label{l:GmultTmult}
Let $\g$ be a Lie algebra equipped with an invariant inner product. Let $(M,\f,\omega,J,g)$ be a transversely symplectic complete Riemannian foliation with compact Molino manifold. Let $(L,\nabla^L)$ be a prequantum line bundle with basic connection. Suppose $M$ is equipped with a Hamiltonian action by transverse Killing fields with moment map $\mu_\g \colon M \rightarrow \g^*$. Then
\[ RR_\g(M,\f,L)^\g=\Big(RR_T(M,\f,L)\cdot \wedge \n_-\Big)^T=\Big(\index^\f_T(\sigma_{D,U(0),\mu})\cdot \wedge \n_-\Big)^T.\]
\end{lemma}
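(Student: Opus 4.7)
The lemma breaks into two independent equalities. The first, $RR_\g(M,\f,L)^\g=(RR_T(M,\f,L)\cdot \wedge \n_-)^T$, is a Weyl-theoretic identity valid for any $\chi\in R(G)$: namely, $\chi^G=(\chi|_T\cdot \wedge \n_-)^T$. I would prove this by writing $\chi=\sum_\lambda m_\lambda \chi_{V_\lambda}$ and combining the Weyl character formula $\chi_{V_\lambda}|_T=A(\lambda+\rho)/A(\rho)$ with the Weyl denominator identity $A(\rho)=e^\rho\cdot \wedge \n_-$, where $A(\mu)=\sum_{w\in W}\tn{sign}(w)e^{w\mu}$ is the Weyl numerator. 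Together these give $\chi_{V_\lambda}|_T\cdot\wedge \n_-=A(\lambda+\rho)e^{-\rho}=\sum_{w\in W}\tn{sign}(w)e^{w(\lambda+\rho)-\rho}$, and since $\rho$ is regular the equation $w(\lambda+\rho)=\rho$ forces $\lambda=0$ and $w=1$. Hence the coefficient of $e^0$ extracts exactly $m_0=\chi^G$.

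For the second equality I would invoke the nonabelian localization formula (Theorem \ref{t:nonabelloc2}) to write
\[
\index_T^\f(D)\cdot \wedge \n_-=\sum_{\beta\in \B_\g}R_\beta\cdot \wedge \n_-,\qquad R_\beta=\index_T^{\f_\beta}\bigl(\sigma_{D_\beta,U(\beta)_\beta,\mu_\beta}\otimes \Sym(N^*U(\beta)^{1,0}_\beta)\bigr).
\]
The $\beta=0$ summand collapses to $\index_T^\f(\sigma_{D,U(0),\mu})\cdot \wedge \n_-$, since at $\beta=0$ one has $M_0=M$, $\mu_0=\mu$, $\f_0=\f$, $D_0=D$, and $N^*U(0)^{1,0}_0=0$, which trivializes the $\Sym$ factor. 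Therefore the second equality reduces to verifying the vanishing $(R_\beta\cdot \wedge \n_-)^T=0$ for every $\beta\in \B_\g\setminus\{0\}$.

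Establishing this vanishing is the main obstacle, and I would follow Paradan's nonabelian localization strategy from the symplectic case. The complex structure $J_\beta=a_\beta/|a_\beta|$ is chosen so that the $T$-weights appearing in $\Sym(N^*U(\beta)^{1,0}_\beta)$ are polarized along $\beta$, while $\sigma_{D_\beta,U(\beta)_\beta,\mu_\beta}$ localizes onto $\mu_\beta^{-1}(0)$, i.e.\ $\mu=\beta$. Applying abelian localization (Theorem \ref{thm:abel}) to the $T$-action on $M_\beta$, one finds that every $T$-weight $\lambda$ appearing with nonzero multiplicity in $R_\beta$ satisfies an inequality of the form $\pair{\lambda-\beta}{\beta}\ge 0$; in particular $\lambda\ne 0$. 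To extract the vanishing of $(R_\beta\cdot \wedge \n_-)^T$, I would then reinterpret $R_\beta$ as the $T$-restriction of a canonical element of $R^{-\infty}(G)$---morally, a $\g$-equivariant quantization of a normal model around the coadjoint orbit through $\beta$---and apply the first equality of the lemma to that $G$-character. This reduces the desired vanishing to showing that the $G$-multiplicity of the trivial representation in the model quantization is zero, which follows from the polarization (equivalently, from the fact that $0$ is not in the image of the model moment map when $\beta\ne 0$). Summing over $\B_\g\setminus\{0\}$ then leaves only the $\beta=0$ contribution, completing the proof.
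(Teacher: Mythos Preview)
Your treatment of the first equality and the identification of the $\beta=0$ term are fine and match the paper. The gap is in the vanishing argument for $\beta\ne 0$.

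Your proposed route---reinterpret $R_\beta$ as the $T$-restriction of a $G$-equivariant quantization, then invoke ``$0$ is not in the model moment map image''---does not work. First, $R_\beta$ is the index of a symbol deformed by the taming map $\mu_\beta=\mu|_{U(\beta)_\beta}-\beta$, where $\mu=\pr_\t\circ\mu_\g$; this deformation is only $\t$-equivariant, not $\g$-equivariant, so there is no canonical $G$-character whose restriction is $R_\beta$. Second, even granting such a reinterpretation, concluding ``zero not in the moment image implies trivial $G$-multiplicity'' is precisely the $[Q,R]=0$ statement you are in the process of proving, so the argument is circular. Third, your intermediate inequality $\pair{\lambda-\beta}{\beta}\ge 0$ for weights of $R_\beta$ already relies on the Kostant/prequantum condition (which contributes $|\beta|^2$ via $L|_Y$), and you do not mention this; but more importantly, this inequality alone does not kill the $T$-invariants of $R_\beta\cdot\wedge\n_-$, since $\wedge\n_-$ contributes negative-root weights that can push $\pair{\lambda}{\beta}$ below zero.

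The paper's argument is direct and avoids any $G$-reinterpretation: it shows that $\beta/\i$ acts with \emph{strictly positive} eigenvalue on every weight of the bundle $S_\beta\otimes\Sym(N^*U(\beta)_\beta^{1,0})\boxtimes\wedge\n_-$ over $Y=U(\beta)_\beta$. The pieces are: $\Sym$ contributes $\ge 0$ by choice of $J_\beta$; $L|_Y$ contributes $|\beta|^2>0$ by Kostant; $\wedge N^{0,1}\f_Y$ contributes $0$. The only nontrivial estimate is that the positive eigenvalue of $\det(N^*_+Y^{1,0})$ dominates the worst negative contribution $\inf_{I\subset\mf{R}_-}\sum_{\alpha\in I}\pair{\beta}{\alpha}$ from $\wedge\n_-$. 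This is where the moment map and symplectic structure enter: at a point $p\in Y\cap\mu_\g^{-1}(\beta)$, equivariance of $\mu_\g$ gives an embedding $\g_\beta^\perp\hookrightarrow N_pY$ along the transverse orbit directions, and a short computation with the moment map equation shows that the induced complex structure on $\g_\beta^\perp$ is $\omega$-compatible, so the positive-root eigenvalues $\pair{\alpha}{\beta}$ (for $\alpha\in\mf{R}_+$ with $\pair{\alpha}{\beta}>0$) all occur among the eigenvalues on $N^*_+Y^{1,0}$. This exactly cancels the worst $\wedge\n_-$ contribution.
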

\begin{proof}
The first equality is the Weyl character formula. For the second equality we must show that the terms of \eqref{e:nonabelQ} with $0\ne \beta \in \B_\g$ have trivial $T$-invariant part. The proof is similar to the case with trivial $\f$, cf. \cite{ParadanRiemannRoch} or \cite[Lemma 4.17]{LLSS2}. Fix $\beta$ and let $Y=U(\beta)_\beta$. It suffices to show that $\beta/\i$ acts with strictly positive eigenvalues on the $\bZ_2$-graded bundle
\[ \Hom_{\Cl}(\wedge N^*_{J_\beta}Y^{0,1},\wedge N^*\f^{0,1}|_Y)\otimes L|_Y\otimes \Sym(N^*_{J_\beta}Y^{1,0}))\boxtimes \wedge \n_-,\]
over $Y$. This is clear for $\Sym(N^*_{J_\beta}Y^{1,0})$ by the choice of $J_\beta$. By the Kostant condition, $\beta/\i$ acts on $L|_Y$ with eigenvalue $|\beta|^2>0$. We have
\[ \Hom_{\Cl}(\wedge N^{0,1}_{J_\beta}Y,\wedge N^{0,1}\f|_Y)=\wedge N^{0,1}\f_Y\hat{\otimes}\tn{det}(N^*_+Y^{1,0}) \]
where $N^*_+Y^{1,0}\subset N^*Y^{1,0}$ is the subbundle where the eigenvalue of $\beta/\i$ is positive. Note $\beta/\i$ acts trivially on $N^*\f_Y^{0,1}$. Thus we are left show that the eigenvalue for the action of $\beta/\i$ on $\tn{det}(N^*_+Y^{1,0})$ is greater than or equal to the worst negative contribution from $\wedge \n_-$:
\[ \inf_{I\subset \mf{R}_-} \sum_{\alpha \in I}\pair{\beta}{\alpha}.\]
Without loss of generality it suffices to consider the `worst case' where $\beta \in \t_+$ and $I=\mf{R}_-$ achieves the minimum. Let $p \in Y\cap \mu_\g^{-1}(\beta)$. Then $\g$-equivariance of $\mu_\g$ implies that $\g_\beta^\perp \hookrightarrow N_pY$ as a subset of the $\g$ transverse orbit directions. The phase in the polar decomposition of $\ad_\beta \in \End(\g_\beta^\perp)$ is a complex structure with $(\g_\beta^\perp)^{1,0}$ a sum of root spaces $\g_\alpha$ with $\pair{\alpha}{\beta}>0$. Under the orbit embedding, this complex structure is compatible with $\omega_p$ since for $0\ne \xi \in \g_\beta^\perp$ we have
\[ \omega_p(a(\xi),a(\ad_\beta \xi))=-\d\pair{\mu_\g}{\xi}(a(\ad_\beta \xi))=-\pair{\ad_\beta^2\xi}{\xi}=|\ad_\beta \xi|^2>0 \]
where we used the equivariance property of the moment map. Thus the $\pair{\alpha}{\beta}$, for $\alpha \in \mf{R}_+$ such that $\pair{\alpha}{\beta}>0$, occur in the list of eigenvalues of $\beta/\i$ on $N^*_+Y^{1,0}$.
\end{proof}

\begin{definition}
Let $(M,\omega,\f,\mu_\g)$ be a transverse Hamiltonian $\g$-space and suppose $0\in \g^*$ is a regular value of the moment map. The \emph{symplectic quotient} (at $0$) is the transversely symplectic manifold $(M_0=\mu_\g^{-1}(0),\f_0,\omega_0=\omega|_{M_0})$ where $\f_0$ is the codimension $q-\dim(\g)$ foliation of $M_0$ generated by $\f|_{M_0}$ and $\g$. If $L$ is a prequantum line bundle on $M$, then $L_0=L|_{M_0}$ is a prequantum line bundle on $M_0$.
\end{definition}
Note also that a transverse metric $g$ for $(M,\f)$ induces a transverse metric $g_0$ for $(M_0,\f_0)$, and that the completeness property is preserved (by the Hopf-Rinow theorem). For further discussion of symplectic quotients, see Appendix \ref{s:reductiontheory}.

The tangent map $T\mu_\g$ induces a $\g\ltimes \f$-equivariant isomorphism
\begin{equation} 
\label{e:tgtmap}
T\mu_\g|_{M_0} \colon NM_0 \rightarrow M_0 \times \g^*\simeq M_0\times \g.
\end{equation}
Let 
\[ c_{\g_\bC} \colon \Cl(\g\oplus \g)\rightarrow \wedge \g_\bC \]
be the Clifford action. Define a transverse spinor bundle $S_0$ for $M_0$ by
\[ S_0=\Hom_{\Cl(\g\oplus\g)}(\wedge \g_\bC,S|_{M_0}) \]
where $M_0\times(\g\oplus \g) \hookrightarrow N\f|_{M_0}$ as the $\g$-orbit and $NM_0$ directions. The index of the corresponding transverse Dirac operator $D_0$ on $M_0$ defines the quantization of the reduced space $RR(M_0,\f_0,L_0) \in \bZ$.

\begin{theorem}
\label{t:qr0}
Let $\g$ be a Lie algebra equipped with an invariant inner product. Let $(M,\f,\omega,J,g)$ be a transversely symplectic complete Riemannian foliation such that $M/\ol{\f}$ is compact. Suppose $M$ is equipped with a $\g\ltimes \f$-equivariant basic prequantum line bundle $(L,\nabla^L)$ and a Hamiltonian action by transverse Killing fields with moment map $\mu_\g \colon M \rightarrow \g^*$. Suppose $0$ is a regular value of $\mu_\g$ and let $(M_0,\f_0,\omega_0,L_0)$ be the reduced space and reduced prequantum line bundle. Then
\[ RR_\g(M,\f,L)^\g=RR(M_0,\f_0,L_0).\]
\end{theorem}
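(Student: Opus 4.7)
The plan is to apply Lemma~\ref{l:GmultTmult} and then analyze the only surviving non-abelian localization contribution, the one from $\beta=0$, via an explicit model on the normal bundle to $M_0$. By that lemma,
\[
RR_\g(M,\f,L)^\g \;=\; \bigl(\,\index^\f_T(\sigma_{D,U(0),\mu})\cdot \wedge \n_-\,\bigr)^T,
\]
for any $\t_M\ltimes\f$-invariant open neighborhood $U(0)$ of $M_0=\mu_\g^{-1}(0)$. Since $0$ is a regular value of $\mu_\g$, $M_0$ is a closed $\f$-saturated submanifold and $T\mu_\g$ furnishes a $\g\ltimes\f$-equivariant isomorphism $NM_0\simeq M_0\times \g^*\simeq M_0\times \g$. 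Using the equivariant tubular neighborhood theorem of Appendix~\ref{s:reductiontheory} (cf.~\cite{SjamaarLinLocalization}), I would take $U(0)$ to be a small neighborhood of the zero section in $NM_0$, shrunk enough that the transverse $\g$-action is infinitesimally free throughout.

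Next, I would deform the transverse metric, the $\omega$-compatible complex structure, the prequantum line bundle, and the taming map on $U(0)$ by straight-line homotopies to their linear models for the vector bundle $\pi\colon NM_0 \to M_0$, exactly as in the reduction step of the proof of Theorem~\ref{thm:abel}. Under the embedding $M_0\times(\g\oplus \g)\hookrightarrow N\f|_{M_0}$ via the $\g$-orbit and $NM_0$ directions, the spinor bundle decomposes as
\[
S|_{U(0)} \;\simeq\; \pi^*S_0 \,\wh{\otimes}\, \pi^*(\wedge \g_\bC),
\]
and the deformed symbol becomes a twisted product of a $\t_M\ltimes \f|_{M_0}$-transversely elliptic symbol along $M_0$ (acting on $S_0\otimes L|_{M_0}$) with a fiberwise model symbol on $M_0\times \g$ given by Clifford multiplication on $\wedge \g_\bC$ deformed by the identity map $y\mapsto y$ in the fiber coordinate. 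Applying the multiplicativity result (Theorem~\ref{t:indextwistedproduct}, compare Corollary~\ref{c:symalg}) then factors the index as a product of an index along $M_0$ with a purely representation-theoretic fiber contribution $I_\g\in R^{-\infty}(T)$.

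The main obstacle is the fiberwise calculation: one must show that $(I_\g\cdot \wedge \n_-)^T = 1$ in $\bZ$. This is the key representation-theoretic ingredient in Paradan's non-abelian localization for symplectic manifolds (\cite{ParadanRiemannRoch}); it carries over unchanged in our setting because the fiber model involves only the linear $\g$-representation on $\g\oplus \g$ and its spinors, with no data coming from $\f$. Once this collapse is established, the surviving factor is $\index_\g^{\f|_{M_0}}(D_0)^\g$, where $D_0$ is the transverse Dirac operator on $(M_0,\f|_{M_0})$ acting on $S_0\otimes L|_{M_0}$. Since $0$ is a regular value, the anchor of $\g_{M_0}\ltimes \f|_{M_0}$ is injective and its orbits form precisely the enlarged foliation $\f_0$. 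Applying Theorem~\ref{t:infinitesimalactionfoliation} then converts the $\g$-invariant part into the ordinary transverse index on $(M_0,\f_0)$, which by construction equals $RR(M_0,\f_0,L_0)$, completing the proof.
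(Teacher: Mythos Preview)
Your overall strategy---localize to $\beta=0$ via Lemma~\ref{l:GmultTmult}, linearize on $NM_0\simeq M_0\times\g$, split the spinors, and invoke Theorem~\ref{t:infinitesimalactionfoliation}---is correct and matches the paper.  But the order of operations you propose does not work, and this creates a genuine gap at the ``twisted product'' step.

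The problem is the claimed factorization of $\sigma_{D,U(0),\mu}$ over $(M_0\times\g,\f)$.  Write the transverse covector at $(m,y)$ as $\zeta_0+\eta+\zeta$ with $\zeta_0\in N^*_m\f_0$, $\eta\in a(\g)^*_m$ (the $\g$-orbit directions, which lie in $N^*(\f|_{M_0})$), and $\zeta\in T^*_y\g$.  Under $S\simeq S_0\,\wh{\otimes}\,\wedge\g_\bC$ the deformed symbol is
\[
c_{S_0}(\zeta_0)\,\wh{\otimes}\,1\;+\;1\,\wh{\otimes}\,c_{\g_\bC}\bigl((\eta-\text{taming})\oplus\zeta\bigr).
\]
The second summand depends on the \emph{base} covector $\eta$, so this is not of the form $\sigma_1(\text{base})\,\wh{\boxtimes}\,1+1\,\wh{\boxtimes}\,\sigma_2(\text{fiber})$, and neither Theorem~\ref{t:indextwistedproduct} nor Corollary~\ref{c:symalg} applies.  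Relatedly, your ``base symbol'' is said to act on $S_0$ and to be $\t\ltimes\f|_{M_0}$-transversely elliptic, but $c_{S_0}$ is only a $\Cl(N^*\f_0)$-symbol; it is degenerate in the $a(\g)^*$-directions inside $N^*(\f|_{M_0})$ and hence is not transversely elliptic for $\f|_{M_0}$.  Finally, even if a factorization $(\text{base})\cdot I_\g$ held in $R^{-\infty}(T)$, the implication ``$(I_\g\cdot\wedge\n_-)^T=1\Rightarrow$ surviving factor is $\index_\g^{\f|_{M_0}}(D_0)^\g$'' is a non-sequitur: taking $T$-invariants of a product does not factor that way, and you have silently passed from $T$- to $\g$-equivariance without justification.

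The fix is to reorder: first homotope $\mu$ to $\mu_\g$ (both vanish on $M_0$, so transverse ellipticity is preserved), then use the Weyl character formula to rewrite $(\index^\f_T(\sigma_{D,U(0),\mu_\g})\cdot\wedge\n_-)^T=\index^\f_\g(\sigma_{D,U,\mu_\g})^\g$ with a $\g$-invariant neighborhood $U$, and \emph{then} apply Theorem~\ref{t:infinitesimalactionfoliation} to pass to the enlarged foliation $\ti{\f}_0$ on $M_0\times\g^*$.  This eliminates the $\eta$-variable (it is absorbed into the foliation), and only now does the symbol split cleanly as $\sigma_{D_0}\,\wh{\boxtimes}\,1+1\,\wh{\boxtimes}\,\sigma_{\g^*}$ with $\sigma_{\g^*}$ the elliptic Bott symbol on $\g^*$ of index~$1$.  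No separate representation-theoretic collapse ``$(I_\g\cdot\wedge\n_-)^T=1$'' is needed; it is subsumed by the Weyl formula step and the Bott index.
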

\begin{proof}
By Lemma \ref{l:GmultTmult}, $RR_\g(M,\f,L)^\g$ equals
\[ \Big(\index_T^\f(\sigma_{D,U(0),\mu})\cdot \wedge \n_-\Big)^T \]
where $\sigma_{D,U(0),\mu}$ is the deformed symbol on a neighborhood $U(0)$ of $\mu^{-1}(0)$ in $M$. By a homotopy we may replace $\sigma_{D,U(0),\mu}$ with $\sigma_{D,U(0),\mu_\g}$, the symbol deformed by $a(\mu_\g)$, without changing the index. By the Weyl character formula
\[ \Big(\index^\f_T(\sigma_{D,U(0),\mu_\g})\cdot \wedge \n_-\Big)^T=\index^\f_\g(\sigma_{D,U,\mu_\g})^G \]
where $U$ is a $G$-invariant open neighborhood of $\mu_\g^{-1}(0)$ in $M$. 

Precomposition of \eqref{e:tgtmap} with the inverse of a $\g\ltimes \f$-equivariant tubular neighborhood embedding shows that a neighborhood of $M_0$ in $M$ may be modelled as $M_0\times \g^*$. Up to homotopy, $\mu_\g$ can be replaced with its linearization along $M_0$, which is just projection onto the second factor of $M_0\times \g^*$. Extend $\f_0$ to a foliation $\ti{\f}_0$ of $M_0\times \g^*$ with leaves $F\times \{\xi\}$ where $F$ is a leaf of $\f_0$ and $\xi \in \g^*$. By Theorem \ref{t:infinitesimalactionfoliation},
\[ \index_\g^\f(\sigma_{D,U,\mu_\g})^\g=\index^{\ti{\f}_0}(\sigma_{D,U,\mu_\g}). \]
Over $M_0\times \g^*$, the spinor module splits into a tensor product
\[ S|_{M_0\times \g^*}\simeq S_0\hat{\boxtimes}(\g^*\times \wedge\g_\bC) \rightarrow M_0\times \g^*,\]
and the symbol $\sigma_{D,U,\mu_\g}$ is homotopic to the product
\[ \sigma_{D_0}\hat{\boxtimes}1+1\hat{\boxtimes}\sigma_{\g^*}, \]
where
\[ \sigma_{\g*}(\xi,\zeta)=c_{\g_\bC}(-\xi\oplus \zeta), \qquad \zeta \in T_\xi \g^*.\]
The $-\xi$ is the deformation by $a(\mu_\g)$, expressed in the local model. The symbol $\sigma_{\g*}$ is the Bott symbol for the vector space $\g^*$; it is elliptic (invertible except at $(0,0)\in T\g^*$) with index $1$. Thus by Theorem \ref{t:indextwistedproduct},
\[ \index^{\ti{\f}_0}(\sigma_{D,U,\mu_\g})=\index^{\f_0}(\sigma_{D_0}).\]
This completes the proof.
\end{proof}

\subsection{Some extensions of the main theorem}
Suppose that we are in the setting of Theorem \ref{t:qr0} (except that we omit the requirement that $0$ is regular value of $\mu_\g$). A version of the well-known `shifting trick' holds in this context, and leads to a formula for $RR_\g(M,\f,L)$. Recall that irreducible finite dimensional representations of $\g$ are parametrized by $\wh{\g}=P_+\times \mf{z}^*$, where $P_+$ is the set of dominant weights of $[\g,\g]$ and $\mf{z}$ is the center of $\g$. For $\lambda \in \wh{g}$, let $V_\lambda$ be the corresponding finite dimensional irreducible representation of $\g$. Note that $V_\lambda^*$ may be realized as the quantization of the coadjoint orbit $\O_{-\lambda}$ equipped with its standard Kirillov-Kostant-Souriau symplectic structure, and prequantum line bundle $G\times_{G_\lambda} \bC_{-\lambda}$. The zero level of the moment map for $M\times \O_{-\lambda}$ is identified with $\mu_\g^{-1}(\O_{\lambda})$ by projection onto the first factor. If $\lambda$ is a regular value of $\mu$, then $0$ is a regular value of the moment map for $M\times \O_{-\lambda}$. Applying Theorem \ref{t:qr0} to $M\times \O_{-\lambda}$ yields the following.
\begin{corollary}
\label{c:shift}
The multiplicity of $V_\lambda$ in $RR_\g(M,\f,L)$ equals the dimension of $RR(M_\lambda,\f_\lambda,L_\lambda)$ where $M_\lambda=\mu_\g^{-1}(\O_\lambda)$, and $\f_\lambda$ is the codimension $q-\dim(\g)$ foliation of $M_\lambda$ generated by $\f$ and the $\g$-action.
\end{corollary}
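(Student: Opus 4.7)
The plan is to apply Theorem~\ref{t:qr0} to the shifted product $M\times \O_{-\lambda}$, implementing in the foliated setting the classical shifting trick. Let $\O_{-\lambda}\subset \g^*$ be the coadjoint orbit through $-\lambda$, equipped with its KKS symplectic form, a compatible $G$-invariant complex structure, and the prequantum line bundle $L_{-\lambda}=G\times_{G_\lambda}\bC_{-\lambda}$, so that $RR_\g(\O_{-\lambda},L_{-\lambda})=V_\lambda^*$ as recalled in the text preceding the corollary. First I would form the product foliation $(M\times \O_{-\lambda},\f\times\{\pt\})$, which is a transversely symplectic complete Riemannian foliation with compact leaf closure space, endowed with the diagonal transverse $\g$-action (Hamiltonian with moment map $(m,\eta)\mapsto \mu_\g(m)+\eta$) and the prequantum line bundle $L\boxtimes L_{-\lambda}$.

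Next, by the multiplicative property of the index (Theorem~\ref{t:indextwistedproduct} applied with trivial structure group $K=\{e\}$, $Q=M$, and with $R=G$, $V=\O_{-\lambda}$, and $\sigma_V$ the Dolbeault-Dirac symbol twisted by $L_{-\lambda}$), and then restricting the external $R=G$-action to the diagonal, one obtains
\[ RR_\g(M\times \O_{-\lambda},\f\times\{\pt\},L\boxtimes L_{-\lambda})=RR_\g(M,\f,L)\otimes V_\lambda^*. \]
Taking $\g$-invariants identifies the left-hand side with the multiplicity of $V_\lambda$ in $RR_\g(M,\f,L)$.

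Since $\lambda$ is a regular value of $\mu_\g$, $0$ is a regular value of the product moment map, so Theorem~\ref{t:qr0} applies to the shifted product. The final step is to identify the symplectic quotient of $(M\times \O_{-\lambda},\f\times\{\pt\},L\boxtimes L_{-\lambda})$ at $0$ with $(M_\lambda,\f_\lambda,L_\lambda)$. Projection to the first factor identifies the zero-level set with $M_\lambda=\mu_\g^{-1}(\O_\lambda)$ via the graph of $-\mu_\g|_{M_\lambda}$; under this identification the foliation generated by $\f\times\{\pt\}$ and the diagonal $\g$-action becomes $\f_\lambda$ (the foliation of $M_\lambda$ generated by $\f$ and the transverse $\g$-action), and the restriction of $L\boxtimes L_{-\lambda}$ becomes $L_\lambda$. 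Combining this identification with Theorem~\ref{t:qr0} gives
\[ (RR_\g(M,\f,L)\otimes V_\lambda^*)^\g = RR(M_\lambda,\f_\lambda,L_\lambda), \]
which is the desired formula.

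The main obstacle I expect is the final identification of reduced data: while routine in the classical symplectic setting, one must verify in the foliated situation that the correspondence intertwines the $\g\ltimes\f$-equivariant basic structures on the line bundles and matches the reduced transverse symplectic forms. Each check is local on transversals and reduces to the standard coadjoint-orbit computation, but attention to the transverse-versus-tangent distinction and to basicness of connections is needed to ensure the identifications are global.
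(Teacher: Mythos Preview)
Your proposal is correct and follows exactly the approach sketched in the paper: form the product $M\times\O_{-\lambda}$ with the trivial foliation on the second factor, use multiplicativity to get $RR_\g(M,\f,L)\otimes V_\lambda^*$, identify the zero level of the diagonal moment map with $\mu_\g^{-1}(\O_\lambda)$ via projection, and apply Theorem~\ref{t:qr0}. One small caveat: Theorem~\ref{t:indextwistedproduct} as stated requires $R$ to be a \emph{compact} connected Lie group, whereas the simply connected integration $G$ of $\g$ need not be compact when $\mf{z}\ne 0$; the multiplicativity you need follows more directly from the fact that the Molino manifold of the product is $W\times\O_{-\lambda}$ together with Atiyah's product formula \cite[Theorem~3.5]{AtiyahTransEll} applied there (equivalently, invoke Theorem~\ref{t:indextwistedproduct} with $R$ replaced by the compact group $\ol{\rho(G)}$).
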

One can further extend the definitions to handle non-regular values as in \cite{MeinrenkenSjamaar} (for example by shift desingularization), and then the corresponding generalization of Theorem \ref{t:qr0} and Corollary \ref{c:shift} hold. This yields the formula
\[ RR_\g(M,\f,L)=\sum_{\lambda \in \wh{\g}} RR(M_\lambda,\f_\lambda,L_\lambda)V_\lambda. \]
In particular since the LHS is finite dimensional, $RR(M_\lambda,\f_\lambda,L_\lambda)$ vanishes for all but finitely many $\lambda$ (see also the examples in Section \ref{s:ex}).

There is a modest extension of Theorem \ref{t:qr0} involving reduction in stages. Suppose given data as in Theorem \ref{t:qr0}, except that we no longer assume $0$ is a regular value of $\mu_\g$. Let $\h \subset \g$ be an ideal. The moment map for the $\h$ action is $\mu_\h=\pr_{\h^*}\circ \mu_\g$. Suppose $0$ is a regular value of $\mu_\h$. Let $M_\h=\mu_\h^{-1}(0)=\mu_\g^{-1}(\tn{ann}(\h))$ where $\tn{ann}(\h)\subset \g^*$ denotes the annihilator of $\h$. Let $\f_\h$ be the foliation of $M_\h$ generated by $\f$ and $\h$. Let $\omega_\h$ denote the induced transverse symplectic form on $M_\h$. The transverse $\g$ action induces a transverse $\g/\h$ action on $(M_\h,\f_\h)$, with moment map $\mu_{\g/\h}=\mu_\g|_{M_\h}\colon M_\h \rightarrow \tn{ann}(\h)=(\g/\h)^*$. Moreover $L_\h=L|_{M_\h}$ is a prequantum line bundle with the restricted prequantum connection, and it becomes a basic line bundle upon using the connection to lift the enlarged foliation $\f_\h$ to $L_\h$.
\begin{theorem}
\label{t:qr0stages}
With assumptions and notation as above,
\[ RR_\g(M,\f,L)^\h=RR_{\g/\h}(M_\h,\f_\h,L_\h),\]
as representations of $\h$.
\end{theorem}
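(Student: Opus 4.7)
The plan is to re-run the proof of Theorem \ref{t:qr0} with $\h$ (and moment map $\mu_\h$) in the role of $\g$ (and $\mu_\g$), treating the commuting $\g/\h$-action as an equivariance parameter throughout. Since $\g$ admits an invariant inner product and $\h$ is an ideal, the orthogonal complement $\h^\perp$ is also an ideal, satisfies $[\h,\h^\perp]=0$, and is canonically identified with $\g/\h$; hence $\g\simeq\h\oplus\g/\h$ as Lie algebras and the transverse $\g$-action factors as commuting transverse isometric Hamiltonian actions of $\h$ and $\g/\h$ with moment maps $\mu_\h$ and $\mu_{\g/\h}$. The Kostant lift of $\g/\h$ on $L_\h=L|_{M_\h}$ built from $\mu_{\g/\h}|_{M_\h}$ agrees with the restriction of the ambient Kostant lift of $\g$ on $L$.

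First I would invoke the $(\g/\h)$-equivariant strengthening of Lemma \ref{l:GmultTmult}, whose proof goes through verbatim because $\g/\h$ commutes with $\h$ and acts on all objects in sight. Combined with the straight-line homotopy that replaces the taming map by $\mu_\h$ and the Weyl character formula for $H$, this yields
\[ RR_\g(M,\f,L)^\h = \index^\f_{\h\times(\g/\h)}(\sigma_{D,U,\mu_\h})^\h \in R(\g/\h), \]
for an $H\times(\g/\h)$-invariant open neighborhood $U$ of $M_\h=\mu_\h^{-1}(0)$. Next, model $U$ via an $H\times(\g/\h)$-equivariant tubular neighborhood and the isomorphism $T\mu_\h|_{M_\h}\colon NM_\h\xrightarrow{\sim}M_\h\times\h^*$, linearizing $\mu_\h$ to projection onto the second factor. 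Let $\tilde\f$ denote $\f|_{M_\h}$ extended trivially along $\h^*$; the foliation $\tilde\f_\h$ generated by $\tilde\f$ and the $\h$-action restricts along $M_\h\times\{0\}$ to $\f_\h$, and regularity of $0$ for $\mu_\h$ ensures that the anchor of $\h_M\ltimes\tilde\f$ is injective, so Theorem \ref{t:infinitesimalactionfoliation} gives
\[ \index^{\tilde\f}_{\h\times(\g/\h)}(\sigma_{D,U,\mu_\h})^\h = \index^{\tilde\f_\h}_{\g/\h}(\sigma_{D,U,\mu_\h}). \]
Decomposing $S|_{M_\h\times\h^*}\simeq S_\h\hat{\boxtimes}(\h^*\times\wedge\h_\bC)$, homotoping the deformed symbol into $\sigma_{D_\h}\hat{\boxtimes}1+1\hat{\boxtimes}\sigma_{\h^*}$ with $\sigma_{\h^*}$ the Bott symbol of index $1$, and applying Theorem \ref{t:indextwistedproduct}, the computation collapses to $RR_{\g/\h}(M_\h,\f_\h,L_\h)$.

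The main subtlety is verifying that every step---the non-abelian localization, symbol deformation, equivariant tubular neighborhood embedding, linearization of $\mu_\h$, and multiplicative decomposition of the spinor module---can be performed $(\g/\h)$-equivariantly. This holds because $\g/\h$ commutes with $\h$, preserves $M_\h$ (by $\h$-invariance of $\mu_{\g/\h}$), and preserves $(\omega,J,g,L,\nabla^L)$; the compatibility of Kostant lifts upon restriction then matches the output of the argument with $RR_{\g/\h}(M_\h,\f_\h,L_\h)$ in the sense of Section \ref{s:quantization}.
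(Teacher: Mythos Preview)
Your proposal is correct and follows essentially the same approach the paper indicates: the paper merely remarks that the proof of Theorem~\ref{t:qr0} carries over, the key point being that since $\h\subset\g$ is an ideal, $\mu_\h$ is $\g$-equivariant, so the non-abelian localization with taming map $\mu_\h$ decomposes $RR_\g(M,\f,L)$ into $\g$-representations. Your splitting $\g\simeq\h\oplus\g/\h$ via the invariant inner product and your treatment of $\g/\h$ as a commuting equivariance parameter is exactly a way of packaging this observation, and the rest of your argument faithfully reproduces the steps of the proof of Theorem~\ref{t:qr0} with the required equivariance tracked throughout.
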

When $\h=0$, Theorem \ref{t:qr0stages} specializes to Theorem \ref{t:qr0}, and the proof of the general result is similar. An important point is that, since $\h\subset \g$ is an ideal, $\mu_\h$ is $\g$-equivariant, meaning that the non-abelian localization formula for the taming map $\mu_\h$ decomposes $RR_\g(M,\f,L)$ into $\g$-representations.

\ignore{
Suppose $\P$ is a proper Lie groupoid over $M$, and that there is a smooth homomorphism $\tn{Mon}(\f)\rightarrow \P$ from the monodromy groupoid of $\f$ to $\P$. For example, one can take $\P$ to be the Molino groupoid (see \eqref{e:Molinogroupoid}), but there may be other cases of interest. Let $\alpha \in \Hom(\P,U_1)$ be a 1-dimensional character of $\P$. Then $\alpha$ determines a foliated line bundle $\bL_\alpha=\P\times_{\P,\alpha}\bC \rightarrow M$. Since $\P$ is proper, $\bL_\alpha$ admits a basic connection: indeed any foliated connection can be averaged using the $\P$ action to obtain a $\P$-invariant, and a fortiori $\f$-invariant, foliated connection. 

\begin{definition}
The $\alpha$-twisted quantization of $(M,\f,\omega,L)$ is $RR(M,\f,L\otimes \bL_\alpha)$. There is a similar definition in the $\g$-equivariant case.
\end{definition}

\begin{example}
\label{ex:freeisometric4}
onsider the Riemannian foliation $(M,\f,g)$ from Examples \ref{ex:freeisometric}, \ref{ex:freeisometric2}, \ref{ex:freeisometric3}. We may take $\P$ to be the action groupoid $\ol{H}\ltimes M$. Any 1-dimensional character $\alpha \in \Hom(\ol{H},U_1)$ determines such a twist. Specializing the discussion from Example \ref{ex:freeisometric3} to this setting, the Riemann-Roch number $RR(M,\f,L)$ coincides with the index of a differential operator $D_M$ (extending $D$) restricted to $\ol{H}$-invariant smooth sections of $S=\wedge N^*\f^{0,1}\otimes L$. Likewise $RR(M,\f,L\otimes \bL_\alpha)$ is the index of $D_M$ restricted to smooth sections satisfying $h\cdot s=\alpha(h)s$ for $h \in \ol{H}$.
\end{example}
}

\section{Examples}\label{s:ex}
In this section we describe a number of examples of transversely symplectic Riemannian foliations and their quantizations.

\subsection{Reduction in stages}\label{s:reductionstages}
One source of examples is reduction in stages starting with an ordinary Hamiltonian manifold with trivial foliation; Theorem \ref{t:qr0stages} then provides a description of the quantization. We spell out this construction here.

Let $G$ be a compact connected Lie group. Let $(M,\omega,\mu_\g\colon M\rightarrow \g^*)$ be a compact Hamiltonian $G$-space. Choose a $G$-invariant compatible almost complex structure, which also determines a $G$-invariant Riemannian metric. Let $\h\subset \g$ be an ideal, and $H\subset G$ the connected immersed normal subgroup with Lie algebra $\h$. Let $\xi \in \mf{z}^*$ be central and such that $\pr_{\h^*}(\xi)$ is a regular value of the moment map $\mu_\h=\pr_{\h^*}\circ \mu\colon M\rightarrow \h^*$ for the $H$ action. Then 
\[ M_\h=\mu_\h^{-1}(\xi)=\mu_\g^{-1}(\xi+\tn{ann}(\h^*)) \]
is a smooth $H$-invariant submanifold. The action of $H$ on $M_\h$ is locally free, hence determines a foliation $\f_\h$ of $M_\h$. The symplectic, almost complex, and Riemannian structures on $M$ induce corresponding transverse structures for $(M_\h,\f_\h)$.
The $\g$ action induces a transverse $\g/\h$ action on $M_\h$, with moment map $\mu_{\g/\h}=\mu_\g|_M-\xi\colon M \rightarrow \tn{ann}(\h)=(\g/\h)^*$.

Let $M$ be equipped with a $G$-equivariant prequantum line bundle $(L,\nabla)$. The restriction $(L_\h=L|_{M_\h},\nabla^{L_\h}=\nabla|_{M_\h})$ is a prequantum line bundle for $M_\h$. The prequantum connection is used to lift the foliation to $L_\h$, and then $(L_\h,\nabla^{L_\h})$ is $\f_\h$-basic. The Kostant formula for $\mu_{\g/\h}$ determines the lift of the $\g/\h$ action to $L_\h$. Theorem \ref{t:qr0stages} specializes to the following.
\begin{corollary}
\label{c:qrstages}
With notation and hypotheses as above, the quantization of $M_\h$ is given by
\[ RR_{\g/\h}(M_\h,\f_\h,L_\h)=\big(RR_G(M,L)\otimes \bC_{-\xi}\big)^\h \in R(\g/\h).\]
\end{corollary}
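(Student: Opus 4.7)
The plan is to apply Theorem~\ref{t:qr0stages} after a standard shifting trick that absorbs the value $\xi\in\mf{z}^*$ into a modification of the prequantum data. Since $\xi$ is fixed under the coadjoint action, the shifted map $\ti{\mu}_\g=\mu_\g-\xi\colon M\rightarrow\g^*$ is still a $G$-equivariant moment map for $\omega$. Let $\bC_{-\xi}$ denote the one-dimensional $G$-representation of weight $-\xi$, equipped with the trivial flat connection, and set $\ti{L}=L\otimes\bC_{-\xi}$ with the tensor product connection. Then $(\ti{L},\nabla^{\ti{L}})$ is a $G$-equivariant prequantum line bundle for $(M,\omega,\ti{\mu}_\g)$: its curvature is still $\omega$, and Kostant's lift of $X\in\g$ to $\ti{L}$ using $\ti{\mu}_\g$, namely $\nabla^L_{a(X)}+2\pi\i\pair{\mu_\g-\xi}{X}$, coincides with the tensor product of Kostant's lift for $(L,\mu_\g)$ with the weight $-\xi$ action on $\bC_{-\xi}$.

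With this modified data, the shifted $\h$-moment map is $\ti{\mu}_\h=\mu_\h-\pr_{\h^*}(\xi)$, and by hypothesis $0$ is a regular value of $\ti{\mu}_\h$, with zero level set equal to $M_\h$. The $\h$-action on $M_\h$ is locally free and generates the foliation $\f_\h$, so all hypotheses of Theorem~\ref{t:qr0stages} are satisfied for the (trivially-foliated) $G$-Hamiltonian manifold $(M,\omega,\ti{\mu}_\g,\ti{L})$ and the ideal $\h\subset\g$. Applying that theorem directly yields
\[ RR_G(M,\ti{L})^\h = RR_{\g/\h}(M_\h,\f_\h,\ti{L}|_{M_\h}). \]

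To conclude I would identify both sides of this identity with the expressions in the corollary. Tensoring with the one-dimensional representation $\bC_{-\xi}$ commutes with the equivariant index, so the left-hand side equals $(RR_G(M,L)\otimes\bC_{-\xi})^\h$. For the right-hand side, there is a canonical isomorphism $\ti{L}|_{M_\h}=L|_{M_\h}\otimes\bC_{-\xi}\simeq L_\h$ of $\g/\h$-equivariant $\f_\h$-basic line bundles: as complex line bundles with connection both reduce to $L|_{M_\h}$ equipped with the restricted prequantum connection, and both carry the $\g/\h$-action obtained from Kostant's formula applied to $\mu_{\g/\h}=\mu_\g|_{M_\h}-\xi$, which is the unique equivariant lift making $\h$ act trivially on sections. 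The only nontrivial step beyond the shifting trick itself is this bookkeeping of the Kostant lifts; the rest is an immediate application of Theorem~\ref{t:qr0stages}.
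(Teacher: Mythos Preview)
Your argument is correct and is exactly the specialization the paper has in mind: the paper simply states that ``Theorem~\ref{t:qr0stages} specializes to the following'' without writing out the details, and what you have done is precisely to perform the shift $\ti{\mu}_\g=\mu_\g-\xi$, $\ti{L}=L\otimes\bC_{-\xi}$ so that Theorem~\ref{t:qr0stages} applies at the zero level of the trivially-foliated $G$-manifold $M$, and then identify $\ti{L}|_{M_\h}$ with the paper's $L_\h$ (which by definition carries the Kostant lift for $\mu_{\g/\h}=\mu_\g|_{M_\h}-\xi$). The bookkeeping you do with the Kostant lifts is exactly what is needed and matches the paper's conventions for $L_\h$.
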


Suppose $0 \in (\g/\h)^*$ is a regular value of $\mu_{\g/\h}$, i.e. $\xi$ is a regular value of $\mu_\g$. Then one can also apply Theorem \ref{t:qr0} to $M_\h$ itself to conclude that the $\g/\h$-invariant part of its quantization equals the quantization of the transversely symplectic manifold $\mu_\g^{-1}(\xi)$, where the foliation of the latter is given by the $\g$-orbits. By the Kostant condition, for $X \in \g$,
\[ \nabla^{L_\h}_X=\L_X-2\pi \i\pair{\xi}{X}.\]
If $X$ is in the kernel of the exponential map $\g\rightarrow G$, then $\L_X$ exponentiates to the identity, and hence the holonomy of the connection $\nabla^{L_\h}$ around a loop $t\mapsto \exp(tX)\cdot p$, $p \in M_\h$ is $e^{-2\pi \i\pair{\xi}{X}}$. Thus $L_\h$ has $\f_\h$-invariant sections if and only if $\xi$ is a character of $G$. In the latter case the quantization of $\mu_\g^{-1}(\xi)$ equals the quantization of the orbifold $\mu_\g^{-1}(\xi)/G$, which in turn equals the $G$-invariant part of $RR_G(M,\omega,L)\otimes \bC_{-\xi}$, by the $[Q,R]=0$ theorem (for a trivial foliation).

\subsection{Toric quasifolds}
Prato \cite{prato2001simple} generalized the Delzant construction of symplectic toric manifolds to arbitrary simple polytopes. The resulting objects are called \emph{symplectic toric quasifolds}. To fit with the setting of this article, we will restrict attention to the case that the toric quasi-fold comes presented as the leaf of a Riemannian foliation equipped with a transverse symplectic structure, and explain the result of applying our quantization procedure. The discussion here may be viewed as a special case of the preceding section, modulo mild non-compactness. 

Let $\t$ be a finite dimensional real vector space. Let $\Delta \subset \t^*$ be a simple polytope containing the origin. Let $v_1,\ldots,v_d \in \t$ be normal vectors to the facets and $c_1,\ldots,c_d$ constants such that
\[ \Delta=\{\xi \in \t^*\mid\pair{\xi}{v_i}\le c_i,i=1,\ldots,d\}.\]
Let
\[ \pi \colon \bR^d \rightarrow \t, \qquad \pi(e_i)=v_i \]
where $e_1,\ldots,e_d$ are the standard basis vectors. Let
\[ \iota \colon \h=\ker(\pi)\hookrightarrow \bR^d \]
be the kernel of $\pi$. Then we have an exact sequence
\[ 0 \rightarrow \h \xrightarrow{\iota} \bR^d \xrightarrow{\pi} \t \rightarrow 0 \]
with dual sequence
\[ 0 \rightarrow \t^*\xrightarrow{\pi^*}\bR^d \xrightarrow{\iota^*}\h^*\rightarrow 0.\]
Consider $\bC^d$ with its standard symplectic structure and action of $\mf{u}(1)^d$ with moment map
\[ \phi(z_1,\ldots,z_d)=-\frac{1}{2}(|z_1|^2,\ldots,|z_d|^2)+\ul{c} \]
where $\ul{c}=(c_1,\ldots,c_d)$. The image of $\phi$ is the shifted orthant in $(\bR^d)^*$ defined by the inequalities $x_1\le c_1,\ldots,x_d\le c_d$, and consequently $\tn{im}(\pi^*)\cap \tn{im}(\phi)=\pi^*(\Delta)$. The induced moment map for the $\h$ action is 
\[ \iota^*\circ\phi \colon \bC^d \rightarrow \h^*. \]
The fiber
\[ M=(\iota^*\circ \phi)^{-1}(0)=\phi^{-1}(\ker(\iota^*))=\phi^{-1}(\tn{im}(\pi^*))=\phi^{-1}(\pi^*(\Delta)) \]
is a compact manifold. The $\h$ action on $M$ is infinitesimally free, hence determines a foliation $\f$ of $M$. The symplectic, Riemannian and complex structures on $\bC^d$ induce corresponding transverse structures $(\omega,g,J)$ for $(M,\f)$. There is a residual transverse action of $\mf{u}(1)^d/\h=\t$ on $(M,\f)$ with moment map $\mu=(\pi^*)^{-1}\circ \phi$ and moment map image $\Delta$.

The $\mf{u}(1)^d$-equivariant quantization of $\bC^d$ with prequantum line bundle $\ul{\bC}_{\ul{c}}$ (corresponding to the moment map $\phi$) is the infinite dimensional representation $\Sym((\bC^d)^*)\otimes \bC_{\ul{c}}$ with weights $\{\ul{c}-\ul{b}\mid \ul{b}\in (\bZ_{\ge 0})^d\}$ (technically $\bC^d$, being non-compact, is not allowed in our quantization scheme; however this may be remedied using symplectic cutting to replace $\bC^d$ with a compact symplectic manifold containing the fiber $M$). Let $L=\ul{\bC}_{\ul{c}}|_M$ be the induced prequantum line bundle on $M$. By Corollary \ref{c:qrstages}, the quantization of $(M,\f,\omega,L)$ is the $\h$-fixed subspace; the corresponding weights are those lying in $\ker(\iota^*)$:
\[ \{\ul{c}-\ul{b}\mid\ul{b}\in (\bZ_{\ge 0})^d\}\cap \ker(\iota^*)=\pi^*(\Delta)\cap (\ul{c}+\bZ^d).\]
Since $\ker(\iota^*)$ is identified with $\t^*$, these may also be thought of as the weights of the $\t$ action on the quantization of $(M,\f,\omega,L)$ (and the multiplicities are all equal to $1$). The following summarizes these results.
\begin{corollary}
The quantization of $(M,\f,\omega,L)$ is 
\[ \big(\Sym((\bC^d)^*)\otimes \bC_{\ul{c}}\big)^\h \in R(\t). \]
The dimension of the quantization is equal to $\#((\ul{c}+\bZ^d)\cap \pi^*(\Delta))$.
\end{corollary}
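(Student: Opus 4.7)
The plan is to derive the statement as an instance of Corollary \ref{c:qrstages} (reduction in stages) applied to $\bC^d$ as a Hamiltonian $U(1)^d$-manifold with prequantum line bundle $\ul{\bC}_{\ul c}$, subalgebra $\h=\ker(\pi)\subset \mf{u}(1)^d$, and shift $\xi=0$. The only substantive technical point is that $\bC^d$ is non-compact, and as flagged in the text this must be handled via symplectic cutting. Granted a suitable compactification, the two expressions in the statement become two different computations of the $\h$-invariant part of the same $U(1)^d$-equivariant quantization.

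For the compactification, I would choose $R>0$ large enough that the compact set $\pi^*(\Delta)\subset \ker(\iota^*)$ is contained in the interior of $\prod_i[-R,c_i]$ relative to $\ker(\iota^*)$, and then perform $U(1)^d$-invariant symplectic cutting of $\bC^d$ by the box polytope $\Delta'_R=\prod_i[-R,c_i]$. The result $\widetilde M$ is a compact smooth Delzant toric $U(1)^d$-manifold, and $\ul\bC_{\ul c}$ extends to a $U(1)^d$-equivariant prequantum line bundle $\widetilde L$. A $U(1)^d$-invariant open neighborhood of $M=\phi^{-1}(\pi^*(\Delta))$ inside $\widetilde M$ is identified equivariantly and symplectically with its image in $\bC^d$, so the transverse symplectic form, transverse complex structure, and basic prequantum line bundle on $(M,\f)$ are unaffected by the cut. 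The hypotheses of Corollary \ref{c:qrstages} are in force since $0$ is a regular value of $\mu_\h=\iota^*\circ \phi$ (the polytope $\Delta$ being simple is exactly the condition that the $\h$-action on $M$ is locally free). Applying the corollary yields
\[
RR_\t(M,\f,L)=\bigl(RR_{U(1)^d}(\widetilde M,\widetilde L)\bigr)^\h\in R(\t).
\]

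To finish, I would match this with the claimed expression by a weight computation. For the Delzant manifold $\widetilde M$, a standard computation (using for example abelian $[Q,R]=0$ or Atiyah--Bott localization) gives $RR_{U(1)^d}(\widetilde M,\widetilde L)=\sum_{\lambda\in \Delta'_R\cap(\ul c+\bZ^d)}[\lambda]$; extracting $\h$-invariants keeps precisely those weights that lie in $\ker(\iota^*)$, and by the choice of $R$ one has $\Delta'_R\cap\ker(\iota^*)=\pi^*(\Delta)$, so the invariant part is indexed by $\pi^*(\Delta)\cap(\ul c+\bZ^d)$, each with multiplicity one. Independently, the weights of $\Sym((\bC^d)^*)\otimes \bC_{\ul c}$ are $\{\ul c-\ul b:\ul b\in\bZ_{\ge 0}^d\}$, each with multiplicity one; the bijection $\ul c-\ul b=\pi^*(\xi)\leftrightarrow \xi\in\Delta\cap\pi^{*-1}(\ul c+\bZ^d)$ (using $(\pi^*\xi)_i=\pair{\xi}{v_i}\le c_i\Leftrightarrow \xi\in\Delta$) shows that the $\h$-invariant part of $\Sym((\bC^d)^*)\otimes \bC_{\ul c}$ is indexed by the same set. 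Thus both expressions agree in $R(\t)$, and the dimension count $\#((\ul c+\bZ^d)\cap \pi^*(\Delta))$ is immediate.

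The main obstacle I anticipate is bookkeeping in the compactification step: one must carefully verify that the cut $\widetilde M$ is smooth (i.e., that $\Delta'_R$ is Delzant with respect to the standard lattice of $\bR^d$, which holds for axis-aligned boxes) and that the cut preserves enough of the $U(1)^d$-structure near $M$ to make the basic prequantum data on $(M,\f)$ genuinely independent of the cutoff $R$. Everything beyond this is routine: the reduction-in-stages corollary does the heavy lifting, and the weight bijection is an elementary calculation with the exact sequence $0\to \h\to \bR^d\to \t\to 0$.
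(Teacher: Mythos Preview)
Your proposal is correct and follows essentially the same approach as the paper: apply Corollary~\ref{c:qrstages} (reduction in stages) to $\bC^d$ with the $\mf{u}(1)^d$-action and subalgebra $\h$, handle the non-compactness by symplectic cutting, and read off the result by a weight count. The paper is terser---it asserts directly that the $\mf{u}(1)^d$-quantization is $\Sym((\bC^d)^*)\otimes\bC_{\ul c}$ and takes $\h$-invariants---whereas you route through the finite Delzant quantization of the cut $\widetilde M$ and then verify agreement with the symmetric algebra on $\h$-invariants; but this is the same argument with the compactification made explicit. One small wording issue: $\pi^*(\Delta)$ touches the faces $x_i=c_i$ of your box (these correspond to the original coordinate hyperplanes $z_i=0$), so it is not in the interior relative to $\ker(\iota^*)$; what you actually need, and what your choice of $R$ guarantees, is only that $\pi^*(\Delta)$ avoids the \emph{new} faces $x_i=-R$ introduced by cutting.
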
  
The quantization carries a representation of $U(1)^d$ such that the immersed subgroup $\exp(\h)$ acts trivially; one could interpret this as a representation of the quasi-torus $U(1)^d/H$.

We may also apply Theorem \ref{t:qr0} to $(M,\f,\omega,L)$ itself. The reduced space at a point $\xi \in \Delta$ is $M_\xi=\phi^{-1}(\ul{a})$, $\ul{a}=\pi^*(\xi)$, with the foliation $\f_\xi$ given by the $\bR^d$-orbits, and $(M_\xi,\f_\xi)$ is weakly equivalent to a point. The induced prequantum line bundle $L_\xi$ is topologically trivial, but may have a non-trivial connection $\nabla^{L_\xi}$, and hence the $\nabla^{L_\xi}$-lifted foliation may have non-trivial holonomy. Since $M_\xi$ is a single $\bR^d$-orbit, the connection is determined by the Kostant condition: 
\[ \nabla^{L_\xi}_X=\L_X-2\pi \i\pair{\ul{a}}{X}, X \in \bR^d.\]
In this expression, $\L_X=2\pi \i\pair{\ul{c}}{X}$ is the $\mf{u}(1)^d$ action on the total space of the prequantum line bundle $\ul{\bC}_{\ul{c}}$ of $\bC^d$. Therefore the holonomy of $\nabla^{L_\xi}$ for the loop corresponding to $X \in \bZ^d$ is $e^{2\pi \i\pair{\ul{c}-\ul{a}}{X}}$. It follows that the $\nabla^{L_\xi}$-lifted foliation on $L_\xi$ does not have trivial holonomy unless $\ul{a} \in (\ul{c}+\bZ^d)$. Thus the prequantum line bundle admits a foliation invariant section if and only if $\pi^*(\xi) \in (\ul{c}+\bZ^d)$. (This is a $\ul{c}$-shifted version of the Bohr-Sommerfeld condition, to which it reduces when $\ul{c} \in \bZ^d$.) Applying Theorem \ref{t:qr0} confirms what we already noted above: the dimension of the quantization of $(M,\f,\omega,L)$ is equal to $\#((\ul{c}+\bZ^d)\cap \pi^*(\Delta))$.


\subsection{Locally free isometric actions and K-contact manifolds}
Let $(M,\f,g)$ be the Riemannian foliation considered in Examples \ref{ex:freeisometric}, \ref{ex:freeisometric2}, \ref{ex:freeisometric3}. Assume $(M,g_M)$ is complete and that $\ol{H}\subset \tn{Isom}(M,g_M)$ acts cocompactly on $M$. Let $\omega$ be an $H$-invariant compatible transverse symplectic form. Suppose we are given an isometric Hamiltonian action of a compact connected Lie group $G$, such that the actions of $G,H$ commute. Assuming $0\in \g^*$ is a regular value of the moment map, the $0$-fiber $M_0$ inherits a locally free action of $G\times H$, whose orbits coincide with the Riemannian foliation $\f_0$ of $M_0$. Let $L$ be a $G\times H$-equivariant basic prequantum line bundle for $\omega$, and let $D$ be the corresponding transverse Dirac operator on $S=\wedge N^*\f^{0,1}\otimes L$. Then $RR_G(M,\f,L) \in R(G)$ and $RR(M_0,\f_0,L_0)$ are defined, and by Theorem \ref{t:qr0},
\begin{equation} 
\label{e:qr0freeiso}
RR_G(M,\f,L)^G=RR(M_0,\f_0,L_0).
\end{equation} 
By Example \ref{ex:freeisometric3}, $RR_G(M,\f,L)$ coincides with the $G$-equivariant index of the first-order differential operator $D_M$ (determined by $g_M$) restricted to the space of smooth $H$-invariant sections of $S$. Likewise $RR(M_0,\f_0,L_0)$ coincides with the index of a first-order differential operator $D_{M_0}$ restricted to the space of smooth $G\times H$-invariant sections of $S_0$. Thus \eqref{e:qr0freeiso} reduces to a $[Q,R]=0$ result (for $H$ or $\ol{H}$-invariant sections) for the differential operators $D_M$, $D_{M_0}$ on $M$, $M_0$ respectively. \ignore{More generally, \eqref{e:qr0freeiso} can be twisted by any 1-dimensional character $\alpha \in \Hom(\ol{H},U_1)$ of $\ol{H}$ as in Example \ref{ex:freeisometric4}, and \eqref{e:qr0freeiso} generalizes to
\begin{equation}
\label{e:qr0freeiso2}
RR_G(M,\f,L\otimes \bL_\alpha)^G=RR(M_0,\f_0,(L\otimes \bL_{\alpha})_0).
\end{equation}
This translates into a $[Q,R]=0$ result for $D_M$, $D_{M_0}$ acting on the $\alpha$-isotypical components of the spaces of smooth sections of $S$, $S_0$ respectively.}

Let $(M,\Pi,\theta)$ be a compact contact manifold with contact distribution $\Pi$ and contact form $\theta$. Then $\d\theta$ is a transverse symplectic structure for the codimension $1$ foliation $\f$ of $M$ generated by the Reeb vector field. If $\f$ is Riemannian for some choice of transverse metric $g$, then $(M,\Pi,\theta)$ is called a K-contact manifold. Identify $N\f\simeq \Pi=\ker(\theta)$ and define a bundle-like metric $g_M=\theta^2+g$. The Reeb vector field is Killing for $g_M$. Hence we are in a special case of Example \ref{ex:freeisometric}, with the $H=\bR$ action generated by the flow of the Reeb vector field.

Let $G$ be a compact connected Lie group acting isometrically on $M$ and preserving $\theta$. Let $z \in \mf{z}^*$ be a central element and define $\mu \colon M \rightarrow \g^*$ by
\begin{equation} 
\label{e:contactmu}
\iota(X_M)\theta+\pair{z}{X}=\pair{\mu}{X}.
\end{equation}
By Cartan's formula $\iota(X_M)\d \theta=-\d\pair{\mu}{X}$, hence $\mu$ is a moment map for the transverse symplectic structure. Assume for simplicity that $G$ acts freely on $\mu^{-1}(0)$. Then the quotient $M_{0,G}=\mu^{-1}(0)/G$ is again a K-contact manifold, with contact form $\theta_{0,G}$ determined by the property that its pullback to $\mu^{-1}(0)$ agrees with the restriction of $\theta$.

Given a $G$-equivariant prequantum line bundle $L$ for $\d \theta$ (satisfying the Kostant condition for \eqref{e:contactmu}), we obtain a $G$-equivariant quantization that we denote $RR_G(M,\theta,L)$. We also obtain a prequantum line bundle $L_{0,G}=L|_{\mu^{-1}(0)}/G$ on $M_{0,G}$. Equation \eqref{e:qr0freeiso} implies the following. 
\begin{theorem}
\label{t:qr0Kcontact}
$RR_G(M,\theta,L)^G=RR(M_{0,G},\theta_{0,G},L_{0,G})$.
\end{theorem}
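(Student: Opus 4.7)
The plan is to derive this theorem as a direct specialization of equation \eqref{e:qr0freeiso} (the case of Theorem \ref{t:qr0} for locally free isometric actions), combined with weak-equivalence invariance of the quantization (Corollary \ref{c:quantweakequiv}). The paragraph immediately preceding the statement has already placed the K-contact setting into the framework of Example \ref{ex:freeisometric3}, via the locally free isometric $\bR$-action generated by the Reeb field $R$ on $(M,g_M=\theta^2+g)$, which commutes with the isometric $G$-action.

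First I would verify the hypotheses of \eqref{e:qr0freeiso} for the transversely symplectic Riemannian foliation $(M,\f,g,\omega=\d\theta,J)$. Basicness of $\omega=\d\theta$ is immediate from $\iota(R)\d\theta=0$ and $\L_R\d\theta=0$. Since $G$ preserves $\theta$, it preserves $R$ and $g_M$, and hence acts transversely on $(M,\f,g)$ by isometries. The defining relation $\pair{\mu}{X}=\iota(X_M)\theta+\pair{z}{X}$, combined with Cartan's formula and $\L_{X_M}\theta=0$, yields the transverse moment map equation $\iota(X_M)\d\theta=-\d\pair{\mu}{X}$, while
\[ R\pair{\mu}{X}=\iota(R)\d\pair{\mu}{X}=-\iota(R)\iota(X_M)\d\theta=\iota(X_M)\iota(R)\d\theta=0 \]
shows that $\pair{\mu}{X}$ is $\f$-basic. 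The assumption that $G$ acts freely on $M_0=\mu^{-1}(0)$ implies $0$ is a regular value of $\mu$, so the reduced spaces are smooth. Since $L$ is a $G$-equivariant prequantum line bundle satisfying Kostant's identity for $\mu$, we are in the setting of \eqref{e:qr0freeiso}, which gives
\[ RR_G(M,\f,L)^G=RR(M_0,\f_0,L_0), \]
where $\f_0$ is the foliation on $M_0$ generated by the restrictions of $R$ and of the $\g$-action.

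The remaining step is to identify $(M_0,\f_0,L_0)$ with the K-contact quotient $(M_{0,G},\f_{0,G},L_{0,G})$ via weak equivalence. The projection $p\colon M_0\to M_{0,G}$ is a principal $G$-bundle; by the characterizing property of $\theta_{0,G}$, the differential of $p$ carries $R|_{M_0}$ to the Reeb field of $\theta_{0,G}$. Since the leaves of $\f_0$ are the joint $G\cdot R$-orbits on $M_0$, the pullback of the Reeb foliation on $M_{0,G}$ coincides with $\f_0$, so $p$ is an equivariant complete isometric weak equivalence in the sense of Section \ref{s:weakequiv} intertwining the prequantum line bundles via $L_0=p^*L_{0,G}$. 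Corollary \ref{c:quantweakequiv} then yields $RR(M_0,\f_0,L_0)=RR(M_{0,G},\f_{0,G},L_{0,G})=RR(M_{0,G},\theta_{0,G},L_{0,G})$, completing the proof. There is no essential technical obstacle here; the only point requiring some care is the identification of foliations under $p$, which reduces to the standard observation from contact reduction that the Reeb field of $\theta|_{M_0}$ descends to the Reeb field of $\theta_{0,G}$.
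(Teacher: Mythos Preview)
Your proposal is correct and follows the same approach as the paper: the paper's entire proof is the single sentence ``Equation~\eqref{e:qr0freeiso} implies the following,'' and you have simply spelled out what that sentence entails. The hypothesis check and the explicit weak-equivalence identification of $(M_0,\f_0,L_0)$ with $(M_{0,G},\f_{0,G},L_{0,G})$ via the quotient map $p\colon M_0\to M_{0,G}$ are exactly the details the paper leaves to the reader.
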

\ignore{
Let $D_M$, $D_{M_0}$ be the $G\times \ol{H}$-equivariant first order differential operators determined by $g_M$ that lift $D,D_0$ respectively. Then summing over all $\alpha$, Theorem \ref{t:qr0Kcontact} implies that 
\begin{equation}
\label{e:qr0Kcontact2}
\index_G(D_M)^G=\index(D_{M_0})
\end{equation} 
holds in $R^{-\infty}(\ol{H})$, where the indices on both sides are defined by formally summing over all $\ol{H}$-isotypical components.}
The class of K-contact manifolds includes as special cases (i) Sasakian manifolds, (ii) the total space of a prequantum circle bundle over a symplectic manifold. Quantization of Sasakian manifolds was studied in \cite{hsiao2019geometricCR} (as a special case of a more general quantization scheme for suitable CR manifolds), and we expect that Theorem \ref{t:qr0Kcontact} should be related to one of their results via a foliated version of the Kodaira vanishing principle. Quantization of contact manifolds was also studied in \cite{fitzpatrick2011geometric}, and we expect a similar relation to hold.

The case of prequantum circle bundle $P \rightarrow M$ is already covered by the discussion in Section \ref{s:reductionstages} (modulo mild non-compactness), since $P$ may be realized as the reduced space at level $1$ of a Hamiltonian $S^1$ manifold $N=\tn{tot}(L^*)-0_M$ (the symplectization of the contact manifold $P$), where $L=P\times_{S^1}\bC$ is the prequantum line bundle associated to $P$. 

\subsection{Haefliger suspensions}
Let $(M,\f,g)$ be the Riemannian foliation from Examples \ref{ex:Haefliger}, \ref{ex:Haefliger2}, \ref{ex:Haefliger3}. Assume $(N,g_N)$ is complete and that the action of $\ol{\Gamma}$ on $N$ is cocompact. A $\Gamma$-equivariant symplectic form $\omega_N$ (resp. compatible almost complex structure $J_N$, resp. $\Gamma$-equivariant prequantum line bundle $L_N$) on $N$ induces a transverse symplectic form $\omega$ (resp. compatible transverse almost complex structure $J$, resp. transverse prequantum line bundle $L$) on $M$. Then $RR(M,\f,L)$ is defined and equals the index $RR(N/\Gamma,L_N)$ of the corresponding Dirac operator $D_N$ restricted to smooth $\Gamma$-invariant sections of $\wedge T^*N^{0,1}\otimes L_N$ (see the discussion of Example \ref{ex:Haefliger3}). Let $G$ be a compact connected Lie group and suppose we are given an isometric Hamiltonian action of $G$ on $N$, such that the actions of $G$, $\Gamma$ commute and $L_N$ is $G\times \Gamma$-equivariant. Let $RR_G(N/\Gamma,L_N)\in R(G)$ be the equivariant index of the Dirac operator $D_N$ restricted to smooth $\Gamma$-invariant sections. For simplicity assume $G$ acts freely on the $0$ level of the moment map, and let $N_0$ be the symplectic quotient of $N$ in the usual sense. Let $L_{N,0}$ be the induced prequantum line bundle $L_{N,0}$ on $N_0$ and let $RR(N_0/\Gamma,L_{N,0})$ be the index of the corresponding Dirac operator $D_{N_0}$ restricted to smooth $\Gamma$-invariant sections. Theorem \ref{t:qr0} reduces to the equation
\begin{equation} 
\label{e:qr0Haefliger}
RR_G(N/\Gamma,L_N)^G=RR(N_0/\Gamma,L_{N,0}).
\end{equation}
If $\Gamma$ acts freely and properly on $N$, then \eqref{e:qr0Haefliger} specializes to the classical $[Q,R]=0$ theorem for the compact symplectic manifold $N/\Gamma$.

\appendix

\section{Transverse metrics, completeness and the Molino package}\label{s:completion}
In this appendix we discuss some simple sufficient conditions for a Riemannian foliation to have the `Molino package', as well as the dependence of the Molino manifold on the choice of transverse metric.

Let $(M,\f,g)$ be a Riemannian foliation with transverse orthonormal frame bundle $P$ and lifted foliation $\f_P$. Recall that $(P,\f_P)$ is transversely parallelizable. In \cite[Section 4.5]{MolinoBook} Molino isolated the following additional assumption:
\[ (\star) \qquad N\f_P \text{ is generated by the images of global complete foliate vector fields.}  \]
For short we shall refer to a Riemannian foliation $(M,\f,g)$ satisfying $(\star)$ as a \emph{Molino foliation}. Molino foliations have the Molino package (see Section \ref{s:MolinoOverview}), in particular: (i) existence of a Molino manifold $W=P/\ol{\f}_P$ and generalized Lie groupoid morphism $\tn{Hol}(M,\f) \dashrightarrow O_q\ltimes W$, (ii) a locally constant Molino centralizer sheaf $\scr{C}_M$, (iii) leaf closure (singular) foliation $\ol{\f}$ with Hausdorff quotient $M/\ol{\f}$, (iv) transfer functor for basic vector bundles. Items (i), (ii), (iii) and related results are due to Molino cf. \cite[Section 4.5]{MolinoBook}; item (iv) is due to El Kacimi \cite{Kacimi}.
\begin{proposition}
\label{p:completeMolino}
Complete Riemannian foliations are Molino foliations. In particular any Riemannian foliation on a compact manifold is a Molino foliation.
\end{proposition}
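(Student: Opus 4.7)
The plan is to verify property $(\star)$ directly from the existence of a complete bundle-like metric. For the second claim note that any Riemannian metric on a compact manifold is complete, so a compact Riemannian foliation is automatically complete; it thus suffices to treat the complete case. Fix a complete bundle-like metric $g_M$ with $g_M|_{T\f^\perp}=g$, and let $P\to M$ be the transverse orthonormal frame bundle with its lifted foliation $\f_P$.

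First I would construct a complete $O_q$-invariant bundle-like metric $g_P$ on $(P,\f_P)$. The transverse Levi-Civita connection defines a principal $O_q$-connection on $P$, giving an orthogonal decomposition $TP=H\oplus V$ in which $V$ is tangent to the fibres of $\pi_M$. Define $g_P$ by declaring $H\perp V$, letting $g_P|_H$ be the horizontal lift of $g_M$, and letting $g_P|_V$ be the pullback through the connection form of a fixed $\Ad$-invariant inner product on $\mf{o}_q$. Then $g_P$ is smooth, $O_q$-invariant, and bundle-like for $\f_P$ (because the transverse Levi-Civita connection is foliated and $g_M$ is bundle-like). Completeness of $g_P$ follows from completeness of $g_M$ together with compactness of the fibre $O_q$: any $g_P$-Cauchy sequence in $P$ projects to a $g_M$-Cauchy sequence in $M$, which converges, and compactness of $O_q$ produces a convergent subsequence above.

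Next I would exhibit an explicit global frame of $N\f_P$ by foliate vector fields. Fix bases $\{\xi_j\}$ of $\bR^q$ and $\{A_k\}$ of $\mf{o}_q$. Let $B(\xi_j)$ be the standard horizontal vector field on $P$ (at a frame $p=(m,f)$, the horizontal lift of $f(\xi_j)\in N_m\f$) and let $A_k^*$ be the fundamental vertical vector field for the $O_q$-action. Together these form a global frame of $TP=H\oplus V$, and their images give a frame of $N\f_P$ at every point, realizing the transverse parallelism. Each $A_k^*$ is foliate because $\f_P$ is $O_q$-invariant by construction. Each $B(\xi_j)$ is foliate because $(P,\f_P)$ is a basic principal bundle with the transverse Levi-Civita connection as basic connection: on a local foliation chart $U\simeq T\times F$, $B(\xi_j)|_{\pi_M^{-1}(U)}$ is pulled back from the tautological horizontal vector field on the orthonormal frame bundle of the transversal $T$, and Proposition~\ref{prop:foliatelift} applies.

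Finally, by construction $|B(\xi_j)|_{g_P}=|\xi_j|$ and $|A_k^*|_{g_P}=|A_k|$ are constants, so both families consist of bounded vector fields on the complete Riemannian manifold $(P,g_P)$. A bounded vector field on a complete Riemannian manifold has integral curves of uniformly bounded speed, hence exists for all time, so each $B(\xi_j)$ and $A_k^*$ is complete. This supplies the required global, complete, foliate vector fields whose images span $N\f_P$ pointwise, proving $(\star)$. The one delicate verification is that the $B(\xi_j)$ are genuinely foliate; this rests on the fundamental theorem of Riemannian geometry applied to local transversals, which guarantees that the transverse Levi-Civita connection is a foliated connection on $(P,\f_P)$ — the key structural consequence of the bundle-like hypothesis on $g_M$.
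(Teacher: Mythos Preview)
Your proof is correct and follows essentially the same approach as the paper: construct the tautological horizontal vector fields $B(\xi_j)$ and fundamental vertical fields $A_k^*$ as a global foliate frame of $N\f_P$, then verify completeness. The only minor variation is in the completeness step: you argue via ``bounded vector field on a complete Riemannian manifold is complete'' (requiring completeness of $g_P$), whereas the paper observes directly that integral curves of the $B(\xi_j)$ are parallel transports of frames along $g_M$-geodesics in $M$ and appeals to geodesic completeness of $(M,g_M)$; both arguments are standard and yield the same conclusion.
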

\begin{proof}
Let $\pi_M\colon P \rightarrow M$ be the orthonormal frame bundle of $N\f$ with the lifted foliation $\f_P$. The pullback $\pi_M^*N\f$ has a tautological frame. The transverse Levi-Civita connection for $g$ determines a horizontal embedding $\pi_M^*N\f \hookrightarrow N\f_P$. A choice of basis for $\mf{o}_q$ together with the framing of the horizontal subbundle yields a framing of $N\f_{P}$. 

The pullback of $g_M$, together with a choice of scalar product on $\mf{o}_q$ yields a complete bundle-like metric $g_{P}$ for $(P,\f_{P})$. The metric in turn determines an isomorphism $N\f_{P}\simeq T\f_{P}^\perp$. Under this isomorphism, the frame of $N\f_{P}$ maps to a set of $q+\frac{1}{2}q(q-1)$ global foliate vector fields whose images generate $N\f_{P}$. The $\mf{o}_q$ vector fields in the global frame are complete. The horizontal vector fields in the global frame are also complete. Indeed suppose $X$ is one of the horizontal vector fields from the global frame and let $p \in P$. The maximal integral curve $\gamma_{X,p}=\gamma_{X,p}(t)$ of $X$ through the point $p$ corresponds to parallel transport of the frame $p$ along the geodesic through $\pi_M(p)$ with initial velocity $T\pi_M(X_p)$ orthogonal to the leaf through $\pi_M(p)$. Since $(M,g_M)$ is a complete Riemannian manifold, there is no obstruction to extending this geodesic, and hence $\gamma_{X,p}(t)$ is defined for all $t \in \bR$. Thus we have shown that there are global complete foliate vector fields on $P$ generating $N\f_{P}$; in other words $(M,\f,g)$ is a Molino foliation.
\end{proof}

\begin{proposition}
\label{p:metricdependence}
Let $(M,\f,g)$ be a Molino foliation with Molino manifold $W$. Let $g'$ be a transverse metric for $(M,\f)$. Then $(M,\f,g')$ is also a Molino foliation and there is a canonical $O_q$-equivariant diffeomorphism from $W$ to the Molino manifold $W'$ of $(M,\f,g')$.
\end{proposition}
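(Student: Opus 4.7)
The plan is to construct a canonical isomorphism of foliated $O_q$-principal bundles $\Psi\colon P\to P'$ covering the identity on $M$, and then descend to the Molino manifolds. The device is the square root of $g^{-1}g'$. More precisely, regard $g^{-1}g'$ as a smooth section of $\End(N\f)$ which is pointwise $g$-self-adjoint and positive definite. By the usual functional calculus on each fibre, there is a unique $g$-self-adjoint, positive definite $\psi\in C^\infty(M,\tn{Aut}(N\f))$ with $\psi^2=g^{-1}g'$; equivalently, $\psi$ is the unique such automorphism satisfying $g'(v,w)=g(\psi v,\psi w)$ for all $v,w\in N_m\f$, $m\in M$.

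The key point is that $\psi$ is $\f$-equivariant: the holonomy transport $\tn{hol}(\gamma)\colon N_m\f\to N_{m'}\f$ along a path in a leaf preserves both $g$ and $g'$, so $\tn{hol}(\gamma)\circ\psi_m\circ\tn{hol}(\gamma)^{-1}$ is a $g_{m'}$-self-adjoint positive definite square root of $g^{-1}_{m'}g'_{m'}$; by uniqueness it coincides with $\psi_{m'}$. Equivalently, $\psi$ commutes with the $\tn{Hol}(M,\f)$-action on $N\f$, so $\L_X\psi=0$ for $X\in\x(\f)$. Therefore the map $\Psi\colon P\to P'$ sending a $g$-orthonormal frame $(e_1,\ldots,e_q)$ to the $g'$-orthonormal frame $(\psi^{-1}e_1,\ldots,\psi^{-1}e_q)$ is a well-defined, canonical isomorphism of principal $O_q$-bundles over $M$, and by the $\f$-equivariance of $\psi$ it intertwines the lifted foliations: $\Psi_*\f_P=\f_{P'}$ and $T\Psi\colon N\f_P\xrightarrow{\sim}\Psi^*N\f_{P'}$ as foliated vector bundles.

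To see that $(M,\f,g')$ is a Molino foliation, recall that by hypothesis $N\f_P$ is spanned by images of global complete foliate vector fields $X_1,\ldots,X_r$ on $P$. Since $\Psi$ is a diffeomorphism, the pushforwards $\Psi_*X_1,\ldots,\Psi_*X_r$ are global complete foliate vector fields on $P'$, and by the preceding paragraph their images span $N\f_{P'}$, verifying condition $(\star)$ for $g'$. The diffeomorphism $\Psi$ maps the leaf closures $\ol{\f}_P$ bijectively onto the leaf closures $\ol{\f}_{P'}$, so it descends to a canonical $O_q$-equivariant diffeomorphism of Molino manifolds
\[ \ol{\Psi}\colon W=P/\ol{\f}_P\xrightarrow{\ \sim\ }P'/\ol{\f}_{P'}=W'. \]
There is no real obstacle in this argument; the only point requiring care is the $\f$-equivariance of $\psi$, for which the uniqueness of the positive square root is essential (this is also what makes $\Psi$, and hence $\ol{\Psi}$, canonical rather than dependent on auxiliary choices).
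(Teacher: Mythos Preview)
Your proof is correct and essentially identical to the paper's: both take $\psi=h=(g^{-1}g')^{1/2}$, use it to build the foliated $O_q$-equivariant diffeomorphism $P\to P'$ by $p\mapsto h^{-1}\circ p$, transport the Molino condition $(\star)$ via this diffeomorphism, and descend to leaf-closure quotients. Your explicit justification of the $\f$-equivariance of $\psi$ via holonomy and uniqueness of the positive square root is a welcome elaboration of what the paper records simply as $h\in\Gamma(GL(N\f))^\f$.
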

\begin{proof}
Let $\pi \colon \P \rightarrow M$ be the full frame bundle of $N\f$, with the lifted foliation $\f_\P$. The orthonormal frame bundles $\pi_M'\colon P'\rightarrow M$ and $\pi_M\colon P \rightarrow M$ of $(N\f,g')$, $(N\f,g)$ respectively, are foliated subbundles of $\P$. Consider the composition
\[ N\f \xrightarrow{g'} N^*\f \xrightarrow{g^{-1}} N\f.\]
If $v \ne 0$ then
\[ g(g^{-1}g'(v),v)=g'(v,v)>0,\]
which shows that $g^{-1}g'$ is positive and hence $h=(g^{-1}g')^{1/2} \in \Gamma(GL(N\f)^\f)$ is well-defined, and yields a smooth foliated gauge transformation of $\P$. By construction
\[ g'(v,w)=g(hv,hw),\]
therefore $h$ maps $g'$ orthonormal frames to $g$ orthonormal frames. The inverse $h^{-1}$ restricts to a smooth foliate $O_q$-equivariant diffeomorphism
\[ h^{-1}\colon P \rightarrow P'. \]
The tangent map $Th^{-1}$ sends complete global foliate vector fields on $P$ to complete global foliate vector fields on $P'$. Thus $N\f_{P'}$ is generated by complete global foliate vector fields as well, and $(M,\f,g')$ is also a Molino foliation. By continuity $h^{-1}$ maps leaf closures in $P$ to leaf closures in $P'$, hence descends to an $O_q$-equivariant diffeomorphism from $W$ to $W'=P'/\ol{\f}_{P'}$.
\end{proof}

\begin{remark}
\label{r:canonicalisoequivariance}
Suppose a transverse $\g$ action is given on $(M,\f)$ which is isometric for both $g,g'$. Since the diffeomorphism of Molino manifolds in Proposition \ref{p:metricdependence} is canonically determined by $g,g'$, it will automatically be $\g$-equivariant.
\end{remark}

\begin{proposition}
\label{p:openMolino}
Let $(M,\f,g)$ be a complete Riemannian foliation, and let $M'\subset M$ be an $\f$-saturated open subset with restricted foliation $\f'$ and restricted transverse metric $g'$. Then $(M',\f')$ admits a complete bundle-like metric $\ti{g}'_{M'}$. Consequently $(M',\f',g')$, $(M',\f',\ti{g}')$ are Molino foliations with Molino manifolds $W'$, $\ti{W}'$ respectively, and there are canonical $O_q$-equivariant diffeomorphisms $\pi_W(\pi_M^{-1}(M'))\simeq W'\simeq \ti{W}'$.
\end{proposition}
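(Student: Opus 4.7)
The plan is to first construct a complete bundle-like metric $\ti{g}'_{M'}$ on $(M',\f')$; everything else then follows from Propositions \ref{p:completeMolino} and \ref{p:metricdependence} together with a direct comparison of equivalence relations on $\pi_M^{-1}(M')$.

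To build $\ti{g}'_{M'}$, observe that $V':=\pi_W(\pi_M^{-1}(M'))$ is an open $O_q$-invariant submanifold of $W$, since $\pi_W$ is $O_q$-equivariant and open. On $V'$ I choose any smooth proper function $\tilde\phi_0\colon V'\to[0,\infty)$ and average it over $O_q$ with normalized Haar measure to obtain a smooth $O_q$-invariant proper $\tilde\phi\colon V'\to[0,\infty)$; averaging preserves properness because $\tilde\phi(x_n)\le K$ implies $\inf_{g\in O_q}\tilde\phi_0(gx_n)\le K$, placing some $g_nx_n$ in the compact set $\tilde\phi_0^{-1}([0,K])$, after which the compactness of $O_q$ produces a convergent subsequence of $x_n$ in $V'$. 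Descending $\tilde\phi\circ\pi_W$ via $O_q$-invariance yields a smooth $\f$-basic function $\phi\colon M'\to[0,\infty)$, and I set
\[ \ti{g}'_{M'}:=g_M|_{M'}+d\phi\otimes d\phi, \]
which is bundle-like because $\phi$ being basic makes $d\phi$ a basic transverse $1$-form; the induced transverse metric is $\ti{g}':=g'+d\phi\otimes d\phi$.

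The key point is completeness. I realize $\ti{g}'_{M'}$ as the $O_q$-quotient of the metric $\ti{g}_P:=g_P+\pi_W^*(d\tilde\phi\otimes d\tilde\phi)$ on $\pi_M^{-1}(M')$, where $g_P$ is the complete bundle-like metric on $P$ from the proof of Proposition \ref{p:completeMolino}. A $\ti{g}_P$-Cauchy sequence $p_n$ in $\pi_M^{-1}(M')$ is $g_P$-Cauchy (hence converges to some $p\in P$ by completeness of $g_P$) and has $\tilde\phi(\pi_W(p_n))$ Cauchy in $\bR$; properness of $\tilde\phi$ then forces $\pi_W(p)\in V'$, so $p\in\pi_M^{-1}(M')$. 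Since $O_q$ is compact and acts by isometries, completeness descends from $\ti{g}_P$ to $\ti{g}'_{M'}$ via the standard triangle-inequality lift for principal bundles with compact structure group (choose lifts so that successive $P$-distances are within $2^{-n}$ of the base distances, after passing to a rapidly Cauchy subsequence). Thus $(M',\f',\ti{g}')$ is a complete Riemannian foliation, hence Molino by Proposition \ref{p:completeMolino} with Molino manifold $\ti{W}'$; Proposition \ref{p:metricdependence} then produces the canonical $O_q$-equivariant diffeomorphism $W'\simeq\ti{W}'$.

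Finally, to identify $W'$ with $\pi_W(\pi_M^{-1}(M'))$, I note that both arise as quotients of $P|_{M'}$ by the \emph{same} equivalence relation: since distinct $\f_P$-leaf closures in $P$ are disjoint, two points of $P|_{M'}$ have the same image under $\pi_W$ if and only if they lie in a common $\f_P$-leaf closure taken in the open subspace $P|_{M'}$. Both quotients carry canonical $O_q$-invariant smooth structures (from the Molino construction for $(M',\f',g')$ and as an open subset of $W$, respectively), and the induced bijection is a diffeomorphism. The principal obstacle is the completeness argument: a direct attempt to pick $\phi$ proper on $M'$ itself would fail whenever leaf closures of $\f$ in $M$ are noncompact, since any basic function is constant on leaf closures; the detour through $(P,g_P)$ and the Molino manifold $V'\subset W$ is what bypasses this difficulty.
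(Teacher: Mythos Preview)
Your proof is correct and follows essentially the same strategy as the paper: both construct a proper $O_q$-invariant function on $V'=\pi_W(\pi_M^{-1}(M'))\subset W$, pull it back to an $\f$-basic function on $M'$, and add its squared differential to the restricted bundle-like metric (the paper writes ``Hessian'' but the subsequent graph argument makes clear that $dr\otimes dr$ is meant). The only difference is that you verify completeness upstairs on $\pi_M^{-1}(M')$ with the metric $g_P+\pi_W^*(d\tilde\phi\otimes d\tilde\phi)$ and then descend through the Riemannian submersion $\pi_M$, whereas the paper runs the identical Cauchy-sequence argument directly on $M'$ using completeness of $(M,g_M)$; your route is valid but the detour through $P$ is unnecessary. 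One point you use without stating it---both in the step ``$\pi_W(p)\in V'$, so $p\in\pi_M^{-1}(M')$'' and in the leaf-closure comparison of your final paragraph---is that an open $\f$-saturated subset is automatically $\ol{\f}$-saturated (its closed $\f$-saturated complement contains the closure of each of its leaves); the paper makes this explicit.
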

\begin{proof}
Let $g_M$ be a complete bundle-like metric for $g$, and let $g'_{M'}$ be its restriction to $M'$. To construct a complete bundle-like metric for $M'$, we will adapt an idea from \cite{gordon1973analytical}.

Notice first that the complement $M\backslash M'$ is closed and $\f$-saturated, hence is a union of leaf closures. Thus $M'$ is itself a union of leaf closures. Set
\[ W'=\pi_W(\pi_P^{-1}(M')). \]
Then $W'$ is an $O_q$-invariant open subset of $W$. The homeomorphism $M/\ol{\f}\simeq W/O_q$ restricts to a homeomorphism $M'/\ol{\f}'\simeq W'/O_q$.

Let $r_{W'}\colon W' \rightarrow [0,\infty)$ be any smooth $O_q$-invariant proper function, and let $r \colon M' \rightarrow [0,\infty)$ be the corresponding $\f$-invariant smooth function on $M'$. Note that $r_{W'}$ descends to a continuous map $\bar{r}\colon W'/O_q\simeq M'/\ol{\f}'\rightarrow [0,\infty)$, and properness means that $\bar{r}$ extends continuously to a map (also denoted $\bar{r}$) from the one-point compactification of $M'/\ol{\f}$ to $[0,\infty]$. Pulling back to $M$, we deduce that $r$ extends to an $\f$-invariant continuous map $r\colon \ol{M'}\rightarrow [0,\infty]$ such that $r(\partial \ol{M'})=\{\infty\}$.

Since $r,g'$ are $\f$-invariant, the tensor
\[ \ti{g}'=g'+\tn{Hessian}_{g'}(r) \] 
is a transverse metric on $M'$. We claim that 
\[ \ti{g}'_{M'}:=g'_{M'}+\tn{Hessian}_{g'}(r) \] 
is a complete bundle-like metric for $g'_r$. Let $\ti{d}'$, $d'$, $d$ be the distance functions for $\ti{g}'_{M'}$, $g'_{M'}$, $g_M$ respectively. It suffices to show that any Cauchy sequence $p_n$ for $(M',\ti{d}')$ converges. Since $\ti{d}'\ge d'\ge d$, $p_n$ is a Cauchy sequence for $d$, and therefore converges to a point $p \in M$ by completeness of $d$. Thus $p_n$ is a sequence of points in $M'$ that $d$-converges to a point $p \in \ol{M'}$, and it suffices to show $p$ belongs to $M'$.

Identify $M'$ with the graph of the smooth map $r \colon M' \rightarrow \bR$. Then $\ti{g}'_{M'}$ is the metric on the graph induced by the product metric $g'_{M'}\times g_\bR$ on $M'\times \bR$, where $g_\bR$ is the standard metric on $\bR$. Thus
\[ \ti{d}'(p_n,p_m)\ge (d'\times d_\bR)((p_n,r(p_n)),(p_m,r(p_m)))\ge d_\bR(r(p_n),r(p_m))=|r(p_n)-r(p_m)|, \]
and so $r(p_n)$ is a Cauchy sequence in $\bR$. Thus $r(p_n)$ converges to some $r_\infty \in \bR$, and in particular stays finite as $n \rightarrow \infty$. It follows that $p \notin \partial \ol{M'}$, and hence $p \in M'$. This completes the proof of the first claim in the statement of the proposition. The second claim then follows immediately from Propositions \ref{p:completeMolino}, \ref{p:metricdependence}.
\end{proof}
\begin{remark}
\label{r:openMolinoequivariant}
Suppose a transverse isometric action of $\g$ on $(M,\f,g)$ is given, and that $M'$ is $\g\ltimes \f$-saturated. Then $W'=\pi_W(\pi_M^{-1}(M'))$ is $\g$-saturated. Since $W$ is complete, the isometric $\g$-action on $W$ integrates to an isometric action of the simply connected Lie group $G$ integrating $\g$, and $W'$ is $G$-invariant. Let $\ol{\rho(G)}$ be the closure of the image of $G$ in the isometry group of $W$, which we assume is compact. The complement $W \backslash W'$ is a closed $G$-invariant subset, hence $\ol{\rho(G)}$-invariant. Thus $W'$ is $\ol{\rho(G)}$-invariant. Averaging the function $r$ in the proof of Proposition \ref{p:openMolino} over the $\ol{\rho(G)}$ orbits guarantees that the $\g$ action is isometric for $\ti{g}'$.
\end{remark}

\begin{remark}
One can show, similarly, that open $\f$-saturated subsets of Molino foliations are again Molino foliations.
\end{remark}


\newcommand\eu{\mathfrak}
\newcommand\lie{\mathfrak}

\newcommand{\liet}{\lie{t}} 
\newcommand{\liek}{\lie{k}}
\newcommand{\liel}{\lie{l}}
\newcommand{\z}{\lie{z}}

\newcommand\bb{\mathbf}

\newcommand\ca{\mathscr}

%
%
\newcommand\qu[1][\kern.3ex]{/\kern-.7ex/_{\kern-.4ex#1}}
\newcommand\bigqu[1][\,\,]{\big/\kern-.85ex\big/_{\!\!#1}}

\newcommand\powl{[\kern-.3ex[} \newcommand\powr{]\kern-.3ex]}
\newcommand\bigpowl{\bigl[\kern-.6ex\bigl[}
    \newcommand\bigpowr{\bigr]\kern-.6ex\bigr]}

\newcommand\restrict{\mathop{|}}
\newcommand\bigrestrict{\mathop{\big|}}


\newcommand\inj{\hookrightarrow}
\newcommand\sur{\mathrel{\to\kern-1.8ex\to}}
\newcommand\iso{\mathrel{\overset{\cong}{\to}}}

\newcommand\longto{\longrightarrow} 
\newcommand\To{\Rightarrow}
\newcommand\Tto{\Rrightarrow} 
\newcommand\Longto{\Longrightarrow}
\newcommand\longhookrightarrow{\lhook\joinrel\longrightarrow}
\newcommand\longhookleftarrow{\longleftarrow\joinrel\rhook}
\newcommand\longinj{\longhookrightarrow}
\newcommand\longsur{\mathrel{\longrightarrow\kern-1.8ex\to}}
\newcommand\longiso{\mathrel{\overset{\cong}{\longrightarrow}}}
\newcommand\longleftiso{\mathrel{\overset{\cong}{\longleftarrow}}}


\newcommand\eps{\varepsilon}
\newcommand\bu{{\scriptscriptstyle\bullet}}

\newcommand\cc{{\rm c}} 
\newcommand\cld{{\rm cld}} 
\newcommand\cv{{\rm cv}}
\newcommand\aff{{\rm aff}}


\newcommand\coarse{{\mathrm{coarse}}} 
\newcommand\bas{{\mathrm{bas}}}
\newcommand\loc{{\mathrm{loc}}}
\newcommand\mult{{\mathrm{mult}}}
\newcommand\rig{{\mathrm{rig}}} 
\newcommand\can{{\mathrm{can}}}
\newcommand\red{{\mathrm{red}}}





\section{Reduction theory for transverse symplectic structures}\label{s:reductiontheory}

In this appendix we discuss moment maps and reduction theory for
transversely symplectic foliations. The theory works without any
Riemannian hypothesis, although the specialization to the Riemannian
case is all that is used in the main text.

What we want is a procedure which takes as input a transversely
symplectic foliated manifold with symmetries and as output gives a
transversely symplectic foliated manifold from which the symmetries
have been ``removed''.

Ordinary symplectic reduction proceeds in two steps: take a fibre of
the moment map, and then take its quotient by the symmetry group.  The
foliated case requires a slight change in our point of view.  We will
modify, or rather reinterpret, the second step: instead of dividing
the fibre, we enlarge the foliation of the fibre by incorporating the
group orbit directions into it.  This has the effect of lowering the
codimension of the foliation, so in some sense this amounts to
``reduction in the transverse direction''.  It turns out that this
procedure is applicable not just to Hamiltonian Lie group actions, but
also to transverse Hamiltonian Lie algebra actions (as defined in
Sections \ref{s:tact} and \ref{s:quantization}). The enlarged foliation is transversely symplectic and has codimension equal to the codimension
of the original foliation minus twice the dimension of the Lie
algebra.

One can reduce at a value of the moment map
(Proposition~\ref{proposition;reduction-value}) or at a coadjoint
orbit (Proposition~\ref{proposition;reduction-orbit}).  For
Hamiltonian group actions the results are equivalent.  A mild surprise
is that for Hamiltonian transverse Lie algebra actions actions this is
not necessarily the case: there is an \'etale morphism from the
reduction at value $\alpha$ to the reduced space at the coadjoint
orbit $\ca{O}_\alpha$
(Proposition~\ref{proposition;reduction-compare}).  This morphism is a
weak equivalence under additional assumptions
(Proposition~\ref{proposition;reduction-compare-molino}), which are
satisfied in the context of our main theorem (Theorem~\ref{t:qr0}).

\subsection{Transverse Hamiltonian Lie algebra actions}
\label{subsection;transverse}

Let $(M,\f,\omega)$ be a foliated manifold equipped with a transverse
symplectic structure, i.e.\ a closed $2$-form $\omega$ of constant
rank whose null foliation is equal to $\F$.  Then $\omega$ is
$\f$-basic in the sense that $\iota(X)\omega=\ca{L}_X\omega=0$ for all
$X\in\lie{X}(\f)$.

On a symplectic manifold every smooth function determines a unique
Hamiltonian vector field.  This fails in the transverse symplectic
setting.  Suppose a function $f\in C^\infty(M)$ and a vector field
$X\in\lie{X}(M)$ satisfy the Hamiltonian relationship
$df=\iota(X)\omega$.  Then $\iota(Y)df=\omega(X,Y)=0$ for all
$Y\in\lie{X}(\f)$, so $f$ is basic.  So a non-basic function cannot
have an associated Hamiltonian vector field.  Moreover, $f$ does not
uniquely determine $X$: if the vector field $X'$ also satisfies
$df=\iota(X')\omega$, then $\iota(X-X')\omega=0$, so $X-X'$ is tangent
to $\f$.  So the best we can expect is the following result, the proof
of which is a straightforward verification.

\begin{lemma}\label{lemma;hamiltonian}
Let $(M,\f,\omega)$ be a transversely symplectic foliated manifold.
\begin{enumerate}
\item\label{item;open}
For every basic function $f\in C^\infty(M)^\f$ there is a unique
transverse vector field $X_f\in\lie{X}(M/\f)$ satisfying
$\iota(X)\omega=df$.
\item\label{item;hamiltonian}
The formula $\{f,g\}=\omega(X_f,X_g)$ defines a Poisson bracket on the
algebra of basic functions $C^\infty(M)^\f$.  With respect to this
bracket the map $\ca{H}\colon C^\infty(M)^\f\to\lie{X}(M/\f)$ defined
by $\ca{H}(f)=X_f$ is a morphism of Poisson algebras, whose kernel
consists of the locally constant functions.
\end{enumerate}
\end{lemma}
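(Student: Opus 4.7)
The plan is a direct foliated analogue of the classical symplectic argument. For part (a), since $f\in C^\infty(M)^\f$ the 1-form $df$ is $\f$-basic: $\iota(Y)df = Y(f) = 0$ and $\ca{L}_Y df = d\ca{L}_Y f = 0$ for $Y\in\lie{X}(\f)$. Hence $df$ descends to an $\f$-invariant section of $N^*\f$. Because $\omega$ has constant rank with kernel exactly $T\f$, it induces a fibrewise non-degenerate alternating form on $N\f$, which is $\f$-invariant by $\f$-basicness of $\omega$; this gives an $\f$-equivariant bundle isomorphism $\omega^\flat\colon N\f\iso N^*\f$. I would set $X_f := (\omega^\flat)^{-1}(df)\in\Gamma(N\f)^\f = \lie{X}(M/\f)$ and interpret $\iota(X_f)\omega = df$ by lifting $X_f$ to a foliate vector field (the contraction being independent of the lift since $\iota(T\f)\omega = 0$). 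Uniqueness in $\lie{X}(M/\f)$ follows from fibrewise non-degeneracy of $\omega$ on $N\f$.

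For part (b), bilinearity and skew-symmetry of $\{-,-\}$ are immediate, and a short check using $\f$-basicness of $\omega$ together with the fact that foliate vector fields normalise $\lie{X}(\f)$ shows $\{f,g\}\in C^\infty(M)^\f$. For Leibniz, $d(gh) = g\,dh + h\,dg$ combined with uniqueness in part (a) yields $X_{gh} = gX_h + hX_g$ in $\lie{X}(M/\f)$, hence $\{f,gh\} = g\{f,h\} + h\{f,g\}$.

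The main computation is that $\ca{H}$ intertwines brackets, equivalently the Jacobi identity for $\{-,-\}$. Since $\iota(X_f)\omega = df$ and $d\omega = 0$, Cartan's formula gives $\ca{L}_{X_f}\omega = d\iota(X_f)\omega + \iota(X_f)d\omega = 0$. Then on foliate lifts,
\[
\iota([X_f, X_g])\omega = \ca{L}_{X_f}\iota(X_g)\omega - \iota(X_g)\ca{L}_{X_f}\omega = \ca{L}_{X_f}\,dg = d\bigl(X_f(g)\bigr),
\]
and a direct contraction gives $X_f(g) = \iota(X_f)\iota(X_g)\omega = \omega(X_g, X_f) = -\{f,g\}$, so $\iota([X_f, X_g])\omega = -d\{f,g\}$. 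By uniqueness in part (a), this forces $[X_f, X_g] = -X_{\{f,g\}}$ in $\lie{X}(M/\f)$, which yields Jacobi in the standard way and, with the appropriate sign convention, makes $\ca{H}$ a morphism of Poisson algebras. Finally $\ca{H}(f) = 0$ iff $X_f = 0$ iff $df = 0$ (by non-degeneracy on $N\f$ and the fact that $df$ already kills $T\f$), iff $f$ is locally constant. I do not anticipate any serious obstacle: the only care required is to check that all constructions descend consistently to $\Gamma(N\f)^\f$ and to $\lie{X}(M,\f)/\lie{X}(\f)$, which is automatic from $\omega$ being $\f$-basic and the functions under consideration being $\f$-invariant.
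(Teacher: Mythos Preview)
Your proposal is correct and is precisely the ``straightforward verification'' the paper alludes to; the paper does not give a detailed proof, only the sentence ``the proof of which is a straightforward verification.'' Your observation that the stated conventions yield $[X_f,X_g]=-X_{\{f,g\}}$ (so $\ca{H}$ is a Lie algebra anti-homomorphism unless one adjusts a sign) is accurate and a well-known ambiguity in symplectic conventions; the paper does not address it explicitly.
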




We call $X_f$ the \emph{transverse Hamiltonian vector field}
associated with $f$ and the Lie algebra homomorphism $\ca{H}$ the
\emph{transverse Hamiltonian correspondence}.

Mimicking the symplectic case, we say that a transverse action
$a\colon\g\to\lie{X}(M/\f)$ of a Lie algebra $\g$ is
\emph{Hamiltonian} if $a$ lifts to a Lie algebra homomorphism
\[
\begin{tikzcd}[row sep=large]
&C^\infty(M)^\f\ar[d,"\ca{H}"]\\
\g\ar[ur,dotted,"\mu^\vee"]\ar[r,"a"]&\lie{X}(M/\f).
\end{tikzcd}
\]
We call $\mu^\vee$ the \emph{dual moment map} and the map $\mu\colon
M\to\g^*$ defined by $\pair{\mu(x)}{\xi}=(\mu^\vee(\xi))(x)$ for $x\in
M$ and $\xi\in\g$ the \emph{moment map} of the Hamiltonian action.  We
also write $\mu^\xi$ for the basic function $\mu^\vee(\xi)$ and call
this the \emph{$\xi$-component} of the moment map.  The moment map
$\mu$ completely determines the action $a=\ca{H}\circ\mu^\vee$.  A
\emph{transverse Hamiltonian $\g$-manifold} is a tuple
$(M,\f,\omega,\mu)$ consisting of a foliated manifold $(M,\f)$
equipped with a transverse symplectic form $\omega$ and a transverse
Hamiltonian $\g$-action with moment map $\mu$.

\begin{remark}[group versus algebra]\label{remark;group-algebra}
A special class of transverse Hamiltonian $\g$-manifolds arises from
the presymplectic Hamiltonian Lie group actions
of~\cite{SjamaarLinConvexity}.  Let $\hat{G}$ be a Lie group and
let $(M,\omega)$ be a connected presymplectic Hamiltonian
$\hat{G}$-manifold with moment map $\hat{\mu}\colon M\to\hat{\g}^*$,
where $\hat{\g}=\tn{Lie}(\hat{G})$.  Define $\n$ to be the set of all
$\xi\in\hat{\g}$ with the property that the component $\hat{\mu}^\xi$
of the moment map is constant on $M$.  Then $\n$ is an ideal of
$\hat{\g}$ and the moment map image $\hat{\mu}(M)$ is contained in an
affine subspace of $\hat{\g}^*$ parallel to the subspace $\tn{ann}(\n)$
(\cite[Proposition~2.9.1]{SjamaarLinConvexity}).  Suppose that
$\hat{\mu}$ actually takes values in $\tn{ann}(\n)$.  Let $\g=\hat{\g}/\n$ be the quotient Lie algebra and identify $\tn{ann}(\n)$ with the dual of $\g$; then we can view $\hat{\mu}$ as a map $\mu\colon M\to\g^*$.  The
$\hat{\g}$-action $\hat{\g}\to\lie{X}(M)$ induced by the
$\hat{G}$-action descends to a transverse $\g$-action
$\g\to\lie{X}(M/\f)$, where $\f$ is the null foliation of $\omega$,
and the tuple $(M,\f,\omega,\mu)$ is a transverse Hamiltonian
$\g$-manifold.
\end{remark}
We denote by $\g\ltimes\f=\g_M\times_{N\f}TM$ the transverse action
Lie algebroid of the transverse action $a\colon\g\to\lie{X}(M/\f)$
(Definition~\ref{d:tact}) and by $(\g\ltimes\f)_x=\g\times_{N_x\f}T_xM$
the fibre of $\g\ltimes\f$ at $x\in M$.  The anchor map of the
transverse action Lie algebroid is the bundle map
\[t\colon\g\ltimes\f\longto TM\]
defined by $t(\xi,v)=v$.  The \emph{stabilizer} of $x\in M$ is the
kernel of the anchor,
\begin{equation}\label{equation;stab}
\tn{stab}(\g\ltimes\f,x)=\ker(t_x)\subseteq(\g\ltimes\f)_x,
\end{equation}
which, as is the case for every Lie algebroid, inherits a Lie algebra
structure from $\g\ltimes\f$.  In fact the stabilizer is isomorphic
to the subalgebra of $\g$ given by
\begin{equation}\label{equation;stab-lie}
\tn{stab}(\g\ltimes\f,x)=\{\,\xi\in\g\mid a(\xi)(x)=0\,\}.
\end{equation}
We denote by $\g\ltimes\f\cdot x$ the orbit of $x$ under the Lie
algebroid $\g\ltimes\f$, i.e.\ the leaf through $x$ of the singular
foliation $t(\g\ltimes\f)\subseteq TM$.  This orbit is an immersed
submanifold of $M$, and we have
\begin{equation}\label{equation;orbit}
T_x(\g\ltimes\f\cdot x)/T_x\f=\{\,a(\xi)(x)\mid\xi\in\g\,\}.
\end{equation}
If $X$ is a subset of $M$, we write
\[\g\ltimes\f\cdot X=\bigcup_{x\in X}\g\ltimes\f\cdot x\]
for the $\g\ltimes\f$-orbit (or saturation) of $X$.

As in the symplectic case, the next lemma is a direct consequence of
the definition of a moment map.  Here we denote by $\tn{ann}(F)\subseteq
E^*$ the annihilator of a linear subspace $F$ of a vector space $E$.
We denote by $U^\omega\subseteq V$ the skew orthogonal of a linear
subspace $U$ of a vector space $V$ with respect to a $2$-form $\omega$
on $V$.

\begin{lemma}\label{lemma;moment}
Let $(M,\f,\omega,\mu)$ be a transverse Hamiltonian $\g$-manifold.
For all $x\in M$ we have
\begin{enumerate}
\item\label{item;moment-kernel}
$\ker(T_x\mu)=T_x(\g\ltimes\f\cdot x)^\omega$;
\item\label{item;moment-image}
$\tn{im}(T_x\mu)=\tn{ann}(\tn{stab}(\g\ltimes\f,x))$.
\end{enumerate}
\end{lemma}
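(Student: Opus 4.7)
The proof is a direct foliated adaptation of the classical symplectic lemma. The pivotal computation will be, for $v\in T_xM$ and $\xi\in\g$, the identity
\[
\pair{T_x\mu(v)}{\xi}=d\mu^\xi(v)=\omega_x(a(\xi)(x),v),
\]
which is immediate from the definition of the moment map, $d\mu^\xi=\iota(a(\xi))\omega$. A technical point to keep in mind is that $a(\xi)$ is a transverse vector field, i.e.\ a section of $N\f$ only defined modulo $T\f$; but the right-hand side above is independent of the choice of representative since $T\f=\ker(\omega)$, so the formula is unambiguous.

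For part~\eqref{item;moment-kernel}, I would fix $v$ and vary $\xi$. The element $v$ lies in $\ker(T_x\mu)$ precisely when $\omega_x(a(\xi)(x),v)=0$ for every $\xi\in\g$, that is, $v$ is skew-orthogonal to the subspace $\{a(\xi)(x)\mid\xi\in\g\}$ of $T_xM$ (using any choice of lifts, once more independent of the choice since $T_x\f\subseteq\ker\omega_x$). Since $T_x\f\subseteq\ker\omega_x$, skew-orthogonality to $T_x\f$ is automatic, so by \eqref{equation;orbit} this is equivalent to $v\in T_x(\g\ltimes\f\cdot x)^\omega$.

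For part~\eqref{item;moment-image}, I would dualise the previous computation and fix $\xi$ varying $v$. By definition $\xi\in\tn{ann}(\tn{im}(T_x\mu))$ iff $\omega_x(a(\xi)(x),v)=0$ for every $v\in T_xM$, iff $a(\xi)(x)$ lies in $T_xM^\omega=T_x\f$, iff the transverse vector field $a(\xi)$ vanishes at $x$ as a section of $N\f$; by the identification \eqref{equation;stab-lie} this is exactly the condition $\xi\in\tn{stab}(\g\ltimes\f,x)$. Hence $\tn{ann}(\tn{im}(T_x\mu))=\tn{stab}(\g\ltimes\f,x)$, and taking annihilators on both sides yields the desired equality.

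There is essentially no genuine obstacle here; the only subtlety to be careful about is checking that the skew pairing $\omega_x(a(\xi)(x),\,\cdot\,)$ is well defined despite $a(\xi)(x)$ being a class in $N_x\f$ rather than a tangent vector, which is handled once and for all by the identity $\iota_{T\f}\omega=0$. The two parts of the lemma are formal duals of the same bilinear pairing $\g\times T_xM\to\bR$, $(\xi,v)\mapsto\omega_x(a(\xi)(x),v)$.
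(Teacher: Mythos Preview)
Your proof is correct and is exactly the standard argument the paper has in mind; the paper does not actually spell out a proof but simply remarks that the lemma ``is a direct consequence of the definition of a moment map,'' and your computation via the bilinear pairing $(\xi,v)\mapsto\omega_x(a(\xi)(x),v)$ is precisely that direct consequence. Your care in noting that $\omega_x(a(\xi)(x),\cdot)$ is well defined because $T\f=\ker\omega$ is the only foliated subtlety, and you have handled it correctly.
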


Since the components of the moment map $\mu$ are $\f$-basic, the image
$\mu(\f\cdot x)$ of a leaf $\f\cdot x$ is a single point in $\g^*$, so
we can regard $\mu$ as a foliate map from $(M,\f)$ to $\g^*$ equipped
with the trivial zero-dimensional foliation.  Viewed as such, the map
$\mu$ is equivariant.

\begin{lemma}\label{lemma;equivariant}
Let $(M,\f,\omega,\mu)$ be a transverse Hamiltonian $\g$-manifold.
Then $\mu\colon M\to\g^*$ is equivariant with respect to the
$\g$-action on $M$ and the coadjoint action of $\g$ on $\g^*$.
\end{lemma}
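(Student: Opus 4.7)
The plan is to reduce the equivariance statement to a direct algebraic identity among basic functions and then to invoke the defining property of the dual moment map $\mu^\vee$. Since every component $\mu^\eta = \mu^\vee(\eta) \in C^\infty(M)^\f$ is $\f$-basic, the transverse vector field $a(\xi)$ acts on $\mu^\eta$ as an ordinary derivation, so equivariance with respect to the coadjoint action (infinitesimally, $-\ad^*$ or $\ad^*$ depending on convention) amounts to the identity
\[
a(\xi)\cdot\mu^\eta = -\mu^{[\xi,\eta]} \quad \text{for all } \xi,\eta\in\g,
\]
which is precisely $\pair{\ad_\xi^*\mu}{\eta}$ under the standard sign convention for the coadjoint action.

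The main computation will then be a three-line manipulation. First I would rewrite $a(\xi)\cdot\mu^\eta = \iota(a(\xi))\,d\mu^\eta$ and replace $d\mu^\eta$ by $\iota(a(\eta))\omega$, using the defining Hamiltonian relation $d\mu^\eta = \iota(X_{\mu^\eta})\omega$ combined with the identification $X_{\mu^\eta} = \ca{H}(\mu^\vee(\eta)) = a(\eta)$ that comes from $a = \ca{H}\circ\mu^\vee$. This yields
\[
a(\xi)\cdot\mu^\eta \;=\; \omega(a(\eta),a(\xi)) \;=\; -\omega(a(\xi),a(\eta)) \;=\; -\{\mu^\xi,\mu^\eta\}.
\]
Finally, because $\mu^\vee\colon\g\to C^\infty(M)^\f$ is by hypothesis a morphism of Lie algebras (using the Poisson bracket of Lemma~\ref{lemma;hamiltonian}), the right-hand side equals $-\mu^{[\xi,\eta]}$, which is the desired equivariance identity.

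The only real subtlety—rather than a genuine obstacle—is bookkeeping of the signs: one must pin down once and for all the conventions for (i) the sign in $\iota(X_f)\omega = df$, (ii) the resulting sign in the Poisson bracket $\{f,g\} = \omega(X_f,X_g)$, and (iii) the convention for the coadjoint action $\ad^*$, so that the three signs combine consistently. Once these are fixed as in Lemma~\ref{lemma;hamiltonian} and the moment map definition above, no further argument is needed and the equivariance follows immediately from the chain of equalities sketched.
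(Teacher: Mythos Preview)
Your proposal is correct and follows essentially the same approach as the paper: compute $\iota(a(\xi))\,d\mu^\eta$, replace $d\mu^\eta$ by $\iota(a(\eta))\omega$ via $a=\ca{H}\circ\mu^\vee$, identify the result as a Poisson bracket, and invoke that $\mu^\vee$ is a Lie algebra homomorphism to obtain $-\mu^{[\xi,\eta]}=\pair{\ad^*(\xi)\mu}{\eta}$. The paper's proof is precisely this chain of equalities, compressed into one displayed line.
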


\begin{proof}
Let $\xi$, $\eta\in\g$.  The dual moment map $\mu^\vee$ is a Lie
algebra homomorphism, so $\mu^{[\xi,\eta]}=\{\mu^\xi,\mu^\eta\}$, and
hence
\begin{equation}\label{equation;lie-poisson}
\iota(a(\xi))\d\mu^\eta=\iota(a(\xi))\iota(a(\eta))\omega=
\{\mu^\eta,\mu^\xi\}=\mu^{[\eta,\xi]}=-\mu^{[\xi,\eta]}=
\pair{\ad^*(\xi)\mu}{\eta}.
\end{equation}
This shows that $\d\mu(a(\xi))=\ad^*(\xi)\mu$, i.e.\ $\mu$ is
equivariant.
\end{proof}

Recall that the symplectic leaves of the linear Poisson space
$\g^*$ are the orbits of the coadjoint action.  In particular, each
coadjoint orbit $\ca{O}$ of $\g^*$ is equipped with a canonical
symplectic form $\sigma_{\ca{O}}$, the \emph{Kirillov-Kostant-Souriau
  symplectic form}, which at the point $\alpha\in\ca{O}$ is given by
\[\sigma_{\ca{O},\alpha}(\xi,\eta)=\alpha([\xi,\eta])\]
for $\xi$, $\eta\in\g$.  We denote by
$\g_\alpha=\tn{stab}(\g,\alpha)$ the coadjoint stabilizer of an
element $\alpha\in\g^*$.

The next result is a preliminary to the reduction theorem.  Though
awkward to state, it is nothing more than the Lie algebra analogue of
a very simple fact for Hamiltonian group actions: namely that in a
Hamiltonian $G$-manifold the moment map image of an orbit $L=G\cdot x$
is the coadjoint orbit $\ca{O}$ through $\mu(x)$, that the restriction
of the moment map $\mu_L\colon L\to\ca{O}$ is a fibre bundle whose
fibres are easy to describe, and that $\mu_L^*\sigma_{\ca{O}}$ is
equal to the restriction of $\omega$ to $L$.  Beware that for
transverse Hamiltonian Lie algebra actions the map $\mu_L$ is not
necessarily surjective.

\begin{lemma}\label{lemma;isotropic}
Let $(M,\f,\omega,\mu)$ be a transverse Hamiltonian $\g$-manifold.
Let $L$ be an orbit of the transverse action Lie algebroid
$\g\ltimes\f$.  There is a unique coadjoint orbit $\ca{O}$ of $\g^*$
satisfying $\ca{O}\supseteq\mu(L)$.  Let $\mu_L$ be the restriction of
the moment map to the orbit $L$.  The map
\[\mu_L\colon L\longto\ca{O}\]
is a submersion and $\mu_L^*\sigma_{\ca{O}}=\omega|_L$.  The connected
components of the fibres of $\mu_L$ are of the form
$\g_\alpha\ltimes\f\cdot x$, where $x$ ranges over all points in
$L$ and $\alpha=\mu(x)\in\ca{O}$.  In particular the submanifolds
$\g_\alpha\ltimes\f\cdot x$ of $M$ are isotropic for all $x$.
\end{lemma}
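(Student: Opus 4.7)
The plan is to exploit equivariance of $\mu$ together with the description of the tangent space to an orbit given in \eqref{equation;orbit}, namely $T_xL/T_x\f=a(\g)(x)$. First, for the existence and uniqueness of $\ca{O}$: since $\mu$ is $\f$-basic it annihilates $T_x\f$, and by Lemma~\ref{lemma;equivariant} one has $T_x\mu(a(\xi)(x))=\ad^*(\xi)\mu(x)$, which lies tangent to the coadjoint orbit through $\mu(x)$. Hence $T_x\mu(T_xL)\subseteq T_{\mu(x)}(\g\cdot \mu(x))$ for every $x\in L$, and by connectedness of $L$ the image $\mu(L)$ is contained in a unique coadjoint orbit $\ca{O}$. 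The same calculation shows that $T_x\mu_L\colon T_xL\to T_{\mu(x)}\ca{O}$ is surjective (the image contains all $\ad^*(\xi)\mu(x)$, which span $T_{\mu(x)}\ca{O}$), so $\mu_L$ is a submersion.

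Next, for the identity $\mu_L^*\sigma_\ca{O}=\omega|_L$: given $v,w\in T_xL$, write $v=v_\f+a(\xi)(x)$, $w=w_\f+a(\eta)(x)$ with $v_\f,w_\f\in T_x\f$. Because $T\f$ is the null distribution of $\omega$, all cross terms vanish and
\[ \omega|_L(v,w)=\omega(a(\xi)(x),a(\eta)(x))=\iota(a(\eta))\iota(a(\xi))\omega\big|_x=\iota(a(\eta))\d\mu^\xi\big|_x=\pair{\mu(x)}{[\xi,\eta]}, \]
using the moment map relation and equivariance (compare \eqref{equation;lie-poisson}). On the other hand $T_x\mu_L(v)=\ad^*(\xi)\mu(x)$, and evaluating the Kirillov-Kostant-Souriau form gives the same scalar $\pair{\mu(x)}{[\xi,\eta]}$, so $\mu_L^*\sigma_\ca{O}=\omega|_L$.

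For the fibre description, fix $x\in L$ and $\alpha=\mu(x)$. A tangent vector $v=v_\f+a(\xi)(x)\in T_xL$ lies in $\ker T_x\mu_L$ iff $\ad^*(\xi)\alpha=0$, i.e.\ $\xi\in\g_\alpha$; combined with \eqref{equation;orbit} this gives $\ker T_x\mu_L=T_x\f+a(\g_\alpha)(x)=T_x(\g_\alpha\ltimes\f\cdot x)$. A short check using the flow of $a(\xi)$ for $\xi\in\g_\alpha$ (which acts on $\g^*$ as the one-parameter group $e^{t\ad^*(\xi)}$ fixing $\alpha$) together with basicness of $\mu$ along $\f$-directions shows that the entire orbit $\g_\alpha\ltimes\f\cdot x$ lies in $\mu_L^{-1}(\alpha)$. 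Since this orbit is connected, immersed, and has the correct tangent space at every point, it must be a connected component of $\mu_L^{-1}(\alpha)$. Finally, isotropy of $\g_\alpha\ltimes\f\cdot x$ is immediate from $\mu_L^*\sigma_\ca{O}=\omega|_L$ together with the fact that $\mu_L$ is constant on this fibre.

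No step presents a serious obstacle; the mild subtlety lies in verifying that $\mu$ is constant on the full orbit $\g_\alpha\ltimes\f\cdot x$ (not merely at the point $x$), which requires noting that once one reaches another point of the fibre, the stabilizer relevant to the subsequent flow is still $\g_\alpha$ (not the stabilizer at the new point), so the coadjoint action by those particular elements continues to fix $\alpha$.
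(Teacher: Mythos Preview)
Your argument is correct and follows essentially the same route as the paper: equivariance of $\mu$ gives the coadjoint orbit and the submersion property, the identity $\mu_L^*\sigma_{\ca{O}}=\omega|_L$ comes from the moment relation~\eqref{equation;lie-poisson}, and the fibre description follows from computing $\ker T_x\mu_L$.

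One small point deserves tightening. You show that $\g_\alpha\ltimes\f\cdot x$ is a connected immersed submanifold of $\mu_L^{-1}(\alpha)$ with the same tangent space at each of its points; this makes it \emph{open} in the fibre, but not automatically a full connected component (an open connected subset of a manifold need not be a component). The paper closes this by observing that the orbits $\g_{\mu(y)}\ltimes\f\cdot y$, as $y$ ranges over $L$, form a partition of $L$; restricted to a single fibre $\mu_L^{-1}(\alpha)$ this is a partition into open connected sets, forcing each to be a connected component. Your ``mild subtlety'' paragraph already contains the ingredient needed for this (namely that $\mu$ is constant on each such orbit, so distinct orbits in the same fibre are disjoint), so adding one sentence invoking the partition would complete the argument.
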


\begin{proof}
The existence and uniqueness of $\ca{O}$ and the fact that $\mu_L$ is
a submersion follow from the equivariance of $\mu$
(Lemma~\ref{lemma;equivariant}).  Let $x\in L$ and $\alpha=\mu(x)$.
Then $\alpha\in\ca{O}$ (again by equivariance), and the transverse
$\g$-action on $M$ restricts to a transverse action of the Lie
subalgebra $\g_\alpha$.  Hence we have a transverse action Lie
algebroid $\g_\alpha\ltimes\f$, whose orbit through $x$ is
$\g_\alpha\ltimes\f\cdot x\subseteq L$.
From~\eqref{equation;orbit} we get a short exact sequence of vector
spaces
\begin{equation}\label{equation;exact}
\begin{tikzcd}
T_x(\g_\alpha\ltimes\f\cdot x)\ar[r,hook]&T_x(\g\ltimes\f\cdot
x)=T_xL\ar[r,two heads,"T_x\mu_L"]&T_\alpha\ca{O},
\end{tikzcd}
\end{equation}
which tells us that the connected submanifold
$\g_\alpha\ltimes\f\cdot x$ is an open subset of the fibre
$\mu_L^{-1}(\alpha)$.  Since the orbits $\g_\alpha\ltimes\f\cdot x$
form a partition of the orbit $L$, we conclude that each
$\g_\alpha\ltimes\f\cdot x$ is a connected component of a fibre of
$\mu_L$.  Let $\xi$, $\eta\in\g$.  It follows
from~\eqref{equation;lie-poisson} that
\begin{align*}
(\mu_L^*\sigma_{\ca{O}})_x\bigl(a(\xi)(x),a(\eta)(x)\bigr)&=
  \sigma_{\ca{O},\alpha}(\xi,\eta)=\alpha([\xi,\eta])=\mu^{[\xi,\eta]}(x)\\
&=\omega_x\bigl(a(\xi)(x),a(\eta)(x)\bigr),
\end{align*}
which proves $\mu_L^*\sigma_{\ca{O}}=\omega|_L$.  In particular
$\omega$ vanishes along the fibres of $\mu_L$, so each orbit
$\g_\alpha\ltimes\f\cdot x$ is isotropic.
\end{proof}

\subsection{Reduction at a value}

Let $\g$ be a Lie algebra and $(M,\f,\omega,\mu)$ a transverse
Hamiltonian $\g$-manifold.  Transverse symplectic reduction is based
on the following simple observation.

\begin{proposition}\label{proposition;reduction-value}
Suppose that $\alpha\in\g^*$ is a regular value of $\mu\colon
M\to\g^*$.  Let $M_\alpha=\mu^{-1}(\alpha)$.
\begin{enumerate}
\item\label{item;zero-foliation}
For every $x\in M_\alpha$ the leaf $\f\cdot x$ is contained in
$M_\alpha$, so $\f$ restricts to a foliation $\f_\alpha$ of
$M_\alpha$.
\item\label{item;zero-anchor}
Let $\g_\alpha=\tn{stab}(\g,\alpha)$ be the coadjoint stabilizer of
$\alpha$.  The transverse $\g$-action $\g\to\lie{X}(M/\f)$ restricts
to a transverse $\g_\alpha$-action
$\g_\alpha\to\lie{X}(M_\alpha/\f_\alpha)$ on $(M_\alpha,\f_\alpha)$.
The anchor $t_\alpha\colon\g_\alpha\ltimes\f_\alpha\to TM_\alpha$ of
the transverse action Lie algebroid is injective and therefore
determines a foliation of $M_\alpha$, also denoted by
$\g_\alpha\ltimes\f_\alpha$.
\item\label{item;zero-symplectic}
The null foliation of the closed $2$-form
$\omega_\alpha=\omega|_{M_\alpha}$ is equal to
$\g_\alpha\ltimes\f_\alpha$.
\end{enumerate}
\end{proposition}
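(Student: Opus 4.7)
The plan is to tackle (a), (b), (c) in order, using the two preparatory lemmas (the moment map lemma identifying $\ker T_x\mu$ with $T_x(\g\ltimes\f\cdot x)^\omega$ and $\tn{im}(T_x\mu)$ with $\tn{ann}(\tn{stab}(\g\ltimes\f,x))$, and the equivariance lemma $T\mu\circ\tilde a(\xi)=\ad^*(\xi)\mu$). Part (a) is essentially immediate: since each component $\mu^\xi$ is $\f$-basic, $\mu$ is constant along every leaf of $\f$, so any leaf meeting $M_\alpha$ is contained in $M_\alpha$; this gives the restricted foliation $\f_\alpha$ and also the identification $T\f_\alpha=T\f|_{M_\alpha}$, so $N\f_\alpha$ embeds canonically as a subbundle of $N\f|_{M_\alpha}$ with complement $NM_\alpha$.

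For (b) I would fix $\xi\in\g$ and pick an arbitrary lift $\tilde a(\xi)\in\lie{X}(M)$ of $a(\xi)\in\Gamma(N\f)^\f$. By the equivariance lemma, $T\mu(\tilde a(\xi))|_{M_\alpha}=\ad^*(\xi)\alpha$. Since $\alpha$ is regular, $\ker T_x\mu=T_xM_\alpha$ for $x\in M_\alpha$, so $\tilde a(\xi)|_{M_\alpha}$ is tangent to $M_\alpha$ precisely when $\xi\in\g_\alpha$. This shows that the transverse action restricts to $a_\alpha\colon\g_\alpha\to\Gamma(N\f_\alpha)^{\f_\alpha}$. Anchor injectivity then follows from the moment map lemma: at a regular value the image of $T_x\mu$ is all of $\g^*$, hence $\tn{stab}(\g\ltimes\f,x)=0$, and a fortiori the $\g_\alpha\ltimes\f_\alpha$ stabilizer at any $x\in M_\alpha$ is $0$; combined with $T\f_\alpha=T\f|_{M_\alpha}$ this gives $\ker t_\alpha=0$, so $\g_\alpha\ltimes\f_\alpha$ integrates to a regular foliation of $M_\alpha$.

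Part (c) is the main computation. I would identify the null foliation of $\omega_\alpha$ pointwise with $T_xM_\alpha\cap (T_xM_\alpha)^\omega$, where the second skew-orthogonal is taken inside $T_xM$ with respect to the ambient (degenerate) form $\omega$. Using $T_xM_\alpha=\ker T_x\mu=T_x(\g\ltimes\f\cdot x)^\omega$ from the moment map lemma and the standard identity $(U^\omega)^\omega=U+T_x\f$ (valid since the radical of $\omega_x$ is exactly $T_x\f$), I get $(T_xM_\alpha)^\omega=T_x(\g\ltimes\f\cdot x)$. Thus the null space at $x$ is $T_xM_\alpha\cap T_x(\g\ltimes\f\cdot x)$. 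Writing a general element of $T_x(\g\ltimes\f\cdot x)$ as $v+a(\xi)(x)$ with $v\in T_x\f$ and $\xi\in\g$ and applying $T_x\mu$ (which vanishes on $T_x\f$ by basicness and sends $a(\xi)(x)$ to $\ad^*(\xi)\alpha$), the condition to lie in $T_xM_\alpha$ becomes exactly $\xi\in\g_\alpha$. Hence the null space is $T_x\f+a(\g_\alpha)(x)=T_x(\g_\alpha\ltimes\f_\alpha\cdot x)$, which is the tangent space to the foliation $\g_\alpha\ltimes\f_\alpha$ at $x$.

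The only step requiring genuine care is the identity $(U^\omega)^\omega=U+\tn{null}(\omega_x)$ for a presymplectic form, which I would verify fiberwise by passing to the symplectic quotient $T_xM/T_x\f$ where it reduces to the standard double-orthogonal identity for a non-degenerate form; everything else is a routine assembly of the two preparatory lemmas and the regular value hypothesis.
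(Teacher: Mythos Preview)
Your proof is correct and tracks the paper's argument closely for parts (a) and (b). For part (c) there is a minor but genuine difference in packaging: the paper rewrites the intersection $(T_xL)^\omega\cap T_xL$ as the null space of $\omega|_L$ and then invokes Lemma~\ref{lemma;isotropic} (the identity $\mu_L^*\sigma_{\ca{O}}=\omega|_L$ together with the exact sequence $T_x(\g_\alpha\ltimes\f\cdot x)\hookrightarrow T_xL\twoheadrightarrow T_\alpha\ca{O}$) to identify that null space with $T_x(\g_\alpha\ltimes\f\cdot x)$, whereas you compute $T_xM_\alpha\cap T_xL$ directly by writing a general element of $T_xL$ and applying equivariance. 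Your route is slightly more elementary in that it bypasses the Kirillov--Kostant--Souriau form entirely; the paper's route has the advantage that Lemma~\ref{lemma;isotropic} is reused verbatim in the orbit-reduction proof (Proposition~\ref{proposition;reduction-orbit}), so the extra lemma earns its keep there.
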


\begin{proof}
\eqref{item;zero-foliation}~This follows from the fact that the
components of the moment map are basic.

\eqref{item;zero-anchor}~The first assertion follows from the
equivariance of $\mu$ (Lemma~\ref{lemma;equivariant}).  Since $\alpha$
is a regular value of $\mu$, we have $\tn{stab}(\g\ltimes\f,x)=0$ for
all $x\in M_\alpha$ by Lemma~\ref{lemma;moment}\eqref{item;moment-kernel}.
It now follows from~\eqref{equation;stab}
and~\eqref{equation;stab-lie} that the anchor $t_\alpha$ is injective.

\eqref{item;zero-symplectic}~Let $x\in M_\alpha$ and
$L=\g\ltimes\f\cdot x$.  Then
$T_xM_\alpha=\ker(T_x\mu)=(T_x(\g\ltimes\f\cdot
x))^\omega=(T_xL)^\omega$ by
Lemma~\ref{lemma;moment}\eqref{item;moment-kernel}.  Therefore
\begin{align*}
\ker(\omega_{\alpha,x})&=T_xM_\alpha\cap(T_xM_\alpha)^\omega
=(T_xL)^\omega\cap\bigl((T_xL)^\omega\bigr)^\omega\\
&=(T_xL)^\omega\cap T_xL
=\ker((\omega|_L)_x),
\end{align*}
the kernel of the presymplectic form $\omega|_L$ at $x$.  It follows
from the identity $\mu_L^*\sigma_{\ca{O}}=\omega|_L$ of
Lemma~\ref{lemma;isotropic} and from the exact
sequence~\eqref{equation;exact} that
$\ker((\omega|_L)_x)=T_x(\g_\alpha\ltimes\f\cdot x)$.  Hence
$\ker(\omega_{\alpha,x})= T_x(\g_\alpha\ltimes\f\cdot
x)=T_x(\g_\alpha\ltimes\f_\alpha\cdot x)$.
\end{proof}

Ordinary symplectic reduction involves two steps: take a fibre
$\mu^{-1}(\alpha)$ of the moment map, and then take its quotient by
the action of the coadjoint stabilizer $G_\alpha$.  In our foliated
setting we will modify the second step: instead of dividing the fibre,
we enlarge the foliation.

\begin{definition}\label{definition;reduction-value}
Let $(M,\f,\omega,\mu)$ a transverse Hamiltonian $\g$-manifold.
Assume that $\alpha$ is a regular value of $\mu$.  The \emph{reduction
  at level $\alpha$} of $(M,\f,\omega,\mu)$ is the transversely
symplectic foliated manifold
$(M_\alpha,\g_\alpha\ltimes\f_\alpha,\omega_\alpha)$ of
Proposition~\ref{proposition;reduction-value}.
\end{definition}

\subsection{Reduction at a coadjoint orbit}

An alternative to reduction at a value is reduction at a coadjoint
orbit.  Let $\g$ be a Lie algebra and $(M,\f,\omega,\mu)$ a
transverse Hamiltonian $\g$-manifold.

\begin{proposition}\label{proposition;reduction-orbit}
Let $\ca{O}$ be a coadjoint orbit of $\g^*$.  Suppose that
$\mu\colon M\to\g^*$ is transverse to $\ca{O}$.  Let
$M_{\ca{O}}=\mu^{-1}(\ca{O})$ and
$\mu_{\ca{O}}=\mu|_{M_{\ca{O}}}\colon M_{\ca{O}}\to\ca{O}$.
\begin{enumerate}
\item\label{item;orbit-foliation}
For every $x\in M_{\ca{O}}$ the leaf $\f\cdot x$ is contained in
$M_{\ca{O}}$, so $\f$ restricts to a foliation $\f_{\ca{O}}$ of
$M_{\ca{O}}$.
\item\label{item;orbit-anchor}
The transverse $\g$-action $\g\to\lie{X}(M/\f)$ restricts to a
transverse $\g$-action $\g\to\lie{X}(M_{\ca{O}}/\f_{\ca{O}})$ on
$(M_{\ca{O}},\f_{\ca{O}})$.  The anchor
$t_{\ca{O}}\colon\g\ltimes\f_{\ca{O}}\to TM_{\ca{O}}$ of the
transverse action Lie algebroid is injective and therefore determines
a foliation of $M_{\ca{O}}$, also denoted by $\g\ltimes\f_{\ca{O}}$.
\item\label{item;orbit-symplectic}
The null foliation of the closed $2$-form
$\omega_{\ca{O}}=\omega|_{M_{\ca{O}}}-\mu_{\ca{O}}^*\sigma_{\ca{O}}$
is equal to $\g\ltimes\f_{\ca{O}}$.
\end{enumerate}
\end{proposition}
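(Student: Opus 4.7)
My plan is to follow the pattern of Proposition~\ref{proposition;reduction-value} with the coadjoint orbit $\ca{O}$ in place of the point $\{\alpha\}$ and with the KKS correction term $\mu_{\ca{O}}^*\sigma_{\ca{O}}$ absorbing the non-triviality of $\ca{O}$. Part~\eqref{item;orbit-foliation} will be immediate, since each $\mu^\xi$ is $\f$-basic, so $\mu$ is constant on leaves of $\f$ and $\f\cdot x\subseteq\mu^{-1}(\mu(x))\subseteq M_{\ca{O}}$ for every $x\in M_{\ca{O}}$.

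For part~\eqref{item;orbit-anchor}, the equivariance identity $T_x\mu(a(\xi)(x))=\ad^*(\xi)\mu(x)\in T_{\mu(x)}\ca{O}$ from Lemma~\ref{lemma;equivariant} shows that $a(\xi)$ restricts to a transverse vector field on $(M_{\ca{O}},\f_{\ca{O}})$. The key observation is that transversality of $\mu$ to $\ca{O}$, combined with equivariance, forces $T_x\mu$ to be surjective for every $x\in M_{\ca{O}}$: equivariance gives $T_{\mu(x)}\ca{O}\subseteq\tn{im}(T_x\mu)$, which together with the transversality condition $\tn{im}(T_x\mu)+T_{\mu(x)}\ca{O}=\g^*$ yields $\tn{im}(T_x\mu)=\g^*$. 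Lemma~\ref{lemma;moment}\eqref{item;moment-image} then gives $\tn{stab}(\g\ltimes\f,x)=0$, and since $T_x\f_{\ca{O}}=T_x\f$ as subspaces of $T_xM$ the same holds for $\g\ltimes\f_{\ca{O}}$; so the anchor $t_{\ca{O}}$ has trivial kernel at every point and is a vector bundle injection. A by-product is that every $\alpha\in\ca{O}$ is a regular value of $\mu$, which will be needed in part~\eqref{item;orbit-symplectic}.

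The main computation is part~\eqref{item;orbit-symplectic}. Closedness of $\omega_{\ca{O}}$ and the inclusion $T\f_{\ca{O}}\subseteq\ker(\omega_{\ca{O}})$ are immediate from closedness and $\f$-basicness of $\omega$ and $\sigma_{\ca{O}}$. The task is to identify $\ker(\omega_{\ca{O},x})$ with $T_xL$, where $L=\g\ltimes\f\cdot x=\g\ltimes\f_{\ca{O}}\cdot x$ (the two orbits coincide because both anchors have image $T_x\f+a(\g)(x)$); the constant-rank conclusion follows automatically from anchor injectivity. For the forward inclusion $T_xL\subseteq\ker(\omega_{\ca{O},x})$, I would write $v=v_0+a(\xi)(x)$ with $v_0\in T_x\f$, take $w\in T_xM_{\ca{O}}$ with $T_x\mu\cdot w=\ad^*(\eta)\alpha$ where $\alpha=\mu(x)$, and compute: $\f$-basicness of $\omega$ kills the $v_0$-contribution, the moment-map condition $\iota(a(\xi))\omega=\d\mu^\xi$ yields $\omega_x(a(\xi)(x),w)=\pair{T_x\mu\cdot w}{\xi}=\alpha([\xi,\eta])$, and the KKS formula yields $\sigma_{\ca{O},\alpha}(T_x\mu\cdot v,T_x\mu\cdot w)=\sigma_{\ca{O},\alpha}(\ad^*(\xi)\alpha,\ad^*(\eta)\alpha)=\alpha([\xi,\eta])$, so these cancel in $\omega_{\ca{O},x}(v,w)$.

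For the reverse inclusion the usual symplectic-orthogonal argument is blocked by the KKS correction, so I would reduce to the case already handled in Proposition~\ref{proposition;reduction-value}: fix $\alpha=\mu(x)\in\ca{O}$ and observe that along $M_\alpha=\mu^{-1}(\alpha)$ the pullback $\mu_{\ca{O}}^*\sigma_{\ca{O}}$ vanishes, so $\omega_{\ca{O}}|_{M_\alpha}=\omega_\alpha$. Using the surjection $T_x\mu|_{a(\g)(x)}\twoheadrightarrow T_\alpha\ca{O}$ coming from equivariance, $T_xM_{\ca{O}}=T_xM_\alpha+a(\g)(x)$, so any $v\in\ker(\omega_{\ca{O},x})$ decomposes as $v=v'+a(\zeta)(x)$ with $v'\in T_xM_\alpha$. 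Testing $\omega_{\ca{O},x}(v,w)=0$ against $w\in T_xM_\alpha$ kills both the KKS term and $\omega_x(a(\zeta)(x),w)$, yielding $\omega_\alpha(v',w)=0$ for all such $w$. Proposition~\ref{proposition;reduction-value}\eqref{item;zero-symplectic} then gives $v'\in T_x(\g_\alpha\ltimes\f_\alpha\cdot x)\subseteq T_xL$, and hence $v\in T_xL$. The principal obstacle is this reverse inclusion: the KKS correction makes the direct symplectic-orthogonal approach unavailable, so the restriction-to-$M_\alpha$ device is essential, and the sign conventions for $\ad^*$, the Poisson bracket, and the KKS form must all match throughout for the cancellation in the forward inclusion to work.
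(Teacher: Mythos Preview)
Your proof is correct. Parts~\eqref{item;orbit-foliation}--\eqref{item;orbit-anchor} match the paper's treatment (which simply says they are proved ``in the same way'' as Proposition~\ref{proposition;reduction-value}), and your observation that transversality plus equivariance forces $T_x\mu$ to be surjective is exactly what is needed.

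For part~\eqref{item;orbit-symplectic} you take a genuinely different route from the paper. The paper argues via symplectic linear algebra on a quotient: it identifies $T_xM_{\ca{O}}=(T_xL_\alpha)^\omega$ (using that $\pr_\alpha\circ\mu$ is a $\g_\alpha$-moment map), passes to the symplectic vector space $V=(T_xL_\alpha)^\omega/T_xL_\alpha$, recognizes $T_\alpha\ca{O}\cong T_xL/T_xL_\alpha$ as a symplectic subspace of $V$ via Lemma~\ref{lemma;isotropic}, and observes that $\omega_{\ca{O}}$ on $V$ is obtained from $\omega$ by subtracting the form on that subspace---so its null space is precisely $T_\alpha\ca{O}$, which pulls back to $T_xL$. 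Both inclusions come out at once. Your argument instead treats the two inclusions separately: the forward one by a direct cancellation computation, and the reverse one by decomposing $T_xM_{\ca{O}}=T_xM_\alpha+a(\g)(x)$, testing against $T_xM_\alpha$, and invoking Proposition~\ref{proposition;reduction-value}\eqref{item;zero-symplectic} as a black box. Your approach is more hands-on and avoids the passage to quotient spaces, at the cost of depending on the earlier proposition; the paper's is more self-contained and conceptual, packaging everything into a single symplectic-complement statement on $V$. Both are sound and of comparable length.
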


\begin{proof}
\eqref{item;orbit-foliation}--\eqref{item;orbit-anchor}~are proved in
the same way as Proposition~\ref{proposition;reduction-value}%
\eqref{item;zero-foliation}--\eqref{item;zero-anchor}.

\eqref{item;orbit-symplectic}~Let $x\in M_{\ca{O}}$ and
$\alpha=\mu(x)\in\ca{O}$.  The tangent space to $\ca{O}$ at $\alpha$
is $\tn{ann}(\g_\alpha)$, the annihilator of the stabilizer subalgebra
$\g_\alpha=\tn{stab}(\g,\alpha)$.  Therefore
\[
T_xM_{\ca{O}}=(T_x\mu)^{-1}(T_\alpha\ca{O})=\ker({\pr_\alpha}\circ
T_x\mu),
\]
where $\pr_\alpha$ denotes the quotient map $\g^*\to\g_\alpha^*$.  Put
$L=\g\ltimes\f\cdot x$ and $L_\alpha=\g_\alpha\ltimes\f\cdot x$.
Since ${\pr_\alpha}\circ\mu$ is a moment map for the transverse
$\g_\alpha$-action on $M$, we obtain from
Lemma~\ref{lemma;moment}\eqref{item;moment-kernel} that
\begin{equation}\label{equation;inverse-orbit}
T_xM_{\ca{O}}=(T_xL_\alpha)^\omega.
\end{equation}
Therefore $\omega$ descends to a symplectic form on the vector space
\[V=T_xM_{\ca{O}}/T_xL_\alpha=(T_xL_\alpha)^\omega/T_xL_\alpha,\]
which we will also call $\omega$.  The exact
sequence~\eqref{equation;exact} shows that $V$ has a symplectic
subspace isomorphic to $T_xL/T_xL_\alpha\cong T_\alpha\ca{O}$.  Let
$W=(T_\alpha\ca{O})^\omega$ be the orthogonal complement of
$T_\alpha\ca{O}$ in $V$; then $W$ is a symplectic subspace of $V$ with
symplectic form $\omega_W=\omega|_W$, and $V=T_\alpha\ca{O}\oplus W$
is a symplectic direct sum.  But $V$ has a second $2$-form, namely the
form obtained by subtracting from $\omega$ the form on the symplectic
subspace $T_\alpha\ca{O}$, in other words the form induced by
$\omega_{\ca{O}}=\omega|_{M_{\ca{O}}}-\mu^*\sigma_{\ca{O}}$.  This
shows that the symplectic vector space $(W,\omega_W)$ is isomorphic to
the quotient of the presymplectic vector space $(V,\omega_{\ca{O}})$
by its null space $T_\alpha\ca{O}$.  To conclude we note that
\[
W\cong
V/T_\alpha\ca{O}=\frac{T_xM_{\ca{O}}/T_xL_\alpha}{T_xL/T_xL_\alpha}=
T_xM_{\ca{O}}/T_xL,
\]
so $T_xL=T_x(\g\ltimes\f\cdot x)$ is the null space of the form
$\omega_{\ca{O}}$ on $T_xM_{\ca{O}}$.
\end{proof}

\begin{definition}\label{definition;reduction-orbit}
Let $(M,\f,\omega,\mu)$ a transverse Hamiltonian $\g$-manifold.  Let
$\ca{O}$ be a coadjoint orbit of $\g^*$ and assume that $\mu$ is
transverse to $\ca{O}$.  The \emph{reduction at the orbit $\ca{O}$} of
$(M,\f,\omega,\mu)$ is the transversely symplectic foliated manifold
$(M_{\ca{O}},\g\ltimes\f_{\ca{O}},\omega_{\ca{O}})$ of
Proposition~\ref{proposition;reduction-orbit}.
\end{definition}

How does the reduction at a point
$(M_\alpha,\g_\alpha\ltimes\f_\alpha,\omega_\alpha)$ of
Definition~\ref{definition;reduction-value} compare with the reduction
at an orbit $(M_{\ca{O}},\g\ltimes\f_{\ca{O}},\omega_{\ca{O}})$ of
Definition~\ref{definition;reduction-orbit}?
The best we can hope for is the following result, which says that the
inclusion $M_\alpha\inj M_{\ca{O}}$ induces an \'etale morphism of
$0$-symplectic stacks $M_\alpha/\g_\alpha\ltimes\f_\alpha\to
M_{\ca{O}}/\g\ltimes\f_{\ca{O}}$.  The coarse quotient of this
morphism (i.e.\ the map on leaf spaces) is in general neither
injective nor surjective.

\begin{proposition}\label{proposition;reduction-compare}
Let $(M,\f,\omega,\mu)$ a transverse Hamiltonian $\g$-manifold.  Let
$\ca{O}$ be a coadjoint orbit of $\g^*$ and
$M_{\ca{O}}=\mu^{-1}(\ca{O})$.  Let $\alpha\in\ca{O}$ and
$M_\alpha=\mu^{-1}(\alpha)$.  Suppose that $\mu\colon M\to\g^*$ is
transverse to $\ca{O}$.  Then $\alpha$ is a regular value of $\mu$,
the inclusion $j_\alpha\colon M_\alpha\to M_{\ca{O}}$ is transverse to
the foliation $\g\ltimes\f_{\ca{O}}$, the pullback foliation is
$j_\alpha^*(\g\ltimes\f_{\ca{O}})=\g_\alpha\ltimes\f_\alpha$, and
$j_\alpha^*\omega_{\ca{O}}=\omega_\alpha$.
\end{proposition}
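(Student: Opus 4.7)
The plan is to verify the four assertions in sequence, relying on the equivariance of $\mu$ (Lemma \ref{lemma;equivariant}), on Lemma \ref{lemma;moment}, and especially on the short exact sequence \eqref{equation;exact} of Lemma \ref{lemma;isotropic}.

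First I would establish that $\alpha$ is a regular value. Let $x\in M_\alpha$. By the transversality hypothesis, $T_\alpha\g^*=T_x\mu(T_xM)+T_\alpha\ca{O}$. Equivariance of $\mu$ implies that for every $\xi\in\g$ we have $T_x\mu(a(\xi)(x))=\ad^*(\xi)\alpha$, and the map $\g\to T_\alpha\ca{O}$, $\xi\mapsto\ad^*(\xi)\alpha$, is surjective. Hence $T_\alpha\ca{O}\subseteq T_x\mu(T_xM)$, so $T_x\mu$ is surjective.

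Next I would address the transversality of $j_\alpha$ to the foliation $\g\ltimes\f_{\ca{O}}$ together with the computation of the pullback foliation. Write $L=\g\ltimes\f\cdot x$; the leaf of $\g\ltimes\f_{\ca{O}}$ through $x$ has tangent space $T_xL$. For $v\in T_xM_{\ca{O}}$ we have $T_x\mu(v)\in T_\alpha\ca{O}$, and by the surjectivity of $T_x\mu_L\colon T_xL\to T_\alpha\ca{O}$ established in Lemma \ref{lemma;isotropic} there exists $w\in T_xL$ with $T_x\mu(w)=T_x\mu(v)$; then $v-w\in\ker(T_x\mu)=T_xM_\alpha$, which proves transversality. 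The tangent distribution of the pullback foliation at $x$ is $T_xM_\alpha\cap T_xL=\ker(T_x\mu_L)$, and by the exact sequence \eqref{equation;exact} this coincides with $T_x(\g_\alpha\ltimes\f\cdot x)=T_x(\g_\alpha\ltimes\f_\alpha\cdot x)$, which is the tangent distribution of $\g_\alpha\ltimes\f_\alpha$. To pass from an equality of tangent distributions to an equality of foliations one must check that the leaves of $\g_\alpha\ltimes\f_\alpha$ are indeed the connected components of $M_\alpha\cap L$; this is exactly the content of Lemma \ref{lemma;isotropic}, which describes the connected components of the fibres of $\mu_L$ as the orbits $\g_\alpha\ltimes\f\cdot y$.

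Finally, the equality $j_\alpha^*\omega_{\ca{O}}=\omega_\alpha$ is immediate: since $\mu_{\ca{O}}\circ j_\alpha$ is the constant map with value $\alpha$, we have $j_\alpha^*\mu_{\ca{O}}^*\sigma_{\ca{O}}=0$, so $j_\alpha^*\omega_{\ca{O}}=j_\alpha^*\omega|_{M_{\ca{O}}}=\omega|_{M_\alpha}=\omega_\alpha$. I do not expect any serious obstacles in carrying out this plan; the only subtle point is the distinction between having the correct tangent distribution and having the correct leaves, which is handled cleanly by appealing to Lemma \ref{lemma;isotropic} rather than by reproving its content.
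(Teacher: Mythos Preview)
Your proposal is correct and follows essentially the same route as the paper's proof: both use equivariance of $\mu$ (Lemma~\ref{lemma;equivariant}) to get regularity of $\alpha$, Lemma~\ref{lemma;isotropic} (in particular the exact sequence~\eqref{equation;exact}) for transversality and the identification of the pullback foliation, and the definitions from Propositions~\ref{proposition;reduction-value} and~\ref{proposition;reduction-orbit} for the equality of forms. Your write-up is more explicit than the paper's terse version, and your remark about distributions versus leaves is a nice touch of care, though once the tangent distributions of two regular foliations on the same manifold agree they are the same foliation.
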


\begin{proof}
Since $\mu$ is equivariant (Lemma~\ref{lemma;equivariant}), the
transversality of $\mu$ to $\ca{O}$ implies $\alpha\in\ca{O}$ is a
regular value.  That $\g\ltimes\f_{\ca{O}}$ is transverse to
$j_\alpha$ follows also from the transversality of $\mu$, and that
$j_\alpha^*(\g\ltimes\f_{\ca{O}})$ is equal to
$\g_\alpha\ltimes\f_\alpha$ follows from
Lemma~\ref{lemma;isotropic}.  The equality
$j_\alpha^*\omega_{\ca{O}}=\omega_\alpha$ follows immediately from
Propositions~\ref{proposition;reduction-value}
and~\ref{proposition;reduction-orbit}.
\end{proof}

Under some additional assumptions the discrepancy between the two
kinds of reduction disappears.

\begin{proposition}\label{proposition;reduction-compare-molino}
With the conventions and hypotheses of
Proposition~\ref{proposition;reduction-compare}, suppose additionally
that the foliation $\f$ is Molino, the leaf closure space $M/\ol{\f}$ is
compact, the transverse action of the Lie algebra $\g$ is isometric,
and $\g$ admits a positive definite invariant inner product.  Then
$j_\alpha\colon(M_\alpha,\g_\alpha\ltimes\f_\alpha,\omega_\alpha)\to
(M_{\ca{O}},\g\ltimes\f_{\ca{O}},\omega_{\ca{O}})$ is a weak
equivalence of transversely symplectic foliations.
\end{proposition}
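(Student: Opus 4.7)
The plan is to exploit the equivalence (recorded in the remark after Proposition~\ref{p:equivMorita}) between weak equivalences of Riemannian foliations and Morita equivalences of their holonomy groupoids. Proposition~\ref{proposition;reduction-compare} already establishes that $j_\alpha$ is an immersion transverse to $\g\ltimes\f_{\ca{O}}$ with pullback foliation precisely $\g_\alpha\ltimes\f_\alpha$; consequently the restriction of $\tn{Hol}(M_{\ca{O}},\g\ltimes\f_{\ca{O}})$ to the transversal $M_\alpha$ coincides canonically with $\tn{Hol}(M_\alpha,\g_\alpha\ltimes\f_\alpha)$. The classical theory of complete transversals then yields a Morita equivalence between this restriction and the full holonomy groupoid, provided we verify essential surjectivity: every $\g\ltimes\f_{\ca{O}}$-orbit in $M_{\ca{O}}$ meets $M_\alpha$.

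Essential surjectivity is the step in which all of the additional hypotheses enter. Let $y \in M_{\ca{O}}$ and $\beta = \mu(y) \in \ca{O}$. Because $\g$ admits a positive definite invariant inner product, the simply connected integration $G$ of $\g$ is compact, and writing $\ca{O} = G\cdot\alpha$ I can choose $\xi_1,\ldots,\xi_n \in \g$ such that $g := \exp(\xi_n)\cdots\exp(\xi_1)$ satisfies $g\cdot\alpha = \beta$. Because $\f$ is Molino and the transverse action is isometric, each transverse Killing field $a(\xi_i)$ admits a complete foliate lift $a^\dagger(\xi_i)$ on the transverse orthonormal frame bundle $P$ (Section~\ref{s:tact}); this lift preserves $\f_P$ and descends to generate the $G$-action on the Molino manifold $W$. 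Setting $p' = e^{-a^\dagger(\xi_1)}\cdots e^{-a^\dagger(\xi_n)}\cdot p$ for any $p \in \pi_M^{-1}(y)$, and $y' = \pi_M(p')\in M$, the equivariance of $\mu$ (Lemma~\ref{lemma;equivariant}) together with the fact that $a^\dagger(\xi)$ covers $a(\xi)$ gives $\mu(y') = g^{-1}\cdot\beta = \alpha$, so $y' \in M_\alpha$. Moreover, each intermediate step moves the point along the transverse flow of an element of $a(\g)$ combined with motion along $\f$-leaves; thus $y$ and $y'$ lie on a common $\g\ltimes\f$-orbit in $M$, and since both are in $M_{\ca{O}}$, on a common $\g\ltimes\f_{\ca{O}}$-orbit.

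With these two ingredients in place, the bibundle realizing the weak equivalence may be taken as $B = s^{-1}(M_\alpha) \subset \tn{Mon}(M_{\ca{O}},\g\ltimes\f_{\ca{O}})$, where $\tn{Mon}$ denotes the source-simply-connected monodromy integration (which is a smooth Hausdorff Lie groupoid in the Riemannian setting). The target $t\colon B\to M_{\ca{O}}$ is a surjective submersion by essential surjectivity, the source $s\colon B\to M_\alpha$ is a surjective submersion with the units as a section, source fibers are connected by construction of $\tn{Mon}$, and target fibers are connected once one restricts to the source-connected components. The pullback foliations $s^*(\g_\alpha\ltimes\f_\alpha)$ and $t^*(\g\ltimes\f_{\ca{O}})$ both coincide with the canonical foliation of $B$ by source fibers, so they agree. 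I expect the main obstacle to be making the flow argument in the second step fully rigorous — in particular, verifying that $\mu(y') = g^{-1}\cdot\beta$ exactly, rather than up to some $\f$-dependent correction; this is most cleanly handled by first passing to the Molino manifold $W$, where $G$ acts honestly by isometries and $\mu$ descends to a genuinely $G$-equivariant map $\mu_W\colon W\to\g^*$, whence the identity can be read off directly from the $G$-action on $\g^*$.
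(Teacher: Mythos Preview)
Your argument has a genuine gap in the ``fully faithful'' direction, which you dispose of too quickly. You assert that the restriction of $\tn{Hol}(M_{\ca{O}},\g\ltimes\f_{\ca{O}})$ to the transversal $M_\alpha$ coincides with $\tn{Hol}(M_\alpha,\g_\alpha\ltimes\f_\alpha)$, claiming this follows from Proposition~\ref{proposition;reduction-compare}. It does not: that proposition only identifies the pullback \emph{foliation} (the Lie algebroid level). At the groupoid level, the restricted holonomy groupoid has as orbits the intersections $(\g\ltimes\f_{\ca{O}}\cdot x)\cap M_\alpha$, and by Lemma~\ref{lemma;isotropic} each such intersection is only a \emph{union} of $\g_\alpha\ltimes\f_\alpha$-leaves, a priori disconnected. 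The two groupoids agree only if each big leaf meets $M_\alpha$ in a single small leaf, and this is exactly what must be proved. The paper handles this by passing to the Molino manifold $W$, where the honest $G$-action gives $G\cdot w\cap W_\alpha=G_\alpha\cdot w$; translating back to $M$ and using that $G_\alpha$ is \emph{connected} (this is where the compact-type hypothesis on $\g$ is really used) together with Lemma~\ref{lemma;isotropic} forces $(\g\ltimes\f\cdot x)\cap M_\alpha=\g_\alpha\ltimes\f_\alpha\cdot x$. Without this step your bibundle $B$ need not have the correct orbit structure, and the claimed Morita equivalence fails.

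There is also a smaller error: the simply connected integration $G$ of a compact-type $\g$ is \emph{not} compact in general (take $\g=\bR$). Fortunately you do not actually need compactness of $G$---any $\beta\in\ca{O}$ is reached from $\alpha$ by a finite composition of coadjoint flows regardless---but you \emph{do} need the compact-type hypothesis for connectedness of $G_\alpha$ in the step above. Your essential-surjectivity sketch via flows on $P$ is on the right track, and your closing remark that one should pass to $W$ is exactly how the paper proceeds for both directions; I would recommend reorganizing the whole argument around $W$ from the start, proving both $W_{\ca{O}}=G\cdot W_\alpha$ and $G\cdot w\cap W_\alpha=G_\alpha\cdot w$ there, and then translating back.
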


\begin{proof}
We use Molino's structure theory summarized in Section~\ref{s:MolinoOverview}.  The
transverse orthonormal frame bundle $\pi_M\colon P\to M$ is a foliated
principal bundle with foliation $\f_P$.  The leaf closures of $\f_P$
form a strictly simple foliation $\ol{\f}_P$ of $P$.  Let $W=P/\ol{\f}_P$
be the Molino manifold and $\pi_W\colon P\to W$ the projection.  The
map $\mu\circ\pi_M$ is $\f_P$-invariant and therefore descends to a
smooth map $\mu_W\colon W\to\g^*$.  Let
\[
W_\alpha=\pi_W(\pi_M^{-1}(M_\alpha))=\mu_W^{-1}(\alpha),\qquad
W_{\ca{O}}=\pi_W(\pi_M^{-1}(M_{\ca{O}}))=\mu_W^{-1}(\ca{O}).
\]
Since the transverse $\g$-action is isometric, it carries over to a
$\g$-action on $W$.  It follows from Lemma~\ref{lemma;equivariant}
that $\mu_W$ is $\g$-equivariant.  Because $W$ is compact, the
$\g$-action on $W$ integrates to a $G$-action for some connected Lie
group $G$ with Lie algebra $\g$, and the map $\mu_W$ is
$G$-equivariant.  It follows that $W_{\ca{O}}=G\cdot W_\alpha$ and
$G\cdot w\cap W_\alpha=G_\alpha\cdot w$ for all $w\in W_\alpha$.
Since $\g$ is of compact type, the coadjoint stabilizer $G_\alpha$ is
connected.  Hence we obtain
\[
W_{\ca{O}}=\g\cdot W_\alpha,\qquad\g\cdot w\cap
W_\alpha=\g_\alpha\cdot w
\]
for all $w\in W_\alpha$, and therefore
\[
M_{\ca{O}}=(\g\times\lie{c})\ltimes\f\cdot
M_\alpha,\qquad(\g\times\lie{c})\ltimes\f\cdot x\cap
M_\alpha=(\g_\alpha\times\lie{c})\ltimes\f_\alpha\cdot x
\]
for all $x\in M_\alpha$, where $\lie{c}$ is the Molino centralizer
sheaf of $M$.  Since $M_\alpha$ is closed and $\f$-invariant, the
first of these equalities says that $M_{\ca{O}}=\g\ltimes\f\cdot
M_\alpha$, in other words the groupoid morphism
$J_\alpha\colon\tn{Hol}(M_\alpha,\g_\alpha\ltimes\f_\alpha)\to
\tn{Hol}(M_{\ca{O}},\g\ltimes\f_{\ca{O}})$ induced by $j_\alpha$ is
essentially surjective.  The second equality implies
\[
\g_\alpha\ltimes\f_\alpha\cdot x\subseteq\g\ltimes\f\cdot x\cap
M_\alpha\subseteq\overline{\g_\alpha\ltimes\f_\alpha\cdot x}
\]
for all $x\in M_\alpha$.  Since $\g_\alpha\ltimes\f_\alpha\cdot x$ is
a connected component of $\g\ltimes\f\cdot x\cap M_\alpha$
(Lemma~\ref{lemma;isotropic}), we obtain
$\g_\alpha\ltimes\f_\alpha\cdot x=\g\ltimes\f\cdot x\cap M_\alpha$,
which means that $J_\alpha$ is fully faithful.
\end{proof}

\subsection{Additional properties}

We explain three other features of transverse symplectic reduction
that are of use in Section~\ref{s:qr0}: the shifting trick, the reduction
of weak equivalences, and the reduction of complete Riemannian foliations.

The shifting trick for Hamiltonian group actions says that reduction
of $M$ at an orbit $\ca{O}$ is symplectically isomorphic to reduction
of the product $M\times\ca{O}^-$ at the zero level.  Here $\ca{O}^-$
denotes the opposite of $\ca{O}$, i.e.\ $\ca{O}$ equipped with the
symplectic form $-\sigma_{\ca{O}}$.  The shifting trick is proved by
identifying $M$ with the graph of the moment map in $M\times\g^*$.
The foliated version is almost the same.

\begin{proposition}\label{proposition;reduction-shift}
Let $\g$ be a Lie algebra and $(M,\f,\omega,\mu)$ a transverse
Hamiltonian $\g$-manifold.  Let $\ca{O}$ be a coadjoint orbit of
$\g^*$, let $i_{\ca{O}}\colon\ca{O}\to\g^*$ be the inclusion,
and let $M_{\ca{O}}=\mu^{-1}(\ca{O})$.
\begin{enumerate}
\item\label{item;product}
The product $M\times\ca{O}^-$ equipped with the foliation $\pr_1^*\f$,
the presymplectic form $\pr_1^*\omega-\pr_2^*\sigma_{\ca{O}}$, the
diagonal $\g$-action, and the moment map
$\pr_1^*\mu-\pr_2^*i_{\ca{O}}$ is a transverse Hamiltonian
$\g$-manifold.  The moment map $\mu\colon M\to\g^*$ is
transverse to $\ca{O}$ if and only if $0$ is a regular value of
$\pr_1^*\mu-\pr_2^*i_{\ca{O}}$.
\item\label{item;shift}
Suppose that $\mu\colon M\to\g^*$ is transverse to $\ca{O}$.  The
map $M\to M\times\g^*$ defined by $x\mapsto(x,\mu(x))$ induces an
isomorphism
\[
(M_{\ca{O}},\g\ltimes\f_{\ca{O}},\omega_{\ca{O}})\cong
\bigl((M\times\ca{O}^-)_0,\g\ltimes(\pr_1^*\f)_0,
(\pr_1^*\omega-\pr_2^*\sigma_{\ca{O}})_0\bigr)
\]
of transversely symplectic foliated manifolds.
\end{enumerate}
\end{proposition}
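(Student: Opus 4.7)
The plan is to verify part (a) by a direct calculation using that moment maps, presymplectic forms, and null foliations behave well under pullback by the projections, and then to establish part (b) by exhibiting the graph of $\mu$ as an explicit isomorphism of reduced data.

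For part (a), I will first verify that $\Omega := \pr_1^*\omega - \pr_2^*\sigma_{\ca{O}}$ is a transverse symplectic structure on $(M\times \ca{O}^-,\pr_1^*\f)$. Closedness is immediate. For the null foliation, at a point $(x,\alpha)$ and a tangent vector $(v,w) \in T_xM\oplus T_\alpha \ca{O}$, the pairing $\Omega((v,w),(v',w')) = \omega(v,v') - \sigma_{\ca{O}}(w,w')$ vanishes identically in $(v',w')$ if and only if $v \in T_x\f$ and $w = 0$ (since $\sigma_{\ca{O}}$ is nondegenerate). So $\ker(\Omega) = T\pr_1^*\f$ as required. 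Next, for the moment map condition, the vector field generated by $\xi\in \g$ under the diagonal action is $a_M(\xi)\oplus a_{\ca{O}}(\xi)$, where $a_\ca{O}(\xi)$ is the generator of the coadjoint action; its contraction against $\Omega$ is $-\pr_1^*\d\mu^\xi + \pr_2^*\d i_\ca{O}^\xi = -\d(\pr_1^*\mu^\xi - \pr_2^*i_\ca{O}^\xi)$, using that $-i_\ca{O}$ is a moment map for the coadjoint $\g$-action on $(\ca{O},-\sigma_\ca{O})=\ca{O}^-$. Finally, for the regular value statement, at $(x,\alpha) \in (\pr_1^*\mu - \pr_2^*i_\ca{O})^{-1}(0)$ (so $\alpha = \mu(x)$), the differential of $\pr_1^*\mu - \pr_2^*i_\ca{O}$ sends $(v,w) \in T_xM\oplus T_\alpha\ca{O}$ to $T_x\mu(v) - w$; surjectivity of this map onto $\g^*$ is precisely the transversality $T_x\mu(T_xM) + T_\alpha\ca{O} = \g^*$.

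For part (b), the graph map $\Phi\colon M_\ca{O} \to (M\times \ca{O}^-)_0$, $\Phi(x)=(x,\mu(x))$, is a diffeomorphism with inverse $\pr_1$. I will check that $\Phi$ pulls the restricted 2-form back to $\omega_\ca{O}$ and intertwines the two enlarged foliations. The first is immediate: $\Phi^*\pr_1^*\omega = \omega|_{M_\ca{O}}$ and $\Phi^*\pr_2^*\sigma_\ca{O} = \mu_\ca{O}^*\sigma_\ca{O}$, so $\Phi^*\Omega|_{(M\times \ca{O}^-)_0} = \omega_\ca{O}$. For the foliations, observe that on $M\times \ca{O}^-$, leaves of $\pr_1^*\f$ have the form $L \times \{\alpha\}$ for $L$ a leaf of $\f$ and $\alpha \in \ca{O}$; since $\mu$ is $\f$-basic, such a leaf is contained in $(M\times \ca{O}^-)_0$ exactly when $\alpha = \mu(x)$ for $x \in L$. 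Under $\pr_1$ these map to the leaves of $\f_\ca{O}$. The diagonal transverse $\g$-action on $(M\times \ca{O}^-)_0$, generated by $a_M(\xi)\oplus a_\ca{O}(\xi)$, pushes forward under $\pr_1$ to the transverse $\g$-action on $M_\ca{O}$ generated by $a_M(\xi)$, because $\g$-equivariance of $\mu$ (Lemma~\ref{lemma;equivariant}) precisely says $T\mu\circ a_M(\xi) = a_\ca{O}(\xi)\circ \mu$. Hence $\Phi$ intertwines $\g_{M_\ca{O}}\ltimes\f_\ca{O}$ with the restriction of $\g_{M\times\ca{O}^-}\ltimes\pr_1^*\f$ to $(M\times \ca{O}^-)_0$, yielding the desired isomorphism of transversely symplectic foliated manifolds.

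The only mildly technical point is bookkeeping the sign conventions for the moment map of $\ca{O}^-$ (it is $-i_\ca{O}$, not $i_\ca{O}$), and keeping track of the basic property of $\mu$ when identifying leaves of $\pr_1^*\f$ that survive in $(M\times \ca{O}^-)_0$. No genuine obstacle appears, since everything reduces to the bilinearity of the two Cartan calculations and the explicit graph identification.
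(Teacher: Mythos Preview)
Your proposal is correct and takes essentially the same approach as the paper, which simply states that the result is a straightforward verification based on Propositions~\ref{proposition;reduction-value} and~\ref{proposition;reduction-orbit}. You have spelled out those details carefully; the only point you might also mention for completeness is that the dual moment map $\xi\mapsto\pr_1^*\mu^\xi-\pr_2^*i_{\ca{O}}^\xi$ is a Lie algebra homomorphism (required by the paper's definition of Hamiltonian), but this follows immediately from the product Poisson structure and the fact that each factor is a moment map.
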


The proof is a straightforward verification based on
Propositions~\ref{proposition;reduction-value}
and~\ref{proposition;reduction-orbit}.

The next result says that if two transverse Hamiltonian $\g$-manifolds
are weakly equivalent, then their reduced spaces are weakly
equivalent.

\begin{proposition}\label{proposition;reduction-equivalent}
Let $\g$ be a Lie algebra and $(M_1,\f_1,\omega_1,\mu_1)$ a transverse
Hamiltonian $\g$-manifold.  Let $(M_2,\f_2)$ be a foliated manifold
and $(B,\pi_1,\pi_2)$ a weak equivalence between $(M_1,\f_1)$ and
$(M_2,\f_2)$.  There exists a unique structure of transverse
Hamiltonian $\g$-manifold $(M_2,\f_2,\omega_2,\mu_2)$ on $M_2$ such
that $B$ is equivariant, $\pi_1^*\omega_1=\pi_2^*\omega_2$, and
$\pi_1^*\mu_1=\pi_2^*\mu_2$.  For every coadjoint orbit $\ca{O}$ of
$\g$ transverse to $\mu_1$ (or to $\mu_2$), $B$ induces a symplectic
weak equivalence between the reduced spaces
$((M_1)_{\ca{O}},\g\ltimes(\f_1)_{\ca{O}},(\omega_1)_{\ca{O}})$ and
$((M_2)_{\ca{O}},\g\ltimes(\f_2)_{\ca{O}},(\omega_2)_{\ca{O}})$.
\end{proposition}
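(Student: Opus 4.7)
The plan has two parts: first construct the transverse Hamiltonian structure on $M_2$ by descent through $B$, then restrict the weak equivalence to a weak equivalence of reduced spaces.

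For the first part, write $\f_B=\pi_1^*\f_1=\pi_2^*\f_2$, so that $N\f_B=\pi_i^*N\f_i$. Since each $\pi_i$ is a surjective submersion with connected fibers whose pullback foliation is $\f_B$, every $\f_B$-basic form (resp.\ $\f_B$-invariant function) on $B$ descends uniquely along $\pi_i$ to an $\f_i$-basic form (resp.\ $\f_i$-invariant function) on $M_i$. In particular $\pi_1^*\omega_1$ descends through $\pi_2$ to a closed $\f_2$-basic $2$-form $\omega_2$ of constant rank equal to the codimension, and $\pi_1^*\mu_1$ descends to $\mu_2\colon M_2\to\g^*$. The transverse $\g$-action is transferred to $M_2$ by Proposition~\ref{p:equivMorita}. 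The moment-map identity is verified by pulling $\iota(a_1(\xi))\omega_1=-\d\mu_1^\xi$ up to $B$, where it becomes $\iota(a_B(\xi))\omega_B=-\d\mu_B^\xi$ for $\omega_B=\pi_1^*\omega_1$ and $a_B$ the pulled-back transverse action, and then pushing down through $\pi_2$. Uniqueness of all these data is automatic from the uniqueness of descent of basic forms and invariant functions.

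For the reduction statement, I observe that $\mu_B:=\pi_1^*\mu_1=\pi_2^*\mu_2\colon B\to\g^*$ is transverse to $\ca{O}$, since each $\pi_i$ is a submersion. Set $B_{\ca{O}}=\mu_B^{-1}(\ca{O})=\pi_i^{-1}((M_i)_{\ca{O}})$. The restrictions $\pi_i|_{B_{\ca{O}}}\colon B_{\ca{O}}\to(M_i)_{\ca{O}}$ are surjective submersions whose fibers coincide with the (connected) fibers of $\pi_i$, and $\f_B|_{B_{\ca{O}}}=(\pi_i|_{B_{\ca{O}}})^*((\f_i)_{\ca{O}})$. Equivariance of $\mu_B$ (Lemma~\ref{lemma;equivariant}) together with $\g$-invariance of $\ca{O}$ shows that the transverse $\g$-action on $B$ restricts to $B_{\ca{O}}$ and covers the transverse $\g$-actions on the $(M_i)_{\ca{O}}$ through $\pi_i|_{B_{\ca{O}}}$.

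The crux is identifying the enlarged foliations, i.e.\ verifying that $(\pi_i|_{B_{\ca{O}}})^*(\g\ltimes(\f_i)_{\ca{O}})=\g\ltimes(\f_B|_{B_{\ca{O}}})$. A leaf of $\g\ltimes(\f_i)_{\ca{O}}$ is the $\g$-saturation of a leaf of $(\f_i)_{\ca{O}}$; since the $\g$-action on $B_{\ca{O}}$ covers the one on $(M_i)_{\ca{O}}$ and $\pi_i|_{B_{\ca{O}}}$ has connected fibers, the preimage of such a leaf is the $\g$-saturation of the preimage, which is itself a leaf of $\f_B|_{B_{\ca{O}}}$. Hence $(B_{\ca{O}},\pi_1|_{B_{\ca{O}}},\pi_2|_{B_{\ca{O}}})$ is a weak equivalence of the reduced foliated manifolds. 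The symplectic compatibility follows because $((\omega_i)_{\ca{O}})$ pulls back via $\pi_i|_{B_{\ca{O}}}$ to the common form $\omega_B|_{B_{\ca{O}}}-\mu_B^*\sigma_{\ca{O}}$ on $B_{\ca{O}}$. The main technical obstacle is establishing this identification of enlarged foliations cleanly; everything else is routine descent and restriction.
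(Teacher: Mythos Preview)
Your proof is correct and follows essentially the same approach as the paper: for the first assertion the paper simply refers back to the proof of Proposition~\ref{p:equivMorita}, and for the second it declares (without further detail) that $B_{\ca{O}}=(\mu_1\circ\pi_1)^{-1}(\ca{O})=(\mu_2\circ\pi_2)^{-1}(\ca{O})$ furnishes the required symplectic weak equivalence. Your write-up simply unpacks what the paper leaves implicit---the descent of $\omega_2$ and $\mu_2$, the verification that the restricted maps $\pi_i|_{B_{\ca{O}}}$ remain surjective submersions with connected fibers, and the identification of the enlarged foliations---so there is no substantive difference in strategy.
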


\begin{proof}
The first assertion is proved in the same way as
Proposition~\ref{p:equivMorita}.  To prove the second assertion, note that the
submanifold
$B_{\ca{O}}=(\mu_1\circ\pi_1)^{-1}(\ca{O})=(\mu_2\circ\pi_2)^{-1}(\ca{O})$
of $B$ defines a symplectic weak equivalence between
$((M_1)_{\ca{O}},\g\ltimes(\f_1)_{\ca{O}},(\omega_1)_{\ca{O}})$ and
$((M_2)_{\ca{O}},\g\ltimes(\f_2)_{\ca{O}},(\omega_2)_{\ca{O}})$.
\end{proof}

To conclude we show that the complete Riemannian foliation property is preserved under transverse symplectic reduction.

\begin{proposition}\label{proposition;reduction-molino}
Let $\g$ be a Lie algebra and $(M,\f,\omega,\mu)$ a transverse
Hamiltonian $\g$-manifold.  Let $\ca{O}$ be a coadjoint orbit of $\g$
transverse to $\mu$.  A Riemannian structure $g$ on $(M,\f)$ which is
invariant under $\g$ induces a Riemannian structure $g_{\ca{O}}$ on
the reduced foliation $(M_{\ca{O}},\g\ltimes\f_{\ca{O}})$.  If
$(M,\f,g)$ is a complete Riemannian foliation, then so is
$(M_{\ca{O}},\g\ltimes\f_{\ca{O}},g_{\ca{O}})$.
\end{proposition}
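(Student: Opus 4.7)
The construction of $g_{\ca{O}}$ is direct. At a point $x\in M_{\ca{O}}$ the subspace $T_xM_{\ca{O}}/T_x\f$ of $N_x\f$ contains $a(\g)_x$, since the transverse $\g$-action preserves $M_{\ca{O}}$ by equivariance (Lemma~\ref{lemma;equivariant}). Using the transverse metric $g$, define $A^\perp\subset N\f|_{M_{\ca{O}}}$ as the fibrewise $g$-orthogonal complement of the subbundle $a(\g)$ inside $TM_{\ca{O}}/T\f|_{M_{\ca{O}}}$; this subbundle is a model for the normal bundle $N(\g\ltimes\f_{\ca{O}})=TM_{\ca{O}}/T(\g\ltimes\f_{\ca{O}})$, and the restriction of $g$ to $A^\perp$ defines $g_{\ca{O}}$. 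Its $\g\ltimes\f_{\ca{O}}$-invariance is immediate: $g$ is $\f$-invariant (being a transverse metric) and $\g$-invariant (by hypothesis), while the subbundle $a(\g)$ and hence its $g$-orthogonal complement are preserved by both $\f|_{M_{\ca{O}}}$ and the transverse $\g$-action.

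For completeness, the plan is to reduce to the special case $\ca{O}=\{0\}$ using the shifting trick of Proposition~\ref{proposition;reduction-shift}. That proposition identifies $(M_{\ca{O}},\g\ltimes\f_{\ca{O}},g_{\ca{O}})$ with the zero-level reduction of the transverse Hamiltonian $\g$-manifold $M\times\ca{O}^-$ equipped with the product foliation $\pr_1^*\f$. Choosing a $\g$-invariant complete Riemannian metric $g_{\ca{O}^-}$ on $\ca{O}^-$ (which in the applications of Section~\ref{s:qr0} is available by averaging, since there $\g$ is of compact type and $\ca{O}$ is compact) together with a complete bundle-like metric $g_M$ for $(M,\f)$ extending $g$, the product $g_M\oplus g_{\ca{O}^-}$ is a complete bundle-like metric for $(M\times\ca{O}^-,\pr_1^*\f)$ inducing the transverse metric $g\oplus g_{\ca{O}^-}$. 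The shifting isomorphism is $\g$-equivariant and intertwines the reduced transverse metrics, so completeness of the zero-level reduction of $M\times\ca{O}^-$ will imply completeness of $(M_{\ca{O}},\g\ltimes\f_{\ca{O}},g_{\ca{O}})$.

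It remains to handle the case $\ca{O}=\{0\}$. Since $\{0\}$ is closed in $\g^*$, the submanifold $M_0=\mu^{-1}(0)$ is closed in $M$, and the Hopf-Rinow theorem (used exactly as in Section~\ref{s:riemfol}) shows that the restriction $g_M|_{M_0}$ is a complete Riemannian metric on $M_0$. I would then verify that $g_M|_{M_0}$ is bundle-like for $(M_0,\g\ltimes\f_0)$ with associated transverse metric $g_0$. Decompose orthogonally $TM_0=T\f|_{M_0}\oplus V$ with $V=T\f^\perp\cap TM_0$; the bundle-like property of $g_M$ for $\f$ identifies $V$ with a subbundle of $N\f|_{M_0}$ on which $g_M$ agrees with $g$. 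Inside $V$ the subbundle $A$ of $g_M$-horizontal lifts of $a(\g)$ spans $T(\g\ltimes\f_0)$ transversally to $T\f|_{M_0}$, so $T(\g\ltimes\f_0)^\perp\cap TM_0=A^\perp$ with restricted metric equal to $g_0$ by the construction in the first paragraph. The main obstacle is securing the $\g$-invariant complete metric on $\ca{O}^-$ used in the shifting step for an arbitrary Lie algebra $\g$; outside the compact-type setting this may require an auxiliary argument or additional hypotheses, but for the applications driving the appendix this difficulty does not arise.
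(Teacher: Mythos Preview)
Your first paragraph (construction of $g_{\ca{O}}$ and its invariance) is fine and matches the paper. The difficulty is with your completeness argument.

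The detour through the shifting trick is not only unnecessary, it contains a genuine error. The shifting diffeomorphism $x\mapsto(x,\mu(x))$ from $M_{\ca{O}}$ to $(M\times\ca{O}^-)_0$ is a symplectic isomorphism, but it does \emph{not} intertwine the reduced transverse metrics. Under the graph map, a normal vector $[v]\in T_xM_{\ca{O}}/T_x\f$ is sent to $([v],T\mu(v))$, and the metric $g\oplus g_{\ca{O}^-}$ on the target adds the term $g_{\ca{O}^-}(T\mu(v),T\mu(v))$. After quotienting by the respective $\g$-orbit directions the two induced metrics on $N(\g\ltimes\f_{\ca{O}})$ differ unless the subspace $a(\g)_x$ happens to be $g$-orthogonal to $\ker(T_x\mu)$, which is not implied by the hypotheses (the symplectic relation $\ker T_x\mu=(a(\g)_x)^\omega$ from Lemma~\ref{lemma;moment} gives $\omega$-orthogonality, not $g$-orthogonality). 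So completeness for $(g\oplus g_{\ca{O}^-})_0$ does not yield completeness for the specific metric $g_{\ca{O}}$ in the statement.

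Your third paragraph is correct, and it is essentially the paper's entire argument. The paper does not pass through the shifting trick at all: it simply observes that $M_{\ca{O}}$ itself is a closed subset of $M$ (since $\ca{O}$ is closed in $\g^*$, which holds whenever $\g$ is of compact type, the case of interest), so by Hopf--Rinow the restriction of a complete bundle-like metric $g_M$ to $M_{\ca{O}}$ is again complete. Then one checks, exactly as you do for the zero level, that $g_M|_{M_{\ca{O}}}$ is bundle-like for the enlarged foliation $\g\ltimes\f_{\ca{O}}$ with induced transverse metric $g_{\ca{O}}$; this uses only that $g$ is $\g$-invariant. There is nothing special about the level $0$ here: replace $M_0$ by $M_{\ca{O}}$ throughout your third paragraph and you have the paper's proof.
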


\begin{proof}
The normal bundle $N(\g\ltimes\f_{\ca{O}})$ of the foliation
$\g\ltimes\f_{\ca{O}}$ is a quotient of the normal bundle
$N\f_{\ca{O}}$ of the foliation $\f_{\ca{O}}=\f|_{M_{\ca{O}}}$.  Let
$g_{\ca{O}}$ be the fibre metric on $N(\g\ltimes\f_{\ca{O}})$ induced
by the metric $g$.  Then $g_{\ca{O}}$ is
$\g\ltimes\f_{\ca{O}}$-invariant because $g$ is
$\g\ltimes\f$-invariant. Thus $(M_{\ca{O}},\g\ltimes\f_{\ca{O}},g_{\ca{O}})$ is a Riemannian foliation. The restriction of a bundle-like metric for $(M,\f)$ to the closed (as a set) submanifold $M_{\ca{O}}$ is a bundle-like metric for $(M_{\ca{O}},\f_{\ca{O}})$. If the bundle-like metric on $M$ is complete, then so is its restriction to the closed (as a set) submanifold $M_{\ca{O}}$ (by the Hopf-Rinow theorem).
\end{proof}

\bibliographystyle{amsplain}

\begin{thebibliography}{10}

\bibitem{abelslice}
H.~Abels, \emph{Parallelizability of proper actions, global {K}-slices and
  maximal compact subgroups}, Math. Ann. \textbf{212} (1974).

\bibitem{alexandrino2022leaf}
M.~M. Alexandrino and F.~C. Caramello~Jr, \emph{Leaf closures of {R}iemannian
  foliations: A survey on topological and geometric aspects of {K}illing
  foliations}, Expo. Math. \textbf{40} (2022), no.~2, 177--230.

\bibitem{AtiyahTransEll}
M.~F. {Atiyah}, \emph{Elliptic operators and compact groups}, vol. 401,
  Springer-Verlag Berlin Heidelberg, 1974.

\bibitem{Braverman2002}
M.~{Braverman}, \emph{Index theorem for equivariant {D}irac operators on
  noncompact manifolds}, K-theory \textbf{27} (2002), no.~1, 61--101.

\bibitem{BLSbsympl}
M.~Braverman, Y.~Loizides, and Y.~Song, \emph{Geometric quantization of
  b-symplectic manifolds}, J. Sympl. Geom. \textbf{19} (2021), no.~1, 1--36.

\bibitem{BruningRiemFoliations}
J.~Br{\"u}ning, F.~W. Kamber, and K.~Richardson, \emph{Index theory for basic
  {D}irac operators on {R}iemannian foliations}, Contemp. Math. \textbf{546}
  (2011), 39--81.

\bibitem{casselmann2017basic}
L.~Casselmann, \emph{Basic {K}irwan surjectivity for {K}-contact manifolds},
  Ann. Global Anal. Geom. \textbf{52} (2017), no.~2, 157--185.

\bibitem{del2019riemannian}
M.~del Hoyo and R.L. Fernandes, \emph{Riemannian metrics on differentiable
  stacks}, Math. Zeit. \textbf{292} (2019), no.~1, 103--132.

\bibitem{del2013lie}
M.~L. del Hoyo, \emph{Lie groupoids and their orbispaces}, Portugaliae
  Mathematica \textbf{70} (2013), no.~2, 161--209.

\bibitem{douglas1995index}
R.G. Douglas, J.F. Glazebrook, F.W. Kamber, and G.~Yu, \emph{Index formulas for
  geometric {D}irac operators in {R}iemannian foliations}, K-theory \textbf{9}
  (1995), no.~5, 407--441.

\bibitem{fitzpatrick2011geometric}
S.~Fitzpatrick, \emph{On the geometric quantization of contact manifolds}, J.
  Geom. Phys. \textbf{61} (2011), no.~12, 2384--2399.

\bibitem{GinzburgGuilleminKarshon}
V.~{Ginzburg}, V.~{Guillemin}, and Y.~{Karshon}, \emph{Cobordism theory and
  localization formulas for {H}amiltonian group actions}, Int. Math. Res.
  Notices \textbf{5} (1996), 221--234.

\bibitem{glazebrook1991transversal}
J.F. Glazebrook and F.W. Kamber, \emph{Transversal {D}irac families in
  {R}iemannian foliations}, Comm. math. phys. \textbf{140} (1991), no.~2,
  217--240.

\bibitem{goertsches2017localization}
O.~Goertsches, H.~Nozawa, and D.~T{\"o}ben, \emph{Localization of
  {C}hern--{S}imons type invariants of {R}iemannian foliations}, Israel J. of
  Math. \textbf{222} (2017), no.~2, 867--920.

\bibitem{goertsches2018equivariant}
O.~Goertsches and D.~T{\"o}ben, \emph{Equivariant basic cohomology of
  {R}iemannian foliations}, J. f{\"u}r die reine und ang. {M}ath. \textbf{2018}
  (2018), no.~745, 1--40.

\bibitem{gordon1973analytical}
W.~B. Gordon, \emph{An analytical criterion for the completeness of
  {R}iemannian manifolds}, Proc. Amer. Math. Soc. \textbf{37} (1973), no.~1,
  221--225.

\bibitem{GorokhovskyLott}
A.~Gorokhovsky and J.~Lott, \emph{The index of a transverse {D}irac-type
  operator: the case of abelian {M}olino sheaf}, J. f{\"u}r die Reine und
  Angewandte Math. \textbf{2013} (2013), no.~678, 125--162.

\bibitem{GuilleminSternbergConjecture}
V.~{Guillemin} and S.~{Sternberg}, \emph{Geometric quantization and
  multiplicities of group representations}, Invent. Math. \textbf{67} (1982),
  no.~3, 515--538.

\bibitem{guillemin2018geometric}
V.~W {Guillemin}, E.~{Miranda}, and J.~{Weitsman}, \emph{On geometric
  quantization of b-symplectic manifolds}, Adv. Math. \textbf{331} (2018),
  941--951.

\bibitem{habib2009brief}
G.~Habib and K.~Richardson, \emph{A brief note on the spectrum of the basic
  {D}irac operator}, Bull. London Math. Soc. \textbf{41} (2009), no.~4,
  683--690.

\bibitem{haefliger1990riemannian}
A.~Haefliger and E.~Salem, \emph{Riemannian foliations on simply connected
  manifolds and actions of tori on orbifolds}, Illinois J. Math. \textbf{34}
  (1990), no.~4, 706--730.

\bibitem{KarshonHarada}
M.~{Harada} and Y.~{Karshon}, \emph{Localization for equivariant cohomology
  with varying polarization}, Communications in {A}nalysis and {G}eometry
  \textbf{20} (2012), 869--947.

\bibitem{HochsMathaiQuantizingProperActions}
P.~{Hochs} and V.~{Mathai}, \emph{Quantising proper actions on {S}pin-c
  manifolds}, Asian J. Math. (to appear).

\bibitem{SjamaarHoffmanStacky}
B.~Hoffman, R.~Sjamaar, and C.~Zhu, \emph{Stacky {H}amiltonian actions and
  symplectic reduction}, IMRN, to appear. arXiv:1808.01003.

\bibitem{hsiao2019geometricCR}
C.-Y. Hsiao, X.~Ma, and G.~Marinescu, \emph{Geometric quantization on {CR}
  manifolds}, arXiv:1906.05627.

\bibitem{Kacimi}
El~Kacimi-Alaoui, \emph{Op{\'e}rateurs transversalement elliptiques sur un
  feuilletage riemannien et applications}, Compositio Math. \textbf{73} (1990),
  no.~1, 57--106.

\bibitem{LLSS2}
Y.~Lin, Y.~Loizides, R.~Sjamaar, and Y.~Song, \emph{Log symplectic manifolds
  and {$[Q,R]=0$}}, To appear, Int. Math. Res. Not.

\bibitem{SjamaarLinConvexity}
Y.~Lin and R.~Sjamaar, \emph{Convexity properties of presymplectic moment
  maps}, J. Symplectic Geom. \textbf{17} (2019), 1159--1200.

\bibitem{SjamaarLinLocalization}
\bysame, \emph{Cohomological localization for transverse {L}ie algebra actions
  on {R}iemannian foliations}, J. Geom. Phys. \textbf{158} (2020).

\bibitem{liu2021chern}
W.~Liu, \emph{Chern characters in equivariant basic cohomology}, Comptes
  Rendus. Math. \textbf{359} (2021), no.~1, 1--5.

\bibitem{MaZhangVergneConj}
X.~{Ma} and W.~{Zhang}, \emph{Geometric quantization for proper moment maps},
  Comptes Rendus Math. \textbf{347} (2009), no.~7-8, 389--394.

\bibitem{MeinrenkenSymplecticSurgery}
E.~{Meinrenken}, \emph{Symplectic surgery and the {S}pin-c {D}irac operator},
  Adv. Math. \textbf{134} (1998), 240--277.

\bibitem{MeinrenkenKHomology}
\bysame, \emph{Twisted {K}-homology and group-valued moment maps}, Int. Math.
  Res. Not. \textbf{2012} (2012), 4563--4618.

\bibitem{MeinrenkenSjamaar}
E.~{Meinrenken} and R.~{Sjamaar}, \emph{Singular reduction and quantization},
  Topology \textbf{38} (1999), no.~4, 699--762.

\bibitem{MoerdijkMrcun}
I.~Moerdijk and J.~Mrcun, \emph{Introduction to foliations and {L}ie
  groupoids}, vol.~91, Cambridge University Press, 2003.

\bibitem{MolinoBook}
P.~Molino, \emph{Riemannian foliations}, Progress in Math., vol.~73,
  Birkhauser, 1988.

\bibitem{ParadanRiemannRoch}
P.-E. {Paradan}, \emph{Localization of the {R}iemann-{R}och character}, J.
  Funct. Anal. \textbf{187} (2001), 442--509.

\bibitem{ParadanFormalI}
P.-E. Paradan, \emph{Formal geometric quantization}, Ann. Inst. Fourier
  (Grenoble) \textbf{59} (2009), no.~1, 199--238.

\bibitem{WittenNonAbelian}
P.-E. {Paradan} and M.~{Vergne}, \emph{Witten non abelian localization for
  equivariant {K}-theory, and the {$[Q, R]= 0$} theorem}, Memoirs of the Am.
  Math. Soc., 2019.

\bibitem{park1996basic}
E.~Park and K.~Richardson, \emph{The basic {L}aplacian of a {R}iemannian
  foliation}, American J. Math. \textbf{118} (1996), no.~6, 1249--1275.

\bibitem{prato2001simple}
E.~Prato, \emph{Simple non-rational convex polytopes via symplectic geometry},
  Topology \textbf{40} (2001), no.~5, 961--975.

\bibitem{TelemanConjecture}
C.~Teleman, \emph{The quantization conjecture revisited}, Ann. of Math.
  \textbf{152} (2000), no.~1, 1--43.

\bibitem{TianZhang}
Y.~{Tian} and W.~{Zhang}, \emph{An analytic proof of the geometric quantization
  conjecture of {G}uillemin–-{S}ternberg}, Invent. Math. \textbf{132} (1998),
  229--259.

\bibitem{toben2014localization}
D.~T{\"o}ben, \emph{Localization of basic characteristic classes}, Ann. Inst.
  Four., vol.~64, 2014, pp.~537--570.

\bibitem{tondeurbook}
P.~Tondeur, \emph{Geometry of foliations}, Springer, Monographs in Mathematics,
  1997.

\end{thebibliography}
\providecommand{\bysame}{\leavevmode\hbox to3em{\hrulefill}\thinspace}
\providecommand{\MR}{\relax\ifhmode\unskip\space\fi MR }
\providecommand{\MRhref}[2]{%
  \href{http://www.ams.org/mathscinet-getitem?mr=#1}{#2}
}
\providecommand{\href}[2]{#2}

\end{document}